\numberwithin{equation}{section}
\newtheorem{theorem}{Theorem}[section]
\newtheorem{lemma}[theorem]{Lemma}
\newtheorem{remark}[theorem]{Remark}
\theoremstyle{definition}
\newtheorem{Rem}{Remark}[section]
\def\R{{\mathfrak R}}
\def\begeq{\begin{equation}}
\def\endeq{\end{equation}}
\def\R{\Bbb R}
\begin{document}

\title[the Hartree type Br\'ezis-Nirenberg problem ]{Qualitative analysis to an eigenvalue problem of\\ the Hartree type Br\'ezis-Nirenberg problem  }

\author[K. Pan, S. Wen, J. Yang]{Kefan Pan, Shixin Wen and Jing Yang}

\address[Kefan Pan]{School of Mathematics and Statistics, Central China Normal University, Wuhan 430079, China} \email{kfpan@mails.ccnu.edu.cn}

\address[Shixin Wen]{School of Mathematics and Statistics, Central China Normal University, Wuhan 430079, China} \email{sxwen@mails.ccnu.edu.cn}

\address[Jing Yang]{Department of Mathematics, Zhejiang University of Technology, Zhejiang 310014, China} \email{yyangecho@163.com}

\begin{abstract}
In this paper, we are concerned with the critical Hartree equation
\begin{equation*}
\begin{cases}
-\Delta u=\left(\displaystyle{\displaystyle{\int_{\Omega}}}\frac{u^{2^{*}_{\mu}}(y)}{|x-y|^{\mu}}dy\right)u^{2^{*}_{\mu}-1}+\varepsilon u,\quad u>0,\quad  &\text{in $\Omega$,}\\
u=0,\quad &\text{on $\partial\Omega$,}
\end{cases}
\end{equation*}
where $\Omega\subset \R^{N}$ ($N\geq 5$) is a smooth bounded domain, $\mu\in (0,4)$ and $2^{*}_{\mu}=\frac{2N-\mu}{N-2}$ is the upper critical exponent in the sense of the Hardy-Littlewood-Sobolev inequality. Under a non-degeneracy condition on the critical point $x_0\in\Omega$ of the Robin function $R(x)$, we perform that for $\varepsilon>0$ sufficiently small, the Morse index of the blow-up solutions $u_\varepsilon$ concentrating at $x_0$ can be computed in terms of the negative eigenvalues of the Hessian matrix $D^{2}R(x)$ at $x_0$. Compared with the usual local cases, our problem is non-local due to the nonlinearity with Hartree-type, and several difficulties arise and new estimates of the eigenpairs $\{\left(\lambda_{i,\varepsilon},v_{i,\varepsilon}\right)\}$ to the associated linearized problem at $u_{\varepsilon}$ should be introduced. To our knowledge, this seems to be the first paper to consider the qualitative analysis of a Hartree type Br\'ezis-Nirenberg problem and
our results extend the works established by M. Grossi et al in \cite{GP}
and F. Takahashi in \cite{Ta3} to the non-local case.
\vskip 0.2cm
\break
\noindent{\textbf{Keywords:}} Critical Hartree equation; Morse index; Asymptotic behavior.
\end{abstract}

\date{}\maketitle

\section{\label{Int}Introduction}
\setcounter{equation}{0}
\vspace{0.2cm}

This paper is devoted to the qualitative analysis of solutions to the non-local problem
\begin{equation}\label{eq 1.1}
\begin{cases}
-\Delta u=\left(\displaystyle{\displaystyle{\int_{\Omega}}}\frac{u^{2^{*}_{\mu}}(y)}{|x-y|^{\mu}}dy\right)u^{2^{*}_{\mu}-1}+\varepsilon u,\quad u>0,\quad  &\text{in $\Omega$,}\\
u=0,\quad &\text{on $\partial\Omega$,}
\end{cases}
\end{equation}
 where $\Omega\subset \R^{N}$ ($N\geq 5$) is a smooth bounded domain, $\mu\in (0,4)$, $\varepsilon>0$ is a small parameter and $2^{*}_{\mu}=\frac{2N-\mu}{N-2}$ is the upper critical exponent in the sense of the Hardy-Littlewood-Sobolev inequality. Different from the Br\'ezis-Nirenberg problem driven by the fractional Laplacian operator, \eqref{eq 1.1} is non-local since the Hartree-type nonlinearity, which makes it more complicated to handle than the  local semilinear elliptic problems.

In a celebrated paper \cite{BN}, Br\'ezis and Nirenberg considered the problem
\begin{equation}\label{eq1.2}
    \begin{cases}
        -\Delta u=u^{2^{*}-1}+\varepsilon u,\quad u>0,\quad  &\text{in $\Omega$,}\\[1mm]
u=0,\quad &\text{on $\partial\Omega$,}
    \end{cases}
\end{equation}
where $\Omega\subset \R^{N}$ ($N\geq3$) is a smooth bounded domain, $\varepsilon>0$ and $2^{*}=\frac{2N}{N-2}$ is the critical Sobolev exponent. In \cite{BN}, The authors overcame the difficulties due to the lack of compactness of the embedding $H^{1}_{0}(\Omega)
\hookrightarrow L^{2^{*}}(\Omega)$ and obtained the existence results of positive solutions to \eqref{eq1.2}. Since then, many results have been derived concerning the multiplicity, existence and nonexistence of nodal solutions, and qualitative properties of solutions (also see \cite{BLR,CFP,CLP,CSS1,CW,G,H,IP,MP1,SZ,Ta3,Rey1,Rey2}). Among the plenty of work dealt with \eqref{eq1.2}, we would like to recall the papers of Han \cite{H} and Rey \cite{Rey1,Rey2}, where the authors utilized different methods to study the asymptotic behavior of the positive solution $u_\varepsilon$ to \eqref{eq1.2} verifying
\begin{align}\label{eq1.3}
 \frac{\displaystyle{\int_{\Omega}}\left|\nabla u_{\varepsilon}\right|^2 dx}{\left(\displaystyle{\int_{\Omega}}\left|u_{\varepsilon}\right|^{2^{*}} dx\right)^{\frac{2}{2^{*}}}}\longrightarrow S,  \quad \text{as}\quad \varepsilon\rightarrow0,
\end{align}
where $S$ is the best Sobolev constant. They proved that, for $\varepsilon>0$ small, the solution $u_\varepsilon$ blows up at a critical point of Robin function $R(x)$ (see Definition B). Moreover, via the Pohozaev identity, they obtained the exact blowing-up rate of the solution $u_\varepsilon$ under $\|\cdot\|_{L^{\infty}(\Omega)}$. Later, for such a blow-up solution $u_\varepsilon$, Takahashi in \cite{Ta3} derived the precise asymptotic behavior of the first $(N+2)$-eigenvalues and the corresponding eigenfunctions of the following eigenvalue problem
\begin{equation}\label{eq1.4}
    \begin{cases}-\Delta v=\lambda\Big(\left(2^{*}-1\right) u_{\varepsilon}^{2^{*}-2}+\varepsilon\Big) v & \text { in } \Omega, \\
    v=0 & \text { on } \partial \Omega, \\ \|v\|_{L^{\infty}(\Omega)}=1. & \end{cases}
\end{equation}

An analogue problem to \eqref{eq1.2} is the following one
\begin{equation}\label{eq1.5}
    \begin{cases}
        -\Delta u=u^{2^{*}-1-\varepsilon},\quad u>0,\quad  &\text{in $\Omega$,}\\[1mm]
u=0,\quad &\text{on $\partial\Omega$.}
    \end{cases}
\end{equation}
Obviously, for $\varepsilon>0$, the nearly critical problem \eqref{eq1.5} admits a solution $u_\varepsilon$ by variational methods. But, due to the appearance of the critical Sobolev exponent when $\varepsilon=0$, \eqref{eq1.5} becomes delicate and only has trivial solutions provided $\Omega$ is star-shaped, which makes it interesting to study the asymptotic behavior of the solution $u_\varepsilon$ to \eqref{eq1.5} satisfying \eqref{eq1.3}. In particular, for $\varepsilon>0$ small, a similar phenomenon occurs as that of solutions to \eqref{eq1.2} with \eqref{eq1.3}, see \cite{H,Rey1,Rey2}. Later, under a non-degeneracy condition on the critical point $x_0\in\Omega$ of Robin function, Grossi et al in \cite{GP} stated  a connection between the eigenvalues of the associated linearized problem at $u_\varepsilon$ concentrating at $x_0$ and those of the Hessian matrix $D^{2}R(x_0)$, which implies that the Morse index of $u_\varepsilon$ can be computed as a by-product (see Definition A). Subsequently, such a type of results has been extended to multi-bubble solutions of \eqref{eq1.5} in \cite{CKL}.

Recently, much attention has been focused on the study of the non-local equation \eqref{eq 1.1}, which is closely related to the nonlinear Choquard equation
\begin{align}\label{eq1.6}
  -\Delta u+V(x) u=\left(\displaystyle{\int_{\R^N}}\frac{|u(y)|^{p}}{|x-y|^{\mu}}dy\right)|u|^{p-2} u, \quad\text {in $\R^N$.}
\end{align}
For the case that $N=3,\,p=2,\,\mu=1$ and $V(x)\equiv1$, \eqref{eq1.6} reduces into
\begin{align}\label{eq1.7}
    -\Delta u+u=\left(\displaystyle{\int_{\R^3}}\frac{u^{2}(y)}{|x-y|}dy\right) u, \quad\text {in $\R^3$,}
\end{align}
which arises in various fields of mathematical physics, such as the description of the quantum theory of a polaron at rest (see \cite{PS}). By variational methods, the existence and uniqueness of ground states to \eqref{eq1.7} were obtained by Lieb \cite{L1}. Later, Wei et al in \cite{WW} showed the non-degeneracy of ground states to \eqref{eq1.7}. Regarding the problem \eqref{eq1.6} with different assumptions on the potential $V(x)$ and the exponent $p$, several results have been obtained in the past decades, we may refer the interested readers to \cite{A,ANY,MV1,MV2,DY,GHPS} and the reference therein. In particular, Moroz et al in \cite{MV1} showed that, for $N\geq3$ and $\mu\in(0,N)$, the lower critical exponent $2_{*}^{\mu}:=\frac{2N-\mu}{N}$ and the upper critical exponent $2^{*}_{\mu}:=\frac{2N-\mu}{N-2}$ play a crucial role in the existence and nonexistence of solutions to \eqref{eq1.6}.

Concerning the upper critical case, the first result was obtained in \cite{GY}, where the authors utilized the variational methods to clarify the existence, multiplicity and nonexistence of solutions to \eqref{eq 1.1}. We would like to point out that their results perfectly agree with the local problem \eqref{eq1.2}.
Very recently, great attention has been paid to studying \eqref{eq 1.1} as the parameter $\varepsilon$ goes to zero. Similar to the arguments about the local problems \eqref{eq1.2} or \eqref{eq1.5}, the stability assumptions on the critical points of Robin function are essential for the existence and local uniqueness of the solutions to \eqref{eq 1.1}. In \cite{YYZ}, via the reduction arguments, Yang et al constructed a concentrated solution $u_\varepsilon$ of \eqref{eq 1.1} blowing-up at $x_0$ with $x_0\in\Omega$  a non-degenerate critical point of $R(x)$. Later, Squassina et al in \cite{SYZ} utilized the local Pohozaev identity and blow-up analysis to derive the local uniqueness of the blow-up solutions.

Conversely, in \cite{YZ}, Yang et al studied the asymptotic behavior of solutions
 $u_\varepsilon$ of \eqref{eq 1.1} satisfying
\begin{align}\label{eq 1.2}
    \frac{\displaystyle{\int_{\Omega}}{|\nabla u_{\varepsilon}|^{2}}dx}{\left(\displaystyle{\displaystyle{\int_{\Omega}}}\displaystyle{\displaystyle{\int_{\Omega}}}\frac{|u_{\varepsilon}(x)|^{2^{*}_{\mu}}|u_{\varepsilon}(y)|^{2^{*}_{\mu}}}{|x-y|^{\mu}}dxdy\right)^{\frac{1}{2^{*}_{\mu}}}}\longrightarrow S_{HL},\quad \text{as}\quad \varepsilon\rightarrow0,
\end{align}
where $S_{HL}$ is the best constant defined by
\begin{align}\label{eq 1.3}
S_{HL}:=\underset{u\in \mathcal{D}^{1,2}(\R^{N})\backslash\{0\}}{\inf} \frac{\displaystyle{\int_{\R^{N}}}{|\nabla u|^{2}}dx}{\left(\displaystyle{\displaystyle{\int_{\R^{N}}}}\displaystyle{\displaystyle{\int_{\R^{N}}}}\frac{|u(x)|^{2^{*}_{\mu}}|u(y)|^{2^{*}_{\mu}}}{|x-y|^{\mu}}dxdy\right)^{\frac{1}{2^{*}_{\mu}}}}.
\end{align}
In this case, the solutions $u_\varepsilon$ are called blow-up solutions since it is possible to verify that
$$\left|\nabla u_{\varepsilon}\right|^{2}\rightharpoonup S_{HL}^{\frac{2N-\mu}{N-\mu+2}}\delta_{x_0},\quad\text{weakly in the sense of measures,}$$
 where $x_0$ is the blow-up point of $u_\varepsilon$ and $\delta_{x_0}$ is the Dirac function centered at $x_0$, and they showed that the point $x_0\in\Omega$ is the critical point of Robin function $R(x)$. Note that the results of \eqref{eq 1.1} we mentioned above in \cite{YYZ,SYZ,YZ} can be regarded as the counterpart of the local case \eqref{eq1.2} in \cite{Rey2,G} respectively.

Based on the work \cite{YZ}, the main purpose of this paper is to consider the qualitative analysis the blow-up solutions of
\eqref{eq 1.1} verifying \eqref{eq 1.2}.
Precisely,  we will study the asymptotic behavior of the eigenvalues and the
corresponding eigenfunctions to the associated linearized problem at $u_\varepsilon$, and then a strong correspondence between the Morse index of blow-up solutions and the negative eigenvalues of the Hessian matrix of Robin function at the concentration point will be performed. We would like to point out that Morse index has a wide range of applications in mathematics, such as the derivation of symmetries, uniqueness or bifurcation results of solutions, see \cite{BLP,DP,GGPS,PW,DGIP} and the references therein. To state the
results in this direction, we first introduce some notations and definitions as follows.

\vskip 0.2cm

\noindent\textbf{Definition A.}\label{def A}
\emph{The Morse and augmented
Morse index of a solution $u_\varepsilon$ to problem \eqref{eq 1.1} can be defined as
\begin{equation*}
\begin{cases}
m\left(u_\varepsilon\right):=\sharp\left\{i \in \mathbb{N}:\lambda_{i, \varepsilon}<1\right\}, \\[1mm]
m_0\left(u_\varepsilon\right):=\sharp\left\{i \in \mathbb{N}: \lambda_{i, \varepsilon} \leq 1\right\},
\end{cases}
\end{equation*}
where $\lambda_{1,\varepsilon}<\lambda_{2,\varepsilon}\leq\lambda_{3,\varepsilon}\leq\cdots$ is the sequence of eigenvalues for the linearized problem
\begin{equation}\label{eq 1.12}
\begin{cases}
-\Delta v_{i,\varepsilon}(x)=\lambda_{i,\varepsilon}\Bigg\{(2^{*}_\mu-1)u^{2^{*}_\mu-2}_{\varepsilon}(x)v_{i,\varepsilon}(x)\left(\displaystyle{\displaystyle{\int_{\Omega}}}\frac{u_{\varepsilon}^{2^{*}_\mu}(y)}{|x-y|^{\mu}}dy\right)+\varepsilon v_{i,\varepsilon}(x)\\
\quad\,\,\,\quad\,\,\,\,\,\quad\,\,\,\,\,\,\,\,\,\,\quad\,+2^{*}_\mu u^{2^{*}_\mu-1}_{\varepsilon}(x)\left(\displaystyle{\displaystyle{\int_{\Omega}}}\frac{u^{2^{*}_\mu-1}_{\varepsilon}(y)v_{i,\varepsilon}(y)}{|x-y|^{\mu}}dy\right)\Bigg\},\quad&\text{in $\Omega$,}\\
v_{i,\varepsilon}(x)=0,\quad &\text{on $\partial\Omega$,}\\[2mm]
\|v_{i,\varepsilon}\|_{L^{\infty}(\Omega)}=1,
\end{cases}
\end{equation}
here $v_{i,\varepsilon}$ is the $i$-th eigenfunction corresponding to $\lambda_{i,\varepsilon}$.
  For any function $f\in C^{2}(\Omega)$, we denote by $m(x)$ and $m_{0}(x)$ the Morse and augmented Morse index of $x$, as a critical point of $f$, that is,
 \begin{equation*}
 \begin{cases}
  m(x):=\sharp\left\{l\in\left\{1,2,\cdots,N\right\} :\lambda_{l}<0\right\}, \\[1mm]
 m_{0}(x):=\sharp\left\{l\in\left\{1,2,\cdots,N\right\} :\lambda_{l}\leq 0\right\},
 \end{cases}
 \end{equation*}
 where $\lambda_{1}\leq\lambda_{2}\leq\cdots\leq\lambda_{N}$ are the eigenvalues of the Hessian matrix $D^{2}f(x)$.
}

\vskip 0.2cm

Note that when $\lambda_{i,\varepsilon}=1$ in \eqref{eq 1.12}, the space generated by the solutions $\{v_{i,\varepsilon}\}$ is the kernel of the
linearized operator at $u_\varepsilon$. Recalling the definition of non-degeneracy of solutions (see \cite{G1}), we can conclude that if each
eigenvalue of \eqref{eq 1.12} is not equal to 1, then the solution $u_{\varepsilon}$ of \eqref{eq 1.1} is non-degenerate, especially $m(u_\varepsilon)=m_{0}(u_\varepsilon)$ in this case. This method greatly reduces
the tedious calculations compared with the utilization of blow-up analysis and local Pohozaev identities to draw the same conclusion. In addition, the solutions of \eqref{eq 1.12} verify the orthogonal relation, that is, for any $i\neq j$,
 \begin{equation}\label{eqq1-11}
     \begin{split}
     0= &(2^{*}_\mu-1)\displaystyle{\int_{\Omega}}u^{2^{*}_\mu-2}_{\varepsilon}v_{i,\varepsilon}v_{j,\varepsilon}\left(\displaystyle{\int_{\Omega}}\frac{u_{\varepsilon}^{2^{*}_\mu}(y)}{|x-y|^{\mu}}dy\right)\,dx+\varepsilon\displaystyle{\int_{\Omega}}v_{i,\varepsilon}v_{j,\varepsilon}dx\\
    &+2^{*}_\mu\displaystyle{\int_{\Omega}}u^{2^{*}_\mu-1}_{\varepsilon}v_{j,\varepsilon}\left(\displaystyle{\int_{\Omega}}\frac{u^{2^{*}_\mu-1}_{\varepsilon}(y)v_{i,\varepsilon}(y)}{|x-y|^{\mu}}dy\right)\,dx .
     \end{split}
 \end{equation}

It is known that the computation of the Morse index gives qualitative information on solutions. Except some results, we have already mentioned, concerning the Morse index of blow-up solutions to \eqref{eq1.2} or \eqref{eq1.5} in \cite{GP,Ta3,CKL}, we present other results in this subject. In \cite{GG}, Gladiali et al studied the two-dimensional Gel'fand problem
\begin{equation}\label{1-2}
    \begin{cases}
        -\Delta u=\varepsilon e^{u},\quad&\text{in $\Omega$,}\\[1mm]
        u=0,\quad&\text{on $\partial\Omega$,}
    \end{cases}
\end{equation}
where $\Omega\subset \R^2$ is a smooth bounded domain and $\varepsilon>0$ is a small parameter. For the given one-bubble solution $u_\varepsilon$ satisfying $$\varepsilon\int_{\Omega}e^{u_\varepsilon}\rightarrow 8\pi~~~as~~~\varepsilon\rightarrow0,$$ they obtained the asymptotic behavior of
eigenvalues and eigenfunctions to the linearized Gel'fand problem and the Morse
index of $u_\varepsilon$ as a product. Later, such results have been extended to multi-bubble solutions of \eqref{1-2} in
\cite{GGOS}, which require a careful analysis of the decay estimates of each bubble. Different from the problems posed in bounded domains $\Omega$, Grossi et al in \cite{GS} considered single-peak solutions of the following nonlinear Schr\"odinger equation
\begin{equation}\label{1-1}
    \begin{cases}
        -\varepsilon^{2}\Delta u+V(x)u=u^{p},\quad u>0,\quad\text{in $\R^N$},\\[2mm]
        u\in H^{1}(\R^N),
    \end{cases}
\end{equation}
where $\varepsilon>0$, $p\in(1,\frac{N+2}{N-2})$ if $N\geq3$, $p>1$ if $N=2$ and $V(x)\in C^{\infty}(\R^N)$ is a bounded positive admissible potential. Under the assumption that the point $P$ is the critical point of $V(x)$, the Morse index of single-peak solutions concentrating at $P$ can be computed, even in the case that $P$ is a degenerate critical point of $V(x)$. Very recently, Luo et al in \cite{LPP} derived the Morse index of multi-peak solutions to \eqref{1-1} when $V(x)$ has non-isolated critical points with different degenerate rates along different directions.

Before presenting our main results, for the convenience of readers, we recall some definitions and known results of \eqref{eq 1.1} needed in this paper.
\vskip 0.2cm
\noindent\textbf{Definition B.}\label{def B}
\emph{Let $G(x,y)$ be the Green function of $-\Delta$ in $H_{0}^{1}(\Omega)$ satisfying
 \begin{equation*}
     \begin{cases}
       -\Delta  G(\cdot,y)=\delta_{y},\quad&\text{in $\Omega$},\\[2mm]
       G(\cdot,y)=0,\quad&\text{on $\partial\Omega$},
     \end{cases}
 \end{equation*}
and $H(x,y)$ be its regular part, i.e.,
$$H(x,y)=\frac{1}{(N-2)\sigma_{N}|x-y|^{N-2}}-G(x,y),\quad \forall\, x,y\in\Omega,$$
where $\sigma_{N}$ is the measure of the unit sphere in $\R^N$. Then, we denote $R(x):=H(x,x)>0$ as the Robin function.}

 \vskip 0.2cm
Getting back to problem \eqref{eq 1.1}, 
as for the constant $S_{HL}$ given by \eqref{eq 1.3}, we have the following conclusions.
\smallskip

\noindent\textbf{Theorem A.(c.f.\cite{GY, GLMY})}\label{thm A} \emph{ $(i)$ The constant $S_{HL}$ defined in \eqref{eq 1.3} is achieved if and only if  by $
\bar{U}_{\xi,\tau}(x)$, where
\begin{equation}\label{eq 1.4}
    \bar{U}_{\xi,\tau}(x)=S^{\frac{(N-\mu)(2-N)}{4(N-\mu+2)}}\left(C_{N,\mu}\right)^{\frac{2-N}{2(N-\mu+2)}}\left(N(N-2)\right)^{\frac{N-2}{4}}U_{\xi,\tau}(x):=\bar{C}\,U_{\xi,\tau}(x).
\end{equation}
Here, we denote $U_{\xi,\tau}(x)$ as the Talenti bubble given by
$$U_{\xi,\tau}(x)=\frac{\tau^{\frac{N-2}{2}}}{(1+\tau^{2}|x-\xi|^{2})^{\frac{N-2}{2}}},\quad \tau\in \R^{+},\,x,\,\xi \in \R^{N},$$
which is the unique minimizer for the best Sobolev constant $S$ and satisfies
\begin{align*}
-\Delta u=N(N-2)u^{2^{*}-1},\quad \text{in $\R^{N}$}.
\end{align*}
Furthermore,
$$S_{HL}=\frac{S}{\left(C_{N,\mu}\right)^{\frac{1}{2^{*}_{\mu}}}}.$$
\\
$(ii)$ The function $\bar{U}_{\xi,\tau}(x)$ is the unique family of positive solutions of
\begin{equation}\label{eq 1.5}
-\Delta u(x)=\left(\displaystyle{\int_{\R^{N}}}\frac{u^{2^{*}_{\mu}}(y)}{|x-y|^{\mu}}dy\right){u}^{2^{*}_{\mu}-1}(x),\quad  \text{in}\ \R^N.
\end{equation}
Moreover, from \eqref{eq 1.4}, a direct calculation yields that $U_{\xi,\tau}(x)$ satisfies
\begin{equation}\label{eq 1.6}
-\Delta u={A}_{HL}\left(\displaystyle{\int_{\R^{N}}}\frac{u^{2^{*}_{\mu}}(y)}{|x-y|^{\mu}}dy\right)u^{2^{*}_{\mu}-1}(x),\quad  \text{in}\ \R^N,\\
\end{equation}
where ${A}_{HL}:=\left(N(N-2)\right)^{\frac{N-\mu+2}{2}}S^{\frac{\mu-N}{2}}{\left(C_{N,\mu}\right)}^{-1}$.
\\[1mm]
$(iii)$ As a consequence of the moving sphere method, the Talenti
bubbles satisfy
\begin{align}\label{eq 1.7}
\displaystyle{\int_{\R^N}}\frac{U_{\xi,\tau}^{2^{*}_{\mu}}(y)}{|x-y|^{\mu}}dy=\frac{N(N-2)}{{A}_{HL}}U_{\xi,\tau}^{2^{*}-2^{*}_{\mu}}(x).
\end{align}
}

\vskip 0.2cm
In the present paper, the non-degeneracy of the solution $\bar{U}_{\xi,\tau}$ to \eqref{eq 1.5} plays a crucial role. We summarize the non-degeneracy results as follows.
\smallskip

\noindent\textbf{Theorem B.(c.f.\cite{DY,GMYZ,GWY})}\label{thm 1.C}
  \emph{ Assume that $N,\,\mu$ satisfy the assumptions:
    \begin{equation*}
       (*) \begin{cases}
            \mu\in (0,N), \quad \mu\,\text{is sufficiently close to $0$ or $N$},\,\quad&\text{if $N=3$ or $4$},\\[1mm]
            \mu\in (0,4), \quad \mu\,\text{is sufficiently close to $0$},\,\quad&\text{if $N\geq 5$ but $N \neq 6$},\\[1mm]
            \mu\in (0,4], \quad \mu\,\text{is sufficiently close to $0$, $4$ or $=4$},\,\quad&\text{if $N=6$}.
        \end{cases}
    \end{equation*}
Then the linearized operator of \eqref{eq 1.5} at $\bar{U}_{\xi,\tau}$ defined by
\begin{equation*}
       L\phi=-\Delta\phi-(2^{*}_\mu-1)\bar{U}_{\xi,\tau}^{2^{*}_\mu-2}\phi\left(\displaystyle{\int_{\R^N}}\frac{\bar{U}_{\xi,\tau}^{2^{*}_\mu}(y)}{|x-y|^{\mu}}dy\right)-2^{*}_{\mu} \bar{U}_{\xi,\tau}^{2^{*}_\mu-1}\left(\displaystyle{\int_{\R^N}}\frac{\bar{U}_{\xi,\tau}^{2^{*}_{\mu}-1}(y)\phi(y)}{|x-y|^{\mu}}dy\right)
\end{equation*}
 only admits solutions in $\mathcal{D}^{1,2}(\R^N)$ of the form
$$\phi=\bar{a} D_{\tau}\bar{U}_{\xi,\tau}+\vec{b}\cdot\nabla\bar{U}_{\xi,\tau},$$
where $\bar{a}\in\R$ and $\vec{b}\in\R^N$.
}
\begin{Rem} Note that in \cite{LLTX}, Li et al generalized the above non-degeneracy results of $\bar{U}_{\xi,\tau}$ to the whole range $\mu\in(0,N)$. But, our work is based on the results in \cite{YZ}, which requires the exponent $\mu$ satisfies the assumptions of Theorem B.
\end{Rem}

\vskip 0.2cm
 Now, we state some conclusions about the asymptotic behavior of the solutions $u_\varepsilon$ to problem \eqref{eq 1.1} verifying \eqref{eq 1.2}.
 \smallskip

\noindent\textbf{Theorem C.(c.f.\cite{YZ})}\label{thm 1.D}\emph{
 Assume that $N\geq 4$ and $\mu\in(0,4]$. Let $u_{\varepsilon}$ be a solution of \eqref{eq 1.1} satisfying \eqref{eq 1.2}, then for $\varepsilon>0$ sufficiently small,
$$u_\varepsilon=\xi_\varepsilon PU_{x_\varepsilon,\tau_\varepsilon}+\zeta_{\varepsilon},$$
where $x_\varepsilon\rightarrow x_0$, $\|\zeta_\varepsilon\|_{H_{0}^{1}(\Omega)}\rightarrow 0$ and $\xi_\varepsilon\rightarrow \bar{C}$ as $\varepsilon\rightarrow 0$, with $\bar{C}$ a fixed constant given by \eqref{eq 1.4}. Moreover, $x_0\in\Omega$ is a critical point of the Robin function $R(x)$.
}
\begin{Rem}\label{Rem1.2} Under the assumptions of Theorem C, we can deduce that $u_\varepsilon$ is uniformly bounded in $H_{0}^{1}(\Omega)$, that is,
\begin{equation}\label{eq 1.10}
\displaystyle{\int_{\Omega}}|\nabla u_{\varepsilon}|^{2}dx\leq S_{HL}^{\frac{2N-\mu}{N+2-\mu}}.
\end{equation}
On the other hand, since the solution $u_\varepsilon$ must blow up at the point $x_0$ provided $\varepsilon>0$ small, i.e.,
\begin{equation}\label{eq 1.8}
    \|u_{\varepsilon}\|_{L^{\infty}(\Omega)}\longrightarrow +\infty,\quad\text{as $\varepsilon\rightarrow0$},
  \end{equation}
we can assume that there exists $\tau_{\varepsilon}\in \R^{+}$ such that $\|u_\varepsilon\|_{L^{\infty}(\Omega)}=u_\varepsilon(x_\varepsilon)=\tau_{\varepsilon}^{\frac{N-2}{2}}\rightarrow+\infty$ as $\varepsilon\rightarrow 0$. Then, a family of rescaled functions can be given by$$\tilde{u}_\varepsilon(x)=\tau_{\varepsilon}^{-\frac{N-2}{2}}u_\varepsilon(\tau_{\varepsilon}^{-1}x+x_{\varepsilon}),\quad \forall\,x\in\Omega_{\varepsilon}:=\left\{x\in\R^N:\tau_{\varepsilon}^{-1}x+x_{\varepsilon}\in \Omega \right\}.$$
And a direct calculation yields that
\begin{equation}\label{eq 1.11}
\begin{cases}
-\Delta \tilde{u}_\varepsilon=\left(\displaystyle{\displaystyle{\int_{\Omega_{\varepsilon}}}}\frac{\tilde{u}_\varepsilon^{2^{*}_{\mu}}(y)}{|x-y|^{\mu}}dy\right)\tilde{u}_\varepsilon^{2^{*}_{\mu}-1}+\frac{\varepsilon}{\tau_{\varepsilon}^{2}} \tilde{u}_\varepsilon,\quad\tilde{u}_\varepsilon>0,\quad  &\text{in}\ \Omega_{\varepsilon},\\
\tilde{u}_\varepsilon=0,\quad &\text{on}\ \partial\Omega_{\varepsilon},\\
\|\tilde{u}_{\varepsilon}\|_{L^{\infty}(\Omega_{\varepsilon})}=1.
\end{cases}
\end{equation}
Using Arzela-Ascoli Theorem, we obtain that for $\varepsilon>0$ small enough, there exists a subsequence $\{\tilde{u}_{\varepsilon}\}$ converging to some $u\not\equiv 0$ uniformly on any compact set in $\R^N$, and the limit function $u\in\mathcal{D}^{1,2}(\R^N)$ satisfies \eqref{eq 1.5}. Hence, from {Theorem A-$(ii)$}, we conclude that \begin{equation}\label{eq 1.9}
   \tilde{u}_{\varepsilon}(x)\longrightarrow \bar{U}_{0,1}(x),\quad\text{in $C^{1}_{loc}(\R^N)$.}
\end{equation}
\end{Rem}
\vskip 0.2cm
 Let us denote by $v_{i,\varepsilon}$ the eigenfunction of \eqref{eq 1.12} corresponding to $\lambda_{i,\varepsilon}$ and by $\tilde{v}_{i,\varepsilon}$ the rescaled eigenfunction defined as
$$\tilde{v}_{i,\varepsilon}(x)=v_{i,\varepsilon}(\tau_{\varepsilon}^{-1}x+x_{\varepsilon}),\quad \forall\, x\in \Omega_{\varepsilon}:=\left\{x\in\R^N:\tau_{\varepsilon}^{-1}x+x_{\varepsilon}\in \Omega \right\}.$$
Then, we have
\begin{equation}\label{eq 1.13}
\begin{cases}
    \,-\Delta \tilde{v}_{i,\varepsilon}=\lambda_{i,\varepsilon}\Bigg\{(2^{*}_\mu-1)\tilde{u}_{\varepsilon}^{2^{*}_\mu-2}\tilde{v}_{i,\varepsilon}\left(\displaystyle{\displaystyle{\int_{\Omega_{\varepsilon}}}}\frac{\tilde{u}_{\varepsilon}^{2^{*}_\mu}(y)}{|x-y|^{\mu}}dy\right)+\frac{\varepsilon}{\tau_{\varepsilon}^{2}} \tilde{v}_{i,\varepsilon}\\
\quad\,\,\,\quad\,\,\,\,\,\quad\,\,\,\,\,\,\,\,\,\,\quad\,+2^{*}_{\mu} \tilde{u}_{\varepsilon}^{2^{*}_\mu-1}\left(\displaystyle{\displaystyle{\int_{\Omega_{\varepsilon}}}}\frac{\tilde{u}^{2^{*}_\mu-1}_{\varepsilon}(y)\tilde{v}_{i,\varepsilon}(y)}{|x-y|^{\mu}}dy\right)\Bigg\},\quad&\text{in}\,\Omega_{\varepsilon},\\
\,\tilde{v}_{i,\varepsilon}=0,\quad &\text{on}\ \partial\Omega_{\varepsilon},\\
\,\|\tilde{v}_{i,\varepsilon}\|_{L^{\infty}(\Omega_\varepsilon)}=1.
\end{cases}
\end{equation}
\vskip 0.2cm
Now, we state precisely our results.
\begin{theorem}\label{th1.1}
   Assume $N\geq5$ and $\mu\in(0,4)$. As $\varepsilon\rightarrow0$, we have
  \begin{align}
\lambda_{1,\varepsilon}&\rightarrow\frac{1}{2\cdot2^{*}_\mu-1},\label{eq 1.16}\\
\tilde{v}_{1,\varepsilon}&\rightarrow U_{0,1},\quad \text{in $C^{1}_{loc}\left(\R^N\right)$,} \label{eq 1.17}\\
\|u_\varepsilon\|_{L^{\infty}(\Omega)}^{2}v_{1,\varepsilon}(x)&\rightarrow\frac{\sigma_{N}}{N}G(x,x_0),\quad \text{in $C^{1}_{loc}\left(\bar{\Omega}\backslash\{x_0\}\right)$.}\label{eq 1.18}
  \end{align}
\end{theorem}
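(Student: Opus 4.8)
The plan is to analyze the lowest eigenvalue $\lambda_{1,\varepsilon}$ and its eigenfunction $v_{1,\varepsilon}$ by exploiting the variational characterization of $\lambda_{1,\varepsilon}$ together with the blow-up profile of $u_\varepsilon$ from Theorem C and Remark \ref{Rem1.2}. First I would recall that $\lambda_{1,\varepsilon}$ is characterized by minimizing the Rayleigh quotient $\int_\Omega |\nabla v|^2 / Q_\varepsilon(v)$, where $Q_\varepsilon(v)$ is the quadratic form appearing on the right-hand side of \eqref{eq 1.12}; plugging in the test function $v = PU_{x_\varepsilon,\tau_\varepsilon}$ (or better, the rescaled profile) and using the limit relation \eqref{eq 1.9} together with \eqref{eq 1.7}, I would show $\limsup_{\varepsilon\to0}\lambda_{1,\varepsilon} \le \frac{1}{2\cdot 2^*_\mu - 1}$. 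The key algebraic point is that on the limit bubble $U_{0,1}$, using \eqref{eq 1.6}–\eqref{eq 1.7}, the two nonlocal terms in $Q_\varepsilon$ combine so that $U_{0,1}$ is an eigenfunction of the limiting linearized operator with eigenvalue exactly $\frac{1}{2\cdot 2^*_\mu-1}$: indeed $-\Delta U_{0,1}$ equals $\frac{1}{2\cdot2^*_\mu-1}$ times $\big[(2^*_\mu-1)(\text{first nonlocal term}) + 2^*_\mu(\text{second nonlocal term})\big]$ evaluated at $U_{0,1}$, since both convolution terms reduce to multiples of $U_{0,1}^{2^*-1}$ and the constants add up correctly.

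Next I would establish the matching lower bound and the convergence \eqref{eq 1.17}. Rescale: set $\tilde v_{1,\varepsilon}$ as in \eqref{eq 1.13}; since $\|\tilde v_{1,\varepsilon}\|_{L^\infty} = 1$ and $\lambda_{1,\varepsilon}$ is bounded (by the upper bound just proved and positivity), elliptic estimates give a subsequence converging in $C^1_{loc}(\R^N)$ to some bounded $v$ solving the limiting linearized equation $L_0 v = \lambda_1 v$ in $\R^N$, where $\lambda_1 = \lim \lambda_{1,\varepsilon}$. By Theorem B, $v$ lies in the span of $D_\tau \bar U_{0,1}$ and $\partial_{x_j}\bar U_{0,1}$ only if $\lambda_1 = 1$; but since $v$ has a strict sign (first eigenfunction, hence one can take $v_{1,\varepsilon} > 0$ by the usual argument that the first eigenfunction of a positive operator does not change sign, so $v \ge 0$), and the only nonnegative element is a positive multiple of $D_\tau\bar U_{0,1}$ which changes sign — contradiction unless $\lambda_1 \ne 1$. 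So $\lambda_1$ is a genuine eigenvalue of $L_0$ with a positive eigenfunction; the only such eigenvalue is $\frac{1}{2\cdot2^*_\mu-1}$ with eigenfunction a multiple of $U_{0,1}$. Normalizing $\|v\|_{L^\infty}=1$ and $v(0)$ being the max forces $v = U_{0,1}$, giving \eqref{eq 1.16}–\eqref{eq 1.17}; one must also check $v\not\equiv 0$, which follows because the maximum of $|\tilde v_{1,\varepsilon}|$ is attained at points staying in a bounded region (a standard argument: if the max escaped to infinity or to the boundary layer, one rescales again and derives a contradiction with the equation, using that the potential decays).

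For the outer convergence \eqref{eq 1.18}, I would use the Green's representation $v_{1,\varepsilon}(x) = \lambda_{1,\varepsilon}\int_\Omega G(x,y)\,[\text{RHS of \eqref{eq 1.12}}]\,dy$ and the concentration of the right-hand side. The total mass of the right-hand side, after multiplying by $\|u_\varepsilon\|_{L^\infty}^2 = \tau_\varepsilon^{N-2}$, converges: changing variables $y = \tau_\varepsilon^{-1}z + x_\varepsilon$, the dominant contribution is $\lambda_{1}\int_{\R^N}\big[(2^*_\mu-1)U_{0,1}^{2^*_\mu-2}U_{0,1}(\cdots) + 2^*_\mu U_{0,1}^{2^*_\mu-1}(\cdots)\big]\,dz$ times $\tau_\varepsilon^{-2}\cdot\tau_\varepsilon^{N}\cdot\tau_\varepsilon^{-(N-2)}$-type scaling factors; tracking the powers and using \eqref{eq 1.7} this collapses to $\frac{\sigma_N}{N}$ (the constant is pinned down by the same computation that appears in \cite{YZ}, essentially $\int_{\R^N}\Delta U_{0,1} = -\sigma_N(N-2)\cdot$const and the normalization $U_{0,1}(0)=1$). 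Away from $x_0$, $G(x,y)\to G(x,x_0)$ uniformly, so $\|u_\varepsilon\|_{L^\infty}^2 v_{1,\varepsilon}(x) \to \frac{\sigma_N}{N}G(x,x_0)$ in $C^1_{loc}(\bar\Omega\setminus\{x_0\})$; the $C^1$ statement follows by differentiating the Green representation and the same mass-concentration estimate.

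The main obstacle I anticipate is the nonlocal interaction term $2^*_\mu u_\varepsilon^{2^*_\mu-1}\int_\Omega \frac{u_\varepsilon^{2^*_\mu-1}v}{|x-y|^\mu}dy$ in the quadratic form and in the Green representation: unlike the local Brézis–Nirenberg case, this term couples $v$ at different points, so the standard Arzelà–Ascoli / elliptic bootstrap must be combined with Hardy–Littlewood–Sobolev estimates to control the convolution uniformly in $\varepsilon$, and the mass computation for \eqref{eq 1.18} requires showing the convolution tail (from the region where $y$ is far from $x_\varepsilon$) is negligible after multiplication by $\|u_\varepsilon\|_{L^\infty}^2$. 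Establishing that $v_{1,\varepsilon}$ does not develop a second concentration or a nonvanishing outer part at the scale of the bubble — i.e. that all the mass is captured by the single rescaling — is the delicate point, and I would handle it by a Pohozaev-type or energy argument as in \cite{YZ,Ta3}, adapted to the Hartree nonlinearity.
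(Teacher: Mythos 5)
Your proposal is correct and follows essentially the same route as the paper: a variational upper bound for $\lambda_{1,\varepsilon}$, rescaling plus the classification of the limiting linearized spectrum (Lemma \ref{lma2.5}, built on Theorem B and \eqref{eq 1.7}) to get \eqref{eq 1.16}--\eqref{eq 1.17}, and a Green-function/delta-mass concentration argument (via Lemma \ref{lema 2.3}) for \eqref{eq 1.18}. The only cosmetic differences are that the paper takes $v=u_\varepsilon$ itself as the test function, which closes the Rayleigh-quotient computation exactly without the $\zeta_\varepsilon$ error terms your choice of $PU_{x_\varepsilon,\tau_\varepsilon}$ would introduce, and it identifies $\lambda_1=\frac{1}{2\cdot 2^*_\mu-1}$ directly from the absence of smaller eigenvalues of the limit problem rather than through positivity of the first eigenfunction.
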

\vskip 0.2cm
The next Theorem states some qualitative properties of the eigenfunctions $v_{i,\varepsilon}$ to \eqref{eq 1.12} and its rescaled function $\tilde{v}_{i,\varepsilon}$ to \eqref{eq 1.13}, for $i=2,\cdots,N+1$. In particular, a strong correspondence between the eigenvalues $\lambda_{i,\varepsilon}$ and the eigenvalues of the Hessian matrix $D^{2}R(x)$ at $x_0$ are performed.
\begin{theorem}\label{th1.2} Assume $N\geq6$  and $\mu\in(0,4)$. For $i=2,3,\cdots,N+1$, we obtain that  as $\varepsilon\rightarrow 0$,
\begin{align}
    \tilde{v}_{i,\varepsilon}(x)&\rightarrow\sum_{k=1}^{N}
    \frac{a_{i,k}\,x_k}{\big(1+|x|^{2}\big)^{\frac{N}{2}}},\quad \text{in $C^{1}_{loc}\left(\R^N\right)$,}\label{eq 1.21}\\
\|u_{\varepsilon}\|_{L^{\infty}(\Omega)}^{2+\frac{2}{N-2}}v_{i,\varepsilon}(x)&\rightarrow \frac{\sigma_{N}}{N(N-2)}\sum_{k=1}^{N}a_{i,k}\,\frac{\partial\, G(x,x_{0})}{\partial y_{k}},\quad \text{in $C^{1}_{loc}\left(\bar{\Omega}\backslash\{x_0\}\right)$,}\label{eq 1.22}
\end{align}
for some $\vec{a}_{i}=(a_{i,1},a_{i,2},\cdots,a_{i,N})\neq\vec{0}$. In addition,
\begin{align}\label{eq 1.20}
\|u_{\varepsilon}\|_{L^{\infty}(\Omega)}^{\frac{2N}{N-2}}\left(\lambda_{i,\varepsilon}-1\right)\rightarrow M\gamma_{i-1},\quad\text{ as $\varepsilon\rightarrow 0$,}
\end{align}
where $\gamma_{1}\leq\gamma_{2}\leq\cdots\leq\gamma_{N}$ are eigenvalues of the Hessian matrix $D^{2}R(x_0)$ and
$$M:=\frac{(N-2)\,\sigma_{N}^{2}}{2N(N+2)\left(\displaystyle{\displaystyle{\int_{\R^N}}}U_{0,1}^{2^{*}-2}(x)|\nabla U_{0,1}(x)|^{2}dx\right)}>0.$$
Furthermore, $\vec{a}_i$ is an eigenvector of $D^{2}R(x_0)$ corresponding to $\gamma_{i-1}$ and $\vec{a}_i$ is perpendicular to $\vec{a}_j$ in $\R^N$ if $i\neq j$.
\end{theorem}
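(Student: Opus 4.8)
\medskip
\noindent\textbf{Outline of the proof of Theorem~\ref{th1.2}.}
The plan is to transpose the scheme of Grossi--Pacella \cite{GP} and Takahashi \cite{Ta3} to the nonlocal problem, the genuinely new ingredient being the control of the convolution terms under the concentration rescaling and near $\partial\Omega$. \emph{Step 1 (convergence).} From \eqref{eq 1.13}, the normalization $\|\tilde v_{i,\varepsilon}\|_{L^\infty(\Omega_\varepsilon)}=1$, interior elliptic estimates, and $\varepsilon\tau_\varepsilon^{-2}\to0$, a subsequence gives $\lambda_{i,\varepsilon}\to\bar\lambda_i$ and $\tilde v_{i,\varepsilon}\to v_i$ in $C^1_{loc}(\R^N)$, where $v_i$ solves the linearized equation at $\bar{U}_{0,1}$ with eigenvalue $\bar\lambda_i$. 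Plugging the $N$ translated bubbles $\partial_k PU_{x_\varepsilon,\tau_\varepsilon}$ together with $v_{1,\varepsilon}$ into the min--max characterization of $\lambda_{N+1,\varepsilon}$ yields $\limsup_\varepsilon\lambda_{N+1,\varepsilon}\le1$; on the other hand, the spectral gap of the limiting operator (whose only eigenvalue below $1$ is $\frac{1}{2\cdot 2^*_\mu-1}$, simple, with the positive eigenfunction appearing in Theorem~\ref{th1.1}) together with \eqref{eqq1-11} forces $\bar\lambda_i\ge1$ for $i\ge2$. Hence $\bar\lambda_i=1$ for $i=2,\dots,N+1$, so Theorem~B (non-degeneracy) gives $v_i=\bar{a}_i\,D_\tau\bar{U}_{0,1}+\vec{b}_i\cdot\nabla\bar{U}_{0,1}$; localizing the $L^\infty$-norm near the blow-up point shows $v_i\not\equiv0$.

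\emph{Step 2 (no dilation component).} To prove $\bar{a}_i=0$ for $i=2,\dots,N+1$ — which is exactly \eqref{eq 1.21}, with $a_{i,k}$ proportional to $(\vec{b}_i)_k$ — I would test \eqref{eq 1.12} against the Pohozaev field $\mathcal{T}u_\varepsilon:=(x-x_\varepsilon)\cdot\nabla u_\varepsilon+\tfrac{N-2}{2}u_\varepsilon$ and, conversely, test $v_{i,\varepsilon}$ against the equation satisfied by $\mathcal{T}u_\varepsilon$. Using $\partial_{x_k}|x-y|^{-\mu}=-\partial_{y_k}|x-y|^{-\mu}$ and $u_\varepsilon|_{\partial\Omega}=0$, one checks that $\mathcal{T}u_\varepsilon$ solves the linearized equation with eigenvalue $1$ up to the term $2\varepsilon u_\varepsilon$ and a lower-order remainder $E_\varepsilon$ (due to the convolution being over $\Omega$ rather than $\R^N$), so subtracting the two testings gives
\[
\int_{\partial\Omega}\frac{\partial v_{i,\varepsilon}}{\partial\nu}\,\mathcal{T}u_\varepsilon\,d\sigma=(\lambda_{i,\varepsilon}-1)\int_\Omega\mathcal{L}_\varepsilon[v_{i,\varepsilon}]\,\mathcal{T}u_\varepsilon\,dx-2\varepsilon\int_\Omega u_\varepsilon v_{i,\varepsilon}\,dx-\int_\Omega E_\varepsilon\,v_{i,\varepsilon}\,dx,
\]
where $\mathcal{L}_\varepsilon$ denotes the weighted operator on the right of \eqref{eq 1.12}. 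Since $D_\tau\bar{U}_{0,1}$ is even and $\nabla\bar{U}_{0,1}$ is odd, only the $D_\tau$-component of $v_i$ survives in the interior integrals, so each of the three terms carries a factor $\bar{a}_i$; comparing their orders in $\tau_\varepsilon$ — using \eqref{eq 1.7}, the blow-up rate $\varepsilon\sim c\,\tau_\varepsilon^{-(N-4)}$ and $\lambda_{i,\varepsilon}\to1$ — shows that for $N\ge6$ the identity can hold only if $\bar{a}_i=0$ (equivalently, the unique eigenvalue among $\lambda_{2,\varepsilon},\dots,\lambda_{N+2,\varepsilon}$ retaining a $D_\tau$-component deviates from $1$ at the faster rate $\tau_\varepsilon^{-(N-2)}$, with a positive constant, hence it is $\lambda_{N+2,\varepsilon}$). \emph{Step 3 (Green function limit).} Writing $v_{i,\varepsilon}=\int_\Omega G(\cdot,y)f_{i,\varepsilon}(y)\,dy$ for $f_{i,\varepsilon}$ the right-hand side of \eqref{eq 1.12}, rescaling, and controlling the convolutions via \eqref{eq 1.7}, one finds that the renormalized source $\|u_\varepsilon\|_{L^\infty(\Omega)}^{2+2/(N-2)}f_{i,\varepsilon}$ concentrates at $x_0$; its monopole part vanishes because $v_i$ is odd and all weights are even, while the surviving dipole produces $\sum_k a_{i,k}\partial_{y_k}G(\cdot,x_0)$, giving \eqref{eq 1.22}, the constant being fixed by matching the interior profile $\tilde v_{i,\varepsilon}(x)\sim\sum_k a_{i,k}x_k|x|^{-N}$ with the singularity of $\partial_{y_k}G(\cdot,x_0)$.

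\emph{Step 4 (eigenvalue asymptotics and eigenvector structure).} In the final step I would test \eqref{eq 1.12} against $\partial_k u_\varepsilon$ and, conversely, test $v_{i,\varepsilon}$ against the equation for $\partial_k u_\varepsilon$; since $u_\varepsilon|_{\partial\Omega}=0$ annihilates every boundary term produced by differentiating \eqref{eq 1.1} (including those coming from the nonlocal term), $\partial_k u_\varepsilon$ solves the linearized equation with eigenvalue exactly $1$, and subtracting yields
\[
(\lambda_{i,\varepsilon}-1)\int_\Omega\mathcal{L}_\varepsilon[v_{i,\varepsilon}]\,\frac{\partial u_\varepsilon}{\partial x_k}\,dx=\int_{\partial\Omega}\frac{\partial v_{i,\varepsilon}}{\partial\nu}\,\frac{\partial u_\varepsilon}{\partial x_k}\,d\sigma .
\]
On the left, rescaling together with the mutual orthogonality of the profiles $x_k(1+|x|^2)^{-N/2}$ and \eqref{eq 1.7} give a quantity of order $\tau_\varepsilon^{(4-N)/2}$ equal to $a_{i,k}$ times an explicit positive multiple of $\int_{\R^N}U_{0,1}^{2^*-2}|\nabla U_{0,1}|^2\,dx$; on the right, inserting the sharp expansion of $u_\varepsilon$ near $\partial\Omega$ (with the Robin correction $PU_{x_\varepsilon,\tau_\varepsilon}\sim(N-2)\sigma_N\tau_\varepsilon^{-(N-2)/2}H(\cdot,x_\varepsilon)$) and \eqref{eq 1.22}, and using Green's identities, give a quantity of order $\tau_\varepsilon^{-(3N-4)/2}$ equal to $\sum_l D^2_{kl}R(x_0)\,a_{i,l}$ times an explicit constant. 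Equating and multiplying by $\tau_\varepsilon^{N}=\|u_\varepsilon\|_{L^\infty(\Omega)}^{2N/(N-2)}$ shows that $\|u_\varepsilon\|_{L^\infty(\Omega)}^{2N/(N-2)}(\lambda_{i,\varepsilon}-1)\,\vec{a}_i\to M\,D^2R(x_0)\vec{a}_i$ (after checking that the ratio of constants equals $M$), so $\vec{a}_i\ne\vec{0}$ is an eigenvector of $D^2R(x_0)$ and \eqref{eq 1.20} holds with $\gamma$ the corresponding eigenvalue. Passing to the limit in \eqref{eqq1-11} — whose dominant part, after rescaling, is the limiting bilinear form evaluated on $v_i,v_j$, which on $\mathrm{span}\{\nabla\bar{U}_{0,1}\}$ is a positive multiple of the Euclidean product — gives $\vec{a}_i\cdot\vec{a}_j=0$ for $i\ne j$; and since $\lambda_{2,\varepsilon}\le\cdots\le\lambda_{N+1,\varepsilon}$ and $M>0$, the eigenvalue attached to $v_i$ must be $\gamma_{i-1}$.

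The main obstacle is the control of the nonlocal convolutions throughout — their scaling behaviour under $x\mapsto\tau_\varepsilon^{-1}x+x_\varepsilon$, their decay as $x\to\partial\Omega$, and above all their contribution to the boundary Pohozaev integrals of Steps 2 and 4, which has to be expanded precisely enough to isolate $D^2R(x_0)$ while showing that the many remainder terms are genuinely of lower order. This is handled by systematic use of the identity \eqref{eq 1.7}, the Hardy--Littlewood--Sobolev inequality, and sharp pointwise bounds on $u_\varepsilon$ and $v_{i,\varepsilon}$; it is precisely in these error estimates — most visibly in the argument of Step 2 that $\bar{a}_i=0$ — that the restriction $N\ge6$ is required.
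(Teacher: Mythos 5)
Your outline follows essentially the same route as the paper: a min--max upper bound for $\lambda_{i,\varepsilon}$ via derivative-type test functions, the Pohozaev identity with the dilation field $\mathcal{T}u_\varepsilon$ to kill the $D_\tau$-component (the paper's Lemmas \ref{lema 2.10}--\ref{lema 2.11}), Green's representation with integration by parts against auxiliary first-order PDE solutions for \eqref{eq 1.22}, testing against $\partial_k u_\varepsilon$ for \eqref{eq 1.20}, and the orthogonality relation \eqref{eqq1-11} for the perpendicularity of the $\vec a_i$. Two small corrections: your Step 2 needs the \emph{quantitative} bound $\lambda_{i,\varepsilon}\le 1+O(\|u_\varepsilon\|_\infty^{-2^*})$ (the paper's \eqref{eq 4.13}, whose proof is where most of the nonlocal estimates live), not merely $\limsup_\varepsilon\lambda_{N+1,\varepsilon}\le1$ as stated in your Step 1, since the dilation identity alone only says that a surviving $D_\tau$-component forces $\lambda_{i,\varepsilon}-1\sim\kappa\|u_\varepsilon\|_\infty^{-2}$; and the restriction $N\ge6$ is actually consumed in the Green-representation remainders for \eqref{eq 1.22} (the paper's \eqref{4-1}, \eqref{4-2}, \eqref{I-3}), not in the $\bar a_i=0$ step, which only needs $N>4$ for $\kappa>0$.
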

\vskip 0.2cm
Finally, we present some estimates on the $(N+2)$-th eigenvalue of \eqref{eq 1.12} and the asymptotic behavior of the corresponding eigenfunction.
\begin{theorem}\label{th1.3} Assume $N\geq6$  and $\mu\in(0,4)$. As $\varepsilon\rightarrow0$, we have
\begin{align}
\tilde{v}_{N+2,\varepsilon}(x)&\rightarrow b_{N+2}
    \frac{1-|x|^{2}}{\big(1+|x|^{2}\big)^{\frac{N}{2}}},\quad \text{in $C^{1}_{loc}\left(\R^N\right)$,}\label{eq 1.24}
\end{align}
for some $b_{N+2}\neq 0$. Moreover, for $\varepsilon>0$ small,
\begin{align}
\|u_{\varepsilon}\|_{L^{\infty}(\Omega)}^{2}\left(\lambda_{N+2,\varepsilon}-1\right)\rightarrow \kappa,\label{eq 1.23}
\end{align}
where
$$\kappa:=\frac{2\sigma_{N}^{2}(N-4)R(x_0)}{(N+2)N^{2}\left(\displaystyle{\displaystyle{\int_{\R^N}}}\frac{(1-|x|^{2})^{2}}{(1+x^{2})^{N+2}}dx\right)}=\left(N-2\right)\left(N-4\right)MR(x_0)>0.$$
\end{theorem}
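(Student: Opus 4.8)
The plan is to adapt the approach of Grossi--Pacella \cite{GP} and Takahashi \cite{Ta3} to the non-local setting, the new feature being a uniform weighted (decay) estimate for $\tilde v_{N+2,\varepsilon}$ whose proof must simultaneously control the Riesz-type potentials attached to $u_\varepsilon$ and $v_{N+2,\varepsilon}$. It is convenient to record that $D_\tau U_{0,1}=\frac{N-2}{2}\,\frac{1-|x|^2}{(1+|x|^2)^{N/2}}$, so the function in \eqref{eq 1.24} is, up to a constant, the dilation mode of the Talenti bubble.

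\emph{(Step 1: limiting value of $\lambda_{N+2,\varepsilon}$, uniform decay, and classification.)} First I would show $\lambda_{N+2,\varepsilon}\to 1$: applying the Courant--Fischer characterization of $\lambda_{N+2,\varepsilon}$ to the $(N+2)$-dimensional trial space spanned by $PU_{x_\varepsilon,\tau_\varepsilon}$, $\partial_k PU_{x_\varepsilon,\tau_\varepsilon}$ ($k=1,\dots,N$) and $D_\tau PU_{x_\varepsilon,\tau_\varepsilon}$ — whose Rayleigh quotients relative to \eqref{eq 1.12} tend to $\frac{1}{2\cdot 2^{*}_{\mu}-1}$ and $1$ respectively — gives $\limsup_{\varepsilon\to0}\lambda_{N+2,\varepsilon}\le 1$, while $\lambda_{N+2,\varepsilon}\ge\lambda_{N+1,\varepsilon}\to 1$ by Theorem~\ref{th1.2}. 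Next, and this is the step I expect to be the main obstacle, I would establish the uniform estimate $|\tilde v_{N+2,\varepsilon}(x)|\le C(1+|x|)^{-(N-2)}$ on $\Omega_\varepsilon$ with $C$ independent of $\varepsilon$: starting from $\|\tilde v_{N+2,\varepsilon}\|_{L^{\infty}(\Omega_\varepsilon)}=1$, one iterates the Green representation of \eqref{eq 1.13} (using $G_{\Omega_\varepsilon}(x,y)\le c_N|x-y|^{-(N-2)}$), the convergence $\tilde u_\varepsilon\to\bar U_{0,1}$ with uniform decay (see \eqref{eq 1.9} and \eqref{eq 1.5}), and the Hardy--Littlewood--Sobolev inequality; the delicate point, absent in \cite{GP,Ta3}, is that at every step of the bootstrap one must re-estimate $\int_{\Omega_\varepsilon}|x-y|^{-\mu}\tilde u_\varepsilon^{2^{*}_{\mu}-1}(y)\tilde v_{N+2,\varepsilon}(y)\,dy$ in terms of the current decay of $\tilde v_{N+2,\varepsilon}$, so that the iteration couples $\tilde v_{N+2,\varepsilon}$ with its own Riesz potential. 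Once this is available, $\tilde v_{N+2,\varepsilon}$ is bounded in $\mathcal{D}^{1,2}(\R^N)$, hence (along a subsequence, by elliptic estimates) $\tilde v_{N+2,\varepsilon}\to v_{N+2}$ weakly in $\mathcal{D}^{1,2}(\R^N)$ and in $C^1_{loc}(\R^N)$; since $\lambda_{N+2,\varepsilon}\to 1$ and $\varepsilon/\tau_\varepsilon^2\to 0$, the limit solves the linearized equation of Theorem~B at $\bar U_{0,1}$, so by the non-degeneracy there $v_{N+2}=\bar a\,D_\tau\bar U_{0,1}+\vec b\cdot\nabla\bar U_{0,1}$.

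\emph{(Step 2: non-triviality and profile.)} The decay estimate of Step 1 forces the normalization $\|\tilde v_{N+2,\varepsilon}\|_{L^{\infty}}=1$ to be attained inside a fixed ball $B_{R_0}(0)$, so $C^1_{loc}$-convergence gives $\|v_{N+2}\|_{L^{\infty}(B_{R_0})}=1$; in particular $v_{N+2}\not\equiv 0$. To pin down its form I would pass \eqref{eqq1-11} to the limit, the key point being that, by the Hardy--Littlewood--Sobolev identity \eqref{eq 1.7} and its $x$-gradient, the rescaled limits of the two non-local weighted scalar products in \eqref{eqq1-11} both collapse onto the local weighted product $\int_{\R^N}U_{0,1}^{2^{*}-2}\varphi\,\psi\,dx$ (again uniform control of the potentials is needed here). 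Taking the pairs $(i,j)=(N+2,j)$, $j=2,\dots,N+1$, and using Theorem~\ref{th1.2} to identify $\lim\tilde v_{j,\varepsilon}$ as a nonzero multiple of $\vec a_j\cdot\nabla U_{0,1}$, one obtains $\int_{\R^N}U_{0,1}^{2^{*}-2}(\vec a_j\cdot\nabla U_{0,1})\,v_{N+2}\,dx=0$; since $D_\tau U_{0,1}$ is even while $\nabla U_{0,1}$ is odd, $\int_{\R^N}U_{0,1}^{2^{*}-2}\partial_jU_{0,1}\,\partial_kU_{0,1}\,dx\propto\delta_{jk}$, and the $\vec a_j$ span $\R^N$, this forces $\vec b=0$. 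Hence $v_{N+2}=b_{N+2}\,\frac{1-|x|^2}{(1+|x|^2)^{N/2}}$ with $b_{N+2}\neq 0$ (because $v_{N+2}\not\equiv0$), which is \eqref{eq 1.24}; the pair $(i,j)=(1,N+2)$ in \eqref{eqq1-11} is then automatically consistent, since $\int_{\R^N}U_{0,1}^{2^{*}-1}\,\frac{1-|x|^2}{(1+|x|^2)^{N/2}}\,dx=0$.

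\emph{(Step 3: the eigenvalue rate.)} To obtain \eqref{eq 1.23}, I would derive — by a Pohozaev-type identity on a fixed ball $B_\delta(x_\varepsilon)$, or equivalently by pairing \eqref{eq 1.12} for $v_{N+2,\varepsilon}$ with the approximate dilation eigenfunction $D_\tau PU_{x_\varepsilon,\tau_\varepsilon}$ and pairing the defect $E_\varepsilon:=-\Delta(D_\tau PU_{x_\varepsilon,\tau_\varepsilon})-L_\varepsilon'[D_\tau PU_{x_\varepsilon,\tau_\varepsilon}]$ with $v_{N+2,\varepsilon}$, where $L_\varepsilon'$ is the operator on the right of \eqref{eq 1.12} — using the symmetry of $L_\varepsilon'$ (for the non-local part, by Fubini) the identity
$$(1-\lambda_{N+2,\varepsilon})\,\langle L_\varepsilon'[D_\tau PU_{x_\varepsilon,\tau_\varepsilon}],\,v_{N+2,\varepsilon}\rangle=\langle E_\varepsilon,\,v_{N+2,\varepsilon}\rangle,$$
exact up to boundary remainders over $\partial B_\delta(x_\varepsilon)$. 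The left-hand side behaves like $(1-\lambda_{N+2,\varepsilon})\,c_1\,b_{N+2}\,\tau_\varepsilon^{-N/2}$ with $c_1>0$ an explicit multiple of $\int_{\R^N}U_{0,1}^{2^{*}-2}(D_\tau U_{0,1})^2\,dx$; on the right-hand side, the term $\varepsilon u_\varepsilon$ in \eqref{eq 1.1} and the regular part $H$ entering through $PU_{\xi,\tau}=U_{\xi,\tau}-c_N\tau^{-(N-2)/2}H(\cdot,\xi)+\mathrm{l.o.t.}$ (Theorem~C and the standard expansion) combine, via the known balance between $\varepsilon$ and $\tau_\varepsilon$ for solutions verifying \eqref{eq 1.2}, into $c_2\,b_{N+2}\,R(x_\varepsilon)\,\tau_\varepsilon^{-(3N-4)/2}$ with explicit $c_2\neq0$. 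Dividing by $b_{N+2}\neq0$ and by $\tau_\varepsilon^{-N/2}$ yields $\tau_\varepsilon^{N-2}(\lambda_{N+2,\varepsilon}-1)\to -(c_2/c_1)R(x_0)$, which is \eqref{eq 1.23} since $\|u_\varepsilon\|_{L^{\infty}(\Omega)}^2=\tau_\varepsilon^{N-2}$; and $\kappa=(N-2)(N-4)MR(x_0)$ follows by inserting $M$ from Theorem~\ref{th1.2} and using the Beta-function identity $\int_{\R^N}U_{0,1}^{2^{*}-2}|\nabla U_{0,1}|^2\,dx=N\int_{\R^N}U_{0,1}^{2^{*}-2}(D_\tau U_{0,1})^2\,dx$. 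Besides the uniform decay estimate of Step 1, the remaining delicate point is the bookkeeping, here, of the non-symmetric boundary remainders generated by the Hartree convolution on $B_\delta(x_\varepsilon)$, which must be shown to be $o\big(\tau_\varepsilon^{-(3N-4)/2}R(x_\varepsilon)\big)$.
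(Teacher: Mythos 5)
Your proposal reproduces the paper's architecture essentially step by step: $\lambda_{N+2,\varepsilon}\to 1$ by a min--max argument over an $(N+2)$-dimensional trial space (Lemma \ref{lema 5.1}); uniform decay of $\tilde v_{N+2,\varepsilon}$ by iterating the Green representation against the Riesz potentials (Lemma \ref{lema 2.7}) plus classification of the limit via non-degeneracy (Lemma \ref{lema 2.8}); elimination of the translation modes by passing the orthogonality \eqref{eqq1-11} against $v_{j,\varepsilon}$, $j=2,\dots,N+1$, to the limit; and the rate \eqref{eq 1.23} from a Pohozaev-type identity paired with the dilation mode (Lemmas \ref{lema 2.10}--\ref{lema 2.11}). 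Your Beta-function identity and the resulting relation $\kappa=(N-2)(N-4)MR(x_0)$ are correct. The two points where you deviate are both places where the paper's choice is cleaner. First, your trial space is built from $PU_{x_\varepsilon,\tau_\varepsilon}$ and its derivatives, whereas the paper uses $u_\varepsilon$, $\phi\,\partial_j u_\varepsilon$ and $\phi\,\omega_\varepsilon$ with $\omega_\varepsilon=(x-x_\varepsilon)\cdot\nabla u_\varepsilon+\frac{N-2}{2}u_\varepsilon$ (see \eqref{eq 4.2}--\eqref{eq 4.3}): these satisfy the linearized equation exactly up to the cutoff and the $2\varepsilon u_\varepsilon$ term, so all errors localize in the cutoff annulus where the decay of $u_\varepsilon$ is known; with $PU$-based test functions you must additionally control $\zeta_\varepsilon=u_\varepsilon-\xi_\varepsilon PU_{x_\varepsilon,\tau_\varepsilon}$, for which Theorem C only records $o(1)$ in $H^1_0$ — likely sufficient for $\limsup\lambda_{N+2,\varepsilon}\le 1$ but requiring extra HLS estimates you would need to supply. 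Second, for the rate, the paper runs the identity globally on $\Omega$, so the only boundary term is $\int_{\partial\Omega}\partial_\nu u_\varepsilon\,\partial_\nu v_{N+2,\varepsilon}\,(x-x_\varepsilon)\cdot\nu\,dS$, evaluated exactly from $\|u_\varepsilon\|_{\infty}u_\varepsilon\to\frac{\sigma_N}{N}G(\cdot,x_0)$ and $\|u_\varepsilon\|_{\infty}^{2}v_{N+2,\varepsilon}\to-\frac{\sigma_N}{N}b_{N+2}G(\cdot,x_0)$ (Lemma \ref{lema 2.9}), the Robin function entering through $\int_{\partial\Omega}(\partial_\nu G)^2(x-x_0)\cdot\nu\,dS=(N-2)R(x_0)$ rather than through the expansion of $PU$; your ball-based variant would face the non-local remainders on $\partial B_\delta(x_\varepsilon)$ that you flag, and in either case the precise limit of $\|u_\varepsilon\|_{\infty}^{2}v_{N+2,\varepsilon}$ away from $x_0$ (not merely its order of decay) is needed — that is the one ingredient your sketch asserts rather than derives.
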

\vskip 0.2cm
As a consequence of all above results, we obtain the following result.
\begin{theorem}\label{cor1}
Let $N\geq 6$, $\mu\in(0,4)$, $u_\varepsilon$ be a solution to \eqref{eq 1.1} with \eqref{eq 1.2} and $x_0$ be the limit point of the maximum point $x_\varepsilon$ of $u_\varepsilon$. Then if $x_0\in\Omega$ is a non-degenerate critical point of the Robin function $R(x)$, we obtain that the Morse index of $u_\varepsilon$ is equal to $m(x_0)+1$, where $m(x_0)$ denotes the Morse index of the Hessian matrix $D^{2}R(x_0)$.
\end{theorem}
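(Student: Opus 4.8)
The plan is to turn Theorems \ref{th1.1}, \ref{th1.2} and \ref{th1.3} into a pure counting argument. By Definition A one has $m(u_\varepsilon)=\sharp\{i\in\mathbb{N}:\lambda_{i,\varepsilon}<1\}$, so everything reduces to locating, for $\varepsilon>0$ small, where the value $1$ sits inside the ordered spectrum $\lambda_{1,\varepsilon}<\lambda_{2,\varepsilon}\le\lambda_{3,\varepsilon}\le\cdots$ of the linearized problem \eqref{eq 1.12}. First, \eqref{eq 1.16} gives $\lambda_{1,\varepsilon}\to\frac{1}{2\cdot2^{*}_{\mu}-1}$, and since $2^{*}_{\mu}>1$ for $N\geq5$, $\mu\in(0,4)$ this limit is strictly less than $1$; hence $\lambda_{1,\varepsilon}<1$ for $\varepsilon$ small, contributing exactly one eigenvalue below $1$. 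Next, for $i=2,\dots,N+1$, relation \eqref{eq 1.20} reads $\|u_{\varepsilon}\|_{L^{\infty}(\Omega)}^{\frac{2N}{N-2}}(\lambda_{i,\varepsilon}-1)\to M\gamma_{i-1}$ with $M>0$ and $\gamma_{1}\le\cdots\le\gamma_{N}$ the eigenvalues of $D^{2}R(x_0)$. Because $x_0$ is a non-degenerate critical point of $R$, every $\gamma_{j}\neq0$, and since $\|u_{\varepsilon}\|_{L^{\infty}(\Omega)}^{\frac{2N}{N-2}}>0$, the sign of $\lambda_{i,\varepsilon}-1$ stabilizes, for $\varepsilon$ small, to the sign of $\gamma_{i-1}$. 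Thus among $i=2,\dots,N+1$ precisely the indices with $\gamma_{i-1}<0$ satisfy $\lambda_{i,\varepsilon}<1$, and their number equals the number of negative eigenvalues of $D^{2}R(x_0)$, i.e. $m(x_0)$.

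It remains to show that no eigenvalue with $i\geq N+2$ lies below $1$. By \eqref{eq 1.23}, $\|u_{\varepsilon}\|_{L^{\infty}(\Omega)}^{2}(\lambda_{N+2,\varepsilon}-1)\to\kappa>0$, so $\lambda_{N+2,\varepsilon}>1$ for $\varepsilon$ small, and the monotonicity $\lambda_{N+2,\varepsilon}\le\lambda_{N+3,\varepsilon}\le\lambda_{N+4,\varepsilon}\le\cdots$ of the spectrum then forces $\lambda_{i,\varepsilon}>1$ for every $i\geq N+2$. Adding the three contributions, for $\varepsilon>0$ small
$$ m(u_\varepsilon)=\underbrace{1}_{i=1}+\underbrace{m(x_0)}_{2\le i\le N+1}+\underbrace{0}_{i\ge N+2}=m(x_0)+1, $$
which is the assertion. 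The same computation, with $<1$ replaced by $\le1$, shows $m_{0}(u_\varepsilon)=m(u_\varepsilon)$, so $u_\varepsilon$ is in addition non-degenerate.

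As for where the real work lies: at this stage the argument is essentially bookkeeping, and the analytically heavy content is entirely contained in the proofs of Theorems \ref{th1.2} and \ref{th1.3} — namely obtaining the exact blow-up rates of $\lambda_{i,\varepsilon}-1$ in terms of $\|u_{\varepsilon}\|_{L^{\infty}(\Omega)}$ and identifying the limits with the spectrum of $D^{2}R(x_0)$ (resp. with $\kappa$). The only two genuinely indispensable inputs used here are the non-degeneracy hypothesis on $x_0$, which is exactly what prevents any $\gamma_{i-1}$ from vanishing and hence keeps each $\lambda_{i,\varepsilon}$ ($2\le i\le N+1$) strictly on one side of $1$, and the strict positivity $\kappa>0$, which together with the ordering of the eigenvalues closes off the tail $i\geq N+2$. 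No additional estimate beyond Theorems \ref{th1.1}–\ref{th1.3} is needed.
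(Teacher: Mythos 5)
Your argument is correct and coincides with the paper's own proof: count $\lambda_{1,\varepsilon}<1$, use \eqref{eq 1.20} together with the non-degeneracy of $x_0$ to place exactly $m(x_0)$ of the eigenvalues $\lambda_{2,\varepsilon},\dots,\lambda_{N+1,\varepsilon}$ below $1$, and use \eqref{eq 1.23} with the monotonicity of the spectrum to exclude all $i\geq N+2$. The added observation that $m_0(u_\varepsilon)=m(u_\varepsilon)$ is a correct by-product, consistent with the discussion after Definition A.
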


\begin{remark}
We would like to point that there are several crucial difficulties in our main proofs.
\vskip 0.1cm

\noindent\textup{($i$)} In \cite{YZ}, the authors investigated the asymptotic behavior of solutions $u_\varepsilon$ to \eqref{eq 1.1} verifying \eqref{eq 1.2} and gave the explicit expansion of $u_\varepsilon$, see Theorem C. Yet, some essential estimates of $u_\varepsilon$ are unknown, such as the exact blowing-up rate of $u_\varepsilon$ under $\|\cdot\|_{L^{\infty}(\Omega)}$. We will fill in this gap by using blow-up analysis in \cite{H}. In this process, some new arguments about the non-local term in \eqref{eq 1.1} will be performed and we can find the detailed results in Lemma \ref{lema 2.4}.

\vskip 0.1cm

\noindent\textup{($ii$)} Since there exist two non-local terms in \eqref{eq 1.12}, our problem is quite different from \eqref{eq1.4}. This leads to many difficulties while analyzing the asymptotic behavior of the eigenfunctions $v_{i,\varepsilon}$ to \eqref{eq 1.12}, which require more additional precise estimates. To overcome this difficulty, we will use some ideas in \cite{LTX,SYZ} and develop some new Pohozaev identities.

\vskip 0.1cm

\noindent\textup{($iii$)}
As aforementioned, Grossi et al in \cite{GP} and Takahashi in \cite{Ta3}  studied the qualitative behavior of blow-up solutions to the local problems \eqref{eq1.5} and \eqref{eq1.2} respectively, our results can be seen as an extension to the non-local case.
However, we can not utilize the same discussions as in \cite{GP,Ta3} directly due to the Hartree-type nonlinearity in \eqref{eq 1.1}, which may bring obstacles to the estimates of the eigenpairs $\{(\lambda_{i,\varepsilon},v_{i,\varepsilon})\}$ to \eqref{eq 1.12}.

\vskip 0.1cm

 For example, by selecting appropriate test functions defined in \eqref{eq 4.2} and using the min-max principle, we derive the inequality \eqref{eq4.18} concerning the eigenvalue $\lambda_{i,\varepsilon}$, for $i=2,\cdots,N+1$. Then, some new estimates should be introduced to evaluate each term in the numerator and the denominator, especially the terms $D_{2,\varepsilon}$ and $N_{3,\varepsilon}$. We would like to point out that the symmetry property of double integrals, the application of Hardy-Littlewood-Sobolev inequality and the decay estimates of solutions outside the concentrated point play a crucial role.
Similar computational techniques can be used in the estimates of $\lambda_{N+2,\varepsilon}$ in \eqref{eq5-2}, where we should overcome the difficulties from the new choice of test functions given by \eqref{eq 4.3}.

\vskip 0.1cm

\noindent\textup{($iv$)} The restriction $N\geq6$ is needed to deduce the asymptotic behavior of eigenfunctions $v_{i,\varepsilon}\,(i=2,\cdots,N+1)$ stated in the proof of \eqref{eq 1.22}. Indeed, the key steps in proving \eqref{eq 1.22} are \eqref{4-1}, \eqref{4-2} and \eqref{I-3}, which indicates that $N\geq6$ is an essential condition.

\end{remark}

Our paper will be organized as follows: In Section 2, we state the precise estimates of solutions $u_\varepsilon$ to \eqref{eq 1.1} with \eqref{eq 1.2}, eigenfunctions $v_{i,\varepsilon}$ to \eqref{eq 1.12} and its rescaled function $\tilde{v}_{i,\varepsilon}$ to \eqref{eq 1.13}, which will be used throughout the paper. Concerning the properties of the eigenpairs $\{(\lambda_{i,\varepsilon},v_{i,\varepsilon})\}$, since the information of the previous pair is crucial for the study of the next pair, we first consider the case $i=1$ in Section 3, then $2\leq i\leq N+1$ in Section 4, and finally $i=N+2$ in Section 5. In each case, the procedure consists of the estimates of the eigenvalue $\lambda_{i,\varepsilon}$, the limit characterization of the eigenfunction $v_{i,\varepsilon}$ and its rescaled function $\tilde{v}_{i,\varepsilon}$. Thus, by analyzing the asymptotic behavior of eigenpairs to \eqref{eq 1.12}, we complete the proof of Theorem \ref{cor1}. In addition, we recall some well-known results and give some computational techniques in appendixes, which are useful in our proofs.

In this paper, we denote $\|\cdot\|_{p}$ $\left(\,p\in[1,\infty]\,\right)$ as the usual $L^{p}(\Omega)$ norm and ``$\rightarrow,\rightharpoonup$'' as the strong and weak convergence respectively in the corresponding space. For simplicity, we will omit the constants $\bar{C}$ in \eqref{eq 1.4}, ${A}_{HL}$ in \eqref{eq 1.6} and $\frac{N(N-2)}{{A}_{HL}}$ in \eqref{eq 1.7}. Our results are still correct in terms of the difference of a coefficient. Moreover, we will use $C$, $C_{i}$, $i\in\mathbb{N}$, to denote various positive constants from line to line.

\noindent
\section{\label{Pre}Preliminaries}
\setcounter{equation}{0}
\vspace{0.2cm}
 In this section, we present some results that are essential for the proofs in the subsequent sections. Firstly, we give some estimates of blow-up solutions $u_\varepsilon$ to \eqref{eq 1.1} with \eqref{eq 1.2}, especially the blowing-up rate of $u_\varepsilon$ under $\|\cdot\|_{\infty}$.
\begin{lemma}\label{lema 2.4}
    Let $u_\varepsilon$ be a solution of \eqref{eq 1.1} with \eqref{eq 1.2}. Then the following estimates hold true
      \begin{equation}\label{eq 2.2}
        u_{\varepsilon}(x)\leq CU_{x_{\varepsilon,\tau_{\varepsilon}}}(x),\quad  x\in \Omega,
    \end{equation}
\begin{align}\label{eq 2.3}
   \|u_\varepsilon\|_{\infty} u_\varepsilon(x)\rightarrow \frac{\sigma_{N}}{N}G(x,x_0),\quad \text{in}\,\,C^{1}_{loc}\left(\bar{\Omega}\backslash\{x_0\}\right)\,\,\text{as}\,\,\varepsilon\rightarrow 0.
\end{align}
\begin{align}\label{eq 2.4}
\underset{\varepsilon\rightarrow0}{\lim}\,\varepsilon\|u_\varepsilon\|_{\infty}^{\frac{2(N-4)}{N-2}}=\frac{\sigma_N}{2a_N}\frac{N-2}{N^2}R(x_0),\quad\text{for $N\geq5$,}
\end{align}
where $\sigma_{N}$ is the measure of the unit sphere in $\R^N$ and $a_N$ is given by
$$a_N=\int_{0}^{\infty}\frac{r^{N-1}
}{(1+r^{2})^{N-2}}\,dr=\frac{\Gamma\left(\frac{N}{2}\right)\Gamma\left(\frac{N}{2}-2\right)}{2\Gamma\left(N-2\right)}.$$
\begin{proof}
    Note that \eqref{eq 2.2} is equivalent to
\begin{equation}\label{eq 2.5}
        \tilde{u}_{\varepsilon}(x)\leq C U_{0,1}(x),\quad x\in\Omega_{\varepsilon}.
    \end{equation}
Denote the Kelvin transform of $\tilde{u}_\varepsilon$ as
$$w_{\varepsilon}(x):=\frac{1}{|x|^{N-2}}\tilde{u}_\varepsilon\left(\frac{x}{|x|^{2}}\right),\quad x\in \Omega_{\varepsilon}^{*},$$
where $\Omega_{\varepsilon}^{*}=T(\Omega_{\varepsilon})\subseteq \{x\in \R^N:|x|\geq \frac{1}{\alpha \tau_{\varepsilon}}\}$ for some $\alpha>0$ with the inversion map
$T:\R^N\setminus\{0\}\rightarrow\R^N\setminus\{0\}:T(x)=\frac{x}{|x|^2}$
. Then a direct calculation yields that
\begin{align*}
    -\Delta w_{\varepsilon}(x)=w_{\varepsilon}^{2^{*}_{\mu}-1}(x)\left(\displaystyle{\displaystyle{\int_{\Omega_{\varepsilon}^{*}}}}\frac{w_\varepsilon^{2^{*}_{\mu}}(z)}{|x-z|^{\mu}}dz\right)+\frac{\varepsilon}{\tau_{\varepsilon}^{2}|x|^{4}} w_\varepsilon(x)
    :=a_{\varepsilon}(x)w_\varepsilon(x),\quad  &\text{in}\,\,\Omega_{\varepsilon}^{*},
\end{align*}
where
$$a_{\varepsilon}(x):=w_{\varepsilon}^{2^{*}_{\mu}-2}(x)\left(\displaystyle{\int_{\Omega_{\varepsilon}^{*}}}\frac{w_\varepsilon^{2^{*}_{\mu}}(z)}{|x-z|^{\mu}}dz\right)+\frac{\varepsilon}{\tau_{\varepsilon}^{2}|x|^{4}}.$$
From Lemma \ref{lema 2.1}, it is enough to clarify that $a_{\varepsilon}(x)\in L^{\frac{N}{2}}(\Omega_{\varepsilon}^{*}\cap B_{R}(0))$. This tells that there exist $C>0$ and $R>0$ such that
\begin{align}\label{eq 2.6}
    \sup_{x\in \Omega_{\varepsilon}^{*}\cap B_{{R}/{2}}(0)}w_\varepsilon(x)\leq C,
\end{align}
which implies \eqref{eq 2.5} holds.  To this end, firstly using similar arguments in \cite[Lemma 3.1]{G1}, we obtain that there exists $\varepsilon_{0}>0$ such that
\begin{equation}\label{eq 2.7}
    \begin{split}
      &\frac{\varepsilon}{\tau_{\varepsilon}^{2}}\left(\displaystyle{\displaystyle{\int_{\Omega_{\varepsilon}^{*}\cap B_{R}(0)}}}\frac{1}{|x|^{4\cdot{\frac{N}{2}}}}dx\right)^{\frac{2}{N}}\leq \frac{\varepsilon}{\tau_{\varepsilon}^{2}}\left(\displaystyle{\displaystyle{\int_{{B^{c}_{{1}/{\alpha \tau_{\varepsilon}}}(0)}\bigcap B_{R}(0)}}}\frac{1}{|x|^{2N}}dx\right)^{\frac{2}{N}} \\
    =&\frac{\varepsilon}{\tau_{\varepsilon}^{2}}\left(\displaystyle{\displaystyle{\int_{\frac{1}{\alpha \tau_{\varepsilon}}}^{R}}}\frac{\sigma_{N}\rho^{N-1}}{\rho^{2N}}d\rho\right)^{\frac{2}{N}}<\frac{\varepsilon_{0}}{2},\quad \text{for $\varepsilon>0$ small and $R>0$}.
    \end{split}
\end{equation}
For the other term in $a_\varepsilon(x)$, since
$$\displaystyle{\int_{\Omega_{\varepsilon}^{*}}}\frac{w_\varepsilon^{2^{*}_{\mu}}(y)}{|x-y|^{\mu}}dy=\frac{1}{|x|^{\mu}}\displaystyle{\int_{\Omega_{\varepsilon}}}\frac{\tilde{u}_{\varepsilon}^{2^{*}_{\mu}}(y)}{|\frac{x}{|x|^{2}}-y|^{\mu}}dy,\quad \forall\, x\in \Omega_{\varepsilon}^{*}\cap B_{R}(0),$$
 a direct computation yields that
\begin{align}\label{eq 2.8}
&\left(\displaystyle{\int_{\Omega_{\varepsilon}^{*}\cap B_{R}(0)}}\left|w_{\varepsilon}^{2^{*}_{\mu}-2}(x)\left(\displaystyle{\displaystyle{
\int_{\Omega_{\varepsilon}^{*}}}}\frac{w_\varepsilon^{2^{*}_{\mu}}(y)}{|x-y|^{\mu}}dy\right)
\right|^{\frac{N}{2}}dx\right)^{\frac{2}{N}}\notag\\
=&\left(\displaystyle{\int_{\Omega_{\varepsilon}\cap B^{c}_{{1}/{R}}(0)}}\left|\tilde{u}_{\varepsilon}^{2^{*}_{\mu}-2}(x)
\left(\displaystyle{\int_{\Omega_{\varepsilon}}}\frac{\tilde{u}_{\varepsilon}^{2^{*}_{\mu}}
(y)}{|x-y|^{\mu}}dy\right)\right|^{\frac{N}{2}}dx\right)^{\frac{2}{N}}\notag\\
=&\left(\displaystyle{\int_{\Omega\cap B^{c}_{{1}/{R\tau_{\varepsilon}}}(x_{\varepsilon})}}\left|u_{\varepsilon}^{2^{*}_{\mu}-2}
(x)\left(\displaystyle{\int_{\Omega}}\frac{u_{\varepsilon}^{2^{*}_{\mu}}(y)}{|x-y|^{\mu}}
dy\right)\right|^{\frac{N}{2}}dx\right)^{\frac{2}{N}}\notag\\
\underset{(1)}{\leq }&~C\left(\displaystyle{\int_{\Omega\cap B^{c}_{{1}/{R\tau_{\varepsilon}}}(x_{\varepsilon})}}|u_{\varepsilon}|^{\left(2^{*}_{\mu}-2\right)
\frac{N}{2}}dx\right)^{\frac{2}{N}}\underset{(2)}{\leq} C\left(\displaystyle{\displaystyle{\int_{\Omega\cap B^{c}_{{1}/{R\tau_{\varepsilon}}}(x_{\varepsilon})}}}U_{x_\varepsilon,\tau_{\varepsilon}
}^{\left(2^{*}_{\mu}-2\right)\frac{N}{2}}(x)dx\right)^{\frac{2}{N}}\notag\\
{\leq}~&
\begin{cases}
     \,\frac{C}{\tau_{\varepsilon}^{\frac{\mu}{2}}}\left(\displaystyle{\int_{\R^N}}U_{0,1}^{\left(2^{*}_{\mu}-2\right)\frac{N}{2}}(x)dx\right)^{\frac{2}{N}},\quad &\text{if $\mu\in(0,2)$,}\\
    \,\frac{C}{\tau_{\varepsilon}^{\frac{4-\mu}{2}}}\left(\displaystyle{\int_{\Omega}}U_{x_{\varepsilon},\tau_\varepsilon}^{\left(2^{*}_{\mu}-2\right)\frac{N}{2}}(x)dx\right)^{\frac{2}{N}},\quad &\text{if $\mu\in(2,4)$,}
\end{cases}\notag\\
<&~\frac{\varepsilon_{0}}{2},\quad\, \quad\,\text{for $\varepsilon>0$ small and $R>0$}.
\end{align}
Here (1) and (2) hold since the following fact  founded in \cite[Lemma 6.1]{GY}
$$\displaystyle{\int_{\Omega}}\frac{u_{\varepsilon}^{2^{*}_{\mu}}(y)}{|x-y|^{\mu}}dy\in L^{\infty}(\Omega),$$
and the blow-up solution $u_{\varepsilon}$ can be written as $u_{\varepsilon}=\xi_{\varepsilon}PU_{x_{\varepsilon,\tau_{\varepsilon}}}+\zeta_{\varepsilon}$, with $PU_{x_{\varepsilon,\tau_{\varepsilon}}}\leq U_{x_{\varepsilon,\tau_{\varepsilon}}}$ and $\|\zeta_{\varepsilon}\|_{H^{1}_{0}(\Omega)}\ll1$.
WLOG, take $R=1$. Hence, from \eqref{eq 2.7}, \eqref{eq 2.8}, \eqref{eq 1.10}, Theorem A-$(i)$ and Lemma \ref{lema 2.1}, we can derive \eqref{eq 2.6} since
\begin{align*}
&\displaystyle{\int_{\Omega_{\varepsilon}^{*}\cap B_{1}(0)}}\left|w_{\varepsilon}\right|^{2^{*}}dx= \displaystyle{\int_{\Omega_{\varepsilon}\backslash B_{1}(0)}}\left|\tilde{u}_{\varepsilon}\right|^{2^{*}}dx=\displaystyle{\int_{\Omega\cap B^{c}_{{1}/{\tau_\varepsilon}}(x_\varepsilon)}}\left|{u}_{\varepsilon}\right|^{2^{*}}dx\\
\leq &\displaystyle{\int_{\Omega}}\left|{u}_{\varepsilon}\right|^{2^{*}}dx\leq S^{-1}\left(\displaystyle{\int_{\Omega}}|\nabla u_{\varepsilon}|^{2}dx\right)^{\frac{2^{*}}{2}}\leq S^{-1}\left(S_{HL}^{\frac{2N-\mu}{N+2-\mu}}\right)^{\frac{2^{*}}{2}} =C S^{\frac{N^{2}-2\mu+4}{(N-2)(N+2-\mu)}}<+\infty.
\end{align*}

\smallskip

Now, we prove \eqref{eq 2.3}.  Note that the function $\|u_\varepsilon\|_{\infty}u_\varepsilon$ satisfies
\begin{align*}
-\Delta\big(\|u_\varepsilon\|_{\infty}u_\varepsilon(x)\big)=\|u_\varepsilon\|_{\infty}\left(\varepsilon u_{\varepsilon}(x)+\left(\displaystyle{\int_{\Omega}}\frac{u_{\varepsilon}^{2^{*}_{\mu}}(y)}{|x-y|^{\mu}}dy\right)u_{\varepsilon}^{2^{*}_{\mu}-1}(x)\right):=f_{\varepsilon}(x),\quad x\in \Omega.
\end{align*}
By  \eqref{eq 1.7} and \eqref{eq 2.2}, we derive that for any $x\in\omega$, where $\omega$ is a neighborhood of $\partial\Omega$,
\begin{equation*}
\begin{split}
|f_{\varepsilon}(x)|\leq& ~C\|u_\varepsilon\|_{\infty}\left(\displaystyle{\int_{\R^N}}\frac{U_{x_\varepsilon,\tau_{\varepsilon}}^{2^{*}_{\mu}}(y)}{|x-y|^{\mu}}dy\right)U_{x_\varepsilon,\tau_{\varepsilon}}^{2^{*}_{\mu}-1}(x)+C\varepsilon \|u_\varepsilon\|_{\infty} U_{x_\varepsilon,\tau_{\varepsilon}}(x)\\
=&C
\|u_\varepsilon\|_{\infty}U^{2^{*}-1}_{x_\varepsilon,\lambda_{\varepsilon}}(x)+C\varepsilon \|u_\varepsilon\|_{\infty} U_{x_\varepsilon,\tau_{\varepsilon}}(x)
\\
\leq& ~C\left(\frac{1}{\tau_{\varepsilon}^{2}}\frac{1}{|x-x_{\varepsilon}|^{N+2}}+\varepsilon\frac{1}{|x-x_{\varepsilon}|^{N-2}}\right)\longrightarrow 0,\quad\text{as $\varepsilon\rightarrow0$}.
\end{split}
\end{equation*}
This implies that for $\varepsilon>0$ small, $\|f_{\varepsilon}\|_{L^{\infty}(\omega)}\rightarrow 0$. 
Also we have
\begin{align*}
    \|f_{\varepsilon}\|_{L^{1}(\Omega)} 
\leq&\displaystyle{\int_{\Omega_{\varepsilon}}}\left|\left(\displaystyle{\int_{\Omega_{\varepsilon}}}\frac{\tilde{u}_{\varepsilon}^{2^{*}_{\mu}}(y)}{|x-y|^{\mu}}dy\right)\tilde{u}_{\varepsilon}^{2^{*}_{\mu}-1}(x)\right|dx+ \frac{\varepsilon}{\tau_{\varepsilon}^{2}}\displaystyle{\int_{\Omega_{\varepsilon}}}\left|\tilde{u}_{\varepsilon}(x)\right|dx\\
\leq&~C\displaystyle{\int_{\R^N}}\left(\displaystyle{\int_{\R^N}}\frac{U_{0,1}^{2^{*}_{\mu}}(y)}{|x-y|^{\mu}}dy\right)U_{0,1}^{2^{*}_{\mu}-1}(x)dx+ C\frac{\varepsilon}{\tau_{\varepsilon}^{2}}\left(\displaystyle{\int_{B_{1}(0)}}U_{0,1}(x)dx+\displaystyle{\int_{\Omega_\varepsilon\backslash B_{1}(0)}}U_{0,1}(x)dx\right)\\
\leq&~C\displaystyle{\int_{\R^N}}U_{0,1}^{2^{*}-1}(x)dx+C\varepsilon=C\frac{\sigma_{N}}{N}+C\varepsilon\leq~C\frac{\sigma_{N}}{N}.
\end{align*}
Hence, $f_{\varepsilon}\in L^{1}(\Omega)$. Moreover, for $\varepsilon>0$ sufficiently small, we claim that
\begin{align}\label{eq 2.9}
    f_{\varepsilon}(x)\longrightarrow \frac{\sigma_{N}}{N}\delta_{x_{0}}(x) \quad\text{in the sense of distributions}.
\end{align}
Indeed, for any $\psi\in C_{0}^{\infty}(\Omega)$, we have
\begin{align*}
\displaystyle{\int_{\Omega}}f_{\varepsilon}(x)\psi(x)dx
=& \displaystyle{\int_{\Omega_{\varepsilon}}}
\displaystyle{\int_{\Omega_{\varepsilon}}}\frac{\tilde{u}_{\varepsilon}^{2^{*}_{\mu}}(y)
\,\tilde{u}_{\varepsilon}^{2^{*}_{\mu}-1}(x)\,{\psi}\left(\tau_{\varepsilon}^{-1}x
+x_\varepsilon\right)}{|x-y|^{\mu}}\,dxdy+\frac{\varepsilon}{\tau_{\varepsilon}^{2}}
\displaystyle{\int_{\Omega_{\varepsilon}}}\tilde{u}_{\varepsilon}(x){\psi}
\left(\tau_{\varepsilon}^{-1}x+x_\varepsilon\right)dx\\
=&\displaystyle{\int_{\Omega_{\varepsilon}}}
\displaystyle{\int_{\Omega_{\varepsilon}}}\frac{\tilde{u}_{\varepsilon}^{2^{*}_{\mu}}(y)
\,\tilde{u}_{\varepsilon}^{2^{*}_{\mu}-1}(x)\,{\psi}(\tau_{\varepsilon}^{-1}x+x_\varepsilon)}
{|x-y|^{\mu}}\,dxdy+O\left(\varepsilon\right)\\
\to&\psi(x_{0})\displaystyle{\int_{\R^N}}
\left(\displaystyle{\int_{\R^N}}\frac{U_{0,1}^{2^{*}_{\mu}}(y)}{|x-y|^{\mu}}dy\right)
U_{0,1}^{2^{*}_{\mu}-1}(x)dx\\=&\psi(x_{0})\displaystyle{\int_{\R^N}}U_{0,1}^{2^{*}-1}(x)dx=\frac{\sigma_{N}}{N}\psi(x_{0}),
\end{align*}
which implies that \eqref{eq 2.9} holds true. Therefore, according to \cite[Remark 2.2]{C}, since $\{f_{\varepsilon}\}$ is a bounded sequence both in $L^{1}(\Omega)$ and $L^{\infty}(\omega)$,
  \eqref{eq 2.3} follows by Lemma \ref{lema 2.3}.

\vskip 0.1cm

Next, we prove \eqref{eq 2.4}. In fact, from the Pohozaev identity \eqref{eq 2.1} with $y=x_0$ and \eqref{eq 2.3}, we have
\begin{align*}
    \text{LHS of \eqref{eq 2.1}}=&\varepsilon\left(\displaystyle{\int_{\Omega}}U_{x_{\varepsilon},\tau_{\varepsilon}}^{2}(x)dx+o(1)\right)=\frac{\varepsilon}{\tau_{\varepsilon}^{2}}\left(\displaystyle{\int_{\Omega_{\varepsilon}}}U_{0,1}^{2}(x)dx+o(1)\right)\\
    =&\frac{\varepsilon}{\tau_{\varepsilon}^{2}}a_{N}\sigma_{N}+o\left(\frac{\varepsilon}{\tau_{\varepsilon}^{2}}\right),\\
    \text{RHS of \eqref{eq 2.1}}=&\frac{\sigma_{N}^{2}}{2N^{2}\tau_{\varepsilon}^{N-2}}\left(\displaystyle{\int_{\partial\Omega}}\left(\frac{\partial G(x,x_{0})}{\partial \nu }\right)^{2}(x-x_0)\cdot {\nu}\,dS_{x}+o(1)\right)\\
    =&\frac{(N-2)\sigma_{N}^{2}R(x_{0})}{2N^{2}\tau_{\varepsilon}^{N-2}}+o\left(\frac{1}{\tau_{\varepsilon}^{N-2}}\right),
\end{align*}
where we use the fact that
$$\displaystyle{\int_{\partial\Omega}}\left(\frac{\partial G(x,x_{0})}{\partial \nu }\right)^{2}(x-x_0)\cdot {\nu}\,dS_{x}=(N-2)R(x_0).$$
Then, a simple computation yields \eqref{eq 2.4}.
\end{proof}
\end{lemma}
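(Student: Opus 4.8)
The plan is to prove the three estimates \eqref{eq 2.2}, \eqref{eq 2.3} and \eqref{eq 2.4} in that order, each one feeding into the next. For \eqref{eq 2.2}: after the rescaling $\tilde u_\varepsilon(x)=\tau_\varepsilon^{-\frac{N-2}{2}}u_\varepsilon(\tau_\varepsilon^{-1}x+x_\varepsilon)$ of Remark \ref{Rem1.2}, this bound is equivalent to the uniform estimate $\tilde u_\varepsilon(x)\le CU_{0,1}(x)$ on $\Omega_\varepsilon$, and we already know from Theorem A$(ii)$ and Remark \ref{Rem1.2} that $\tilde u_\varepsilon\to\bar U_{0,1}$ in $C^1_{loc}(\R^N)$. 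To upgrade this local convergence to global decay I would apply the Kelvin transform $w_\varepsilon(x)=|x|^{2-N}\tilde u_\varepsilon(x/|x|^2)$, which transfers the decay of $\tilde u_\varepsilon$ at infinity into a bound for $w_\varepsilon$ near the origin; $w_\varepsilon$ solves a linear equation $-\Delta w_\varepsilon=a_\varepsilon w_\varepsilon$ whose potential $a_\varepsilon$ is the sum of the transformed lower order term $\varepsilon\tau_\varepsilon^{-2}|x|^{-4}$ and the transformed Hartree weight times $w_\varepsilon^{2^*_\mu-2}$. The crux is to show $\|a_\varepsilon\|_{L^{N/2}(\Omega_\varepsilon^*\cap B_R(0))}$ is as small as we like for $\varepsilon$ small: the first term is controlled by an explicit integral over the annulus $\{|x|\ge(\alpha\tau_\varepsilon)^{-1}\}\cap B_R(0)$, and for the Hartree term I would change variables back to $\Omega$, use the $L^\infty$ bound on $\int_\Omega u_\varepsilon^{2^*_\mu}(y)|x-y|^{-\mu}\,dy$ from \cite{GY} together with the decomposition $u_\varepsilon=\xi_\varepsilon PU_{x_\varepsilon,\tau_\varepsilon}+\zeta_\varepsilon$ of Theorem C and $PU_{x_\varepsilon,\tau_\varepsilon}\le U_{x_\varepsilon,\tau_\varepsilon}$, to reduce matters to a far-field tail integral $\int_{\Omega\cap B^c_{1/(R\tau_\varepsilon)}(x_\varepsilon)}U_{x_\varepsilon,\tau_\varepsilon}^{(2^*_\mu-2)N/2}$, which a case split into $\mu\in(0,2)$ and $\mu\in(2,4)$ bounds by $C\tau_\varepsilon^{-\mu/2}$ or $C\tau_\varepsilon^{-(4-\mu)/2}$, hence by $o(1)$. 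Since $w_\varepsilon$ is bounded in $L^{2^*}(B_1(0))$ by \eqref{eq 1.10} and Sobolev, a Brezis--Kato/Moser iteration (Lemma \ref{lema 2.1}) then gives $\sup_{\Omega_\varepsilon^*\cap B_{R/2}(0)}w_\varepsilon\le C$, which is \eqref{eq 2.2}.

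For \eqref{eq 2.3}, set $h_\varepsilon:=\|u_\varepsilon\|_\infty u_\varepsilon$, so that $-\Delta h_\varepsilon=f_\varepsilon$ with $f_\varepsilon=\|u_\varepsilon\|_\infty\big(\varepsilon u_\varepsilon+(\int_\Omega u_\varepsilon^{2^*_\mu}(y)|x-y|^{-\mu}dy)u_\varepsilon^{2^*_\mu-1}\big)$. Using \eqref{eq 2.2} and \eqref{eq 1.7} to replace the convolution factor by (essentially) $U_{x_\varepsilon,\tau_\varepsilon}^{2^*-1}$, I would verify: $f_\varepsilon\to0$ uniformly on a fixed neighbourhood $\omega$ of $\partial\Omega$ (from the decay of the Talenti bubble away from $x_\varepsilon$); $\{f_\varepsilon\}$ is bounded in $L^1(\Omega)$, which after rescaling follows from $\tilde u_\varepsilon\to\bar U_{0,1}$ and $\int_{\R^N}U_{0,1}^{2^*-1}=\sigma_N/N$; and $f_\varepsilon\to\frac{\sigma_N}{N}\delta_{x_0}$ in the sense of distributions, by testing against $\psi\in C_0^\infty(\Omega)$, rescaling and passing to the limit. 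A standard elliptic result for right-hand sides bounded in $L^1(\Omega)$, uniformly bounded in $L^\infty$ near $\partial\Omega$, and converging to a Dirac mass (Lemma \ref{lema 2.3}, cf. \cite[Remark 2.2]{C}) then yields $h_\varepsilon\to\frac{\sigma_N}{N}G(\cdot,x_0)$ in $C^1_{loc}(\bar\Omega\setminus\{x_0\})$.

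For \eqref{eq 2.4}, I would insert $u_\varepsilon$ into the Pohozaev identity \eqref{eq 2.1} with $y=x_0$. The left-hand side, after replacing $u_\varepsilon$ by $U_{x_\varepsilon,\tau_\varepsilon}$ and rescaling, is $\frac{\varepsilon}{\tau_\varepsilon^2}\sigma_N a_N+o\big(\frac{\varepsilon}{\tau_\varepsilon^2}\big)$, the constant $a_N=\int_0^\infty r^{N-1}(1+r^2)^{2-N}dr$ being finite precisely because $N\ge5$. The right-hand side is the boundary integral $\frac12\int_{\partial\Omega}(\partial_\nu u_\varepsilon)^2(x-x_0)\cdot\nu\,dS_x$; by \eqref{eq 2.3} one has $\partial_\nu u_\varepsilon=\frac{\sigma_N}{N}\|u_\varepsilon\|_\infty^{-1}\partial_\nu G(x,x_0)+o(\|u_\varepsilon\|_\infty^{-1})$ on $\partial\Omega$, and combining with the classical identity $\int_{\partial\Omega}(\partial_\nu G(x,x_0))^2(x-x_0)\cdot\nu\,dS_x=(N-2)R(x_0)$ this equals $\frac{(N-2)\sigma_N^2R(x_0)}{2N^2\tau_\varepsilon^{N-2}}(1+o(1))$. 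Equating the two sides and using $\|u_\varepsilon\|_\infty=\tau_\varepsilon^{(N-2)/2}$, so that $\tau_\varepsilon^{N-4}=\|u_\varepsilon\|_\infty^{2(N-4)/(N-2)}$, gives exactly \eqref{eq 2.4}.

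The hardest part is the first step, namely the uniform $L^{N/2}$ smallness of the nonlocal potential $a_\varepsilon$ after the Kelvin transform. This is where the Hartree structure forces a genuine departure from the classical Han--Rey blow-up argument, since one must simultaneously exploit the $L^\infty$ bound on the convolution term, the comparison $PU_{x_\varepsilon,\tau_\varepsilon}\le U_{x_\varepsilon,\tau_\varepsilon}$ coming from the reduction in \cite{YZ}, and a $\mu$-dependent estimate of the bubble tail. Once \eqref{eq 2.2} is established, the remaining two steps follow the local blow-up scheme of \cite{H} with only routine modifications.
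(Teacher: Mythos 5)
Your proposal reproduces the paper's own argument essentially step for step: the Kelvin transform with the $L^{N/2}$ smallness of the nonlocal potential (including the $\mu\in(0,2)$ versus $\mu\in(2,4)$ tail estimate and the use of the $L^\infty$ bound on the convolution from \cite{GY}), then the $L^1$/$L^\infty(\omega)$/Dirac-mass scheme via Lemma \ref{lema 2.3} for \eqref{eq 2.3}, and finally the Pohozaev identity matching for \eqref{eq 2.4}. The approach and all key ingredients coincide with the paper's proof, so there is nothing to add.
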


Now, we give a description of the eigenvalues and eigenfunctions to the associated linearized problem of \eqref{eq 1.5} at $U_{0,1}$.
\begin{lemma}\label{lma2.5} The eigenvalue problem
\begin{equation}\label{eq 1.14}
    \begin{cases}
         \!-\Delta \! V_{i}=\lambda_{i}\!\left(\!(2^{*}_\mu-1){U}_{0,1}^{2^{*}_\mu-2}V_{i}\!
         \displaystyle{\displaystyle{\int_{\R^N}}}\!\frac{{U}_{0,1}^{2^{*}_\mu}(y)}
         {|x-y|^{\mu}}dy+2^{*}_{\mu}{U}_{0,1}^{2^{*}_\mu-1}\!
        \displaystyle{\displaystyle{\int_{\R^N}}}\!\frac{{U}_{0,1}^{2^{*}_{\mu}-1}
         (y)V_{i}(y)}
         {|x-y|^{\mu}}\!dy\right),\\[6mm]
    V_{i}\in\mathcal{D}^{1,2}(\R^N),
    \end{cases}
\end{equation}
has eigenvalues
$$\lambda_{1}=\frac{1}{2\cdot2^{*}_\mu-1}<1=\lambda_{2}=\cdots=\lambda_{N+2}\leq \lambda_{N+3}\leq\cdots.$$
Moreover, ${U}_{0,1}(x)$ is the eigenfunction corresponding to $\lambda_{1}$, and the eigenfunctions related to $\lambda_{2},\cdots,\lambda_{N+2}$ are given by
$$\frac{\partial {U}_{\xi,1}(x)}{\partial \xi_{i}}\bigg|_{\xi=0}~~(\,\text{for $i=1,\cdots,N$}\,),\quad \frac{\partial {U}_{0,\tau}(x)}{\partial \tau}\bigg|_{\tau=1}.$$
  \begin{proof}
    The results can be directly obtained as a consequence of \eqref{eq 1.6} in Theorem A and the non-degeneracy result in Theorem B.
  \end{proof}
\end{lemma}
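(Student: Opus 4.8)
\emph{Proof proposal.} The plan is to regard \eqref{eq 1.14} as a compact self\nobreakdash-adjoint eigenvalue problem, determine $\lambda_1$ by an explicit substitution combined with a Krein--Rutman argument, identify the eigenspace of the value $1$ with the kernel supplied by Theorem B, and finally rule out eigenvalues strictly between the two by exploiting that $U_{0,1}$ minimizes the quotient defining $S_{HL}$. First I would write the right\nobreakdash-hand side of \eqref{eq 1.14} without the factor $\lambda_i$ as $\mathcal{T}V$ and set $B(v,w):=\int_{\R^N}(\mathcal{T}v)\,w\,dx$, a symmetric bilinear form. By \eqref{eq 1.7}, the coefficient $U_{0,1}^{2^{*}_{\mu}-2}\int_{\R^N}\frac{U_{0,1}^{2^{*}_{\mu}}(y)}{|x-y|^{\mu}}\,dy$ is a constant multiple of $U_{0,1}^{2^{*}-2}\in L^{N/2}(\R^N)$, so together with the Hardy--Littlewood--Sobolev inequality and the decay of $U_{0,1}$ one checks that $B$ is bounded on $\mathcal{D}^{1,2}(\R^N)$ and compact relative to the Dirichlet form $v\mapsto\int_{\R^N}|\nabla v|^{2}$; hence the reciprocals $\mu_i:=\lambda_i^{-1}$ are exactly the eigenvalues of $B$ with respect to the Dirichlet form, listed in decreasing order, and carry the Courant--Fischer min\nobreakdash-max characterization. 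Moreover, a direct computation using \eqref{eq 1.6}--\eqref{eq 1.7} gives $\mathcal{T}U_{0,1}=(2\cdot2^{*}_{\mu}-1)\,(-\Delta U_{0,1})$, so $\tfrac{1}{2\cdot2^{*}_{\mu}-1}$ is an eigenvalue of \eqref{eq 1.14} with positive eigenfunction $U_{0,1}$.

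Next I would show $\lambda_1=\tfrac{1}{2\cdot2^{*}_{\mu}-1}$ and that it is simple. The operator $(-\Delta)^{-1}\!\circ\mathcal{T}$ is compact and strongly positivity\nobreakdash-preserving on $\mathcal{D}^{1,2}(\R^N)$, because $(-\Delta)^{-1}$ preserves positivity and the coefficients of $\mathcal{T}$ together with the Riesz kernel $|x-y|^{-\mu}$ (with $0<\mu<N$) are positive; by the Krein--Rutman theorem its spectral radius is a simple eigenvalue with a positive eigenfunction, and no other eigenfunction keeps one sign. Since $U_{0,1}>0$ is an eigenfunction of this operator for the eigenvalue $2\cdot2^{*}_{\mu}-1$, that value is the spectral radius, so $\lambda_1=\tfrac{1}{2\cdot2^{*}_{\mu}-1}$, it is simple, and $\lambda_1<1$ because $2^{*}_{\mu}>1$ when $N\ge5$. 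Next, Theorem B (in the normalization adopted here) says that $\mathcal{T}\phi=-\Delta\phi$ in $\mathcal{D}^{1,2}(\R^N)$ holds precisely for $\phi$ in the $(N+1)$\nobreakdash-dimensional span of $\frac{\partial U_{\xi,1}}{\partial\xi_i}\big|_{\xi=0}$ $(i=1,\dots,N)$ and $\frac{\partial U_{0,\tau}}{\partial\tau}\big|_{\tau=1}$; thus $1$ is an eigenvalue of \eqref{eq 1.14} of multiplicity exactly $N+1$ with these eigenfunctions. Translation\nobreakdash- and scale\nobreakdash-invariance of $\int_{\R^N}|\nabla U_{0,1}|^{2}$ give $\int_{\R^N}\nabla U_{0,1}\cdot\nabla\phi=0$ for each such $\phi$, whence $B(U_{0,1},\phi)=(2\cdot2^{*}_{\mu}-1)\int_{\R^N}\nabla U_{0,1}\cdot\nabla\phi=0$ and $B(\phi,\phi)=\int_{\R^N}|\nabla\phi|^{2}$; inserting the $(N+2)$\nobreakdash-dimensional space spanned by $U_{0,1}$ and this kernel into the min\nobreakdash-max formula then yields $\mu_{N+2}\ge1$, that is, $\lambda_j\le1$ for $2\le j\le N+2$.

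It remains to prove $\mu_2\le1$, equivalently that $B(v,v)\le\int_{\R^N}|\nabla v|^{2}$ for every $v$ with $\int_{\R^N}\nabla U_{0,1}\cdot\nabla v=0$; this is the crux of the argument. By Theorem A, $U_{0,1}$ is, up to a multiplicative constant, a minimizer of the quotient in \eqref{eq 1.3}, so the functional $u\mapsto\int_{\R^N}|\nabla u|^{2}-S_{HL}\big(\int_{\R^N}\!\int_{\R^N}\frac{|u(x)|^{2^{*}_{\mu}}|u(y)|^{2^{*}_{\mu}}}{|x-y|^{\mu}}\,dxdy\big)^{1/2^{*}_{\mu}}$ attains its minimum value $0$ at $U_{0,1}$ and therefore has nonnegative second variation there. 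On the subspace where the first variation vanishes, which by the Euler--Lagrange equation \eqref{eq 1.6} for $U_{0,1}$ is exactly $\{v:\int_{\R^N}\nabla U_{0,1}\cdot\nabla v=0\}$, this second variation reduces, after matching the normalization constants, precisely to $\int_{\R^N}|\nabla v|^{2}-B(v,v)\ge0$. This forces $\mu_2=\cdots=\mu_{N+2}=1$, i.e. $\lambda_2=\cdots=\lambda_{N+2}=1$, and, since the value $1$ has multiplicity exactly $N+1$, also $\lambda_{N+3}>1$; the eigenfunctions are as claimed. I expect the main difficulty to lie in this last step: carefully tracking the normalization constants in the second\nobreakdash-variation identity and verifying that the set where the first variation vanishes coincides with the Dirichlet\nobreakdash-orthogonal complement of $U_{0,1}$. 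Everything else is the standard compact spectral theory, the Krein--Rutman theorem, and the cited Theorems A and B.
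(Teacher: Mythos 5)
Your proposal is correct, and it is considerably more detailed than the paper's one-line justification, which simply invokes \eqref{eq 1.6} and Theorem B. The two key facts you use are the same ones the paper cites — the equation for $U_{0,1}$ (giving $\mathcal{T}U_{0,1}=(2\cdot2^{*}_{\mu}-1)(-\Delta U_{0,1})$, hence the value $\tfrac{1}{2\cdot2^{*}_{\mu}-1}$ with eigenfunction $U_{0,1}$) and the non-degeneracy result (giving that the eigenvalue $1$ has multiplicity exactly $N+1$ with the stated eigenfunctions). What you add, and what the paper leaves implicit, is the argument that there is no eigenvalue strictly between $\tfrac{1}{2\cdot2^{*}_{\mu}-1}$ and $1$: your second-variation computation at the minimizer of $S_{HL}$, using that $D'(U_{0,1})[v]=2\cdot2^{*}_{\mu}\int\nabla U_{0,1}\cdot\nabla v$ so that the constrained subspace is exactly the Dirichlet-orthogonal complement of $U_{0,1}$, and that the paper's normalization makes $\int|\nabla U_{0,1}|^{2}$ equal to the double integral so the constants cancel, is sound (note $2^{*}_{\mu}>2$ for $\mu<4$, so the twice-differentiability you need holds). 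Your min-max bound $\mu_{N+2}\ge1$ via the span of $U_{0,1}$ and the kernel is also correct, since translation and dilation invariance of the Dirichlet energy give $\int\nabla U_{0,1}\cdot\nabla\phi=0$ for kernel elements. One minor technical caveat: the classical Krein--Rutman theorem requires a cone with nonempty interior, which the positive cone in $\mathcal{D}^{1,2}(\R^{N})$ lacks; it is cleaner to observe that a maximizer of $B(v,v)/\|\nabla v\|^{2}$ may be taken nonnegative (replacing $v$ by $|v|$ does not decrease $B$ since the Riesz kernel is positive) and hence positive by the strong maximum principle, and that two positive functions cannot be Dirichlet-orthogonal because $\int\nabla V_{1}\cdot\nabla U_{0,1}=\int V_{1}(-\Delta U_{0,1})>0$; this forces $U_{0,1}$ to realize $\lambda_{1}$ and gives simplicity. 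With that substitution your argument is complete and self-contained, whereas the paper's proof effectively defers the intermediate-eigenvalue exclusion to the cited non-degeneracy references.
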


In the following, we show a fine estimate of the eigenfunctions ${v}_{i,\epsilon}$ to \eqref{eq 1.12}.
\begin{lemma}\label{lema 2.7}
For any $i\in \mathbb{N}$, let $v_{i,\varepsilon}$ be a solution of \eqref{eq 1.12}. Then, there exists a constant $C>0$ independent of $\varepsilon$ such that
    \begin{align}\label{eq 2.10}
    |v_{i,\varepsilon}(x)|\leq \frac{C}{(1+\tau_{\varepsilon}|x-x_{\varepsilon}|)^{N-2}},\quad \forall\, x\in \Omega.
\end{align}
\begin{proof}
Since $v_{i,\varepsilon}$ is a solution of \eqref{eq 1.12}, by Green's representation formula, we have
\begin{align*}
    v_{i,\varepsilon}(x)=&\underbrace{(2^{*}_\mu-1)\,\lambda_{i,\varepsilon}\displaystyle{\int_{\Omega}}G(x,z)u_{\varepsilon}^{2^{*}_\mu-2}(z)v_{i,\varepsilon}(z)\left(\displaystyle{\int_{\Omega}}\frac{u_{\varepsilon}^{2^{*}_\mu}(y)}{|z-y|^{\mu}}dy\right)dz}_{:=I}+\underbrace{\varepsilon\,\lambda_{i,\varepsilon}\displaystyle{\int_{\Omega}}G(x,z)v_{i,\varepsilon}(z)dz}_{:=II}\\
&+\underbrace{2^{*}_\mu\,\lambda_{i,\varepsilon}\displaystyle{\int_{\Omega}}G(x,z)u_{\varepsilon}^{2^{*}_\mu-1}(z)\left(\displaystyle{\int_{\Omega}}\frac{u_{\varepsilon}^{2^{*}_\mu-1}(y)v_{i,\varepsilon}(y)}{|z-y|^{\mu}}dy\right)dz}_{:=III}.
\end{align*}
   Recall that for any $x\in\Omega$, $|v_{i,\varepsilon}(x)|\leq 1$ and $u_{\varepsilon}(x)\leq CU_{x_{\varepsilon},\tau_{\varepsilon}}(x)$. Then, from Lemma \ref{lema 2.5}, we have
    \begin{align*}
        I\leq &~C\displaystyle{\int_{\Omega}}\frac{1}{|z-x|^{N-2}}U_{x_{\varepsilon},\tau_{\varepsilon}}^{2^{*}_\mu-2}(z)\left(\displaystyle{\int_{\R^{N}}}\frac{U_{x_{\varepsilon},\tau_{\varepsilon}}^{2^{*}_\mu}(y)}{|z-y|^{\mu}}dy\right)dz\\
\leq&~C\displaystyle{\int_{\Omega}}\frac{1}{|z-x|^{N-2}}U_{x_{\varepsilon},\tau_{\varepsilon}}^{2^{*}-2}(z)dz =C\displaystyle{\int_{\Omega}}\frac{1}{|z-x|^{N-2}}\frac{\tau_{\varepsilon}^{2}}{(1+\tau_{\varepsilon}^{2}|z-x_{\varepsilon}|^{2})^{2}}dz\\
\leq&~C \displaystyle{\int_{\R^N}}\frac{1}{|\tau_{\varepsilon}(x-x_{\varepsilon})-z|^{N-2}}\frac{1}{(1+|z|)^{4}}dz\leq \frac{C}{(1+\tau_{\varepsilon}|x-x_{\varepsilon}|)^{2}}.
    \end{align*}
Using similar arguments as \cite[Proposition 3.5]{CLP}, it is easy to check that $II=O(\varepsilon)$. Moreover, by Lemma \ref{lema 2.6} with $\mu\in(0,4)$ and HLS, we can calculate that
\begin{align*}
    III
    \leq & C\displaystyle\displaystyle{\int_{\Omega}}\frac{1}{|z-x|^{N-2}}U_{x_{\varepsilon},
    \tau_{\varepsilon}}^{2^{*}_\mu-1}(z)\left({\displaystyle\displaystyle{\int_{\Omega}}}
    \frac{U_{x_{\varepsilon},\tau_{\varepsilon}}^{2^{*}_\mu-1}(y)}{|z-y|^{\mu}}dy\right)dz\\
   \leq& ~C\!\left(\!{\displaystyle\displaystyle{\int_{\Omega}}}\frac{\tau_{\varepsilon}^{\frac{N(N+2-\mu)}{2N-\mu}}}{(1+\tau_{\varepsilon}|y-x_{\varepsilon}|)^{\frac{2N(N+2-\mu)}{2N-\mu}}}dy\!\right)^{\frac{2N-\mu}{2N}}\!\left(\displaystyle{\displaystyle{\int_{\Omega}}}\frac{1}{|z-x|^{\frac{2N(N-2)}{2N-\mu}}}\frac{\tau_{\varepsilon}^{\frac{N(N+2-\mu)}{2N-\mu}}}{(1+\tau_{\varepsilon}|z-x_{\varepsilon}|)^{\frac{2N(N+2-\mu)}{2N-\mu}}}dz\!\right)^{\frac{2N-\mu}{2N}}\\
 \leq&  ~C\left(\displaystyle{\displaystyle{\int_{\R^N}}}\frac{1}{|\tau_{\varepsilon}(x-x_{\varepsilon})-z)|^{\frac{2N(N-2)}{2N-\mu}}}\frac{1}{(1+|z|)^{\frac{2N(N+2-\mu)}{2N-\mu}}}dz\right)^{\frac{2N-\mu}{2N}}\leq  \frac{C}{(1+\tau_{\varepsilon}|x-x_{\varepsilon}|)^{\frac{2N-\mu}{2}}}.
\end{align*}
Hence, we derive that
\begin{align*}
    v_{i,\varepsilon}(x)=O\left(\frac{1}{(1+\tau_{\varepsilon}|x-x_{\varepsilon}|)^{2}}\right)+O(\varepsilon).
\end{align*}
Repeating the above process, we also have
\begin{align*}
    v_{i,\varepsilon}(x)=O\left(\frac{1}{(1+\tau_{\varepsilon}|x-x_{\varepsilon}|)^{4}}\right)+O(\varepsilon^{2}).
\end{align*}
Then, we can proceed the above arguments for finite number of times to obtain \eqref{eq 2.10}.
\end{proof}
\end{lemma}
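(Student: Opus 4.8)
\medskip
\noindent\textbf{Proof proposal.} The plan is to run a bootstrap based on the Green representation formula for \eqref{eq 1.12}. Representing the right-hand side of \eqref{eq 1.12} against $G(x,z)$ one writes $v_{i,\varepsilon}(x)=I(x)+II(x)+III(x)$, where $I$, $II$, $III$ are $\lambda_{i,\varepsilon}$ times the $G$-convolutions of, respectively, the first nonlocal term $(2^{*}_{\mu}-1)u_{\varepsilon}^{2^{*}_{\mu}-2}v_{i,\varepsilon}\int_{\Omega}u_{\varepsilon}^{2^{*}_{\mu}}(y)|x-y|^{-\mu}dy$, the linear term $\varepsilon v_{i,\varepsilon}$, and the double nonlocal term $2^{*}_{\mu}u_{\varepsilon}^{2^{*}_{\mu}-1}\int_{\Omega}u_{\varepsilon}^{2^{*}_{\mu}-1}(y)v_{i,\varepsilon}(y)|x-y|^{-\mu}dy$. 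The inputs are only: the normalization $\|v_{i,\varepsilon}\|_{L^{\infty}(\Omega)}=1$; the pointwise bound $u_{\varepsilon}\le C\,U_{x_{\varepsilon},\tau_{\varepsilon}}$ from \eqref{eq 2.2}; the uniform bound $\lambda_{i,\varepsilon}\le C_{i}$ for each fixed $i$ (a standard consequence of the min--max characterization with bubble-type test functions); the identity \eqref{eq 1.7}; the Hardy--Littlewood--Sobolev inequality; and standard convolution estimates for Riesz potentials of the bubble.

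First I would dispose of the two harmless terms. Since $z\mapsto\int_{\Omega}G(x,z)\,dz$ is bounded on $\Omega$, the normalization $|v_{i,\varepsilon}|\le1$ gives $|II(x)|=O(\varepsilon)$. For $I$, the decisive point is that, by \eqref{eq 1.7} and $u_{\varepsilon}\le C\,U_{x_{\varepsilon},\tau_{\varepsilon}}$, the product $U_{x_{\varepsilon},\tau_{\varepsilon}}^{2^{*}_{\mu}-2}(z)\int_{\R^{N}}U_{x_{\varepsilon},\tau_{\varepsilon}}^{2^{*}_{\mu}}(y)|z-y|^{-\mu}dy$ collapses to a constant multiple of $U_{x_{\varepsilon},\tau_{\varepsilon}}^{2^{*}-2}(z)$ -- exactly the weight of the local Br\'ezis--Nirenberg linearization -- so that $I(x)\le C\int_{\Omega}|x-z|^{-(N-2)}U_{x_{\varepsilon},\tau_{\varepsilon}}^{2^{*}-2}(z)\,dz$; rescaling $z\mapsto\tau_{\varepsilon}(z-x_{\varepsilon})$ and setting $W=\tau_{\varepsilon}(x-x_{\varepsilon})$, this is the convolution of $|W-z|^{-(N-2)}$ with $(1+|z|)^{-4}$, and the standard lemma gives $I(x)=O\big((1+\tau_{\varepsilon}|x-x_{\varepsilon}|)^{-2}\big)$. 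For $III$ I would use Hardy--Littlewood--Sobolev with conjugate exponents $p=q=\tfrac{2N}{2N-\mu}$ together with the decay of $\int_{\Omega}U_{x_{\varepsilon},\tau_{\varepsilon}}^{2^{*}_{\mu}-1}(y)|z-y|^{-\mu}dy$, obtaining $III(x)=O\big((1+\tau_{\varepsilon}|x-x_{\varepsilon}|)^{-(2N-\mu)/2}\big)$; since $\mu<4$ the exponent $(2N-\mu)/2$ exceeds $N-2$, so $III$ is never the binding term.

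The bootstrap is then carried on $I$ and on the $\varepsilon$-errors. Once $|v_{i,\varepsilon}(z)|\le C(1+\tau_{\varepsilon}|z-x_{\varepsilon}|)^{-\alpha}+O(\varepsilon^{k})$ has been reached, re-inserting it into $I$ and using $U_{x_{\varepsilon},\tau_{\varepsilon}}^{2^{*}-2}(z)\le C\tau_{\varepsilon}^{2}(1+\tau_{\varepsilon}|z-x_{\varepsilon}|)^{-4}$ turns the scaled integral into $\int_{\R^{N}}|W-z|^{-(N-2)}(1+|z|)^{-(4+\alpha)}dz=O\big((1+|W|)^{-\min(N-2,\,\alpha+2)}\big)$ (an inoffensive logarithm appears only in the borderline case $\alpha=N-4$ and is absorbed at the next step), while $II$ acquires a further factor $\varepsilon$ so that its error becomes $O(\varepsilon^{k+1})$. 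Starting from $\alpha=0$ and iterating, the decay exponent improves by essentially two per step and saturates at $N-2$ after finitely many steps, leaving $|v_{i,\varepsilon}(z)|\le C(1+\tau_{\varepsilon}|z-x_{\varepsilon}|)^{-(N-2)}+O(\varepsilon^{m})$ with $m$ as large as we please. Finally, \eqref{eq 2.4} gives $\varepsilon\sim\tau_{\varepsilon}^{-(N-4)}$ (recall $\|u_{\varepsilon}\|_{\infty}=\tau_{\varepsilon}^{(N-2)/2}$ and $N\ge5$) while $1+\tau_{\varepsilon}|x-x_{\varepsilon}|\le C\tau_{\varepsilon}$ on $\Omega$, so choosing $m$ with $m(N-4)\ge N-2$ makes $O(\varepsilon^{m})=O(\tau_{\varepsilon}^{-(N-2)})=O\big((1+\tau_{\varepsilon}|x-x_{\varepsilon}|)^{-(N-2)}\big)$, which absorbs the residual error and yields \eqref{eq 2.10}.

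I expect the only real difficulty to be the double-integral term $III$: unlike $I$ it cannot be reduced to the local Br\'ezis--Nirenberg setting via \eqref{eq 1.7}, and the sharp rate $(2N-\mu)/2$ has to be teased out by combining Hardy--Littlewood--Sobolev with both the pointwise and the integral decay of the Riesz potential of the bubble, the hypothesis $\mu<4$ being exactly what keeps this rate above $N-2$. Everything else is bookkeeping: checking that the polynomial decay saturates precisely at the Green-kernel order $N-2$ rather than overshooting, tracking the compounding powers of $\varepsilon$ through the iterations, and using \eqref{eq 2.4} together with the boundedness of $\Omega$ to close the estimate.
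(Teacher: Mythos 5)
Your proposal is correct and follows essentially the same route as the paper: the same Green-representation decomposition into $I$, $II$, $III$, the same use of \eqref{eq 1.7} to collapse $I$ to the local weight $U_{x_{\varepsilon},\tau_{\varepsilon}}^{2^{*}-2}$, the same HLS treatment of $III$ with the exponent $(2N-\mu)/2>N-2$ coming from $\mu<4$, and the same bootstrap. If anything, your final step is more explicit than the paper's, which only says ``proceed for a finite number of times'': you correctly use \eqref{eq 2.4} (so $\varepsilon\sim\tau_{\varepsilon}^{-(N-4)}$) and the boundedness of $\Omega$ to absorb the residual $O(\varepsilon^{m})$ into $(1+\tau_{\varepsilon}|x-x_{\varepsilon}|)^{-(N-2)}$.
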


\begin{Rem}\label{rmk2.1}
    From Lemma \ref{lema 2.7}, we deduce that for any $x\in \Omega$, $\|u_\varepsilon\|_{\infty}\left|v_{i,\varepsilon}(x)\right|\leq CU_{x_{\varepsilon},\tau_{\varepsilon}}(x)$, which implies that the rescaled eigenfuntion $\tilde{v}_{i,\varepsilon}$ satisfies $|\tilde{v}_{i,\varepsilon}(x)|\leq CU_{0,1}(x)$, $\forall x\in \Omega_{\varepsilon}$.
\end{Rem}

In the subsequent two Lemmas, we perform the asymptotic behavior of the eigenfuntions ${v}_{i,\varepsilon}$ to \eqref{eq 1.12} and the rescaled eigenfuntions $\tilde{v}_{i,\varepsilon}$ to \eqref{eq 1.13}.
\begin{lemma} \label{lema 2.8}
For any $i\in \mathbb{N}$, suppose that $\lambda_{i}=\underset{\varepsilon\rightarrow0}{\lim}\lambda_{i,\varepsilon}=1$. Then as $\varepsilon\rightarrow0$, we have
\begin{align}\label{eq 2.11}
\tilde{v}_{i,\varepsilon}(x)\longrightarrow\sum_{k=1}^{N}
\frac{a_{i,k}\,x_k}{\big(1+|x|^{2}\big)^{\frac{N}{2}}}+b_{i}\frac{1-|x|^{2}}{\big(1+|x|^{2}\big)^{\frac{N}{2}}},\quad\text{in $C^{1}_{loc}(\R^N)$,}
\end{align}
 where $(a_{i,1},\cdots,a_{i,N},b_{i})\neq(0,\cdots,0)\in \R^{N+1}$.
\begin{proof}
 Recalling that the rescaled eigenfuntion $\tilde{v}_{i,\varepsilon}$ satisfies \eqref{eq 1.13},
by the elliptic theory, we obtain that there exists $V_{i}$ such that, up to a subsequence, $\{\tilde{v}_{i,\varepsilon}\}$ converges to $V_i$ in $C^{1}_{loc}(\R^N)$. Moreover, from \eqref{eq 1.9}, we have $\tilde{u}_{\varepsilon}\rightarrow U_{0,1}$ in $C^{1}_{loc}(\R^N)$ as $\varepsilon\rightarrow 0$. Then, passing to the limit in \eqref{eq 1.13}, we obtain that $V_i$ satisfies \eqref{eq 1.14} with $\lambda_{i}=1$.
On the other hand, by the estimates $$\tilde{u}_{i,\varepsilon}(x)\leq C_{1}U_{0,1}(x),\quad \left|\tilde{v}_{i,\varepsilon}(x)\right|\leq C_{2}U_{0,1}(x),\quad \forall\,x\in \Omega_{\varepsilon}, $$
  a direct calculation yields that $\tilde{v}_{i,\varepsilon}$ is uniformly bounded in $\mathcal{D}^{1,2}(\R^N)$. Indeed,
\begin{align*}
\displaystyle{\int_{\Omega_{\varepsilon}}}|\tilde{v}_{i,\varepsilon}|^{2^{*}}dx\leq C\displaystyle{\int_{\R^N}}|U_{0,1}|^{2^{*}}dx<+\infty.
\end{align*}
Moreover, from \eqref{eq 1.13}, we have
\begin{equation*}\label{eq 2.12}
    \begin{split}
&\displaystyle{\int_{\Omega_{\varepsilon}}}|\nabla\tilde{v}_{i,\varepsilon}|^{2}dx\\
\leq&(2^{*}_\mu-1)\displaystyle{\int_{\Omega_{\varepsilon}}}\tilde{u}_{\varepsilon}^{2^{*}_\mu-2}\tilde{v}_{i,\varepsilon}^{2}
\left(\displaystyle{\int_{\Omega_{\varepsilon}}}\frac{\tilde{u}_{\varepsilon}^{2^{*}_\mu}(y)}{|x-y|^{\mu}}dy\right)dx+\frac{\varepsilon}{\tau_{\varepsilon}^{2}} \displaystyle{\int_{\Omega_{\varepsilon}}}\tilde{v}_{i,\varepsilon}^{2}dx\\
&+2^{*}_{\mu} \displaystyle{\int_{\Omega_{\varepsilon}}}\tilde{u}_{\varepsilon}^{2^{*}_\mu-1}|\tilde{v}_{i,\varepsilon}|
\left(\displaystyle{\int_{\Omega_{\varepsilon}}}\frac{\tilde{u}_{\varepsilon}^{2^{*}_\mu-1}(y)|\tilde{v}_{i,\varepsilon}(y)|}{|x-y|^{\mu}}dy\right)dx.
\end{split}\end{equation*}
Note that \begin{equation*}
    \begin{split}
      \frac{\varepsilon}{\tau_{\varepsilon}^{2}} \displaystyle{\int_{\Omega_{\varepsilon}}}\tilde{v}_{i,\varepsilon}^{2}dx\leq &\,\frac{\varepsilon}{\tau_{\varepsilon}^{2}} \left(\frac{1}{\Lambda_{1}\left(\Omega_{\varepsilon}\right)}\displaystyle{\int_{\Omega_{\varepsilon}}}|\nabla \tilde{v}_{i,\varepsilon}|^{2}dx\right)=\varepsilon\left(\frac{1}{\Lambda_{1}(\Omega)}\displaystyle{\int_{\Omega_{\varepsilon}}}|\nabla \tilde{v}_{i,\varepsilon}|^{2}dx\right),
    \end{split}
\end{equation*}
where $\Lambda_{1}(\cdot)$ means the first eigenvalue of $-\Delta$ under Dirichlet boundary condition and $\Lambda_{1}\left(\Omega_{\varepsilon}\right)=\|u_{\varepsilon}\|_{\infty}^{2-2^{*}}\Lambda_{1}\left(\Omega\right)$.
Then, we obtain that
\begin{equation}\label{2-13}
    \begin{split}
        &\big(1+o(1)\big)
\displaystyle{\int_{\Omega_{\varepsilon}}}|\nabla\tilde{v}_{i,\varepsilon}|^{2}dx\\
\leq &~ (2^{*}_\mu-1)\displaystyle{\int_{\Omega_{\varepsilon}}}\tilde{u}_{\varepsilon}^{2^{*}_\mu-2}\tilde{v}_{i,\varepsilon}^{2}\left(\displaystyle{\int_{\Omega_{\varepsilon}}}\frac{\tilde{u}_{\varepsilon}^{2^{*}_\mu}(y)}{|x-y|^{\mu}}dy\right)dx\\
&+2^{*}_{\mu} \displaystyle{\int_{\Omega_{\varepsilon}}}\tilde{u}_{\varepsilon}^{2^{*}_\mu-1}|\tilde{v}_{i,\varepsilon}|\left(\displaystyle{\int_{\Omega_{\varepsilon}}}\frac{\tilde{u}_{\varepsilon}^{2^{*}_\mu-1}(y)|\tilde{v}_{i,\varepsilon}(y)|}{|x-y|^{\mu}}dy\right)dx\\
\leq&~C\displaystyle{\int_{\R^N}}U_{0,1}^{2^{*}_\mu}(x)\left(\displaystyle{\int_{\R^N}}\frac{U_{0,1}^{2^{*}_\mu}(y)}{|x-y|^{\mu}}dy\right)dx=~ C\displaystyle{\int_{\R^N}}U_{0,1}^{2^{*}}(x)dx<+\infty.
    \end{split}
\end{equation}
Hence, the sequence $\tilde{v}_{i,\varepsilon}$ is uniformly bounded in $\mathcal{D}^{1,2}(\R^N)$ and the limit function  $V_{i}\in\mathcal{D}^{1,2}(\R^N)$ verifying \eqref{eq 1.14}. Under the assumption that $\lambda_{i}=\underset{\varepsilon\rightarrow0}{\lim}\lambda_{i,\varepsilon}=1$, from Lemma \ref{lma2.5}, we deduce that there exist some constants $a_{i,1},\cdots,a_{i,N}$ and $b_{i}$ such that
$$ V_i(x)=\sum_{k=1}^{N}\frac{a_{i,k}\,x_k}{\big(1+|x|^{2}\big)^{\frac{N}{2}}}+b_{i}\frac{1-|x|^{2}}{\big(1+|x|^{2}\big)^{\frac{N}{2}}}.$$
In addition, we can derive that the limit function $V_{i}\not\equiv 0$ by proving the vector $(a_{i,1},\cdots,a_{i,N},b_{i})$ is not equal to zero. Assume by contradiction that $(a_{i,1},\cdots,a_{i,N},b_{i})=(0,\cdots,0)$, then $\tilde{v}_{i,\varepsilon}$ would converge to zero. On the other hand, since $\|{v}_{i,\varepsilon}\|_{L^{\infty}(\Omega)}=\|\tilde{v}_{i,\varepsilon}\|_{L^{\infty}(\Omega_\varepsilon)}=1$, there would exist $x_{\varepsilon}^{i}\in \Omega$ such that $|{v}_{i,\varepsilon}(x_{\varepsilon}^{i})|=1$ and then $|\tilde{v}_{i,\varepsilon}(y_{\varepsilon}^{i})|=1$ with $y_{\varepsilon}^{i}=(x_{\varepsilon}^{i}-x_\varepsilon)\|u_\varepsilon\|_{\infty}^{\frac{2}{N-2}}$. Therefore, if $\tilde{v}_{i,\varepsilon}$ converges to zero in $C^{1}_{loc}(\R^N)$, the point $y_{\varepsilon}^{i}\rightarrow \infty$ as $\varepsilon\rightarrow 0$, which is a contradiction with the estimate $|\tilde{v}_{i,\varepsilon}|\leq CU_{0,1}$ given in Remark \ref{rmk2.1}. Thus, we complete the proof.
\end{proof}
\end{lemma}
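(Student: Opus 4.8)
The plan is to combine uniform a priori estimates for the rescaled eigenfunctions with the classification, furnished by Lemma \ref{lma2.5}, of the kernel of the limiting linearized operator. Recall from Remark \ref{rmk2.1} (a consequence of Lemma \ref{lema 2.7}) that $|\tilde v_{i,\varepsilon}(x)|\le C\,U_{0,1}(x)$ on $\Omega_\varepsilon$, and from \eqref{eq 1.9} that $\tilde u_\varepsilon\to U_{0,1}$ in $C^1_{loc}(\R^N)$ together with $\tilde u_\varepsilon\le C\,U_{0,1}$. First I would test \eqref{eq 1.13} with $\tilde v_{i,\varepsilon}$. The term $\tfrac{\varepsilon}{\tau_\varepsilon^2}\int_{\Omega_\varepsilon}\tilde v_{i,\varepsilon}^2$ is reabsorbed into the Dirichlet energy using the Poincar\'e inequality on $\Omega_\varepsilon$ and the scaling identity $\Lambda_1(\Omega_\varepsilon)=\|u_\varepsilon\|_\infty^{2-2^{*}}\Lambda_1(\Omega)$, which produces a factor $o(1)$ since $\tau_\varepsilon^2=\|u_\varepsilon\|_\infty^{2^{*}-2}$; the two Hartree terms are controlled by $C\int_{\R^N}U_{0,1}^{2^{*}}<\infty$ via the pointwise bounds and the Hardy-Littlewood-Sobolev inequality, exactly as in \eqref{2-13}. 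This yields a uniform bound for $\tilde v_{i,\varepsilon}$ in $\mathcal{D}^{1,2}(\R^N)$. Since the right-hand side of \eqref{eq 1.13} is then bounded in $L^\infty_{loc}(\R^N)$, elliptic regularity upgrades this to a uniform $C^{1,\alpha}_{loc}$ bound.

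Next I would pass to a subsequence along which $\tilde v_{i,\varepsilon}\to V_i$ in $C^1_{loc}(\R^N)$ and $\tilde v_{i,\varepsilon}\rightharpoonup V_i$ weakly in $\mathcal{D}^{1,2}(\R^N)$, so that $V_i\in\mathcal{D}^{1,2}(\R^N)$. Letting $\varepsilon\to0$ in \eqref{eq 1.13} and using $\lambda_{i,\varepsilon}\to1$, $\varepsilon/\tau_\varepsilon^2\to0$ and $\tilde u_\varepsilon\to U_{0,1}$, the only delicate point is the convergence of the two Riesz potentials $\int_{\Omega_\varepsilon}\tilde u_\varepsilon^{2^{*}_\mu}(y)|x-y|^{-\mu}\,dy$ and $\int_{\Omega_\varepsilon}\tilde u_\varepsilon^{2^{*}_\mu-1}(y)\tilde v_{i,\varepsilon}(y)|x-y|^{-\mu}\,dy$ to their natural limits, locally uniformly in $x$. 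I would establish this by splitting each integral into a neighbourhood of the pole, where the kernel $|x-y|^{-\mu}$ is integrable since $\mu<4<N$ and the integrands converge locally uniformly, and a far region, where dominated convergence applies against the majorant $C\,U_{0,1}^{2^{*}_\mu}(y)|x-y|^{-\mu}$, whose tail decays like $|y|^{-2N}$ and hence is integrable with vanishing tail. Consequently $V_i$ is a $\mathcal{D}^{1,2}(\R^N)$-solution of the linearized equation \eqref{eq 1.14} with $\lambda_i=1$.

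By Lemma \ref{lma2.5}, the eigenvalue $1$ has eigenspace spanned by $\frac{\partial U_{\xi,1}}{\partial \xi_k}\big|_{\xi=0}$ ($k=1,\dots,N$) and $\frac{\partial U_{0,\tau}}{\partial \tau}\big|_{\tau=1}$; computing these derivatives and absorbing the dimensional constants (following the normalization convention of the Introduction) gives $V_i(x)=\sum_{k=1}^{N}a_{i,k}\,x_k(1+|x|^2)^{-N/2}+b_i(1-|x|^2)(1+|x|^2)^{-N/2}$ for some $(a_{i,1},\dots,a_{i,N},b_i)\in\R^{N+1}$. It remains to exclude the trivial case. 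If this vector were $0$, then $\tilde v_{i,\varepsilon}\to0$ in $C^1_{loc}(\R^N)$; but the normalization $\|\tilde v_{i,\varepsilon}\|_{L^\infty(\Omega_\varepsilon)}=1$ produces points $y^i_\varepsilon\in\Omega_\varepsilon$ with $|\tilde v_{i,\varepsilon}(y^i_\varepsilon)|=1$, which would then be forced to satisfy $|y^i_\varepsilon|\to\infty$, contradicting $|\tilde v_{i,\varepsilon}(y^i_\varepsilon)|\le C\,U_{0,1}(y^i_\varepsilon)\to0$. Hence $(a_{i,1},\dots,a_{i,N},b_i)\ne0$ and $V_i\not\equiv0$, which is \eqref{eq 2.11} along the chosen subsequence.

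The main obstacle is the passage to the limit in the two convolution terms of \eqref{eq 1.13}: unlike the local Br\'ezis-Nirenberg eigenvalue problem \eqref{eq1.4} treated by Takahashi, here a non-local operator must be controlled in the limit, and this step — together with the non-triviality argument above — relies crucially on the uniform pointwise decay estimate of Lemma \ref{lema 2.7}, itself obtained through a bootstrap.
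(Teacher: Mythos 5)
Your proposal is correct and follows essentially the same route as the paper: testing \eqref{eq 1.13} to get a uniform $\mathcal{D}^{1,2}(\R^N)$ bound (absorbing the $\varepsilon/\tau_\varepsilon^{2}$ term via the scaled Poincar\'e inequality), passing to the limit to obtain a solution of \eqref{eq 1.14} with $\lambda_i=1$, classifying it via Lemma \ref{lma2.5}, and ruling out the trivial limit through the normalization $\|\tilde v_{i,\varepsilon}\|_{L^{\infty}(\Omega_\varepsilon)}=1$ combined with the decay bound of Remark \ref{rmk2.1}. Your extra care in justifying the convergence of the two Riesz-potential terms is a welcome elaboration of a step the paper leaves implicit, but it is not a different argument.
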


\begin{lemma}\label{lema 2.9}
For any $i\in \mathbb{N}$, suppose that $\lambda_{i}=\underset{\varepsilon\rightarrow0}{\lim}\lambda_{i,\varepsilon}=1$ and $b_{i}\neq 0$ in \eqref{eq 2.11}. Then, we have
\begin{align}\label{eq 2.14}
\|u_\varepsilon\|_{\infty}^{2}v_{i,\varepsilon}(x)\longrightarrow -\frac{\sigma_{N}}{N}b_{i}G(x,x_0),\quad \text{in $C^{1}_{loc}(\Omega\backslash\{x_0\})$ as $\varepsilon\rightarrow0$}.
\end{align}
\begin{proof}
    Define $\phi_{i,\varepsilon}=\|u_\varepsilon\|_{\infty}^{2}v_{i,\varepsilon}$. A direct calculation yields that
    \begin{align*}
        -\Delta \phi_{i,\varepsilon}=f_{i,\varepsilon}(x),\quad x\in \Omega,
    \end{align*}
where
\begin{align*}
    f_{i,\varepsilon}(x):=&(2^{*}_\mu-1)\lambda_{i,\varepsilon}\|u_\varepsilon\|_{\infty}^{2}u^{2^{*}_\mu-2}_{\varepsilon}(x)v_{i,\varepsilon}(x)\left(\displaystyle{\int_{\Omega}}\frac{u^{2^{*}_\mu}_{\varepsilon}(y)}{|x-y|^{\mu}}dy\right)+\varepsilon\,\lambda_{i,\varepsilon}\|u_\varepsilon\|_{\infty}^{2} v_{i,\varepsilon}(x)\nonumber\\
&+2^{*}_\mu\lambda_{i,\varepsilon}\|u_\varepsilon\|_{\infty}^{2} u^{2^{*}_\mu-1}_{\varepsilon}(x)\left(\displaystyle{\int_{\Omega}}\frac{u^{2^{*}_\mu-1}_{\varepsilon}(y)v_{i,\varepsilon}(y)}{|x-y|^{\mu}}dy\right).
\end{align*}
Recall that
\begin{alignat*}{3}
&{u}_\varepsilon(x)\leq C U_{x_\varepsilon,\tau_{\varepsilon}}(x), &&\quad |{v}_{i,\varepsilon}(x)|\leq \frac{C }{\left(1+\tau_{\varepsilon}^{2}|x-x_{\varepsilon}|^{2}\right)^{\frac{N-2}{2}}},&&\quad \forall x\in\Omega, \\
      &\tilde{u}_\varepsilon(x)\leq C U_{0,1}(x), &&\quad|\tilde{v}_{i,\varepsilon}(x)|\leq C U_{0,1}(x),&&\quad \forall x\in\Omega_{\varepsilon}.
\end{alignat*}
Then a direct computation yields that
\begin{align*}
\|f_{i,\varepsilon}\|_{L^{1}(\Omega)}\leq
&(2^{*}_\mu-1)\displaystyle{\int_{\Omega_{\varepsilon}}}\tilde{u}_{\varepsilon}^{2^{*}_\mu-2}(x)|\tilde{v}_{i,\varepsilon}(x)|\left(\displaystyle{\int_{\Omega_{\varepsilon}}}\frac{\tilde{u}_{\varepsilon}^{2^{*}_\mu}(y)}{|x-y|^{\mu}}dy\right)dx+\frac{\varepsilon}{\tau_{\varepsilon}^{2}} \displaystyle{\int_{\Omega_{\varepsilon}}} |\tilde{v}_{i,\varepsilon}(x)|dx\\
&+2^{*}_\mu \displaystyle{\int_{\Omega_{\varepsilon}}}\tilde{u}_{\varepsilon}^{2^{*}_\mu-1}(x)\left(\displaystyle{\int_{\Omega_{\varepsilon}}}\frac{\tilde{u}_{\varepsilon}^{2^{*}_\mu-1}(y)|\tilde{v}_{i,\varepsilon}(y)|}{|x-y|^{\mu}}dy\right)dx\\
\leq&~C(2\cdot2^{*}_\mu-1)\displaystyle{\int_{\R^N}}U_{0,1}^{2^{*}_\mu-1}(x)\left(\displaystyle{\int_{\R^N}}\frac{U_{0,1}^{2^{*}_\mu}(y)}{|x-y|^{\mu}}dy\right)dx\\
&+\frac{C\,\varepsilon}{\tau_{\varepsilon}^{2}} \left(\displaystyle{\int_{ B_{1}(0)} }U_{0,1}(x)dx+\displaystyle{\int_{\Omega_{\varepsilon}\backslash B_{1}(0)}} U_{0,1}(x)dx\right)\\
\leq&~C\displaystyle{\int_{\R^N}}U_{0,1}^{2^{*}-1}(x)dx+C\varepsilon<+\infty.
\end{align*}
Hence, $f_{i,\varepsilon}(x)\in{L^{1}(\Omega)}$. Moreover, for any compact set $\omega$ in $\bar{\Omega}\backslash\{x_0\}$, from \eqref{eq 2.4}, we obtain that
\begin{align*}
    \|f_{i,\varepsilon}\|_{L^{\infty}(\omega)}\leq&C\|u_\varepsilon\|_{\infty}\left((2\cdot2^{*}_\mu-1)\,\underset{x\in\omega}{\sup}\,\left|U^{2^{*}_\mu-1}_{x_\varepsilon,\tau_\varepsilon}(x)\left(\displaystyle{\int_{\Omega}}\frac{U_{x_\varepsilon,\tau_\varepsilon}^{2^{*}_\mu}(y)}{|x-y|^{\mu}}dy\right)\right|+\varepsilon\,\underset{x\in\omega}{\sup}\,U_{x_\varepsilon,\tau_\varepsilon}(x)\right)\\
\leq&C\|u_\varepsilon\|_{\infty}\left((2\cdot2^{*}_\mu-1)\,\underset{x\in\omega}{\sup}\,U^{2^{*}-1}_{x_\varepsilon,\tau_\varepsilon}(x)+\varepsilon\,\underset{x\in\omega}{\sup}\,U_{x_\varepsilon,\tau_\varepsilon}(x)\right)\\
\leq&\frac{C}{\|u_\varepsilon\|_{\infty}^{2^{*}-2}}+\frac{C}{\|u_\varepsilon\|_{\infty}^{\frac{2(N-4)}{N-2}}}\rightarrow0,\quad\text{as $\varepsilon\rightarrow 0$.}
\end{align*}
Now, we claim that $f_{i,\varepsilon}(x)\rightarrow -\frac{\sigma_{N}}{N}b_{i}\delta_{x_{0}}(x)$ in the sense of distribution.
Indeed, for any $\psi\in C_{0}^{\infty}(\Omega)$, we denote its rescaled function as
$$\tilde{\psi}(x):=\psi(\tau_\varepsilon^{-1}x+x_\varepsilon),\quad \forall\,x\in\Omega_{\varepsilon}:=\left\{x\in\R^N:\tau_{\varepsilon}^{-1}x+x_\varepsilon\in\Omega\right\}.$$
Then, it holds that
\begin{align*}
    &\displaystyle{\int_{\Omega}} f_{i,\varepsilon}(x)\psi(x)dx\\
=&(2^{*}_\mu-1)\lambda_{i,\varepsilon}\left(\displaystyle{\int_{\Omega_{\varepsilon}}}\tilde{u}^{2^{*}_\mu-2}_{\varepsilon}(x)\tilde{v}_{i,\varepsilon}(x)\tilde{\psi}(x)\left(\displaystyle{\int_{\Omega_{\varepsilon}}}\frac{\tilde{u}^{2^{*}_\mu}_{\varepsilon}(y)}{|x-y|^{\mu}}dy\right)dx\right)+\frac{\varepsilon}{\tau^{2}_{\varepsilon}}\lambda_{i,\varepsilon}\displaystyle{\int_{\Omega_{\varepsilon}}}\tilde{v}_{i,\varepsilon}(x)\tilde{\psi}(x)dx\\
&+2^{*}_\mu\lambda_{i,\varepsilon}\left(\displaystyle{\int_{\Omega_{\varepsilon}}}\tilde{u}^{2^{*}_\mu-1}_{\varepsilon}(x)\tilde{\psi}(x)\left(\displaystyle{\int_{\Omega_{\varepsilon}}}\frac{\tilde{u}^{2^{*}_\mu-1}_{\varepsilon}(y)\tilde{v}_{i,\varepsilon}(y)}{|x-y|^{\mu}}dy\right)dx\right)\\
=&(2^{*}_\mu-1)b_{i}{\psi}(x_0)\left(\displaystyle{\int_{\R^N}}U_{0,1}^{2^{*}_\mu-2}(x)\,\frac{1-|x|^{2}}{\big(1+|x|^{2}\big)^{\frac{N}{2}}}\left(\displaystyle{\int_{\R^N}}\frac{U_{0,1}^{2^{*}_\mu}(y)}{|x-y|^{\mu}}dy\right)dx+o(1)\right)+O(\varepsilon)\\
&+2^{*}_\mu b_{i}\psi(x_0)\left(\displaystyle{\int_{\R^N}}U_{0,1}^{2^{*}_\mu-1}(x)\left(\displaystyle{\int_{\R^N}}\frac{U_{0,1}^{2^{*}_\mu-1}(y)}{|x-y|^{\mu}}\,\frac{1-|y|^{2}}{\big(1+|y|^{2}\big)^{\frac{N}{2}}}dy\right)dx+o(1)\right)\\
\longrightarrow&(2^{*}-1)b_{i}{\psi}(x_0)\displaystyle{\int_{\R^N}}\frac{1-|x|^{2}}{\big(1+|x|^{2}\big)^{\frac{N+4}{2}}}dx
=-\frac{\sigma_{N}}{N}b_{i}\psi(x_{0}),
\end{align*}
where we have used the results obtained in Lemma \ref{lema 2.8}, Lemma \ref{a-1} and the following identity

$$\displaystyle{\int_{\R^N}}U_{0,1}^{2^{*}-2}(x)\frac{x_k}{\big(1+|x|^{2}\big)^{\frac{N}{2}}}dx=0,\quad \text{for $k=1,\cdots,N$.}$$
 Then, utilizing Lemma \ref{lema 2.3} to the function $\phi_{i,\varepsilon}$,
we obtain the desired result.
\end{proof}
\end{lemma}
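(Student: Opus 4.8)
The plan is to reduce the assertion to an elliptic convergence lemma (Lemma~\ref{lema 2.3}) via rescaling and the identification of a distributional limit. Set $\phi_{i,\varepsilon}:=\|u_\varepsilon\|_{\infty}^{2}v_{i,\varepsilon}$; multiplying the linearized equation \eqref{eq 1.12} through by $\|u_\varepsilon\|_{\infty}^{2}$ yields $-\Delta\phi_{i,\varepsilon}=f_{i,\varepsilon}$ in $\Omega$, $\phi_{i,\varepsilon}=0$ on $\partial\Omega$, where $f_{i,\varepsilon}$ is the sum of three pieces weighted by $\lambda_{i,\varepsilon}\|u_\varepsilon\|_{\infty}^{2}$: the local term carrying the Riesz-potential factor $\int_\Omega u_\varepsilon^{2^{*}_{\mu}}(y)|x-y|^{-\mu}\,dy$, the $\varepsilon$-term, and the purely nonlocal convolution term. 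The conclusion \eqref{eq 2.14} will follow once we verify: \textup{(a)} $\{f_{i,\varepsilon}\}$ is bounded in $L^{1}(\Omega)$; \textup{(b)} $\{f_{i,\varepsilon}\}$ is bounded, and in fact tends to $0$, in $L^{\infty}(\omega)$ for every compact $\omega\subset\bar\Omega\setminus\{x_0\}$ (in particular near $\partial\Omega$); and \textup{(c)} $f_{i,\varepsilon}\to-\frac{\sigma_N}{N}b_i\,\delta_{x_0}$ in the sense of distributions.

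For \textup{(a)} I would rescale by $x\mapsto\tau_\varepsilon^{-1}x+x_\varepsilon$ and invoke the pointwise bounds $u_\varepsilon\le CU_{x_\varepsilon,\tau_\varepsilon}$ from \eqref{eq 2.2} and $|\tilde v_{i,\varepsilon}|\le CU_{0,1}$ on $\Omega_\varepsilon$ (Remark~\ref{rmk2.1}, equivalently Lemma~\ref{lema 2.7}), together with the identity \eqref{eq 1.7} for the first Riesz factor and the Hardy--Littlewood--Sobolev inequality for the convolution in the third piece, with the same exponent bookkeeping already used in the proof of Lemma~\ref{lema 2.7}; this bounds the $L^{1}$-mass of each piece by a fixed multiple of $\int_{\R^N}U_{0,1}^{2^{*}-1}\,dx=\sigma_N/N$, plus an $O(\varepsilon)$ from the $\varepsilon$-term. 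For \textup{(b)}, on a compact $\omega$ away from $x_0$ one has $U_{x_\varepsilon,\tau_\varepsilon}\lesssim\tau_\varepsilon^{-(N-2)}$ and $U_{x_\varepsilon,\tau_\varepsilon}^{2^{*}-1}\lesssim\tau_\varepsilon^{-(N+2)}$ uniformly in $x\in\omega$, so $\|f_{i,\varepsilon}\|_{L^{\infty}(\omega)}\le C\|u_\varepsilon\|_{\infty}^{2-2^{*}}+C\varepsilon\|u_\varepsilon\|_{\infty}^{-2(N-4)/(N-2)}$, which tends to $0$ by the exact blow-up rate \eqref{eq 2.4}.

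The crux is \textup{(c)}. Fix $\psi\in C_0^{\infty}(\Omega)$, write $\tilde\psi(x)=\psi(\tau_\varepsilon^{-1}x+x_\varepsilon)$, rescale $\int_\Omega f_{i,\varepsilon}\psi$, and pass to the limit using $\tilde u_\varepsilon\to U_{0,1}$ in $C^1_{loc}(\R^N)$ (from \eqref{eq 1.9}), $\tilde v_{i,\varepsilon}\to V_i=\sum_{k}a_{i,k}x_k(1+|x|^2)^{-N/2}+b_i(1-|x|^2)(1+|x|^2)^{-N/2}$ in $C^1_{loc}(\R^N)$ (Lemma~\ref{lema 2.8}), and the domination $|\tilde v_{i,\varepsilon}|\le CU_{0,1}$ to control the tails of the nested integrals; this gives $\int_\Omega f_{i,\varepsilon}\psi\to\psi(x_0)\,\ell_i$, where $\ell_i$ is the sum of the two nonlocal integrals evaluated at $U_{0,1}$ and $V_i$ (the $\varepsilon$-term contributes $0$). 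In both nonlocal integrals the pieces carrying the coefficients $a_{i,k}$ drop out because $\int_{\R^N}U_{0,1}^{2^{*}-2}(x)\,x_k(1+|x|^2)^{-N/2}\,dx=0$ for each $k$, so only the radial $b_i$-piece $V:=\frac{1-|x|^2}{(1+|x|^2)^{N/2}}$ remains. Collapsing the first convolution with \eqref{eq 1.7}, and — as a clean shortcut for the second — using that $V$ is simultaneously an eigenfunction of \eqref{eq 1.14} for $\lambda=1$ and of the \emph{local} linearized operator at $U_{0,1}$ (by Theorem~A, identity \eqref{eq 1.6}, and the non-degeneracy of Theorem~B), reduces $\ell_i$ to $b_i(2^{*}-1)\int_{\R^N}\frac{1-|x|^2}{(1+|x|^2)^{(N+4)/2}}\,dx$. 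The explicit value $\int_{\R^N}\frac{1-|x|^2}{(1+|x|^2)^{(N+4)/2}}\,dx=-\frac{\sigma_N(N-2)}{N(N+2)}$ (Lemma~\ref{a-1}, or a Beta-function computation) together with $2^{*}-1=\frac{N+2}{N-2}$ gives $\ell_i=-\frac{\sigma_N}{N}b_i$, which is \textup{(c)}. Applying Lemma~\ref{lema 2.3} to $\phi_{i,\varepsilon}$ then yields \eqref{eq 2.14}.

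I expect the main difficulty to lie in \textup{(c)}: one has to justify interchanging the limit with the two nested, only locally convergent Riesz integrals, which forces one to combine the global $L^{1}$-tail control from \textup{(a)} with the $CU_{0,1}$-domination of $\tilde v_{i,\varepsilon}$ to handle the regions $|x|$ or $|y|$ large; one has to keep track of the normalization constants $\bar C$, $A_{HL}$, $\frac{N(N-2)}{A_{HL}}$ attached to the bubble and to \eqref{eq 1.7} (which the paper agrees to suppress, so the reader must trust that they cancel and the limiting coefficient is exactly $-\sigma_N/N$); and one has to run the arithmetic that identifies the sum of the two nonlocal integrals with $(2^{*}-1)\int_{\R^N}U_{0,1}^{2^{*}-2}V$, where the exponent relation $(2^{*}_{\mu}-2)+(2^{*}-2^{*}_{\mu})=2^{*}-2$ and the Riesz identity are precisely what make everything close. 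A secondary point, easy but not to be skipped, is checking that $f_{i,\varepsilon}$ satisfies the hypotheses of Lemma~\ref{lema 2.3} — uniform $L^{1}(\Omega)$-boundedness together with uniform $L^{\infty}$-boundedness on compacts away from $x_0$ — which is exactly what \textup{(a)} and \textup{(b)} provide.
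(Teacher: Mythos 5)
Your proposal is correct and follows essentially the same route as the paper: write $-\Delta(\|u_\varepsilon\|_\infty^2 v_{i,\varepsilon})=f_{i,\varepsilon}$, verify the $L^1(\Omega)$ bound and the vanishing $L^\infty$ bound away from $x_0$ via \eqref{eq 2.2}, Lemma \ref{lema 2.7} and \eqref{eq 2.4}, identify the distributional limit $-\frac{\sigma_N}{N}b_i\delta_{x_0}$ after rescaling (the $a_{i,k}$ parts dropping by oddness, the convolutions collapsing via \eqref{eq 1.7} and the identities of Lemma \ref{integral}), and conclude with Lemma \ref{lema 2.3}. Your shortcut of treating the radial part as a simultaneous eigenfunction of the nonlocal and local linearized operators is only a cosmetic variant of the paper's use of \eqref{a-2}, and the final coefficient $-\sigma_N/N$ matches.
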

Now, we prove the following identity, which is useful in the proof of Lemma \ref{lema 2.11}.
\begin{lemma}\label{lema 2.10}
 For any $z\in\R^N$, define $\omega_{\varepsilon}(x)=(x-z)\cdot\nabla u_{\varepsilon}(x)+\frac{N-2}{2}u_\varepsilon(x)$. Then the following integral identity holds
  \begin{align}
&\displaystyle{\int_{\partial\Omega}}\frac{\partial u_{\varepsilon}}{\partial \nu}(x)\frac{\partial v_{i,\varepsilon}}{\partial \nu}(x)(x-z)\cdot \nu\,dS_{x}\nonumber\\
      =&\left(1-\lambda_{i,\varepsilon}\right)\Bigg\{\varepsilon\displaystyle{\int_{\Omega}}v_{i,\varepsilon}(x)\omega_{\varepsilon}(x)dx+(2^{*}_\mu-1)\displaystyle{\int_{\Omega}}u^{2^{*}_\mu-2}_{\varepsilon}(x)v_{i,\varepsilon}(x)\omega_{\varepsilon}(x)\left(\displaystyle{\int_{\Omega}}\frac{u^{2^{*}_\mu}_{\varepsilon}(y)}{|x-y|^{\mu}}dy\right)dx\nonumber\\
&\,\quad\,\quad\,\,\quad\,\quad+2^{*}_{\mu}\displaystyle{\int_{\Omega}}\displaystyle{\int_{\Omega}}\frac{u^{2^{*}_\mu-1}_{\varepsilon}(x)v_{i,\varepsilon}(x)u^{2^{*}_\mu-1}_{\varepsilon}(y)\omega_{\varepsilon}(y)}{|x-y|^{\mu}}dxdy\Bigg\}+2\varepsilon\displaystyle{\int_{\Omega}}u_{\varepsilon}(x)v_{i,\varepsilon}(x)dx.\label{eq 2.15}
  \end{align}

  \begin{proof}
     It follows form  Lemma \ref{lma A-3} that
      \begin{align}\label{eq 2.16}
          -\Delta \omega_{\varepsilon}(x)=&\varepsilon \omega_{\varepsilon}(x)+\left(2^{*}_\mu-1\right)u_\varepsilon^{2^{*}_\mu-2}(x)\omega_{\varepsilon}(x)\left(\displaystyle{\int_{\Omega}}\frac{u_\varepsilon^{2^{*}_\mu}(y)}{|x-y|^{\mu}}dy\right)\nonumber\\
          &+2^{*}_\mu u_\varepsilon^{2^{*}_\mu-1}(x)\left(\displaystyle{\int_{\Omega}}\frac{u_\varepsilon^{2^{*}_\mu-1}(y)\omega_{\varepsilon}(y)}{|x-y|^{\mu}}dy\right)+2\varepsilon u_\varepsilon(x),\quad x\in\Omega.
      \end{align}
 On the one hand, multiplying \eqref{eq 2.16} by $v_{i,\varepsilon}$ and integrating, we obtain that
      \begin{equation}\label{eq-2.17}
          \begin{split}
\displaystyle{\int_{\Omega}}\nabla \omega_\varepsilon(x) \nabla v_{i,\varepsilon}(x) dx=&\varepsilon  \displaystyle{\int_{\Omega}} \omega_\varepsilon(x) v_{i,\varepsilon}(x) dx+2\varepsilon \displaystyle{\int_{\Omega}} u_\varepsilon(x)v_{i,\varepsilon}(x) dx\\
          &+\left(2^{*}_\mu-1\right)\displaystyle{\int_{\Omega}}u_\varepsilon^{2^{*}_\mu-2}(x)\omega_{\varepsilon}(x)v_{i,\varepsilon}(x)\left(\displaystyle{\int_{\Omega}}\frac{u_\varepsilon^{2^{*}_\mu}(y)}{|x-y|^{\mu}}dy\right)dx\\
          &+2^{*}_\mu \displaystyle{\int_{\Omega}}u_\varepsilon^{2^{*}_\mu-1}(x)v_{i,\varepsilon}(x)\left(\displaystyle{\int_{\Omega}}\frac{u_\varepsilon^{2^{*}_\mu-1}(y)\omega_{\varepsilon}(y)}{|x-y|^{\mu}}dy\right)dx.
          \end{split}
      \end{equation}
      On the other hand, multiplying \eqref{eq 1.12} by $\omega_{\varepsilon}$, we have
    \begin{equation}\label{eq-2.18}
          \begin{split}
    &\displaystyle{\int_{\Omega}}\nabla \omega_\varepsilon(x) \nabla v_{i,\varepsilon}(x) dx\\=&\lambda_{i,\varepsilon}\Bigg\{\left(2^{*}_\mu-1\right)\displaystyle{\int_{\Omega}}u_\varepsilon^{2^{*}_\mu-2}(x)\omega_{\varepsilon}(x)v_{i,\varepsilon}(x)\left(\displaystyle{\int_{\Omega}}\frac{u_\varepsilon^{2^{*}_\mu}(y)}{|x-y|^{\mu}}dy\right)dx+\varepsilon  \displaystyle{\int_{\Omega}} \omega_\varepsilon(x) v_{i,\varepsilon}(x) dx\\
          &\quad\,\,\quad\,\,+2^{*}_\mu \displaystyle{\int_{\Omega}}u_\varepsilon^{2^{*}_\mu-1}(x)\omega_{\varepsilon}(x)\left(\displaystyle{\int_{\Omega}}\frac{u_\varepsilon^{2^{*}_\mu-1}(y)v_{i,\varepsilon}(y)}{|x-y|^{\mu}}dy\right)dx\Bigg\}\\
          &+\displaystyle{\int_{\partial\Omega}}\frac{\partial v_{i,\varepsilon}(x)}{\partial \nu}\frac{\partial u_{\varepsilon}(x)}{\partial \nu}(x-z)\cdot \nu\,dS_{x},
          \end{split}
      \end{equation}
where we have used the fact that $u_\varepsilon=0$ on $\partial\Omega$. Thus, due to the symmetry property of the following double integrals:
\begin{align*}
\displaystyle{\int_{\Omega}}u_\varepsilon^{2^{*}_\mu-1}(x)\omega_{\varepsilon}(x)\left(\displaystyle{\int_{\Omega}}\frac{u_\varepsilon^{2^{*}_\mu-1}(y)v_{i,\varepsilon}(y)}{|x-y|^{\mu}}dy\right)dx=\displaystyle{\int_{\Omega}}u_\varepsilon^{2^{*}_\mu-1}(x)v_{i,\varepsilon}(x)\left(\displaystyle{\int_{\Omega}}\frac{u_\varepsilon^{2^{*}_\mu-1}(y)\omega_{\varepsilon}(y)}{|x-y|^{\mu}}dy\right)dx,
\end{align*}
\eqref{eq 2.15} can be obtained from \eqref{eq-2.17} and \eqref{eq-2.18}.
\end{proof}
\end{lemma}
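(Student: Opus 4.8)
The plan is to compute the Dirichlet pairing $\int_{\Omega}\nabla\omega_{\varepsilon}\cdot\nabla v_{i,\varepsilon}\,dx$ in two ways — once from the equation satisfied by $\omega_{\varepsilon}$ tested against $v_{i,\varepsilon}$, once from the linearized equation \eqref{eq 1.12} tested against $\omega_{\varepsilon}$ — and then compare them, so that the boundary term appears on one side while the bulk terms reorganize into the right-hand side of \eqref{eq 2.15}. The only genuinely non-local input is the symmetry of the Hartree kernel $|x-y|^{-\mu}$, which will be used to merge the two $2^{*}_{\mu}$-cross terms into a single one.

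First I would record the equation solved by $\omega_{\varepsilon}$. The function $\omega_{\varepsilon}(x)=(x-z)\cdot\nabla u_{\varepsilon}(x)+\tfrac{N-2}{2}u_{\varepsilon}(x)$ is precisely $\tfrac{d}{d\tau}\big|_{\tau=1}\big[\tau^{\frac{N-2}{2}}u_{\varepsilon}(z+\tau(x-z))\big]$, and since $2^{*}_{\mu}=\tfrac{2N-\mu}{N-2}$ makes the Choquard nonlinearity transform homogeneously of the right degree under the dilation $u\mapsto\tau^{\frac{N-2}{2}}u(z+\tau(\cdot-z))$ about any point $z$, differentiating \eqref{eq 1.1} in $\tau$ shows that $\omega_{\varepsilon}$ solves the linearized equation with eigenvalue $1$ plus a forcing term $2\varepsilon u_{\varepsilon}$ coming from the scale-breaking linear term $\varepsilon u_{\varepsilon}$; concretely, $-\Delta\omega_{\varepsilon}=\varepsilon\omega_{\varepsilon}+(2^{*}_{\mu}-1)u_{\varepsilon}^{2^{*}_{\mu}-2}\omega_{\varepsilon}\big(\int_{\Omega}|x-y|^{-\mu}u_{\varepsilon}^{2^{*}_{\mu}}(y)\,dy\big)+2^{*}_{\mu}u_{\varepsilon}^{2^{*}_{\mu}-1}\big(\int_{\Omega}|x-y|^{-\mu}u_{\varepsilon}^{2^{*}_{\mu}-1}(y)\omega_{\varepsilon}(y)\,dy\big)+2\varepsilon u_{\varepsilon}$, which is \eqref{eq 2.16}. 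This is the elementary but slightly lengthy computation of Lemma~\ref{lma A-3}, so I would simply cite it.

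Next I would test \eqref{eq 2.16} against $v_{i,\varepsilon}$ and integrate by parts: since $v_{i,\varepsilon}=0$ on $\partial\Omega$, the boundary term vanishes and I obtain \eqref{eq-2.17}. Dually, I would test \eqref{eq 1.12} against $\omega_{\varepsilon}$ and integrate by parts; here the surviving boundary integral is $\int_{\partial\Omega}\omega_{\varepsilon}\,\partial_{\nu}v_{i,\varepsilon}\,dS_{x}$, and because $u_{\varepsilon}\equiv 0$ on $\partial\Omega$ its gradient there is purely normal, so $\omega_{\varepsilon}=\big((x-z)\cdot\nu\big)\partial_{\nu}u_{\varepsilon}$ on $\partial\Omega$ and this integral is exactly the left-hand side of \eqref{eq 2.15}; this gives \eqref{eq-2.18}. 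Equating the two expressions for $\int_{\Omega}\nabla\omega_{\varepsilon}\cdot\nabla v_{i,\varepsilon}\,dx$ and solving for the boundary term, the term $\varepsilon\int_{\Omega}\omega_{\varepsilon}v_{i,\varepsilon}$ and the single non-local term with factor $(2^{*}_{\mu}-1)$ appear with coefficients $1$ (from \eqref{eq-2.17}) and $\lambda_{i,\varepsilon}$ (from \eqref{eq-2.18}), hence combine into $(1-\lambda_{i,\varepsilon})$ times themselves; the term $2\varepsilon\int_{\Omega}u_{\varepsilon}v_{i,\varepsilon}$ arises only from \eqref{eq-2.17}; and the two $2^{*}_{\mu}$-cross terms $\int_{\Omega}u_{\varepsilon}^{2^{*}_{\mu}-1}(x)v_{i,\varepsilon}(x)\big(\int_{\Omega}|x-y|^{-\mu}u_{\varepsilon}^{2^{*}_{\mu}-1}(y)\omega_{\varepsilon}(y)\,dy\big)dx$ (from \eqref{eq-2.17}) and its image under interchanging $v_{i,\varepsilon}$ with $\omega_{\varepsilon}$ (from \eqref{eq-2.18}) are equal by Fubini and $|x-y|^{-\mu}=|y-x|^{-\mu}$, so they too combine into $(1-\lambda_{i,\varepsilon})$ times the double integral. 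Collecting the pieces yields \eqref{eq 2.15}.

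The main obstacle is the first step: rigorously differentiating the convolution term along the vector field $(x-z)\cdot\nabla+\tfrac{N-2}{2}$ and verifying that the exponents balance so that nothing survives beyond the linearized operator plus the $2\varepsilon u_{\varepsilon}$ correction. I expect this to be dispatched by the appendix Lemma~\ref{lma A-3}; everything afterwards is a routine integration-by-parts computation whose only delicate ingredient is the kernel-symmetry cancellation of the two Hartree double integrals.
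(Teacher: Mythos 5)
Your proposal is correct and follows essentially the same route as the paper: cite Lemma \ref{lma A-3} for the equation \eqref{eq 2.16} satisfied by $\omega_{\varepsilon}$, test it against $v_{i,\varepsilon}$ and test \eqref{eq 1.12} against $\omega_{\varepsilon}$, equate the two expressions for $\int_{\Omega}\nabla\omega_{\varepsilon}\cdot\nabla v_{i,\varepsilon}\,dx$ (using $u_{\varepsilon}=0$ on $\partial\Omega$ to identify the boundary term), and merge the two Hartree cross terms by the symmetry of the kernel. Your dilation interpretation of $\omega_{\varepsilon}$ is a nice gloss on the appendix computation but does not change the argument.
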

Combining with the above identity in Lemma \ref{lema 2.10}, we deduce the precise estimate of eigenvalues to \eqref{eq 1.12}, which plays a crucial role in analyzing the asymptotic behavior of eigenfunctions in Section 4 and Section 5.
\begin{lemma}\label{lema 2.11}
For any $i\in \mathbb{N}$, suppose that $\lambda_{i}=\underset{\varepsilon\rightarrow0}{\lim}\lambda_{i,\varepsilon}=1$ and $b_{i}\neq 0$ in \eqref{eq 2.11}. Then there exists $\kappa>0$ such that
    \begin{align}\label{eq 2.17}
\lambda_{i,\varepsilon}-1=&\frac{1}{\|u_\varepsilon\|_{\infty}^{2}}\big(\kappa+o(1)\big),
    \end{align}
where
\begin{align*}
    \kappa:=\frac{2(N-4)\sigma_{N}^{2}R(x_0)}{(N+2)N^{2}\left(\displaystyle{\int_{\R^N}}\frac{(1-|x|^{2})^{2}}{(1+x^{2})^{N+2}}dx\right)}>0.
\end{align*}
\end{lemma}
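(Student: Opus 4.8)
The natural route is to exploit the Pohozaev-type identity \eqref{eq 2.15} of Lemma \ref{lema 2.10} with the choice $z = x_\varepsilon$ (the maximum point of $u_\varepsilon$), then extract the leading order of both sides. First I would rescale: setting $\omega_\varepsilon(x) = (x-x_\varepsilon)\cdot\nabla u_\varepsilon(x) + \tfrac{N-2}{2}u_\varepsilon(x)$ and passing to the variables $y = \tau_\varepsilon(x-x_\varepsilon)$, the function $\|u_\varepsilon\|_\infty^{-1}\omega_\varepsilon$ rescales to $\widetilde\omega_\varepsilon(y) = y\cdot\nabla\widetilde u_\varepsilon(y) + \tfrac{N-2}{2}\widetilde u_\varepsilon(y)$, which by \eqref{eq 1.9} converges in $C^1_{loc}$ to $D_1 U_{0,1}$, i.e. to $\tfrac{N-2}{2}\cdot\tfrac{1-|y|^2}{(1+|y|^2)^{N/2}}$ up to the normalization. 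Likewise $\widetilde v_{i,\varepsilon}\to b_i\,\tfrac{1-|y|^2}{(1+|y|^2)^{N/2}}$ by Lemma \ref{lema 2.8} together with the hypothesis $b_i\neq 0$ (the $a_{i,k}$ contribute nothing to the relevant integrals by oddness). The plan is to show that the three bracketed volume integrals on the right of \eqref{eq 2.15}, after multiplication by $\tau_\varepsilon^{-?}$ appropriate powers, converge to an explicit positive constant times $b_i$, so that the whole right-hand side behaves like $(1-\lambda_{i,\varepsilon})\cdot(\text{const}\cdot b_i\,\|u_\varepsilon\|_\infty^{-2}\tau_\varepsilon^{?}) + (\text{lower order})$, while the boundary integral on the left is estimated by the known decay of $u_\varepsilon$ and $v_{i,\varepsilon}$ off $x_0$ — here I would use \eqref{eq 2.3}, \eqref{eq 2.14} and the fact that near $\partial\Omega$ one has $\|u_\varepsilon\|_\infty u_\varepsilon \to \tfrac{\sigma_N}{N}G(\cdot,x_0)$ and $\|u_\varepsilon\|_\infty^2 v_{i,\varepsilon}\to -\tfrac{\sigma_N}{N}b_i G(\cdot,x_0)$ in $C^1$.

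The computation of the leading boundary term is where the Robin function enters: by the $C^1$ convergence of $\|u_\varepsilon\|_\infty u_\varepsilon$ and $\|u_\varepsilon\|_\infty^2 v_{i,\varepsilon}$ to (multiples of) $G(x,x_0)$ on a neighbourhood of $\partial\Omega$, one gets
\begin{equation*}
\|u_\varepsilon\|_\infty^3\displaystyle{\int_{\partial\Omega}}\frac{\partial u_\varepsilon}{\partial\nu}\frac{\partial v_{i,\varepsilon}}{\partial\nu}(x-x_\varepsilon)\cdot\nu\,dS_x \longrightarrow -\frac{\sigma_N^2}{N^2}\,b_i\displaystyle{\int_{\partial\Omega}}\left(\frac{\partial G(x,x_0)}{\partial\nu}\right)^2(x-x_0)\cdot\nu\,dS_x = -\frac{(N-2)\sigma_N^2}{N^2}\,b_i\,R(x_0),
\end{equation*}
using the identity $\int_{\partial\Omega}(\partial_\nu G)^2(x-x_0)\cdot\nu\,dS_x = (N-2)R(x_0)$ already invoked in the proof of Lemma \ref{lema 2.4}. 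For the right-hand side, the dominant contribution is the $\varepsilon$-term $2\varepsilon\int_\Omega u_\varepsilon v_{i,\varepsilon}\,dx$ together with the three $(1-\lambda_{i,\varepsilon})$-bracket terms; rescaling and using $\widetilde u_\varepsilon\to U_{0,1}$, $\widetilde v_{i,\varepsilon}\to b_i\,\tfrac{1-|y|^2}{(1+|y|^2)^{N/2}}$ reduces all of them to explicit integrals over $\R^N$ of the form $\int_{\R^N}\tfrac{(1-|x|^2)^2}{(1+|x|^2)^{N+2}}dx$ and $\int_{\R^N}U_{0,1}^2$, whose values give the stated constant $\kappa$. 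A key book-keeping point is that the $\varepsilon$-scale is controlled by \eqref{eq 2.4}, which converts powers of $\varepsilon$ into powers of $\|u_\varepsilon\|_\infty$ and produces exactly the factor $R(x_0)$; matching the two sides then yields $(\lambda_{i,\varepsilon}-1)\|u_\varepsilon\|_\infty^2 \to \kappa$ with $\kappa$ as claimed, and positivity of $\kappa$ is immediate since $N\geq 5$ (so $N-4>0$) and $R(x_0)>0$.

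The main obstacle, as flagged in the authors' remark, is handling the non-local (double-integral) terms: one must justify that
\begin{equation*}
\displaystyle{\int_\Omega}\displaystyle{\int_\Omega}\frac{u_\varepsilon^{2^*_\mu-1}(x)v_{i,\varepsilon}(x)\,u_\varepsilon^{2^*_\mu-1}(y)\omega_\varepsilon(y)}{|x-y|^\mu}\,dx\,dy
\end{equation*}
and its cousins behave to leading order exactly as their local analogues, which requires the Hardy–Littlewood–Sobolev inequality together with the pointwise decay bounds \eqref{eq 2.2}, \eqref{eq 2.10} and Remark \ref{rmk2.1} to dominate the tails uniformly in $\varepsilon$, and the identity \eqref{eq 1.7} to collapse the potential $\int U_{0,1}^{2^*_\mu}/|x-y|^\mu$ into a power of $U_{0,1}$ in the limit. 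The symmetry of the double integral (used already in Lemma \ref{lema 2.10}) is what allows $\omega_\varepsilon$ and $v_{i,\varepsilon}$ to be interchanged, and one should be careful that the error terms coming from the difference between $\Omega_\varepsilon$ and $\R^N$, and between $\widetilde u_\varepsilon$ and $U_{0,1}$, are genuinely $o(1)$ after the correct normalization — this is routine once the decay estimates are in place but is the technically heaviest part. Everything else (the boundary term, the $\varepsilon$-integrals, the elementary integrals over $\R^N$) is standard once the convergence statements of Section 2 are invoked.
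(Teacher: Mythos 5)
Your proposal follows exactly the paper's own argument: take $z=x_\varepsilon$ in the identity \eqref{eq 2.15}, compute the boundary term via the $C^1$ convergence of $\|u_\varepsilon\|_{\infty}u_\varepsilon$ and $\|u_\varepsilon\|_{\infty}^{2}v_{i,\varepsilon}$ to multiples of $G(\cdot,x_0)$ (yielding $-\tfrac{(N-2)\sigma_N^2}{N^2}b_iR(x_0)\|u_\varepsilon\|_{\infty}^{-3}$), rescale the volume terms so that the $(1-\lambda_{i,\varepsilon})$-bracket contributes at order $(1-\lambda_{i,\varepsilon})\|u_\varepsilon\|_{\infty}^{-1}$ while the $2\varepsilon\int u_\varepsilon v_{i,\varepsilon}$ term joins the boundary term at order $\|u_\varepsilon\|_{\infty}^{-3}$ via \eqref{eq 2.4}, and handle the non-local integrals with \eqref{eq 1.7}, HLS, the pointwise decay bounds, and the symmetry of the double integral. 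The balance then gives $(\lambda_{i,\varepsilon}-1)\|u_\varepsilon\|_{\infty}^{2}\to\kappa$ with the stated constant, so the plan is correct and coincides with the paper's proof.
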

\begin{proof}
   Taking $z=x_\varepsilon$ in \eqref{eq 2.15}, we find that
   \begin{align*}
       \text{LHS of \eqref{eq 2.15}}=&\displaystyle{\int_{\partial\Omega}}\frac{\partial u_{\varepsilon}}{\partial \nu}\,\frac{\partial v_{i,\varepsilon}}{\partial \nu}\,(x-x_\varepsilon)\cdot \nu \,dS_{x}\\
    =&\frac{1}{\|u_\varepsilon\|_{\infty}^{3}}\,\displaystyle{\int_{\partial\Omega}}\,\frac{\partial }{\partial \nu}\Big(\|u_\varepsilon\|_{\infty}u_{\varepsilon}(x)\Big)\,\frac{\partial }{\partial \nu}\Big(\|u_\varepsilon\|_{\infty}^{2}v_{i,\varepsilon}(x)\Big)\, (x-x_\varepsilon)\cdot \nu \,dS_{x}\\
    =&\frac{1}{\|u_\varepsilon\|_{\infty}^{3}}\left(-\,\frac{b_{i}\sigma_{N}^{2}(N-2)}{N^{2}}R(x_0)+o(1)\right).
   \end{align*}
Now we write
   \begin{align*}
       \text{RHS of \eqref{eq 2.15}}=
       &\underbrace{\left(1-\lambda_{i,\varepsilon}\right)\left(2^{*}_\mu-1\right)
       \int_{\Omega}u_\varepsilon^{2^{*}_\mu-2}(x)\omega_{\varepsilon}(x)v_{i,\varepsilon}(x)
       \left(\int_{\Omega}\frac{u_\varepsilon^{2^{*}_\mu}(y)}{|x-y|^{\mu}}dy\right)dx}_{:=I_{1,\varepsilon}}\\
       &+\underbrace{\left(1-\lambda_{i,\varepsilon}\right)2^{*}_\mu\int_{\Omega}
       \int_{\Omega}\frac{u_\varepsilon^{2^{*}_\mu-1}(x)v_{i,\varepsilon}(x)u_\varepsilon^{2^{*}_\mu-1}(y)\omega_{\varepsilon}(y)}{|x-y|^{\mu}}dxdy}_{:=I_{2,\varepsilon}}\\
       &+\underbrace{\left(1-\lambda_{i,\varepsilon}\right)\varepsilon  \int_{\Omega} \omega_\varepsilon(x) v_{i,\varepsilon}(x)dx}_{:=I_{3,\varepsilon}}+\underbrace{2\varepsilon \int_{\Omega} u_\varepsilon(x)v_{i,\varepsilon}(x) dx}_{:=I_{4,\varepsilon}}.
\end{align*}
Then there holds
\begin{align*}
    I_{1,\varepsilon}=&\left(1-\lambda_{i,\varepsilon}\right)\left(2^{*}_\mu-1\right)\frac{1}{\|u_\varepsilon\|_{\infty}}
    \int_{\Omega_\varepsilon}\tilde{u}_\varepsilon^{2^{*}_\mu-2}(x)\tilde{\omega}_{\varepsilon}(x)\tilde{v}_{i,\varepsilon}(x)\left(\int_{\Omega_{\varepsilon}}\frac{\tilde{u}_\varepsilon^{2^{*}_\mu}(y)}{|x-y|^{\mu}}dy\right)dx\\
     =&\frac{(N-2)\left(2^{*}_\mu-1\right)\left(1-\lambda_{i,\varepsilon}\right)\,b_{i}}{2\,\|u_\varepsilon\|_{\infty}}
     \left(\displaystyle{\int_{\R^N}}U_{0,1}^{2^{*}_\mu-2}(x)\,\frac{(1-|x|^{2})^{2}}{(1+x^{2})^{N}}
     \left(\displaystyle{\int_{\R^N}}\frac{U_{0,1}^{2^{*}_\mu}(y)}{|x-y|^{\mu}}dy\right)dx+o(1)\right)\\
    =&\frac{(N+2-\mu)\left(1-\lambda_{i,\varepsilon}\right)\,b_{i}}{2\,\|u_\varepsilon\|_{\infty}}
    \left(\displaystyle{\int_{\R^N}}U_{0,1}^{2^{*}-2}(x)\,\frac{(1-|x|^{2})^{2}}{(1+x^{2})^{N}}\,dx+o(1)\right),
\end{align*}
where $\tilde{\omega}_{\varepsilon}(x)=\tau_{\varepsilon}^{-\frac{N-2}{2}}{\omega}_{\varepsilon}\left(\tau_{\varepsilon}^{-1}x+x_{\varepsilon}\right)$ is denoted as the rescaled function of $\omega_{\varepsilon}$. Moreover,
\begin{align*}
    I_{2,\varepsilon}=&\left(1-\lambda_{i,\varepsilon}\right)\frac{2^{*}_\mu}{\|u_\varepsilon\|_{\infty}}\int_{\Omega_\varepsilon}\int_{\Omega_\varepsilon}\frac{\tilde{u}_\varepsilon^{2^{*}_\mu-1}(x)\tilde{v}_{i,\varepsilon}(x)\tilde{u}_\varepsilon^{2^{*}_\mu-1}(y)\tilde{\omega}_{\varepsilon}(y)}{|x-y|^{\mu}}dxdy\\
          =&\frac{2^{*}_\mu(N-2)\left(1-\lambda_{i,\varepsilon}\right)\,b_{i}}{2\,\|u_\varepsilon\|_{\infty}}\left(\displaystyle{\int_{\R^N}}U_{0,1}^{2^{*}_\mu-1}(x)\,\frac{1-|x|^{2}}{(1+x^{2})^{\frac{N}{2}}}\left(\displaystyle{\int_{\R^N}}\frac{U_{0,1}^{2^{*}_\mu-1}(y)\,\frac{1-|y|^{2}}{(1+y^{2})^{\frac{N}{2}}}}{|x-y|^{\mu}}dy\right)dx+o(1)\right)\\
\underset{(*)}{=}&\frac{\mu\left(1-\lambda_{i,\varepsilon}\right)\,b_{i}}{2\,\|u_\varepsilon\|_{\infty}}\left(\displaystyle{\int_{\R^N}}U_{0,1}^{2^{*}-2}(x)\,\frac{(1-|x|^{2})^{2}}{(1+x^{2})^{N}}\,dx+o(1)\right),
\end{align*}
where $(*)$ holds from \eqref{a-2}. Similarly, we find
\begin{align*}
I_{3,\varepsilon}
        =& \frac{\varepsilon(N-2)\left(1-\lambda_{i,\varepsilon}\right)\,b_{i}}{2\,\|u_\varepsilon\|_{\infty}^{\frac{N+2}{N-2}}} \left(\displaystyle{\int_{\R^N}}\frac{(1-|x|^{2})^{2}}{(1+x^{2})^{N}}\,dx+o(1)\right),
\end{align*}
and
\begin{align*}
    I_{4,\varepsilon}
    =\frac{2\varepsilon\, b_{i}}{\|u_\varepsilon\|_{\infty}^{\frac{N+2}{N-2}}}\left(\displaystyle{\int_{\R^N}}U_{0,1}(x)\,\frac{1-|x|^{2}}{(1+x^{2})^{\frac{N}{2}}}\,dx+o(1)\right).
\end{align*}
Thus, if $b_{i}\neq 0$, combining the above estimates of both sides in \eqref{eq 2.15} with \eqref{eq 2.4}, we derive that
\begin{equation}\label{2-20}
    \begin{split}
     &\frac{1}{\|u_\varepsilon\|_{\infty}^{3}}\left(-\,\frac{\sigma_{N}^{2}(N-2)}{N^{2}}R(x_0)-\frac{\sigma_{N}(N-2)R(x_0)}{a_{N}N^{2}}\displaystyle{\int_{\R^N}}U_{0,1}(x)\,\frac{1-|x|^{2}}{(1+x^{2})^{\frac{N}{2}}}dx+o(1)\right)\\
    =&\left(1-\lambda_{i,\varepsilon}\right)\Bigg(\frac{N+2}{2\,\|u_\varepsilon\|_{\infty}}\displaystyle{\int_{\R^N}}U_{0,1}^{2^{*}-2}(x)\,\frac{(1-|x|^{2})^{2}}{(1+x^{2})^{N}}dx+O\left(\frac{1}{\|u_\varepsilon\|^{3}_{\infty}}\right)\Bigg).
    \end{split}
\end{equation}
Hence, a direct calculation yields the desired result \eqref{eq 2.17}. Indeed, we define
\begin{alignat*}{2}
    &A:=\frac{N+2}{2}\displaystyle{\int_{\R^N}}U_{0,1}^{2^{*}-2}(x)\,\frac{(1-|x|^{2})^{2}}{(1+x^{2})^{N}}dx,\quad &&B:=-\frac{\sigma_{N}^{2}(N-2)R(x_0)}{N^{2}},\\
    &C:=\int_{\R^N}U_{0,1}(x)\,\frac{1-|x|^{2}}{(1+x^{2})^{\frac{N}{2}}}dx=-\sigma_{N}\frac{\Gamma\left(\frac{N}{2}\right)\Gamma\left(\frac{N}{2}-2\right)}{\Gamma\left(N-1\right)},\quad &&D:=\frac{\sigma_{N}(N-2)R(x_0)}{a_{N}N^{2}}.
\end{alignat*}
Then, from \eqref{2-20}, we derive that
\begin{align*}
\lambda_{i,\varepsilon}-1=\frac{1}{\|u_\varepsilon\|_{\infty}^{2}}\left(\frac{DC-B}{A}+o(1)\right),
\end{align*}
where
\begin{align*}
   DC-B:=&\left(\frac{\sigma_{N}}{N} \right)^{2}(N-2)R(x_0)\left(1-\frac{2\,\Gamma(N-2)}{\Gamma(N-1)}\right)\\
   =&\left(\frac{\sigma_{N}}{N} \right)^{2}(N-2)\,R(x_0)\left(1-\frac{2}{N-2}\right)=\left(\frac{\sigma_{N}}{N} \right)^{2}(N-4)\,R(x_0)>0,\quad\text{if $N>4$.}
\end{align*}
Hence, a simple computation yields that
\begin{align*}
    \kappa:=\frac{DC-B}{A}=\frac{2(N-4)\sigma_{N}^{2}R(x_0)}{(N+2)N^{2}\left(\displaystyle{\int_{\R^N}}\frac{(1-|x|^{2})^{2}}{(1+x^{2})^{N+2}}dx\right)}>0.
\end{align*}
\end{proof}

\noindent
\section{\label{Proof of Thm1.1}Estimates for the first eigenpair $\big(\lambda_{1,\varepsilon},v_{1,\varepsilon}\big)$}
\setcounter{equation}{0}
\vskip 0.2cm
In this section, we prove Theorem 1.1 by estimating the first eigenpair $\big(\lambda_{1,\varepsilon},v_{1,\varepsilon}\big)$.
\begin{proof}[\textbf{Proof of Theorem \ref{th1.1}}]
By the variational characterization of $\lambda_{1,\varepsilon}$, we have
    \begin{align*}
\lambda_{1,\varepsilon}
=\!\underset{v\in H_{0}^{1}(\Omega)\backslash\{0\}}{\inf}\frac{\displaystyle{\int_{\Omega}}|\nabla v|^{2}dx}{\displaystyle{\int_{\Omega}}\varepsilon v^{2}\!+\!(2^{*}_\mu-1)u_{\varepsilon}^{2^{*}_\mu-2}v^{2}\!\left(\!\displaystyle{\int_{\Omega}}\frac{u^{2^{*}_\mu}_{\varepsilon}(y)}{|x-y|^{\mu}}dy\!\right)\!+2^{*}_{\mu} u_{\varepsilon}^{2^{*}_\mu-1}v\left(\!\displaystyle{\int_{\Omega}}\frac{u^{2^{*}_\mu-1}_{\varepsilon}(y)v(y)}{|x-y|^{\mu}}dy\!\right)\!dx}.
        \end{align*}
Setting $v=u_{\varepsilon}$, a direct calculation yields that
\begin{align*}
    \lambda_{1,\varepsilon}\leq& \frac{\displaystyle{\int_{\Omega}}|\nabla u_{\varepsilon}|^{2}dx}{\varepsilon\displaystyle{\int_{\Omega}} u_{\varepsilon}^{2}dx+(2\cdot2^{*}_\mu-1)\displaystyle{\int_{\Omega}}\displaystyle{\int_{\Omega}}\frac{u_{\varepsilon}^{2^{*}_\mu}(x)u_{\varepsilon}^{2^{*}_\mu}(y)}{|x-y|^{\mu}}dxdy}\\
    =&\frac{\varepsilon \displaystyle{\int_{\Omega}} u_{\varepsilon}^{2}dx+\displaystyle{\int_{\Omega}}\displaystyle{\int_{\Omega}}\frac{u_{\varepsilon}^{2^{*}_\mu}(x)u_{\varepsilon}^{2^{*}_\mu}(y)}{|x-y|^{\mu}}dxdy}{\varepsilon\displaystyle{\int_{\Omega}} u_{\varepsilon}^{2}dx+(2\cdot2^{*}_\mu-1)\displaystyle{\int_{\Omega}}\displaystyle{\int_{\Omega}}\frac{u_{\varepsilon}^{2^{*}_\mu}(x)u_{\varepsilon}^{2^{*}_\mu}(y)}{|x-y|^{\mu}}dxdy}\\
    =&\frac{\varepsilon \|u_{\varepsilon}\|_{\infty}^{2-2^{*}}\displaystyle{\int_{\Omega_{\varepsilon}}} \tilde{u}_{\varepsilon}^{2}dx+\displaystyle{\int_{\Omega_{\varepsilon}}}\displaystyle{\int_{\Omega_{\varepsilon}}}\frac{\tilde{u}_{\varepsilon}^{2^{*}_\mu}(x)\tilde{u}_{\varepsilon}^{2^{*}_\mu}(y)}{|x-y|^{\mu}}dxdy}{\varepsilon \|u_{\varepsilon}\|_{\infty}^{2-2^{*}}\displaystyle{\int_{\Omega_{\varepsilon}}} \tilde{u}_{\varepsilon}^{2}dx+(2\cdot2^{*}_\mu-1)\displaystyle{\int_{\Omega_{\varepsilon}}}\displaystyle{\int_{\Omega_{\varepsilon}}}\frac{\tilde{u}_{\varepsilon}^{2^{*}_\mu}(x)\tilde{u}_{\varepsilon}^{2^{*}_\mu}(y)}{|x-y|^{\mu}}dxdy}\\
    =&\frac{\displaystyle{\int_{\R^{N}}}\displaystyle{\int_{\R^{N}}}\frac{{U}_{0,1}^{2^{*}_\mu}(x){U}_{0,1}^{2^{*}_\mu}(y)}{|x-y|^{\mu}}dxdy+o(1)}{(2\cdot2^{*}_\mu-1)\displaystyle{\int_{\R^{N}}}\displaystyle{\int_{\R^{N}}}\frac{{U}_{0,1}^{2^{*}_\mu}(x){U}_{0,1}^{2^{*}_\mu}(y)}{|x-y|^{\mu}}dxdy+o(1)}
\end{align*}
as $\varepsilon\rightarrow0$, which implies that
$$\underset{\varepsilon\rightarrow 0}{\limsup}\,{\lambda_{1,\varepsilon}}\leq \frac{1}{2\cdot2^{*}_\mu-1}.$$
Hence, by choosing a subsequence, we may assume that $\lambda_{1,\varepsilon}\rightarrow \lambda_{1}\in\left[\,0,(2\cdot2^{*}_\mu-1)^{-1}\right]$. On the other hand, since $\tilde{v}_{1,\varepsilon}$ satisfies \eqref{eq 1.13}, then using similar arguments as in the proof of \eqref{2-13}, we know that $\tilde{v}_{1,\varepsilon}$ is bounded in $D^{1,2}(\R^{N})$. Up to a subsequence, there exists $0\not\equiv V_1\in\mathcal{D}^{1,2}(\R^{N})$ such that $\tilde{v}_{1,\varepsilon} \rightharpoonup  V_{1}$ in $\mathcal{D}^{1,2}(\R^N)$  and $\tilde{v}_{1,\varepsilon}\rightarrow V_1$ in $C^{1}_{loc}(\R^N)$, where $V_1$ satisfies
\begin{equation*}
 -\Delta V_1=\lambda_{1}\left\{(2^{*}_\mu-1) {U}_{0,1}^{2^{*}_\mu-2}V_1\left(\displaystyle{\int_{\R^{N}}}\frac{{U}_{0,1}^{2^{*}_\mu}(y)}{|x-y|^{\mu}}dy\right)+2^{*}_{\mu} {U}_{0,1}^{2^{*}_\mu-1}\left(\displaystyle{\int_{\R^{N}}}\frac{{U}_{0,1}^{2^{*}_\mu-1}(y)V_{1}(y)}{|x-y|^{\mu}}dy\right)\right\},\quad \text{in $\R^N$}.
\end{equation*}
Since there exists no eigenvalue $\lambda$ less than $1/(2\cdot2^{*}_\mu-1)$ from Lemma
\ref{lma2.5}, we obtain that
$$\lambda_{1}=\frac{1}{2\cdot2^{*}_\mu-1}\quad\text{and}\quad V_{1}={U}_{0,1},$$ which implies that \eqref{eq 1.16} and \eqref{eq 1.17} in Theorem \ref{th1.1} hold.
 Moreover, we derive that $\lambda_{1,\varepsilon}$ is simple. In fact, suppose on the contrary, there exist at least two eigenfunctions $v^{(1)}_{1,\varepsilon}$ and $v^{(2)}_{1,\varepsilon}$,  corresponding to $\lambda_{1,\varepsilon}$, which are orthogonal in the sense of \eqref{eqq1-11}. Since the rescaled function $\tilde{v}^{(1)}_{1,\varepsilon}$ and $\tilde{v}^{(2)}_{1,\varepsilon}$ converge to ${U}_{0,1}$ respectively  for $\varepsilon>0$ small, we derive that
 \begin{align*}
     0=&(2^{*}_\mu-1)\displaystyle{\int_{\Omega}}u^{2^{*}_\mu-2}_{\varepsilon}(x)\,v^{(1)}_{1,\varepsilon}(x)\,v^{(2)}_{1,\varepsilon}(x)\left(\displaystyle{\int_{\Omega}}\frac{u^{2^{*}_\mu}_{\varepsilon}(y)}{|x-y|^{\mu}}dy\right)dx+\varepsilon \displaystyle{\int_{\Omega}}v^{(1)}_{1,\varepsilon}(x)\,v^{(2)}_{1,\varepsilon}(x)\,dx\\
&+2^{*}_\mu\displaystyle{\int_{\Omega}} u^{2^{*}_\mu-1}_{\varepsilon}(x)\,v^{(2)}_{1,\varepsilon}(x)\,\left(\displaystyle{\int_{\Omega}}\frac{u^{2^{*}_\mu-1}_{\varepsilon}(y)v^{(1)}_{1,\varepsilon}(y)}{|x-y|^{\mu}}dy\right)dx\\
=&(2^{*}_\mu-1)\displaystyle{\int_{\Omega_{\varepsilon}}}\tilde{u}^{2^{*}_\mu-2}_{\varepsilon}(x)\,\tilde{v}^{(1)}_{1,\varepsilon}(x)\,\tilde{v}^{(2)}_{1,\varepsilon}(x)\,\left(\displaystyle{\int_{\Omega_{\varepsilon}}}\frac{\tilde{u}^{2^{*}_\mu}_{\varepsilon}(y)}{|x-y|^{\mu}}dy\right)dx+ \frac{\varepsilon}{\|u_\varepsilon\|_{\infty}^{2^{*}-2}}\displaystyle{\int_{\Omega_{\varepsilon}}}\tilde{v}^{(1)}_{1,\varepsilon}(x)\,\tilde{v}^{(2)}_{1,\varepsilon}(x)\,dx\\
&+2^{*}_\mu\displaystyle{\int_{\Omega_{\varepsilon}}} \tilde{u}^{2^{*}_\mu-1}_{\varepsilon}(x)\,\tilde{v}^{(2)}_{1,\varepsilon}(x)\,\left(\displaystyle{\int_{\Omega_{\varepsilon}}}\frac{\tilde{u}^{2^{*}_\mu-1}_{\varepsilon}(y)\tilde{v}_{1,\varepsilon}(y)}{|x-y|^{\mu}}dy\right)dx\\
\longrightarrow&(2^{*}_\mu-1)\displaystyle{\int_{\R^N}}{U}_{0,1}^{2^{*}_\mu}(x)\left(\displaystyle{\int_{\R^N}}\frac{{U}_{0,1}^{2^{*}_\mu}(y)}{|x-y|^{\mu}}dy\right)dx+2^{*}_\mu\displaystyle{\int_{\R^N}}{U}_{0,1}^{2^{*}_\mu}(x)\left(\displaystyle{\int_{\R^N}}\frac{{U}_{0,1}^{2^{*}_\mu}(y)}{|x-y|^{\mu}}dy\right)dx\\
    =&(2\cdot2^{*}_\mu-1)\displaystyle{\int_{\R^N}}\displaystyle{\int_{\R^N}}\frac{{U}_{0,1}^{2^{*}_\mu}(x){U}_{0,1}^{2^{*}_\mu}(y)}{|x-y|^{\mu}}\,dxdy=(2\cdot2^{*}_\mu-1)\displaystyle{\int_{\R^N}}{U}_{0,1}^{2^{*}}(x)\,dx,
 \end{align*}
which is a contradiction. Thus, $\lambda_{1,\varepsilon}$ is simple. Finally, we prove that \eqref{eq 1.18} holds. A simple computation yields that the function $\phi_{1,\varepsilon}:=\|u_\varepsilon\|_{\infty}^{2}v_{1,\varepsilon}$ satisfies
\begin{equation}\label{eq 3.1}
    \begin{cases}
        -\Delta \phi_{1,\varepsilon}=f_{1,\varepsilon}(x),\quad &x\in\Omega,\\
     \phi_{1,\varepsilon}=0,\quad &\text{on $\partial\Omega$},
     \end{cases}
 \end{equation}
 where
 \begin{align*}
f_{1,\varepsilon}(x):=&(2^{*}_\mu-1)\,\lambda_{1,\varepsilon}\|u_\varepsilon\|_{\infty}^{2}u^{2^{*}_\mu-2}_{\varepsilon}(x)v_{1,\varepsilon}(x)\left(\displaystyle{\int_{\Omega}}\frac{u^{2^{*}_\mu}_{\varepsilon}(y)}{|x-y|^{\mu}}dy\right)+\varepsilon\,\lambda_{1,\varepsilon}\|u_\varepsilon\|_{\infty}^{2} v_{1,\varepsilon}(x)\nonumber\\
&+2^{*}_\mu\,\lambda_{1,\varepsilon}\|u_\varepsilon\|_{\infty}^{2} u^{2^{*}_\mu-1}_{\varepsilon}(x)\left(\displaystyle{\int_{\Omega}}\frac{u^{2^{*}_\mu-1}_{\varepsilon}(y)v_{1,\varepsilon}(y)}{|x-y|^{\mu}}dy\right).
 \end{align*}
 Since $\lambda_{1,\varepsilon}\rightarrow 1/(2\cdot2^{*}_\mu-1)$ and $\tilde{v}_{1,\varepsilon}\rightarrow U_{0,1}$ in $C^{1}_{loc}(\R^N)$ as $\varepsilon\rightarrow0$, the same arguments as in the proof of \eqref{eq 2.3} implies that $f_{1,\varepsilon}\in L^{1}(\Omega)$ and $f_{1,\varepsilon}(x)\rightarrow0$ for any $x\in\bar{\Omega}\backslash\{x_0\}$ as $\varepsilon\rightarrow0$. Moreover, $$\displaystyle{\int_{\Omega}}f_{1,\varepsilon}(x)\psi(x)dx\rightarrow\frac{\sigma_{N}}{N}\psi(x_{0}),\quad\forall\,\psi(x)\in C_{0}^{\infty}(\Omega).$$
Thus, we obtain that $f_{1,\varepsilon}\rightarrow\frac{\sigma_{N}}{N}\delta_{x_0}$ in the sense of distributions. By Lemma \ref{lema 2.3} and standard elliptic estimates, we derive \eqref{eq 1.18}. This completes the proof of Theorem \ref{th1.1}.
 \end{proof}

\noindent
\section{\label{Proof of Thm1.2}Estimates for the eigenpairs $\left\{\big(\lambda_{i,\varepsilon},v_{i,\varepsilon}\big)\right\}_{i=2}^{N+1}$}
\setcounter{equation}{0}
\vskip 0.2cm
 Let $u_\varepsilon$ be a solution of \eqref{eq 1.1} satisfying \eqref{eq 1.2}, from \eqref{eq 1.8}, we know that, for $\varepsilon>0$ small, there exist $x_\varepsilon\in \Omega$ and $\tau_\varepsilon\in\R^{+}$ such that $\|u_\varepsilon\|_{L^{\infty}(\Omega)}=u_\varepsilon(x_\varepsilon)=\tau_{\varepsilon}^{\frac{N-2}{2}}\rightarrow+\infty$ and $x_\varepsilon\rightarrow x_0\in \Omega$, where $x_0$ is the blow-up point. Noting that the point $x_0$ is an interior point of $\Omega$, we may assume that there exists $\rho>0$ such that $B(x_\varepsilon,2\rho)\subseteq \Omega$ for any $\varepsilon>0$ sufficiently small. Define
\begin{align}\label{eq 4.1}
    \phi(x)=\bar{\phi}(x-x_\varepsilon),
\end{align}
where $\bar{\phi}\in C_0^{\infty}\left(B(0,2\rho)\right)$ satisfying $\bar{\phi}\equiv 1$ in $B(0,\rho)$, $0\leq\bar{\phi}\leq1$ in $B(0,2\rho)$. Denote
\begin{align}
    \psi_{i,\varepsilon}(x)&=\phi(x)\,\frac{\partial u_{\varepsilon}(x)}{\partial x_i},\quad i=1,\cdots,N,\label{eq 4.2}\\
    \psi_{N+1,\varepsilon}(x)&=\phi(x)\left((x-x_{\varepsilon})\cdot\nabla u_{\varepsilon}(x)+\frac{N-2}{2}u_{\varepsilon}(x)\right).\label{eq 4.3}
\end{align}

Using similar arguments as Lemma 2.7 in \cite{Ta3}, we obtain the following result.
\begin{lemma}\label{lema 4.1}
For $\varepsilon>0$ sufficiently small, the functions
$$u_\varepsilon,\, \psi_{1,\varepsilon}, \cdots, \,\psi_{N+1,\varepsilon}$$
are linearly independent in $H^{1}_{0}(\Omega)$.
\end{lemma}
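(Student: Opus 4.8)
The plan is to argue by contradiction through a blow-up rescaling around the concentration point $x_\varepsilon$, in the spirit of \cite[Lemma~2.7]{Ta3}. Suppose the functions are linearly dependent along a sequence $\varepsilon_n\to 0$: there are constants $c_0^{(n)},c_1^{(n)},\dots,c_{N+1}^{(n)}$, not all zero, with
\[
c_0^{(n)}u_{\varepsilon_n}+\sum_{i=1}^{N}c_i^{(n)}\psi_{i,\varepsilon_n}+c_{N+1}^{(n)}\psi_{N+1,\varepsilon_n}=0\qquad\text{in }H_0^1(\Omega).
\]
Since $u_{\varepsilon_n}\in C^2(\Omega)$ this holds pointwise on $\Omega$, so I would substitute $y=\tau_{\varepsilon_n}^{-1}x+x_{\varepsilon_n}$. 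A direct computation gives the scaling laws $u_{\varepsilon_n}\mapsto\tau_{\varepsilon_n}^{(N-2)/2}\tilde u_{\varepsilon_n}$, $\psi_{i,\varepsilon_n}\mapsto\tau_{\varepsilon_n}^{N/2}\,\tilde\phi_n\,\partial_{x_i}\tilde u_{\varepsilon_n}$ for $1\le i\le N$, and $\psi_{N+1,\varepsilon_n}\mapsto\tau_{\varepsilon_n}^{(N-2)/2}\,\tilde\phi_n\big(x\cdot\nabla\tilde u_{\varepsilon_n}+\tfrac{N-2}{2}\tilde u_{\varepsilon_n}\big)$, with $\tilde\phi_n(x)=\bar\phi(\tau_{\varepsilon_n}^{-1}x)$; dividing by $\tau_{\varepsilon_n}^{(N-2)/2}$ yields
\[
c_0^{(n)}\tilde u_{\varepsilon_n}+\tau_{\varepsilon_n}\,\tilde\phi_n\sum_{i=1}^{N}c_i^{(n)}\partial_{x_i}\tilde u_{\varepsilon_n}+c_{N+1}^{(n)}\,\tilde\phi_n\Big(x\cdot\nabla\tilde u_{\varepsilon_n}+\tfrac{N-2}{2}\tilde u_{\varepsilon_n}\Big)=0\qquad\text{in }\Omega_{\varepsilon_n}.
\]

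The next step is to normalize so that the mismatched powers of $\tau_{\varepsilon_n}$ do not destroy the limit. I would set $b_n:=|c_0^{(n)}|+|c_{N+1}^{(n)}|+\tau_{\varepsilon_n}\sum_{i=1}^{N}|c_i^{(n)}|>0$, divide by $b_n$, and relabel $\hat c_0^{(n)}:=c_0^{(n)}/b_n$, $\hat c_{N+1}^{(n)}:=c_{N+1}^{(n)}/b_n$ and $\hat c_i^{(n)}:=\tau_{\varepsilon_n}c_i^{(n)}/b_n$ for $1\le i\le N$, so that $|\hat c_0^{(n)}|+|\hat c_{N+1}^{(n)}|+\sum_{i=1}^{N}|\hat c_i^{(n)}|=1$ for every $n$. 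Passing to a subsequence, $\hat c_j^{(n)}\to\hat c_j$ with $|\hat c_0|+|\hat c_{N+1}|+\sum_{i=1}^{N}|\hat c_i|=1$. On any compact subset of $\R^N$ one has $\tilde\phi_n\equiv1$ for $n$ large (because $\tau_{\varepsilon_n}^{-1}x\to0$ uniformly there), and by Remark~\ref{Rem1.2}, in particular \eqref{eq 1.9}, $\tilde u_{\varepsilon_n}\to U_{0,1}$ in $C^1_{loc}(\R^N)$. Letting $n\to\infty$ in the rescaled identity then gives
\[
\hat c_0\,U_{0,1}+\sum_{i=1}^{N}\hat c_i\,\partial_{x_i}U_{0,1}+\hat c_{N+1}\Big(x\cdot\nabla U_{0,1}+\tfrac{N-2}{2}U_{0,1}\Big)=0\qquad\text{in }\R^N.
\]

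To conclude I would invoke Lemma~\ref{lma2.5}. There $U_{0,1}$ is the eigenfunction of \eqref{eq 1.14} for $\lambda_1=1/(2\cdot2^*_\mu-1)$, whereas $\partial_{x_i}U_{0,1}=-\partial_{\xi_i}U_{\xi,1}|_{\xi=0}$ $(i=1,\dots,N)$ and $x\cdot\nabla U_{0,1}+\tfrac{N-2}{2}U_{0,1}=\partial_\tau U_{0,\tau}|_{\tau=1}$ are eigenfunctions for the eigenvalue $1$. These last $N+1$ functions are linearly independent (the $\partial_{x_i}U_{0,1}$ are odd in $x_i$, the last one is radial), and, since $\lambda_1\neq1$ and \eqref{eq 1.14} is self-adjoint, $U_{0,1}$ does not lie in their span; hence the $N+2$ functions appearing in the limit identity are linearly independent, which forces $\hat c_0=\hat c_1=\dots=\hat c_{N+1}=0$, contradicting $|\hat c_0|+|\hat c_{N+1}|+\sum_{i=1}^{N}|\hat c_i|=1$.

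The only genuinely delicate point is the choice of normalization $b_n$: because $u_\varepsilon$ and $\psi_{N+1,\varepsilon}$ concentrate at the rate $\tau_\varepsilon^{(N-2)/2}$ while the $\psi_{i,\varepsilon}$ with $i\le N$ concentrate at the faster rate $\tau_\varepsilon^{N/2}$, $b_n$ must be designed to keep all renormalized coefficients bounded while preventing all of them from decaying, so that a nontrivial limiting relation survives. Everything else — the pointwise meaning of the $H_0^1$ identity, the vanishing of the cut-off $\tilde\phi_n$ on compacta, the $C^1_{loc}$ convergence of $\tilde u_\varepsilon$, and the eigenfunction classification of Lemma~\ref{lma2.5} — is already available in the excerpt, so no new estimates are needed.
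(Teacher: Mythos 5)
Your argument is correct, but it follows a genuinely different route from the paper's. The paper first shows $\alpha_{0,\varepsilon}=0$ by a pointwise/PDE argument: assuming $u_\varepsilon=\sum_i\beta_{i,\varepsilon}\psi_{i,\varepsilon}$, evaluation at the maximum point $x_\varepsilon$ (where $\nabla u_\varepsilon=0$ and $\phi\equiv1$) forces $\beta_{N+1,\varepsilon}=\tfrac{2}{N-2}>0$, while comparing $-\Delta u_\varepsilon$ with $-\Delta\bigl(\sum_i\beta_{i,\varepsilon}\psi_{i,\varepsilon}\bigr)$ on $B_\rho(x_\varepsilon)$ via the linearized equations \eqref{eqA-4}--\eqref{eqA-5} yields $2(1-2^{*}_\mu)u_\varepsilon^{2^{*}_\mu-1}\bigl(\int_\Omega\frac{u_\varepsilon^{2^{*}_\mu}(y)}{|x-y|^\mu}dy\bigr)=2\varepsilon\beta_{N+1,\varepsilon}u_\varepsilon$, giving a sign contradiction; it then kills $\alpha_{N+1,\varepsilon}$ by another evaluation at $x_\varepsilon$, and only rescales the remaining relation $\sum_{i=1}^N\alpha_{i,\varepsilon}\psi_{i,\varepsilon}=0$. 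You instead rescale the full relation at once and absorb the mismatch between the concentration rates $\tau_\varepsilon^{(N-2)/2}$ (for $u_\varepsilon$, $\psi_{N+1,\varepsilon}$) and $\tau_\varepsilon^{N/2}$ (for $\psi_{i,\varepsilon}$, $i\le N$) into the weighted normalization $b_n$, then conclude from the linear independence of the $N+2$ limiting profiles $U_{0,1}$, $\partial_{x_i}U_{0,1}$, $x\cdot\nabla U_{0,1}+\tfrac{N-2}{2}U_{0,1}$ (which can be checked directly by multiplying the limit identity by $(1+|x|^2)^{N/2}$ and comparing polynomial coefficients, or via the eigenvalue separation of Lemma \ref{lma2.5} as you do). Your approach buys a single, uniform blow-up argument that needs only the $C^1_{loc}$ convergence \eqref{eq 1.9} and the classification of the limit kernel, dispensing entirely with Lemma \ref{lma A-3} and the nonlocal sign computation; the paper's approach avoids having to track coefficients with different scaling rates, at the cost of the extra PDE manipulations. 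Both are complete proofs.
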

\begin{proof}
Assume by contradiction that there exist $\alpha_{0,\varepsilon},\,\alpha_{1,\varepsilon},\cdots,\,\alpha_{N+1,\varepsilon}$ such that $\sum_{i=0}^{N+1}\alpha_{i,\varepsilon}^{2}\neq 0$ and
\begin{align}\label{eq 4.4}
\alpha_{0,\varepsilon}\,u_\varepsilon+\sum_{i=1}^{N+1}\alpha_{i,\varepsilon}\,\psi_{i,\varepsilon}\equiv 0,\quad \forall\, x\in\Omega.
\end{align}
WLOG, we can assume that
\begin{align}\label{eq 4.5}
\sum_{i=0}^{N+1}\alpha_{i,\varepsilon}^{2}\equiv 1,\quad \text{for any $\varepsilon>0$ small.}
\end{align}

 Firstly, we claim that $\alpha_{0,\varepsilon}=0$. If not, from \eqref{eq 4.4}, we derive that
\begin{align}\label{eq 4.6}
u_\varepsilon(x)=\sum_{i=1}^{N+1}\beta_{i,\varepsilon}\psi_{i,\varepsilon}(x),\quad \beta_{i,\varepsilon}:=-\frac{\alpha_{i,\varepsilon}}{\alpha_{0,\varepsilon}}.
\end{align}
Inserting $x=x_{\varepsilon}$ in \eqref{eq 4.6}, since $\phi(x_\varepsilon)=1$ and $\nabla u_\varepsilon(x_\varepsilon)=0$, we have
\begin{align*}
    u_{\varepsilon}(x_\varepsilon)=\frac{(N-2)\beta_{N+1,\varepsilon}}{2}\, u_{\varepsilon}(x_\varepsilon),
\end{align*}
which implies that
\begin{align}\label{eq 4.7}
\beta_{N+1,\varepsilon}=\frac{2}{N-2}>0,\quad\text{if $\alpha_{0,\varepsilon}\neq0$.}
\end{align}
Moreover, note that $\phi(x)\equiv1,\,\forall \,x\in B_{\rho}(x_\varepsilon)$. Then from \eqref{eqA-4} and \eqref{eqA-5}, we obtain that for any $x\in B_{\rho}(x_\varepsilon)$,
\begin{equation}\label{4-8}
\begin{split}
-\Delta\left(\sum_{i=1}^{N+1}\beta_{i,\varepsilon}\psi_{i,\varepsilon}(x)\right)=&\varepsilon\left(\sum_{i=1}^{N+1}\beta_{i,\varepsilon}\psi_{i,\varepsilon}(x)\right)+2\varepsilon \beta_{N+1,\varepsilon}u_\varepsilon(x)\\&+(2^{*}_\mu-1)u_\varepsilon^{2^{*}_\mu-2}(x)\left(\sum_{i=1}^{N+1}\beta_{i,\varepsilon}\psi_{i,\varepsilon}(x)\right)\left(\displaystyle{\int_{\Omega}}\frac{u_\varepsilon^{2^{*}_\mu}(y)}{|x-y|^{\mu}}dy\right)\\
 &+2^{*}_\mu u_\varepsilon^{2^{*}_\mu-1}(x)\left(\displaystyle{\int_{\Omega}}\frac{u_\varepsilon^{2^{*}_\mu-1}(y)\left(\sum_{i=1}^{N+1}\beta_{i,\varepsilon}\psi_{i,\varepsilon}(y)\right)}{|x-y|^{\mu}}dy\right).
\end{split}
    \end{equation}
On the other hand, since $u_\varepsilon$ is the solution to \eqref{eq 1.1}, combining with \eqref{4-8}, we derive that for any $x\in B_{\rho}(x_\varepsilon)$ and $\alpha_{0,\varepsilon}\neq0$, the following equality holds
\begin{align*}
 2(1-2^{*}_\mu)\,u_\varepsilon^{2^{*}_\mu-1}(x)\left(\displaystyle{\int_{\Omega}}\frac{u_\varepsilon^{2^{*}_\mu}(y)}{|x-y|^{\mu}}dy\right)=2\varepsilon \beta_{N+1,\varepsilon}\,u_\varepsilon(x),
\end{align*}
which implies that
     \begin{align*}
\beta_{N+1,\varepsilon}=\frac{(1-2^{*}_\mu)\,u_\varepsilon^{2^{*}_\mu-2}(x)\left(\displaystyle{\int_{\Omega}}\frac{u_\varepsilon^{2^{*}_\mu}(y)}{|x-y|^{\mu}}dy\right)}{\varepsilon },\quad\forall\,x\in B_{\rho}(x_\varepsilon).
     \end{align*}
Recalling \eqref{eq 4.7}, we get a contradiction since $(1-2^{*}_\mu)<0$ when $N\geq6$ and $\mu\in(0,4)$. Hence, we conclude that $\alpha_{0,\varepsilon}=0$.

Next, we prove that $\alpha_{N+1,\varepsilon}=0$. Indeed, from the first step, we know that \eqref{eq 4.4} reduces into
\begin{align*}
\sum_{i=1}^{N}\alpha_{i,\varepsilon}\psi_{i,\varepsilon}+\alpha_{N+1,\varepsilon}\psi_{N+1,\varepsilon}\equiv 0,\quad \forall\, x\in\Omega.
\end{align*}
Then, setting $x=x_\varepsilon$, since $\nabla u_\varepsilon(x_\varepsilon)=0$ and $\phi(x_\varepsilon)=1$, we have
$$\frac{(N-2)\alpha_{N+1,\varepsilon}}{2}\,u_\varepsilon(x_\varepsilon)=0,$$
which implies that $\alpha_{N+1,\varepsilon}=0$. Now, we derive that for any $x\in\Omega$, $\sum_{i=1}^{N}\alpha_{i,\varepsilon}\psi_{i,\varepsilon}(x) \equiv 0$. By scaling, we have
$$\sum_{i=1}^{N}\alpha_{i,\varepsilon}\,{\phi}_{\varepsilon}(x)\,\frac{\partial\tilde{u}_{\varepsilon}(x)}{\partial x_{i}} \equiv 0,\quad \forall\,x\in\Omega_{\varepsilon},$$
where $\phi_{\varepsilon}(x):=\phi\left(\tau_{\varepsilon}^{-1}{x}+x_\varepsilon\right)$. From \eqref{eq 1.9}, we know that the rescaled function $\tilde{u}_{\varepsilon}$ converges to the function ${U}_{0,1}$ in $C^{1}_{loc}(\R^N)$, which implies that
\begin{align*}
\sum_{i=1}^{N}\alpha_{i}\frac{\partial{U}_{0,1}(x)}{\partial x_i}\equiv 0,\quad \forall\,x\in \R^N,
\end{align*}
where $\alpha_{i}=\underset{\varepsilon\rightarrow0}{\lim}\alpha_{i,\varepsilon}$. Since the functions $\frac{\partial{U}_{0,1}}{\partial x_i}$ are linearly independent, we obtain that  $\alpha_i=0$ for $i=1,\cdots,\,N$, which is a contradiction with \eqref{eq 4.5}. Thus, we complete the proof.
\end{proof}
\vskip 0.2cm
In the following, we prove Theorem \ref{th1.2}.
\smallskip
\begin{lemma}\label{lema 4.2}
    For $i=2,\cdots,\,N+1$, we have
    \begin{equation}\label{eq 4.13}
\lambda_{i,\varepsilon}\leq1+O\left(\frac{1}{\|u_{\varepsilon}\|_{\infty}^{2^{*}}}\right),
    \end{equation}
and $\underset{\varepsilon\rightarrow 0}{\lim}\,\lambda_{i,\varepsilon}=1$.
\end{lemma}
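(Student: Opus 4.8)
The plan is to obtain the upper bound \eqref{eq 4.13} by feeding suitable trial functions into the min-max (Courant–Fischer) characterisation of $\lambda_{i,\varepsilon}$, and then to pin down the limit $\lambda_{i,\varepsilon}\to 1$ using the spectral structure of the limiting problem from Lemma \ref{lma2.5}. Write the Rayleigh quotient associated with \eqref{eq 1.12} as
\[
Q_\varepsilon(v)=\frac{\displaystyle{\int_\Omega}|\nabla v|^2\,dx}{\displaystyle{\int_\Omega}\varepsilon v^2+(2^{*}_\mu-1)u_\varepsilon^{2^{*}_\mu-2}v^2\Big(\displaystyle{\int_\Omega}\tfrac{u_\varepsilon^{2^{*}_\mu}(y)}{|x-y|^\mu}dy\Big)+2^{*}_\mu u_\varepsilon^{2^{*}_\mu-1}v\Big(\displaystyle{\int_\Omega}\tfrac{u_\varepsilon^{2^{*}_\mu-1}(y)v(y)}{|x-y|^\mu}dy\Big)\,dx}.
\]
By the variational characterisation, $\lambda_{i,\varepsilon}\le \max_{v\in E}Q_\varepsilon(v)$ for any $(i+1)$-dimensional subspace $E\subset H^1_0(\Omega)$; I take $E=E_{i}:=\mathrm{span}\{u_\varepsilon,\psi_{1,\varepsilon},\dots,\psi_{i-1,\varepsilon}\}$, which has dimension $i$ wait — one needs an $i$-dimensional space to bound $\lambda_{i,\varepsilon}$, so I take $E=\mathrm{span}\{u_\varepsilon,\psi_{1,\varepsilon},\dots,\psi_{i-1,\varepsilon}\}$ of dimension $i$; Lemma \ref{lema 4.1} guarantees these functions are linearly independent. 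The core of the argument is then to show that $Q_\varepsilon(v)\le 1+O(\|u_\varepsilon\|_\infty^{-2^{*}})$ uniformly for $v\in E$ with, say, unit $H^1_0$-norm.

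First I would record that $\psi_{j,\varepsilon}$ ($1\le j\le N$) and $\psi_{N+1,\varepsilon}$ are, away from the blow-up point, exact solutions of the linearised equation \eqref{eq 1.12} with $\lambda=1$: this is the content of the identities \eqref{eqA-4}–\eqref{eqA-5} (cf. Lemma \ref{lma A-3}), the only defects coming from (a) the cut-off $\phi$ supported near $\partial B(x_\varepsilon,2\rho)$ and (b) the $2\varepsilon u_\varepsilon$ term in the Pohozaev-type identity for $\psi_{N+1,\varepsilon}$. Because $u_\varepsilon\le C U_{x_\varepsilon,\tau_\varepsilon}$ by \eqref{eq 2.2} and all its derivatives decay like powers of $(1+\tau_\varepsilon|x-x_\varepsilon|)^{-1}$, the contributions to both numerator and denominator of $Q_\varepsilon$ from the region $\{\phi\not\equiv 1\}$, i.e. $|x-x_\varepsilon|\gtrsim\rho$, are of order $\tau_\varepsilon^{-(N-2)}=\|u_\varepsilon\|_\infty^{-2}$ at worst, while the leading terms of the denominator (the $\mathcal{D}^{1,2}$-mass of $\nabla U_{0,1}$, resp. of $D_\tau U_{0,1}$, after rescaling) are of order $1$. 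Comparing numerator and denominator after the rescaling $x\mapsto \tau_\varepsilon^{-1}x+x_\varepsilon$, using $\tilde u_\varepsilon\to U_{0,1}$ in $C^1_{loc}$ \eqref{eq 1.9}, the HLS inequality to control the Hartree double integrals, and \eqref{eq 1.7} to collapse the convolution against $U_{0,1}^{2^{*}_\mu}$ into a power of $U_{0,1}$, gives $Q_\varepsilon(\psi_{j,\varepsilon})=1+O(\varepsilon+\|u_\varepsilon\|_\infty^{-2^{*}})$; the cross terms with $u_\varepsilon$ and among distinct $\psi_{j,\varepsilon}$ are estimated the same way (they are $o(1)$ relative to the diagonal, or vanish by oddness), and since \eqref{eq 2.4} yields $\varepsilon\sim C\|u_\varepsilon\|_\infty^{-2(N-4)/(N-2)}$ which for $N\ge 6$ is $o(\|u_\varepsilon\|_\infty^{-2})$ hmm — actually $2(N-4)/(N-2)<2^{*}=2N/(N-2)$ always, and one checks $2(N-4)/(N-2)\ge 2$ iff $N\ge 6$, so the genuinely relevant bound is the $\|u_\varepsilon\|_\infty^{-2^{*}}$ one coming from the boundary layer; in any case the $\varepsilon$-term is absorbed. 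This yields \eqref{eq 4.13}.

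For the lower direction, from \eqref{eq 4.13} and the ordering $\lambda_{1,\varepsilon}<\lambda_{2,\varepsilon}\le\cdots$ together with Theorem \ref{th1.1} (which gives $\lambda_{1,\varepsilon}\to(2\cdot 2^{*}_\mu-1)^{-1}<1$), the sequence $\{\lambda_{i,\varepsilon}\}$ is bounded, so along a subsequence $\lambda_{i,\varepsilon}\to\lambda_i\in[(2\cdot 2^{*}_\mu-1)^{-1},1]$. The rescaled eigenfunctions $\tilde v_{i,\varepsilon}$ are bounded in $\mathcal{D}^{1,2}(\R^N)$ by the estimate in \eqref{2-13} and $|\tilde v_{i,\varepsilon}|\le C U_{0,1}$ (Remark \ref{rmk2.1}), so up to a subsequence $\tilde v_{i,\varepsilon}\to V_i$ in $C^1_{loc}(\R^N)$ with $V_i\not\equiv 0$ (the non-vanishing exactly as in Lemma \ref{lema 2.8}), and passing to the limit in \eqref{eq 1.13} shows $V_i$ solves \eqref{eq 1.14} with eigenvalue $\lambda_i$. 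By Lemma \ref{lma2.5} the only eigenvalues of \eqref{eq 1.14} in $[(2\cdot 2^{*}_\mu-1)^{-1},1]$ are $(2\cdot 2^{*}_\mu-1)^{-1}$ and $1$; if $\lambda_i=(2\cdot 2^{*}_\mu-1)^{-1}$ then $V_i$ is a multiple of $U_{0,1}$, but an orthogonality argument against $\tilde v_{1,\varepsilon}\to U_{0,1}$ via the bilinear form in \eqref{eqq1-11} (mirroring the simplicity argument for $\lambda_{1,\varepsilon}$ in the proof of Theorem \ref{th1.1}) forces $V_i$ to be $L^2$-orthogonal-in-that-form to $U_{0,1}$, a contradiction with $\int U_{0,1}^{2^{*}}>0$. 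Hence $\lambda_i=1$, and since the limit is independent of the subsequence, $\lambda_{i,\varepsilon}\to 1$. The main obstacle is the uniform estimate of $Q_\varepsilon$ over the whole subspace $E$ — in particular controlling the two Hartree double integrals in the denominator and the boundary-layer error from the cut-off — which is where the HLS inequality, the decay estimate \eqref{eq 2.2}, and the identity \eqref{eq 1.7} must be combined carefully.
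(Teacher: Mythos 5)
Your plan follows essentially the same route as the paper: the same $i$-dimensional trial space $\mathrm{span}\{u_\varepsilon,\psi_{1,\varepsilon},\dots,\psi_{i-1,\varepsilon}\}$ in the min-max characterization, the same splitting of the Rayleigh quotient into $1+N_\varepsilon/D_\varepsilon$ with the defect coming from the cut-off layer and the $\varepsilon$-terms, and the same argument for $\lambda_i=1$ (boundedness of $\tilde v_{i,\varepsilon}$ in $\mathcal{D}^{1,2}$, passage to the limit problem \eqref{eq 1.14}, and exclusion of $\lambda_i=(2\cdot 2^*_\mu-1)^{-1}$ by orthogonality against $v_{1,\varepsilon}$).

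One point in your bookkeeping needs to be repaired, because the exponent $2^*$ in \eqref{eq 4.13} is load-bearing downstream (the proof of \eqref{eq 1.21} hinges on comparing $\kappa\|u_\varepsilon\|_\infty^{-2}$ against $O(\|u_\varepsilon\|_\infty^{-2^*})$, so $2^*>2$ is exactly the point). You assert that the boundary-layer errors are $O(\|u_\varepsilon\|_\infty^{-2})$ while ``the leading terms of the denominator \dots are of order $1$''; if both statements held for the same normalization of the trial function, the quotient would only be $1+O(\|u_\varepsilon\|_\infty^{-2})$, which is strictly weaker than \eqref{eq 4.13}. The correct accounting (as in \eqref{4-23}--\eqref{eq4.19} and \eqref{eq4.21} of the paper) is that on the unnormalized $\psi_{j,\varepsilon}$ the denominator is of order $\|u_\varepsilon\|_\infty^{2^*-2}\to\infty$ (since $\|\nabla\psi_{j,\varepsilon}\|_2^2\sim\tau_\varepsilon^2$), while the numerator defect stays $O(\|u_\varepsilon\|_\infty^{-2})$; it is this amplification of the denominator that upgrades the rate to $\|u_\varepsilon\|_\infty^{-2-(2^*-2)}=\|u_\varepsilon\|_\infty^{-2^*}$. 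Relatedly, estimating $Q_\varepsilon$ on the basis vectors plus ``cross terms are small'' does not by itself give a uniform bound over the subspace: for a general combination $a_0u_\varepsilon+\sum a_j\psi_{j,\varepsilon}$ the $a_0$-direction contributes a term of order $a_0^2$ to both $N_\varepsilon$ (negative, which is harmless) and $D_\varepsilon$, and one must rule out that a maximizer degenerates onto the $u_\varepsilon$-direction; the paper does this in Step 3 by normalizing $\sum_{j\ge1}a_{j,\varepsilon}^2=1$ and showing $a_{0,\varepsilon}^2\|u_\varepsilon\|_\infty^2=O(1)$, and your write-up should include an argument of this kind.
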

\begin{proof}
    By the variational characterization, $\lambda_{i,\varepsilon}$ can be expressed as
   \begin{equation*}
\lambda_{i,\varepsilon}\!=\!\mathop{\inf}_{W\subseteq H_{0}^{1}(\Omega)\atop dimW=i}\!{\underset{v\in W}{\max}}\frac{\displaystyle{\int_{\Omega}}|\nabla v|^{2}dx}{\displaystyle{\int_{\Omega}}\varepsilon v^{2}\!+\!(2^{*}_\mu-1)u_{\varepsilon}^{2^{*}_\mu-2}v^{2}\!\left(\!\displaystyle{\int_{\Omega}}\frac{u^{2^{*}_\mu}_{\varepsilon}(y)}{|x-y|^{\mu}}dy\!\right)\!+2^{*}_{\mu} u_{\varepsilon}^{2^{*}_\mu-1}v\left(\!\displaystyle{\int_{\Omega}}\frac{u^{2^{*}_\mu-1}_{\varepsilon}(y)v(y)}{|x-y|^{\mu}}dy\!\right)\!dx}.
\end{equation*}
Take
$$W_i=span\big\{u_\varepsilon,\,\psi_{1,\varepsilon},\cdots,\,\psi_{i-1,\varepsilon}\big\},$$
where $\psi_{j,\varepsilon}$ are defined in \eqref{eq 4.2}. By Lemma \ref{lema 4.1}, we obtain that $dim W_i=i$. Hence, we have
\begin{align*}
\lambda_{i,\varepsilon}\!\leq{\underset{v\in W_{i}}{\max}}\frac{\displaystyle{\int_{\Omega}}|\nabla v|^{2}dx}{\displaystyle{\int_{\Omega}}\varepsilon v^{2}\!dx+\!\displaystyle{\int_{\Omega}}\displaystyle{\int_{\Omega}}\!\left(\frac{(2^{*}_\mu-1)u_{\varepsilon}^{2^{*}_\mu-2}(x)v^{2}(x)u_{\varepsilon}^{2^{*}_\mu}(y)}{|x-y|^{\mu}}+\!\frac{2^{*}_\mu u_{\varepsilon}^{2^{*}_\mu-1}(x)v(x)u_{\varepsilon}^{2^{*}_\mu-1}(y)v(y)}{|x-y|^{\mu}}\!\right)\!dxdy}.
\end{align*}
Let us evaluate separately the numerator and the denominator. For any function $v\in W_i$, we assume that there exist $a_0,\,a_1,\cdots,\,a_{i-1}\in \R$ such that
\begin{align}\label{eq 4.16}
    v=a_0 u_\varepsilon+\sum_{j=1}^{i-1}a_j\psi_{j,\varepsilon}=a_0 u_\varepsilon+\phi z_\varepsilon\in W_i,
\end{align}
where $z_\varepsilon(x)=\sum_{j=1}^{i-1}a_j\,\frac{\partial u_\varepsilon(x)}{\partial x_j}$. Thus, we have
\begin{equation}\label{eq4.15}
    \begin{split}
\displaystyle{\int_{\Omega}}|\nabla v|^{2}dx=&\displaystyle{\int_{\Omega}}|\nabla (a_0 u_\varepsilon+\phi z_\varepsilon)|^{2}\,dx\\
=&a_0^{2}\displaystyle{\int_{\Omega}}|\nabla u_\varepsilon|^{2}\,dx+2a_0\displaystyle{\int_{\Omega}}\nabla u_\varepsilon\nabla(\phi z_\varepsilon)\,dx+\displaystyle{\int_{\Omega}}|\nabla (\phi z_\varepsilon)|^{2}\,dx,
    \end{split}
\end{equation}
where a simple computation yields that
    \begin{equation}\label{eq4.17}
        \begin{split}
\displaystyle{\int_{\Omega}}\nabla u_{\varepsilon}\nabla(\phi z_\varepsilon)\,dx=& \displaystyle{\int_{\Omega}}(-\Delta u_\varepsilon)\,\phi\,z_\varepsilon\,dx\\
    =& \varepsilon \displaystyle{\int_{\Omega}}u_\varepsilon\,\phi\, z_\varepsilon\, dx+\displaystyle{\int_{\Omega}}u_{\varepsilon}^{2^{*}_\mu-1}(x)\phi(x) z_\varepsilon(x)\left(\displaystyle{\int_{\Omega}}\frac{u_{\varepsilon}^{2^{*}_\mu}(y)}{|x-y|^{\mu}}dy\right)dx.
        \end{split}
    \end{equation}
Moreover, from \eqref{eqA-4} in Lemma \ref{lma A-3}, we derive that $z_\varepsilon$ satisfies the equation
\begin{equation}\label{eq 4.18}
    \begin{split}
     -\Delta z_\varepsilon(x)=&\varepsilon z_\varepsilon(x)+(2^{*}_\mu-1)u_{\varepsilon}^{2^{*}_\mu-2}(x)z_\varepsilon(x)\left(\displaystyle{\int_{\Omega}}\frac{u_{\varepsilon}^{2^{*}_\mu}(y)}{|x-y|^{\mu}}dy\right)\\
    &+2^{*}_\mu u_{\varepsilon}^{2^{*}_\mu-1}(x)\left(\displaystyle{\int_{\Omega}}\frac{u_{\varepsilon}^{2^{*}_\mu-1}(y)z_\varepsilon(y)}{|x-y|^{\mu}}dy\right),\quad \text{in }\,\Omega.
    \end{split}
\end{equation}
Multiplying \eqref{eq 4.18} by $\phi^{2}z_\varepsilon$ and integrating, we have
\begin{equation}\label{eq 4.19}
\begin{split}
&\displaystyle{\int_{\Omega}} \phi^{2}|\nabla z_\varepsilon|^{2}dx+2\displaystyle{\int_{\Omega}} \phi z_\varepsilon {\nabla \phi} \nabla z_\varepsilon\, dx\\
=&\,\varepsilon\displaystyle{\int_{\Omega}} \phi^{2}(x)z_\varepsilon^{2}(x)\,dx+(2^{*}_\mu-1)\displaystyle{\int_{\Omega}}\displaystyle{\int_{\Omega}}\frac{u_{\varepsilon}^{2^{*}_\mu-2}(x)\phi^{2}(x)z_\varepsilon^{2}(x) u_{\varepsilon}^{2^{*}_\mu}(y)}{|x-y|^{\mu}}\,dxdy\\
&+2^{*}_\mu \displaystyle{\int_{\Omega}}\displaystyle{\int_{\Omega}}\frac{u_{\varepsilon}^{2^{*}_\mu-1}(x)\phi^{2}(x)z_\varepsilon (x)u_{\varepsilon}^{2^{*}_\mu-1}(y)z_\varepsilon(y)}{|x-y|^{\mu}}\,dxdy,
    \end{split}
\end{equation}
and then from \eqref{eq 4.19}, we have
\begin{equation}\label{eq4.16}
    \begin{split}  \displaystyle{\int_{\Omega}}|\nabla (\phi z_\varepsilon)|^{2}\,dx=&\displaystyle{\int_{\Omega}}|\nabla\phi|^{2}|z_\varepsilon|^{2}\,dx+\displaystyle{\int_{\Omega}}|\phi|^{2}|\nabla z_\varepsilon|^{2}\,dx+2\displaystyle{\int_{\Omega}}\phi z_\varepsilon\nabla\phi\nabla z_\varepsilon\,dx\\
=&\displaystyle{\int_{\Omega}}|\nabla\phi|^{2}|z_\varepsilon|^{2}\,dx+\varepsilon\displaystyle{\int_{\Omega}} \phi^{2}(x)z_\varepsilon^{2}(x)\,dx\\
&+(2^{*}_\mu-1)\displaystyle{\int_{\Omega}} \displaystyle{\int_{\Omega}}\frac{u_{\varepsilon}^{2^{*}_\mu-2}(x)\phi^{2}(x)z_\varepsilon^{2}(x) u_{\varepsilon}^{2^{*}_\mu}(y)}{|x-y|^{\mu}}\,dxdy\\
    &+2^{*}_\mu \displaystyle{\int_{\Omega}}\displaystyle{\int_{\Omega}}\frac{u_{\varepsilon}^{2^{*}_\mu-1}(x)\phi^{2}(x)z_\varepsilon (x)u_{\varepsilon}^{2^{*}_\mu-1}(y)z_\varepsilon(y)}{|x-y|^{\mu}}\,dxdy.
    \end{split}
\end{equation}

Therefore,  it holds
\begin{equation}\label{eq4.18}
  \lambda_{i,\varepsilon}
\leq\underset{\left(a_0,\,a_1,\cdots,\,a_{i-1}\right)\,\in\R^{i}}{\max}{\left\{1+\frac{N_\varepsilon}{D_\varepsilon}\right\}},
\end{equation}
and we postpone this proof to Appendix B.
Here
\begin{equation}\label{N-ep}
N_\varepsilon:=N_{1,\varepsilon}+N_{2,\varepsilon}+N_{3,\varepsilon},
\end{equation}
with
\begin{equation}\label{N-ep,1}
N_{1,\varepsilon}:=a_{0}^{2}\,\left(2-2\cdot2^{*}_\mu\right) \displaystyle{\int_{\Omega}}\displaystyle{\int_{\Omega}}\frac{u_{\varepsilon}^{2^{*}_\mu}(x)u_{\varepsilon}^{2^{*}_\mu}(y)}{|x-y|^{\mu}}\,dxdy,
\end{equation}
\smallskip
\begin{equation}\label{N-ep,2}
   N_{2,\varepsilon}:=2a_{0}\,\left(2-2\cdot2^{*}_\mu\right)\displaystyle{\int_{\Omega}}\displaystyle{\int_{\Omega}}\frac{u_{\varepsilon}^{2^{*}_\mu-1}(x)\phi(x)\left(\sum_{j=1}^{i-1}a_j\frac{\partial u_\varepsilon}{\partial x_j}(x)\right)u_{\varepsilon}^{2^{*}_\mu}(y)}{|x-y|^{\mu}}\,dxdy,
\end{equation}
\smallskip
\begin{equation}\label{N-ep,3}
\begin{split}
N_{3,\varepsilon}:&=\underbrace{\displaystyle{\int_{\Omega}}|\nabla\phi|^{2}\left(\sum_{j=1}^{i-1}a_j\frac{\partial u_\varepsilon}{\partial x_j}(x)\right)\left(\sum_{l=1}^{i-1}a_l\frac{\partial u_\varepsilon}{\partial x_l}(x)\right)dx}_{:=N_{3,\varepsilon}^{(1)}}\\
&+\!\underbrace{2^{*}_\mu\!\displaystyle{\int_{\Omega}}u_{\varepsilon}^{2^{*}_\mu-1}\phi\!\left(\sum_{j=1}^{i-1}a_j\frac{\partial u_\varepsilon(x)}{\partial x_j}\right)\!\left(\displaystyle{\int_{\Omega}}\frac{u_{\varepsilon}^{2^{*}_\mu-1}(y)\!\big(\phi(x)-\phi(y)\big)\left(\sum_{l=1}^{i-1}a_l\frac{\partial u_\varepsilon}{\partial y_l}(y)\right)}{|x-y|^{\mu}}dy\!\right)\!dx}_{:=N_{3,\varepsilon}^{(2)}},
   \end{split}
   \end{equation}
and
\begin{equation}\label{D-ep}
D_\varepsilon:=D_{1,\varepsilon}+D_{2,\varepsilon}+D_{3,\varepsilon},
\end{equation}
with
\begin{equation}\label{D-ep,1}
D_{1,\varepsilon}:=a_{0}^{2} \left(\varepsilon \displaystyle{\int_{\Omega}}u_{\varepsilon}^{2}(x)dx+(2\cdot2^{*}_\mu-1)\displaystyle{\int_{\Omega}}\displaystyle{\int_{\Omega}}\frac{u_{\varepsilon}^{2^{*}_\mu}(x)u_{\varepsilon}^{2^{*}_\mu}(y)}{|x-y|^{\mu}}\,dxdy\right),
\end{equation}
\smallskip
 \begin{equation}\label{D-ep,2}
     \begin{split}
D_{2,\varepsilon}:=&2a_0\,\underbrace{\varepsilon \displaystyle{\int_{\Omega}}
 u_{\varepsilon}(x)\phi(x)\left(\sum_{j=1}^{i-1}a_j\frac{\partial u_\varepsilon}{\partial x_j}(x)\right)dx}_{:=D_{2,\varepsilon}^{(1)}}\\
 &+2a_0(2\cdot2^{*}_\mu-1)\underbrace{\displaystyle{\int_{\Omega}}\displaystyle{\int_{\Omega}}\frac{u_{\varepsilon}^{2^{*}_\mu-1}(x)\phi(x)\left(\sum_{j=1}^{i-1}a_j\frac{\partial u_\varepsilon}{\partial x_j}(x)\right)u_{\varepsilon}^{2^{*}_\mu}(y)}{|x-y|^{\mu}}\,dxdy}_{:=D_{2,\varepsilon}^{(2)}},
     \end{split}
 \end{equation}
 \smallskip
\begin{equation}\label{D-ep,3}
    \begin{split}
     D_{3,\varepsilon}:&=\varepsilon \displaystyle{\int_{\Omega}}\phi^{2}(x)\left(\sum_{j=1}^{i-1}a_j\frac{\partial u_\varepsilon}{\partial x_j}(x)\right)\left(\sum_{l=1}^{i-1}a_l\frac{\partial u_\varepsilon}{\partial x_l}(x)\right)dx\\
+&(2^{*}_\mu-1)\displaystyle{\int_{\Omega}}\displaystyle{\int_{\Omega}}\frac{u_{\varepsilon}^{2^{*}_\mu-2}(x)u_{\varepsilon}^{2^{*}_\mu}(y)\phi^{2}(x)\left(\sum_{j=1}^{i-1}a_j\frac{\partial u_\varepsilon}{\partial x_j}(x)\right)\left(\sum_{l=1}^{i-1}a_l\frac{\partial u_\varepsilon}{\partial x_l}(x)\right)}{|x-y|^{\mu}}dxdy\\
+&2^{*}_\mu\displaystyle{\int_{\Omega}}u_{\varepsilon}^{2^{*}_\mu-1}(x)\phi(x)\left(\sum_{j=1}^{i-1}a_j\frac{\partial u_\varepsilon}{\partial x_j}(x)\right)\left(\displaystyle{\int_{\Omega}}\frac{u_{\varepsilon}^{2^{*}_\mu-1}(y)\phi(y)\left(\sum_{l=1}^{i-1}a_l\frac{\partial u_\varepsilon}{\partial y_l}(y)\right)}{|x-y|^{\mu}}dy\!\right)dx.
    \end{split}
\end{equation}
Note that the symmetry property of the double integrals plays a crucial role in the computation of the above terms. The rest of the proof is divided into three steps.

\textbf{Step 1.} We prove some estimates which are needed to compute the quotient in \eqref{eq4.18}.

As for $D_{2,\varepsilon}^{(1)}$ given in \eqref{D-ep,2}, 
there holds  for $j=1,\cdots,i-1$,
\begin{equation}\label{4-ep}
    \begin{split}
       & \varepsilon \displaystyle{\int_{\Omega}}
 u_{\varepsilon}(x)\,\phi(x)\left(\frac{\partial u_\varepsilon(x)}{\partial x_j}\right)dx=\varepsilon\displaystyle{\int_{\Omega}} \,\frac{\partial}{\partial x_{j}}\left(\frac{u_{\varepsilon}^{2}(x)}{2}\right)\phi(x)\,dx=-\frac{\varepsilon}{2}\displaystyle{\int_{\Omega}} \,\left(\frac{\partial \phi(x)}{\partial x_{j}}\right)\,u_{\varepsilon}^{2}(x)dx\\=&-\frac{\varepsilon}{2\|u_\varepsilon\|_{\infty}^{2}}\displaystyle{\int_{\Omega\cap\{|x-x_{\varepsilon}|\geq\rho\}}} \left(\frac{\partial \phi(x)}{\partial x_{j}}\right)\Big(\|u_\varepsilon\|_{\infty}u_{\varepsilon}(x)\Big)^{2}dx
 =O\left(\frac{\varepsilon}{\|u_\varepsilon\|_{\infty}^{2}}\right).
      \end{split}
\end{equation}
For $D_{2,\varepsilon}^{(2)}$, using integration by parts, we have
\begin{equation}\label{hatH2}
    \begin{split}
&\displaystyle{\int_{\Omega}}\displaystyle{\int_{\Omega}}\frac{u_{\varepsilon}^{2^{*}_\mu-1}(x)\phi(x)\left(\sum_{j=1}^{i-1}a_j\frac{\partial u_\varepsilon}{\partial x_j}(x)\right)u_{\varepsilon}^{2^{*}_\mu}(y)}{|x-y|^{\mu}}dxdy\\
          =&\frac{1}{2^{*}_\mu}\displaystyle{\int_{\Omega}}\displaystyle{\int_{\Omega}}\frac{\phi(x)\left(\sum_{j=1}^{i-1}a_j\frac{\partial }{\partial x_j}u_{\varepsilon}^{2^{*}_\mu}(x)\right)u_{\varepsilon}^{2^{*}_\mu}(y)}{|x-y|^{\mu}}dxdy\\
          =&\underbrace{\frac{1}{2^{*}_\mu}\displaystyle{\int_{\Omega}}\displaystyle{\int_{\Omega}}\frac{\left(-\sum_{j=1}^{i-1}a_j\frac{\partial }{\partial x_j}\phi(x)\right)u_{\varepsilon}^{2^{*}_\mu}(x)u_{\varepsilon}^{2^{*}_\mu}(y)}{|x-y|^{\mu}}dxdy}_{:=\widehat{H_{1}}}\\
          &+\underbrace{\frac{1}{2^{*}_\mu}\displaystyle{\int_{\Omega}}\displaystyle{\int_{\Omega}}\left(-\sum_{j=1}^{i-1}a_j\frac{\partial }{\partial x_j}\left(\frac{1}{|x-y|^{\mu}}\right)\right)\phi(x)u_{\varepsilon}^{2^{*}_\mu}(x)u_{\varepsilon}^{2^{*}_\mu}(y)dxdy}_{:=\widehat{H_{2}}}.
    \end{split}
\end{equation}
By HLS, \eqref{eq 2.2} and \eqref{eq 2.3}, we get that
     \begin{equation}\label{4-19}
     \begin{split}
        \widehat{H_{1}}
\leq&~C\left(\displaystyle{\int_{\Omega}}|u_{\varepsilon}(y)|^{2^{*}}dy\right)^{\frac{2N-\mu}{2N}}\left(\displaystyle{\int_{\Omega}}\left|\left(-\sum_{j=1}^{i-1}a_j\frac{\partial }{\partial x_j}\phi(x)\right)u_{\varepsilon}^{2^{*}_\mu}(x)\right|^{\frac{2N}{2N-\mu}}dx\right)^{\frac{2N-\mu}{2N}}\\
        \leq&~\frac{C}{\|u_\varepsilon\|_{\infty}^{2^{*}_\mu}}\left(\displaystyle{\int_{\Omega\cap\{|x-x_\varepsilon|\geq \rho\}}}\left|\left(-\sum_{j=1}^{i-1}a_j\frac{\partial }{\partial x_j}\phi(x)\right)\Big(\|u_\varepsilon\|_{\infty}u_{\varepsilon}\Big)^{2^{*}_\mu}\right|^{\frac{2N}{2N-\mu}}dx\right)^{\frac{2N-\mu}{2N}}\\
        \leq&~\frac{C}{\|u_\varepsilon\|_{\infty}^{2^{*}_\mu}}.
         \end{split}
     \end{equation}
Moreover, a direct calculation yields that
     \begin{align*}
\widehat{H_{2}}=&~\frac{\mu\sum_{j=1}^{i-1}a_j}{2^{*}_\mu}\displaystyle{\int_{\Omega}}\displaystyle{\int_{\Omega}}\frac{\left(x_j-y_j\right)\phi(x)u_{\varepsilon}^{2^{*}_\mu}(x)u_{\varepsilon}^{2^{*}_\mu}(y)}{|x-y|^{\mu+2}}dxdy\\
=&~\frac{\mu\sum_{j=1}^{i-1}a_j}{2^{*}_\mu}\underbrace{\displaystyle{\int_{B_{\rho}(x_\varepsilon)}}\displaystyle{\int_{B_{\rho}(x_\varepsilon)}}\frac{\left(x_j-y_j\right)\phi(x)u_{\varepsilon}^{2^{*}_\mu}(x)u_{\varepsilon}^{2^{*}_\mu}(y)}{|x-y|^{\mu+2}}dxdy}_{:=\widehat{H_{2,1}}}\\
&+\frac{\mu\sum_{j=1}^{i-1}a_j}{2^{*}_\mu}\underbrace{\displaystyle{\int_{B_{\rho}(x_\varepsilon)}}\displaystyle{\int_{\Omega\backslash B_{\rho}(x_\varepsilon)}}\frac{\left(x_j-y_j\right)\phi(x)u_{\varepsilon}^{2^{*}_\mu}(x)u_{\varepsilon}^{2^{*}_\mu}(y)}{|x-y|^{\mu+2}}dxdy}_{:=\widehat{H_{2,2}}}\\
&+\frac{\mu\sum_{j=1}^{i-1}a_j}{2^{*}_\mu}\underbrace{\displaystyle{\int_{\Omega\backslash B_{\rho}(x_\varepsilon)}}\displaystyle{\int_{B_{\rho}(x_\varepsilon)}}\frac{\left(x_j-y_j\right)\phi(x)u_{\varepsilon}^{2^{*}_\mu}(x)u_{\varepsilon}^{2^{*}_\mu}(y)}{|x-y|^{\mu+2}}dxdy}_{:=\widehat{H_{2,3}}}\\
&+\frac{\mu\sum_{j=1}^{i-1}a_j}{2^{*}_\mu}\underbrace{\displaystyle{\int_{\Omega\backslash B_{\rho}(x_\varepsilon)}}\displaystyle{\int_{\Omega\backslash B_{\rho}(x_\varepsilon)}}\frac{\left(x_j-y_j\right)\phi(x)u_{\varepsilon}^{2^{*}_\mu}(x)u_{\varepsilon}^{2^{*}_\mu}(y)}{|x-y|^{\mu+2}}dxdy}_{:=\widehat{H_{2,4}}}.
     \end{align*}
Note that for any $x\in B_{\rho}(x_\varepsilon)$, $\phi(x)\equiv1$. Then due to the symmetry, we have
    \begin{align*}
\widehat{H_{2,1}}=&\!\displaystyle{\int_{B_{\rho}(x_\varepsilon)}}\displaystyle{\int_{B_{\rho}(x_\varepsilon)}}\frac{\left(x_j-y_j\right)u_{\varepsilon}^{2^{*}_\mu}(x)u_{\varepsilon}^{2^{*}_\mu}(y)}{|x-y|^{\mu+2}}dxdy=\!\displaystyle{\int_{B_{\rho}(x_\varepsilon)}}\displaystyle{\int_{B_{\rho}(x_\varepsilon)}}\!\frac{\left(y_j-x_j\right)u_{\varepsilon}^{2^{*}_\mu}(x)u_{\varepsilon}^{2^{*}_\mu}(y)}{|x-y|^{\mu+2}}dxdy\\
=&\!-\!\displaystyle{\int_{B_{\rho}(x_\varepsilon)}}\displaystyle{\int_{B_{\rho}(x_\varepsilon)}}\!\frac{\left(x_j-y_j\right)u_{\varepsilon}^{2^{*}_\mu}(x)u_{\varepsilon}^{2^{*}_\mu}(y)}{|x-y|^{\mu+2}}dxdy,
    \end{align*}
which implies that $\widehat{H_{2,1}}=0$. Moreover, using HLS with $\frac{2N-\mu}{2N}+\frac{2N-\mu-2}{2N}+\frac{\mu+1}{N}=2$, we derive that
    \begin{align*}
\widehat{H_{2,2}}\leq&~C\displaystyle{\int_{B_{\rho}(x_\varepsilon)}}\displaystyle{\int_{\Omega\backslash B_{\rho}(x_\varepsilon)}}\frac{|u_{\varepsilon}(x)|^{2^{*}_\mu}|u_{\varepsilon}(y)|^{2^{*}_\mu}}{|x-y|^{\mu+1}}dxdy\leq C\displaystyle{\int_{B_{\rho}(x_\varepsilon)}}\displaystyle{\int_{\Omega\backslash B_{\rho}(x_\varepsilon)}}\frac{U_{x_\varepsilon,\tau_{\varepsilon}}^{2^{*}_\mu}(x)U_{x_\varepsilon,\tau_{\varepsilon}}^{2^{*}_\mu}(y)}{|x-y|^{\mu+1}}dxdy\\
=&~C\tau_{\varepsilon}^{2N-\mu}\displaystyle{\int_{B_{\rho}(x_\varepsilon)}}\displaystyle{\int_{\Omega\backslash B_{\rho}(x_\varepsilon)}}\frac{1}{\left(1+\tau_{\varepsilon}^{2}|x-x_{\varepsilon}|^{2}\right)^{\frac{2N-\mu}{2}}|x-y|^{\mu+1}\left(1+\tau_{\varepsilon}^{2}|y-x_{\varepsilon}|^{2}\right)^{\frac{2N-\mu}{2}}}dxdy\\
\leq&~C\tau_{\varepsilon}^{2N-\mu}\left(\displaystyle{\int_{B_{\rho}(x_\varepsilon)}}\frac{1}{\left(1+\tau_{\varepsilon}^{2}|y-x_{\varepsilon}|^{2}\right)^{\frac{2N-\mu}{2}\frac{2N}{2N-\mu}}}\,dy\right)^{\frac{2N-\mu}{2N}}\\
&\quad\,\quad\,\quad\cdot\left(\displaystyle{\int_{\Omega\backslash B_{\rho}(x_\varepsilon)}}\frac{1}{\left(1+\tau_{\varepsilon}^{2}|x-x_{\varepsilon}|^{2}\right)^{\frac{2N-\mu}{2}\frac{2N}{2N-\mu-2}}}\,dx\right)^{\frac{2N-\mu-2}{2N}}\\
\leq&~C\left(\displaystyle{\int_{B_{\rho}(x_\varepsilon)}}\frac{1}{\left(1+\tau_{\varepsilon}^{2}|y-x_{\varepsilon}|^{2}\right)^{N}}\,dy\right)^{\frac{2N-\mu}{2N}}=C\left(\displaystyle{\int_{B_{\tau_{\varepsilon}\rho}(0)}}\frac{1}{\left(1+|z|^{2}\right)^{N}}\frac{1}{\tau_{\varepsilon}^{N}}\,dz\right)^{\frac{2N-\mu}{2N}}\\
\leq&~\frac{C}{\tau_{\varepsilon}^{\frac{2N-\mu}{2}}}=\frac{C}{\|u_\varepsilon\|_{\infty}^{2^{*}_\mu}}.
    \end{align*}
Similarly, we obtain that $$\widehat{H_{2,3}}=O\left(\frac{1}{\|u_\varepsilon\|_{\infty}^{2^{*}_\mu}}\right),\quad \widehat{H_{2,4}}=O\left(\frac{1}{\|u_\varepsilon\|_{\infty}^{2\cdot2^{*}_\mu}}\right).$$
Thus, from the estimates of $\widehat{H_{2,1}}-\widehat{H_{2,4}}$, we have $\widehat{H_{2}}=O\left({1}/{\|u_{\varepsilon}\|_{\infty}^{2^{*}_\mu}}\right)$. Combining  \eqref{hatH2} and \eqref{4-19}, we get that
\begin{equation}\label{4-20}
D_{2,\varepsilon}^{(2)}:=\widehat{H_{1}}+\widehat{H_{2}}=O\left(\frac{1}{\|u_\varepsilon\|_{\infty}^{2^{*}_\mu}}\right).
\end{equation}
Therefore, from \eqref{D-ep,2}, \eqref{4-ep} and \eqref{4-20}, we conclude that
\begin{equation}\label{4-21}
D_{2,\varepsilon}:=D_{2,\varepsilon}^{(1)}+D_{2,\varepsilon}^{(2)}=O\left(\frac{\varepsilon}{\|u_\varepsilon\|_{\infty}^{2}}\right)+O\left(\frac{1}{\|u_{\varepsilon}\|_{\infty}^{2^{*}_\mu}}\right).
\end{equation}
As for $D_{3,\varepsilon}$ given in \eqref{D-ep,3}, by a change of variables and the utilization of \eqref{eq 1.9} and \eqref{a-1}, we derive that
\begin{equation*}
\begin{split}
&D_{3,\varepsilon}\\=&\varepsilon \displaystyle{\int_{\Omega_{\varepsilon}}}{\phi_{\varepsilon}^{2}}(x)\left(\sum_{j=1}^{i-1}a_j\frac{\partial \tilde{u}_\varepsilon}{\partial x_j}(x)\right)\left(\sum_{l=1}^{i-1}a_l\frac{\partial \tilde{u}_\varepsilon}{\partial x_l}(x)\right)dx\\
&+(2^{*}_\mu-1)\tau_\varepsilon^{2}\!\displaystyle{\int_{\Omega_{\varepsilon}}}\displaystyle{\int_{\Omega_{\varepsilon}}}\frac{\tilde{u}_{\varepsilon}^{2^{*}_\mu-2}(x){\phi_{\varepsilon}^{2}}(x)\left(\sum_{j=1}^{i-1}a_j\frac{\partial \tilde{u}_\varepsilon}{\partial x_j}(x)\right)\left(\sum_{l=1}^{i-1}a_l\frac{\partial \tilde{u}_\varepsilon}{\partial x_l}(x)\right)\tilde{u}_{\varepsilon}^{2^{*}_\mu}(y)}{|x-y|^{\mu}}\,dxdy\\
&+2^{*}_\mu\tau_\varepsilon^{2}\displaystyle{\int_{\Omega_{\varepsilon}}}\tilde{u}_{\varepsilon}^{2^{*}_\mu-1}(x){\phi_{\varepsilon}}(x)\left(\sum_{j=1}^{i-1}a_j\frac{\partial \tilde{u}_\varepsilon}{\partial x_j}(x)\right)\left(\displaystyle{\int_{\Omega_{\varepsilon}}}\frac{\tilde{u}_{\varepsilon}^{2^{*}_\mu-1}(y)\phi_{\varepsilon}(y)\left(\sum_{l=1}^{i-1}a_l\frac{\partial \tilde{u}_\varepsilon}{\partial y_l}(y)\right)}{|x-y|^{\mu}}dy\right)dx\\
=&\varepsilon\left(\sum_{j=1}^{i-1}a_j^{2}\right)\left(\frac{1}{N}\displaystyle{\int_{\R^N}}\left|\nabla U_{0,1}\right|^{2}dx+o(1)\right)\\
&+\!(2^{*}_\mu-1)\tau_{\varepsilon}^{2}\!\left(\displaystyle{\int_{\R^N}}U_{0,1}^{2^{*}_\mu-2}\!\left(\sum_{j=1}^{i-1}a_j\frac{\partial {U_{0,1}(x)}}{\partial x_j}\right)\!\left(\sum_{l=1}^{i-1}a_l\frac{\partial U_{0,1}(x)}{\partial x_l}\right)\!\left(\displaystyle{\int_{\R^N}}\frac{U_{0,1}^{2^{*}_\mu}(y)}{|x-y|^{\mu}}dy\right)dx+o(1)\!\right)\\
&+2^{*}_\mu\tau_\varepsilon^{2}\!\left(\displaystyle{\int_{\R^N}}U_{0,1}^{2^{*}_\mu-1}\!\left(\sum_{j=1}^{i-1}a_j\frac{\partial U_{0,1}(x)}{\partial x_j}\right)\!\left(\displaystyle{\int_{\R^N}}\frac{U^{2^{*}_\mu-1}_{0,1}(y)\!\left(\sum_{l=1}^{i-1}a_l\frac{\partial U_{0,1}(y)}{\partial y_l}\right)}{|x-y|^{\mu}}dy\right)dx+o(1)\!\right)\\
=&\varepsilon\left(\sum_{j=1}^{i-1}a_j^{2}\right)\!\left(\frac{1}{N}\displaystyle{\int_{\R^N}}\left|\nabla U_{0,1}\right|^{2}dx+o(1)\right)\\
&+\left(2^{*}-1\right)\tau_\varepsilon^{2}\left(\displaystyle{\int_{\R^N}}U_{0,1}^{2^{*}-2}(x)\left(\sum_{j=1}^{i-1}a_j\frac{\partial {U_{0,1}(x)}}{\partial x_j}\right)\left(\sum_{l=1}^{i-1}a_l\frac{\partial U_{0,1}(x)}{\partial x_l}\right)dx+o(1)\right)\\
=&\varepsilon\left(\sum_{j=1}^{i-1}a_j^{2}\right)\left(\frac{1}{N}\displaystyle{\int_{\R^N}}\left|\nabla U_{0,1}\right|^{2}dx+o(1)\right)\\
&+\left(2^{*}-1\right)\tau_\varepsilon^{2}\left(\sum_{j=1}^{i-1}a_j^{2}\right)\left(\frac{1}{N}\displaystyle{\int_{\R^N}}U_{0,1}^{2^{*}-2}(x)\left|\nabla U_{0,1}\right|^{2}dx+o(1)\right),
\end{split}
\end{equation*}
where $\phi_{\varepsilon}(x)=\phi\left(\tau_{\varepsilon}^{-1}x+x_{\varepsilon}\right)$, $\forall\,x\in\Omega_{\varepsilon}$. Hence, we get that
  \begin{equation}\label{4-23}
  \begin{split}
D_{3,\varepsilon}=&\varepsilon\left(\sum_{j=1}^{i-1}a_j^{2}\right)\left(\frac{1}{N}\displaystyle{\int_{\R^N}}\left|\nabla U_{0,1}\right|^{2}dx+o(1)\right)\\
 &+\left(2^{*}-1\right)\|u_{\varepsilon}\|_{\infty}^{2^{*}-2}\left(\sum_{j=1}^{i-1}a_j^{2}\right)\left(\frac{1}{N}\displaystyle{\int_{\R^N}}U_{0,1}^{2^{*}-2}(x)\left|\nabla U_{0,1}\right|^{2}dx+o(1)\right)
  \end{split}
  \end{equation}
Therefore, combining  \eqref{D-ep}, \eqref{D-ep,1}, \eqref{4-21} and \eqref{4-23}, we conclude that
\begin{equation}\label{eq4.19}
   \begin{split}
D_{\varepsilon} =&D_{1,\varepsilon}+D_{2,\varepsilon}+D_{3,\varepsilon}\\
=&~ a_{0}^{2}\left(\varepsilon \displaystyle{\int_{\Omega}}u_{\varepsilon}^{2}(x)dx+(2\cdot2^{*}_\mu-1)\displaystyle{\int_{\Omega}}\displaystyle{\int_{\Omega}}\frac{u_{\varepsilon}^{2^{*}_\mu}(x)u_{\varepsilon}^{2^{*}_\mu}(y)}{|x-y|^{\mu}}dxdy\right)\\
&+O\left(\frac{\varepsilon}{\|u_\varepsilon\|_{\infty}^{2}}\right)+O\left(\frac{1}{\|u_{\varepsilon}\|_{\infty}^{2^{*}_\mu}}\right)+\varepsilon\left(\sum_{j=1}^{i-1}a_j^{2}\right)\left(\frac{1}{N}\displaystyle{\int_{\R^N}}\left|\nabla U_{0,1}\right|^{2}dx+o(1)\right)\\
&+\left(2^{*}-1\right)\|u_{\varepsilon}\|_{\infty}^{2^{*}-2}\left(\sum_{j=1}^{i-1}a_j^{2}\right)\left(\frac{1}{N}\displaystyle{\int_{\R^N}}U_{0,1}^{2^{*}-2}(x)\left|\nabla U_{0,1}\right|^{2}dx+o(1)\right).
\end{split}
\end{equation}
As for the numerator $N_{\varepsilon}$, the term $N_{2,\varepsilon}$ defined by \eqref{N-ep,2} can be estimated by the similar way in the computation of $D_{2,\varepsilon}^{(2)}$ in \eqref{D-ep,2} and then it holds
\begin{equation}\label{4-25}
N_{2,\varepsilon}=O\left(\frac{1}{\|u_{\varepsilon}\|_{\infty}^{2^{*}_\mu}}\right).
\end{equation}
As for $N_{3,\varepsilon}$, the term $N_{3,\varepsilon}^{(1)}$ can be esimated as follows
\begin{equation}\label{4-34}
\begin{split}
&\int_{\Omega}|\nabla\phi|^{2}\left(\sum_{j=1}^{i-1}a_j\frac{\partial u_\epsilon}{\partial x_j}(x)\right)\left(\sum_{l=1}^{i-1}a_l\frac{\partial u_\epsilon}{\partial x_l}(x)\right)dx\\
=&~\frac{1}{\|u_{\epsilon}\|_{\infty}^{2}}\int_{\Omega\cap\{|x-x_{\epsilon}|\geq\rho\}}|\nabla\phi|^{2}\left(\sum_{j=1}^{i-1}a_j\frac{\partial \left(\|u_{\epsilon}\|_{\infty}u_\epsilon\right)}{\partial x_j}(x)\right)\left(\sum_{l=1}^{i-1}a_l\frac{\partial \left(\|u_{\epsilon}\|_{\infty}u_\epsilon\right)}{\partial x_l}(x)\right)dx\\
=&~O\left(\frac{1}{\|u_{\epsilon}\|_{\infty}^{2}}\right).
\end{split}
\end{equation}
For $N_{3,\varepsilon}^{(2)}$, due to the symmetry of the double integrals, we have
\begin{align*}
&2^{*}_\mu\displaystyle{\int_{\Omega}}u_{\varepsilon}^{2^{*}_\mu-1}(x)\phi(x)\left(\sum_{j=1}^{i-1}a_j\frac{\partial u_\varepsilon}{\partial x_j}(x)\right)\left(\displaystyle{\int_{\Omega}}\frac{u_{\varepsilon}^{2^{*}_\mu-1}(y)\left(\phi(x)-\phi(y)\right)\left(\sum_{l=1}^{i-1}a_l\frac{\partial u_\varepsilon}{\partial y_l}(y)\right)}{|x-y|^{\mu}}dy\right)dx\\
=&~2^{*}_\mu\displaystyle{\int_{\Omega}}u_{\varepsilon}^{2^{*}_\mu-1}(y)\phi(y)\left(\sum_{j=1}^{i-1}a_j\frac{\partial u_\varepsilon}{\partial y_j}(y)\right)\left(\displaystyle{\int_{\Omega}}\frac{u_{\varepsilon}^{2^{*}_\mu-1}(x)\left(\phi(y)-\phi(x)\right)\left(\sum_{l=1}^{i-1}a_l\frac{\partial u_\varepsilon}{\partial x_l}(x)\right)}{|x-y|^{\mu}}dx\right)dy\\
=&-2^{*}_\mu\displaystyle{\int_{\Omega}}u_{\varepsilon}^{2^{*}_\mu-1}(y)\phi(y)\left(\sum_{j=1}^{i-1}a_j\frac{\partial u_\varepsilon}{\partial y_j}(y)\right)\left(\displaystyle{\int_{\Omega}}\frac{u_{\varepsilon}^{2^{*}_\mu-1}(x)\left(\phi(x)-\phi(y)\right)\left(\sum_{l=1}^{i-1}a_l\frac{\partial u_\varepsilon}{\partial x_l}(x)\right)}{|x-y|^{\mu}}dx\right)dy,
\end{align*}
then we obtain that
\begin{align}
&N_{3,\varepsilon}^{(2)}\notag\\
=&\frac{2^{*}_\mu}{2}\displaystyle{\int_{\Omega}}\displaystyle{\int_{\Omega}}\frac{\left(\phi(x)-\phi(y)\right)^{2}u_{\varepsilon}^{2^{*}_\mu-1}(x)\left(\sum_{j=1}^{i-1}a_j\frac{\partial u_\varepsilon}{\partial x_j}(x)\right)u_{\varepsilon}^{2^{*}_\mu-1}(y)\left(\sum_{l=1}^{i-1}a_l\frac{\partial u_\varepsilon}{\partial y_l}(y)\right)}{|x-y|^{\mu}}dxdy\notag\\
=&\frac{1}{2\cdot2^{*}_\mu}\displaystyle{\int_{\Omega}}\displaystyle{\int_{\Omega}}\frac{\left(\phi(x)-\phi(y)\right)^{2}\left(\sum_{j=1}^{i-1}a_j\frac{\partial u_\varepsilon^{2^{*}_\mu}}{\partial x_j}(x)\right)\left(\sum_{l=1}^{i-1}a_l\frac{\partial u_\varepsilon^{2^{*}_\mu}}{\partial y_l}(y)\right)}{|x-y|^{\mu}}dxdy\notag\\
=&\frac{1}{2\cdot2^{*}_\mu}\displaystyle{\int_{\Omega}}\displaystyle{\int_{\Omega}}\left(\sum_{j,l=1}^{i-1}a_{j}a_{l}\frac{\partial^{2}}{\partial x_{j}\partial y_{l}}\left(\frac{\left(\phi(x)-\phi(y)\right)^{2}}{|x-y|^{\mu}}\right)\right)u_{\varepsilon}^{2^{*}_\mu}(x)u_{\varepsilon}^{2^{*}_\mu}(y)dxdy\notag\\
=&\underbrace{\frac{1}{2\cdot2^{*}_\mu}\displaystyle{\int_{\Omega}}\displaystyle{\int_{\Omega}}\left(\sum_{j,l=1}^{i-1}a_{j}a_{l}\frac{\partial^{2}}{\partial x_{j}\partial y_{l}}\Big(\phi(x)-\phi(y)\Big)^{2}\right)\frac{u_{\varepsilon}^{2^{*}_\mu}(x)u_{\varepsilon}^{2^{*}_\mu}(y)}{|x-y|^{\mu}}dxdy}_{:=\widehat{K_{1}}}\notag\\
&+\underbrace{\frac{1}{2\cdot2^{*}_\mu}\displaystyle{\int_{\Omega}}\displaystyle{\int_{\Omega}}\left(\sum_{j,l=1}^{i-1}a_{j}a_{l}\frac{\partial^{2}}{\partial x_{j}\partial y_{l}}\left(\frac{1}{|x-y|^{\mu}}\right)\right)\left(\phi(x)-\phi(y)\right)^{2}u_{\varepsilon}^{2^{*}_\mu}(x)u_{\varepsilon}^{2^{*}_\mu}(y)dxdy}_{:=\widehat{K_{2}}}\notag\\
&+\underbrace{\frac{1}{2\cdot2^{*}_\mu}\displaystyle{\int_{\Omega}}\displaystyle{\int_{\Omega}}\left(\sum_{j,l=1}^{i-1}a_{j}a_{l}\frac{\partial}{\partial x_{j}}\left(\frac{1}{|x-y|^{\mu}}\right)\frac{\partial}{\partial y_{l}}\left(\phi(x)-\phi(y)\right)^{2}\right)u_{\varepsilon}^{2^{*}_\mu}(x)u_{\varepsilon}^{2^{*}_\mu}(y)dxdy}_{:=\widehat{K_{3}}}\notag\\
&+\underbrace{\frac{1}{2\cdot2^{*}_\mu}\displaystyle{\int_{\Omega}}\displaystyle{\int_{\Omega}}\left(\sum_{j,l=1}^{i-1}a_{j}a_{l}\frac{\partial}{\partial y_{l}}\left(\frac{1}{|x-y|^{\mu}}\right)\frac{\partial}{\partial x_{j}}\left(\phi(x)-\phi(y)\right)^{2}\right)u_{\varepsilon}^{2^{*}_\mu}(x)u_{\varepsilon}^{2^{*}_\mu}(y)dxdy}_{:=\widehat{K_{4}}}.\label{eqq4.35}
\end{align}
From HLS, we get that
\begin{align*}
    \widehat{K_{1}}=&\frac{1}{2\cdot2^{*}_\mu}\displaystyle{\int_{\Omega}}\displaystyle{\int_{\Omega}}\left(-2\sum_{j,l=1}^{i-1}a_{j}a_{l}\frac{\partial \phi(x)}{\partial x_{j}}\frac{\partial \phi(y)}{\partial y_{l}}\right)\frac{u_{\varepsilon}^{2^{*}_\mu}(x)u_{\varepsilon}^{2^{*}_\mu}(y)}{|x-y|^{\mu}}dxdy\\
\leq&C\left(\displaystyle{\int_{\Omega}}\left|\sum_{j=1}^{i-1}a_{j}\frac{\partial \phi(x)}{\partial x_{j}}u_{\varepsilon}^{2^{*}_\mu}(x)\right|^{\frac{2N}{2N-\mu}}dx\right)^{\frac{2N-\mu}{2N}}\left(\displaystyle{\int_{\Omega}}\left|\sum_{l=1}^{i-1}a_{l}\frac{\partial \phi(y)}{\partial y_{l}}u_{\varepsilon}^{2^{*}_\mu}(y)\right|^{\frac{2N}{2N-\mu}}dy\right)^{\frac{2N-\mu}{2N}}\\
    =&O\left(\frac{1}{\|u_\varepsilon\|_{\infty}^{2\cdot2^{*}_\mu}}\right).
\end{align*}
Moreover, a simple computation yields that
\begin{align*}
    \widehat{K_{2}}
\leq&~C\displaystyle{\int_{\Omega}}\displaystyle{\int_{\Omega}}\frac{\left(\phi(x)-\phi(y)\right)^{2}u_{\varepsilon}^{2^{*}_\mu}(x)u_{\varepsilon}^{2^{*}_\mu}(y)}{|x-y|^{\mu+2}}dxdy\\
=&~\underbrace{C\displaystyle{\int_{B_{\rho(x_\varepsilon)}}}\displaystyle{\int_{B_{\rho(x_\varepsilon)}}}\frac{\left(\phi(x)-\phi(y)\right)^{2}u_{\varepsilon}^{2^{*}_\mu}(x)u_{\varepsilon}^{2^{*}_\mu}(y)}{|x-y|^{\mu+2}}dxdy}_{:=\widehat{K_{2,1}}}\\
&+\underbrace{2C\displaystyle{\int_{B_{\rho(x_\varepsilon)}}}\displaystyle{\int_{\Omega\backslash B_{\rho(x_\varepsilon)}}}\frac{\left(\phi(x)-\phi(y)\right)^{2}u_{\varepsilon}^{2^{*}_\mu}(x)u_{\varepsilon}^{2^{*}_\mu}(y)}{|x-y|^{\mu+2}}dxdy}_{:=\widehat{K_{2,2}}}\\
&+\underbrace{C\displaystyle{\int_{\Omega\backslash B_{\rho(x_\varepsilon)}}}\displaystyle{\int_{\Omega\backslash B_{\rho(x_\varepsilon)}}}\frac{\left(\phi(x)-\phi(y)\right)^{2}u_{\varepsilon}^{2^{*}_\mu}(x)u_{\varepsilon}^{2^{*}_\mu}(y)}{|x-y|^{\mu+2}}dxdy}_{:=\widehat{K_{2,3}}}.
\end{align*}
Note that if $x,y\in B_{\rho}(x_\varepsilon)$, $\phi(x)=\phi(y)=1$, then $\widehat{K_{2,1}}=0$. Moreover, 
by HLS with $\frac{2N-\mu}{2N}+\frac{2N-\mu-4}{2N}+\frac{\mu+2}{N}=2$, we have
\begin{align*}
\widehat{K_{2,2}}\leq&~ C\displaystyle{\int_{B_{\rho(x_\varepsilon)}}}\displaystyle{\int_{\Omega\backslash B_{\rho(x_\varepsilon)}}}\frac{U_{x_\varepsilon,\tau_\varepsilon}^{2^{*}_\mu}(x)U_{x_\varepsilon,\tau_\varepsilon}^{2^{*}_\mu}(y)}{|x-y|^{\mu+2}}dxdy\\
\leq~&C\tau_\varepsilon^{2N-\mu}\displaystyle{\int_{B_{\rho(x_\varepsilon)}}}\displaystyle{\int_{\Omega\backslash B_{\rho(x_\varepsilon)}}}\frac{1}{\left(1+\tau_\varepsilon^{2}|x-x_\varepsilon|^{2}\right)^{\frac{2N-\mu}{2}}|x-y|^{\mu+2}\left(1+\tau_\varepsilon^{2}|y-x_\varepsilon|^{2}\right)^{\frac{2N-\mu}{2}}}dxdy\\
\leq~&C\left(\displaystyle{\int_{B_{\rho(x_\varepsilon)}}}\frac{1}{(1+\tau_\varepsilon^{2}|y-x_\varepsilon|^{2})^{N}}dy\right)^{\frac{2N-\mu}{2N}}=C\left(\displaystyle{\int_{B_{\tau_\varepsilon\rho(0)}}}\frac{1}{(1+|z|^{2})^{N}}\frac{1}{\tau_\varepsilon^{N}}dz\right)^{\frac{2N-\mu}{2N}}\\
=&~O\left(\frac{1}{\tau_\varepsilon^{\frac{2N-\mu}{2}}}\right)=O\left(\frac{1}{\|u_\varepsilon\|_{\infty}^{2^{*}_\mu}}\right).
\end{align*}
Similarly, we get that
$\widehat{K_{2,3}}=O\left({1}/{\|u_\varepsilon\|_{\infty}^{2\cdot2^{*}_\mu}}\right)$. Thus, from the estimates of $\widehat{K_{2,1}}-\widehat{K_{2,3}}$, we obtain that
\begin{equation*}
\widehat{K_{2}}=O\left(\frac{1}{\|u_\varepsilon\|_{\infty}^{2^{*}_\mu}}\right).
\end{equation*}
As for $\widehat{K_{3}}$, a simple computation yields that
\begin{align*}
    &\widehat{K_{3}}\\
=&\frac{\mu}{2^{*}_\mu}\displaystyle{\int_{B_{\rho}(x_\varepsilon)}}\displaystyle{\int_{B_{\rho}(x_\varepsilon)}}\left(\sum_{j,l=1}^{i-1}a_{j}a_{l}\,\frac{(x_j-y_j)}{|x-y|^{\mu+2}}\frac{\partial \phi(y)}{\partial y_{l}}\right)\left(\phi(x)-\phi(y)\right)u_{\varepsilon}^{2^{*}_\mu}(x)u_{\varepsilon}^{2^{*}_\mu}(y)dxdy\\
&\!+\!\frac{\sum_{j,l=1}^{i-1}a_{j}a_{l}}{2\cdot2^{*}_\mu}\!\displaystyle{\int}\!\displaystyle{\int_{\left(\Omega\times\Omega\right)\backslash\left(B_{\rho}(x_\varepsilon)\times B_{\rho}(x_\varepsilon)\right)}}\!\left(\frac{\partial}{\partial x_{j}}\left(\!\frac{1}{|x-y|^{\mu}}\!\right)\right)\!\left(\!\frac{\partial}{\partial y_{l}}\Big(\!\phi(x)-\!\phi(y)\!\Big)^{2}\right)\!u_{\varepsilon}^{2^{*}_\mu}(x)u_{\varepsilon}^{2^{*}_\mu}(y)dxdy\\
    =&0+O\left(\frac{1}{\|u_{\varepsilon}\|_{\infty}^{2^{*}_\mu}}\right)=O\left(\frac{1}{\|u_{\varepsilon}\|_{\infty}^{2^{*}_\mu}}\right),
\end{align*}
where we have used the same arguments as the estimates of $\widehat{H_{2,2}}-\widehat{H_{2,4}}$ in $D_{2,\varepsilon}^{(2)}$ and the fact that $\phi(x)=\phi(y)=1$ for $x,y\in B_{\rho}(x_\varepsilon)$. Similarly, we derive that $\widehat{K_{4}}=O\left({1}/{\|u_{\varepsilon}\|_{\infty}^{2^{*}_\mu}}\right)$.
Then, from \eqref{eqq4.35} and the estimates of $\widehat{K_{1}}-\widehat{K_{4}}$, we have
\begin{equation}\label{N_{3,2}}
N_{3,\varepsilon}^{(2)}=O\left(\frac{1}{\|u_\varepsilon\|_{\infty}^{2^{*}_\mu}}\right).
\end{equation}
Hence, from \eqref{N-ep,3}, \eqref{4-34} and \eqref{N_{3,2}}, we conclude that for any $\mu\in(0,4)$,
\begin{equation}\label{4-36}
N_{3,\varepsilon}:=N_{3,\varepsilon}^{(1)}+N_{3,\varepsilon}^{(2)}=O\left(\frac{1}{\|u_{\varepsilon}\|_{\infty}^{2}}\right)+O\left(\frac{1}{\|u_\varepsilon\|_{\infty}^{2^{*}_\mu}}\right)=O\left(\frac{1}{\|u_{\varepsilon}\|_{\infty}^{2}}\right).
\end{equation}
Therefore, combining  \eqref{N-ep}, \eqref{N-ep,1}, \eqref{4-25} and \eqref{4-36}, we obtain that
\begin{equation}\label{eq4.21}
    \begin{split}
N_{\varepsilon}:=&N_{1,\varepsilon}+N_{2,\varepsilon}+N_{3,\varepsilon}\\
=&a_{0}^{2}\,\left(2-2\cdot2^{*}_\mu\right) \displaystyle{\int_{\Omega}}\displaystyle{\int_{\Omega}}\frac{u_{\varepsilon}^{2^{*}_\mu}(x)u_{\varepsilon}^{2^{*}_\mu}(y)}{|x-y|^{\mu}}\,dxdy+O\left(\frac{1}{\|u_{\varepsilon}\|_{\infty}^{2}}\right).
    \end{split}
\end{equation}

\textbf{Step 2.} We claim that there exists $C_{0}>0$ such that
\begin{equation}\label{eq 4.25}
  \underset{(a_0,a_1,\cdots,a_{i-1})\in \R^{i}}{\max}\left(1+\frac{N_{\varepsilon}}{D_{\varepsilon}}\right)\geq 1+\frac{C_{0}}{\|u_{\varepsilon}\|_{\infty}^{2^{*}}}.
\end{equation}
Indeed, by testing $(a_0,a_1,\cdots,a_{i-1})=(0,1,\cdots,1)\in \R^{i}$, we obtain that
\begin{align*}
\underset{(a_0,a_1,\cdots,a_{i-1})\in \R^{i}}{\max}\left(1+\frac{N_{\varepsilon}}{D_{\varepsilon}}\right)\geq1+\frac{N_{3,\varepsilon}}{D_{3,\varepsilon}}=1+\frac{O\left(\frac{1}{\|u_{\varepsilon}\|_{\infty}^{2}}\right)}{\|u_{\varepsilon}\|_{\infty}^{2^{*}-2}\left(\frac{2^{*}-1}{N}\displaystyle{\int_{\R^N}}U_{0,1}^{2^{*}-2}(x)\left|\nabla U_{0,1}\right|^{2}dx+o(1)\right)}.
\end{align*}
Thus, we get \eqref{eq 4.25}.

\textbf{Step 3.} Let $(a_{0,\varepsilon},a_{1,\varepsilon},\cdots,a_{i-1,\varepsilon})\in\R^{i}$ be a maximizer of
\begin{equation*}
  \underset{(a_0,a_1,\cdots,a_{i-1})\in \R^{i}}{\max}\left(1+\frac{N_{\varepsilon}}{D_{\varepsilon}}\right).
\end{equation*}
We claim that $\sum_{j=1}^{i-1}a_{j,\varepsilon}^{2}\neq0$ for any $\varepsilon>0$ small. Suppose on the contrary that $\sum_{j=0}^{i-1}a_{j,\varepsilon}^{2}=1$ with $a_{0,\varepsilon}=1$. Then from \eqref{eq 4.16} and \eqref{eq 4.25}, we have $v=v_{1,\varepsilon}$ and the corresponding eigenvalue $\lambda_{1,\epsilon}>1$ in this case, which is a contradiction with \eqref{eq 1.16} since $1/\left(2\cdot2^{*}_\mu-1\right)<1$. Thus, we can assume that $\sum_{j=1}^{i-1}a_{j,\varepsilon}^{2}=1$. Moreover, 
we can deduce that
$a_{0,\varepsilon}^{2}\|u_{\varepsilon}\|_{\infty}^{2}$ is uniformly bounded in $\varepsilon$. In fact, if not, this gives a contradiction with \eqref{eq 4.25}. Then, from the estimates \eqref{eq4.18}, \eqref{eq4.19} and \eqref{eq4.21}, we have
\begin{align*}
\lambda_{i,\varepsilon}\leq&~\left(1+\frac{N_{\varepsilon}}{D_{\varepsilon}}\right)\Bigg|_{(a_{0},a_{1},\cdots,a_{i-1}):=(a_{0,\varepsilon},a_{1,\varepsilon},\cdots,a_{i-1,\varepsilon})}\\
=&1+\frac{a_{0,\varepsilon}^{2}\|u_{\varepsilon}\|_{\infty}^{2}\,\left(2-2\cdot2^{*}_\mu\right) \displaystyle{\int_{\Omega}}\displaystyle{\int_{\Omega}}\frac{u_{\varepsilon}^{2^{*}_\mu}(x)u_{\varepsilon}^{2^{*}_\mu}(y)}{|x-y|^{\mu}}\,dxdy+O(1)}{\|u_{\varepsilon}\|_{\infty}^{2^{*}}\left(\frac{\left(2^{*}-1\right)\left(\sum_{j=1}^{i-1}a_{j,\varepsilon}^{2}\right)}{N}\displaystyle{\int_{\R^N}}U_{0,1}^{2^{*}-2}(x)\left|\nabla U_{0,1}\right|^{2}dx+o(1)\right)}\leq~1+\frac{C}{\|u_\varepsilon\|_{\infty}^{2^{*}}}.
\end{align*}
Thus, we obtain \eqref{eq 4.13}. Now, we prove that $\lambda_{i}=\underset{\varepsilon\rightarrow 0}{\limsup}\,\lambda_{i,\varepsilon}=1$. Obviously, from \eqref{eq 4.13}, we have $\lambda_{i,\varepsilon}\rightarrow\lambda_{i}\in[0,1]$ as $\varepsilon\rightarrow0$, and hence we only need to exclude the possibility of $\lambda_{i}<1$. Indeed, 
from Lemma \ref{lema 2.8}, we know that for $\varepsilon>0$ small, the sequence $\tilde{v}_{i,\varepsilon}$ satisfying \eqref{eq 1.13} is bounded in $D^{1,2}(\R^{N})$. Then up to a subsequence, there exists $0\not\equiv V_i \in\mathcal{D}^{1,2}(\R^{N})$ such that
 $\tilde{v}_{i,\varepsilon} \rightharpoonup V_i$ in $\mathcal{D}^{1,2}(\R^{N})$ and $\tilde{v}_{i,\varepsilon}\rightarrow V_i$ in $C^{1}_{loc}(\R^N)$ with $V_i$ satisfies \eqref{eq 1.14}.
On the other hand, from Lemma \ref{lma2.5}, we see that if $\lambda_{i}<1$, then $\lambda_{i}={\left(2\cdot2^{*}_\mu-1\right)}^{-1}$ and $V_{i}(x)=U_{0,1}(x)$. However, since $v_{i,\varepsilon}$ is orthogonal to $v_{1,\varepsilon}$ in the sense of \eqref{eqq1-11} for $i\geq 2$,  from \eqref{eq 1.17} and $\tilde{v}_{i,\varepsilon}\rightarrow V_{i}=U_{0,1}$ in this case, we derive that
\begin{align*}
    0=\left(2\cdot2^{*}_\mu-1\right)\displaystyle{\int_{\R^N}}U_{0,1}^{2^{*}_\mu}(x)\left(\displaystyle{\int_{\R^N}}\frac{U_{0,1}^{2^{*}_\mu}(y)}{|x-y|^{\mu}}dy\right)dx=\left(2\cdot2^{*}_\mu-1\right)\displaystyle{\int_{\R^N}}U_{0,1}^{2^{*}}(x)dx,
\end{align*}
which is a contradiction. Hence, we conclude that $\lambda_i=1$ and the proof of Lemma \ref{lema 4.2} is finished.
\end{proof}

Now, we prove the asymptotic behavior of  the eigenfunctions $v_{i,\varepsilon}$ for $i=2,\cdots,N+1$. We start by proving \eqref{eq 1.21} in Theorem 1.2.
\begin{proof} [\textbf{Proof of \eqref{eq 1.21}}]  From Lemma \ref{lema 2.8}, we only need to prove that the number $b_{i}$ in \eqref{eq 2.11} is zero. Assuming by contradiction that for $i=2,\cdots,N+1$ the number $b_{i}\neq 0$, from Lemma \ref{lema 2.11} and \eqref{eq 4.13}, we derive that
\begin{equation*}
   \frac{\kappa+o(1)}{\|u_\varepsilon\|_{\infty}^{2}}=\lambda_{i,\varepsilon}-1\,\leq O\left(\frac{1}
{\|u_\varepsilon\|_{\infty}^{2^{*}}}\right),
\end{equation*}
where $\kappa>0$. Then we derive that
\begin{equation*}
   0<\kappa+o(1)\leq O\left(\frac{1}
{\|u_\varepsilon\|_{\infty}^{2^{*}-2}}\right)\longrightarrow 0, \quad\text{as $\varepsilon\rightarrow0$,}
\end{equation*}
which is a contradiction. Thus, we get that $b_{i}=0$ in \eqref{eq 2.11} and \eqref{eq 1.21} in Theorem 1.2 is proved, for some $\vec{a}_{i}\neq \vec{0}$.
\end{proof}
\indent Next, we utilize \eqref{eq 1.21} to prove \eqref{eq 1.22} in Theorem 1.2.
\begin{proof}[\textbf{Proof of \eqref{eq 1.22}}] For any $x\in\bar{\Omega}\backslash \{x_{0}\}$, using the Green representation formula, we derive that
    \begin{equation}\label{4-0}
        \begin{split}
          v_{i,\varepsilon}(x)=&\underbrace{\lambda_{i,\varepsilon}\,\varepsilon\displaystyle{\int_{\Omega}}G(x,z)v_{i,\varepsilon}(z)dz}_{:=I_{1}(\varepsilon)}\\
&+\underbrace{\lambda_{i,\varepsilon}(2^{*}_\mu-1)\displaystyle{\int_{\Omega}}G(x,z)u_\varepsilon^{2^{*}_\mu-2}(z)v_{i,\varepsilon}(z)\left(\displaystyle{\int_{\Omega}}\frac{u_\varepsilon^{2^{*}_\mu}(y)}{|z-y|^{\mu}}dy\right)dz}_{:=I_{2}(\varepsilon)}\\
&+\underbrace{\lambda_{i,\varepsilon}\,2^{*}_\mu \displaystyle{\int_{\Omega}}G(x,z)u_\varepsilon^{2^{*}_\mu-1}(z)\left(\displaystyle{\int_{\Omega}}\frac{u_\varepsilon^{2^{*}_\mu-1}(y)v_{i,\varepsilon}(y)}{|z-y|^{\mu}}dy\right)dz}_{:=I_{3}(\varepsilon)}.
        \end{split}
    \end{equation}
As for $I_{1}(\varepsilon)$, we can derive that if $N\geq 6$,
\begin{align}\label{4-1}
    \|u_{\varepsilon}\|_{\infty}^{2+\frac{2}{N-2}}I_{1}(\varepsilon)=O\left(\frac{1}{ \|u_{\varepsilon}\|_{\infty}^{\frac{2N-10}{N-2}}}\right).
\end{align}
Indeed, by a change of variables, we have
\begin{align}\label{eqA-6}
  I_{1}(\varepsilon)=  \lambda_{i,\varepsilon}\,\varepsilon\int_{\Omega}G(x,z)v_{i,\varepsilon}(z)dz=\frac{\lambda_{i,\varepsilon}\,\varepsilon}{\tau_{\varepsilon}^{N}}\int_{\Omega_\varepsilon}G_{\varepsilon}(x,z')\tilde{v}_{i,\varepsilon}(z')\,dz',
\end{align}
where
$$G_{\varepsilon}(x,z')=G\left(x,\frac{z'}{\tau_{\varepsilon}}+x_{\varepsilon}\right),\quad \text{for $z'\in \Omega_{\varepsilon}:=\left\{z'\in\R^N, z:={\tau_{\varepsilon}^{-1}}{z'}+x_{\varepsilon}\in \Omega\right\}$.}$$
From \eqref{eq 1.21}, we know that
$$\tilde{v}_{i,\varepsilon}(z')\longrightarrow\sum_{k=1}^{N}a_{i,k}\,\frac{\partial}{\partial z^{'}_k}\left(\frac{-1}{N-2}U_{0,1}(z')\right),\quad\text{uniformly on any compact subsets of $\R^N$.}$$
Then, we can deduce that there exists a unique solution $\widehat{w}_{1,\varepsilon}(z')$ satisfying the following initial value problem
\begin{equation*}
    \begin{cases}
\,\sum_{j=1}^{N}a_{i,j}\frac{\partial w(z')}{\partial {z'_{j}}}= \tilde{v}_{i,\varepsilon}(z'),\quad z'\in \R^N,\\
\,w(z')\big|_{\Gamma_{a}}=\frac{-1}{N-2}U_{0,1}(z'),
    \end{cases}
\end{equation*}
where $\Gamma_{a}=\{z'\in\R^N:z'\cdot \Vec{a}=0 \}$. Moreover, we derive that
\begin{equation}\label{W_1}
\widehat{w}_{1,\varepsilon}\rightarrow \frac{-1}{N-2}U_{0,1},\quad\text{uniformly on any compact subsets of $\R^N$,}
 \end{equation}
  and
  \begin{equation}\label{W_2}
  \widehat{w}_{1,\varepsilon}(z')=O\left({|z'|^{3-N}}\right),\quad\text{as $|z'|\rightarrow+\infty$.}
  \end{equation}
 Then, using integration by parts, we obtain that for any $x\in\bar{\Omega}\backslash\{x_0\}$,
\begin{align*}
   I_{1}(\varepsilon)=& \frac{\lambda_{i,\varepsilon}\,\varepsilon}{\tau_{\varepsilon}^{N}}\int_{\Omega_\varepsilon}G_{\varepsilon}(x,z')\!\left(\sum_{j=1}^{N}a_{i,j}\frac{\partial\,\widehat{w}_{1,\varepsilon}(z')}{\partial {z'_{j}}}\!\right)\!dz'=\frac{\lambda_{i,\varepsilon}\,\varepsilon}{\tau_{\varepsilon}^{N}}\!\int_{\Omega_\varepsilon}\widehat{w}_{1,\varepsilon}(z')\!\left(-\sum_{j=1}^{N}a_{i,j}\frac{\partial G_{\varepsilon}(x,z')}{\partial {z'_{j}}}\!\right)dz'\\
=&\frac{\lambda_{i,\varepsilon}\,\varepsilon}{\tau_{\varepsilon}^{N}}\int_{\Omega_\varepsilon}\widehat{w}_{1,\varepsilon}(z')\left(-\sum_{j=1}^{N}a_{i,j}\left(\frac{\partial}{\partial {z'_{j}}}G\left(x,\frac{z'}{\tau_{\varepsilon}}+x_{\varepsilon}\right)\right)\right)\,dz'\\
=&\frac{\lambda_{i,\varepsilon}\,\varepsilon}{\tau_{\varepsilon}^{N+1}}\int_{\Omega_\varepsilon}\widehat{w}_{1,\varepsilon}(z')\left(-\sum_{j=1}^{N}a_{i,j}\left(\frac{\partial G\left(x,z\right)}{\partial {z_{j}}}\bigg|_{z=\frac{z'}{\tau_{\varepsilon}}+x_\varepsilon}\right)\right)\,dz'\,\underset{(*)}{\leq}\frac{C\lambda_{i,\varepsilon}\,\varepsilon}{ \|u_{\varepsilon}\|_{\infty}^{2}},
\end{align*}
where $(*)$ holds since
\begin{equation}\label{4-27}
\int_{\Omega_\varepsilon}\widehat{w}_{1,\varepsilon}(z')\left(-\sum_{j=1}^{N}a_{i,j}\left(\frac{\partial G\left(x,z\right)}{\partial {z_{j}}}\bigg|_{z=\frac{z'}{\tau_{\varepsilon}}+x_\varepsilon}\right)\right)\,dz'=O\left(\|u_\varepsilon\|_{\infty}^{\frac{6}{N-2}}\right).
\end{equation}
In fact, since $\left|\nabla G(x,z)\right|\leq \frac{C}{|x-z|^{N-1}}$,  for some $R,\,\tilde{R}>0$ with $B_{R}(0)\subseteq \Omega_{\varepsilon}\subseteq B_{\tilde{R}}(0)$, we have
\begin{equation*}
\begin{split}
    \text{LHS of \eqref{4-27}}
    \leq&\underbrace{C\displaystyle{\int_{B_{R}(0)}}|\widehat{w}_{1,\varepsilon}(z')|\,\frac{1}{\left|x-(\tau^{-1}_{\varepsilon}z'+x_\varepsilon)\right|^{N-1}}\,dz'}_{:=I_{1}^{(1)}(\varepsilon)}\\
&+\underbrace{C\displaystyle{\int_{B_{\tilde{R}}(0)\backslash B_{R}(0)}}|\widehat{w}_{1,\varepsilon}(z')|\,\frac{1}{\left|x-(\tau^{-1}_{\varepsilon}z'+x_\varepsilon)\right|^{N-1}}\,dz'}_{:=I_{1}^{(2)}(\varepsilon)}.
\end{split}
\end{equation*}
As for $I_{1}^{(1)}(\varepsilon)$, noting that for any $x\in\bar{\Omega}\backslash\{x_0\}$, $\left|x-(\tau^{-1}_{\varepsilon}z'+x_\varepsilon)\right|\geq |x-x_\varepsilon|-R\tau_{\varepsilon}^{-1}>0$,   from \eqref{W_1}, a direct computation yields that
\begin{align*}
  I_{1}^{(1)}(\varepsilon) \leq C\displaystyle{\int_{B_{R}(0)}}U_{0,1}(z')\,dz'\leq C\displaystyle{\int_{B_{R}(0)}}\,\frac{1}{|z'|^{N-2}}\,dz'<+\infty.
\end{align*}
As for $I_{1}^{(2)}(\varepsilon)$, using similar arguments as  the proof of Lemma \ref{lema 2.5} in \cite{WY}, we derive that for any $\sigma<N-2$,
$$\displaystyle{\int_{\R^N}}\frac{1}{|y-z|^{N-1}}\,\frac{1}{\left(1+|z|\right)^{2+\sigma}}\,dz\leq \frac{C}{\left(1+|y|\right)^{1+\sigma}}, $$
then from \eqref{W_2}, we have
\begin{align*}
    I_{1}^{(1)}(\varepsilon)\leq C\displaystyle{\int_{B_{\tilde{R}}(0)\backslash B_{R}(0)}}\frac{1}{|z^{'}|^{N-3}}\,\frac{\tau_{\varepsilon}^{N-1}}{\left|\tau_{\varepsilon}(x-x_\varepsilon)-z'\right|^{N-1}}\,dz'\leq \,C\frac{\tau_{\varepsilon}^{N-1}}{\left(\tau_{\varepsilon}|x-x_\varepsilon|\right)^{N-4}}\leq C\|u_\varepsilon\|_{\infty}^{\frac{6}{N-2}}.
\end{align*}
From above estimates of $I_{1}^{(1)}(\varepsilon)$ and $I_{1}^{(2)}(\varepsilon)$, we get \eqref{4-27}. Moreover,
from \eqref{eq 2.4}, we derive \eqref{4-1}, which implies that $\|u_{\varepsilon}\|_{\infty}^{2+\frac{2}{N-2}}\,I_{1}(\varepsilon)=o(1)$ provided $N\geq6$.

\vskip 0.1cm

For $I_{2}(\varepsilon)$, by a change of variables, we obtain that
\begin{align}\label{eq4.26}
    I_{2}(\varepsilon)=\frac{\lambda_{i,\varepsilon}(2^{*}_\mu-1)}{\|u_{\varepsilon}\|_{\infty}^{2}}\displaystyle{\int_{\Omega_{\varepsilon}}}G_{\varepsilon}(x,z')\,\tilde{u}_\varepsilon^{2^{*}_\mu-2}(z')\,\tilde{v}_{i,\varepsilon}(z')\left(\displaystyle{\int_{\Omega_{\varepsilon}}}\frac{\tilde{u}_\varepsilon^{2^{*}_\mu}(y')}{|z{'}-y{'}|^{\mu}}dy{'}\right)dz{'}.
\end{align}
From \eqref{eq 1.7}, \eqref{eq 1.9} and \eqref{eq 1.21}, we derive that
\begin{align*}
\tilde{v}_{i,\varepsilon}(z')&\longrightarrow\sum_{j=1}^{N}a_{i,j}\frac{\partial}{\partial {z'_{j}}}\left(\frac{-1}{N-2}U_{0,1}(z')\right),\\
 \tilde{u}_\varepsilon^{2^{*}_\mu-2}(z')\tilde{v}_{i,\varepsilon}(z')\left(\displaystyle{\int_{\Omega_{\varepsilon}}}\frac{\tilde{u}_\varepsilon^{2^{*}_\mu}(y')}{|z{'}-y{'}|^{\mu}}dy{'}\right)&\longrightarrow\sum_{j=1}^{N}a_{i,j}\frac{\partial}{\partial {z'_{j}}}\left(\frac{-1}{N+2}U_{0,1}^{2^{*}-1}(z')\right),
\end{align*}
uniformly on any compact subsets of $\R^N$. Now, we consider the following linear first order PDE:
\begin{equation}\label{eq4.27}
    \begin{cases}
\,\displaystyle\sum_{j=1}^{N}a_{i,j}\frac{\partial w(z')}{\partial {z'_{j}}}= \tilde{u}_\varepsilon^{2^{*}_\mu-2}(z')\tilde{v}_{i,\varepsilon}(z')\left(\displaystyle{\int_{\Omega_{\varepsilon}}}\frac{\tilde{u}_\varepsilon^{2^{*}_\mu}(y')}{|z{'}-y{'}|^{\mu}}dy{'}\right),\quad z'\in \R^N,\\[6mm]
\,w(z')\Big|_{\Gamma_{a}}=\frac{-1}{N+2}U_{0,1}^{2^{*}-1}(z'),
    \end{cases}
\end{equation}
where $\Gamma_{a}=\{z'\in\R^N:z'\cdot \Vec{a}=0 \}$. Utilizing Lemma 2.4 in \cite{Ta1}, we derive that there exists a unique solution $\widehat{w}_{2,\varepsilon}$  of \eqref{eq4.27} with $\widehat{w}_{2,\varepsilon}(z')=O\left({|z'|^{-(N+1)}}\right)$ as $|z'|\rightarrow+\infty$. Moreover, since
\begin{align*}
\widehat{w}_{2,\varepsilon}(z')&\longrightarrow \frac{-1}{N+2}U_{0,1}^{2^{*}-1}(z'),
    \end{align*}
uniformly on any compact subsets of $\R^N$,
we have
    \begin{align*}
\displaystyle{\int_{\Omega_\varepsilon}}\widehat{w}_{2,\varepsilon}(z')dz'&\longrightarrow \frac{-1}{N+2} \displaystyle{\int_{\R^N}}U_{0,1}^{2^{*}-1}(z')dz'=\frac{-\sigma_{N}}{N(N+2)}.
    \end{align*}
Thus, using integration by parts, we derive from \eqref{eq4.26} that
\begin{align*}
I_{2}(\varepsilon)=&\frac{\lambda_{i,\varepsilon}(2^{*}_\mu-1)}{\|u_{\varepsilon}\|_{\infty}^{2}}\int_{\Omega_{\varepsilon}}G_{\varepsilon}(x,z')\left(\sum_{j=1}^{N}a_{i,j}\frac{\partial}{\partial {z'_{j}}}\widehat{w}_{2,\varepsilon}(z')\right)dz{'}\\=&\frac{\lambda_{i,\varepsilon}(2^{*}_\mu-1)}{\|u_{\varepsilon}\|_{\infty}^{2}}\displaystyle{\int_{\Omega_{\varepsilon}}}\widehat{w}_{2,\varepsilon}(z')\left(-\sum_{j=1}^{N}a_{i,j}\frac{\partial}{\partial {z'_{j}}}G_{\varepsilon}(x,z')\right)dz{'}\\
    =&\frac{\lambda_{i,\varepsilon}(2^{*}_\mu-1)}{\|u_{\varepsilon}\|_{\infty}^{2+\frac{2}{N-2}}}\displaystyle{\int_{\Omega_{\varepsilon}}}\widehat{w}_{2,\varepsilon}(z')\left(-\sum_{j=1}^{N}a_{i,j}\frac{\partial}{\partial {z_{j}}}G(x,z)\bigg|_{z=\frac{z'}{\tau_{\varepsilon}}+x_{\varepsilon}}\right)dz{'},
\end{align*}
which implies that
\begin{align}\label{4-2}
    \|u_{\varepsilon}\|_{\infty}^{2+\frac{2}{N-2}} I_{2}(\varepsilon) \longrightarrow\frac{(2^{*}_\mu-1)\sigma_{N}}{N(N+2)}\left(\sum_{j=1}^{N}a_{i,j}\frac{\partial}{\partial {z_{j}}}G(x,z)\bigg|_{z=x_{0}}\right).
\end{align}
Similarly, for $I_{3}(\varepsilon)$, we derive that
\begin{align}\label{I-3}
\|u_{\varepsilon}\|_{\infty}^{2+\frac{2}{N-2}} I_{3}(\varepsilon) \longrightarrow\frac{\mu\sigma_{N}}{N(N-2)(N+2)}\left(\sum_{j=1}^{N}a_{i,j}\frac{\partial}{\partial {z_{j}}}G(x,z)\bigg|_{z=x_{0}}\right).
\end{align}
Hence, from \eqref{4-0}, \eqref{4-1}, \eqref{4-2} and \eqref{I-3}, we conclude that if $N\geq6$,
\begin{align*}
     \|u_{\varepsilon}\|_{\infty}^{2+\frac{2}{N-2}} v_{i,\varepsilon}(x)\longrightarrow \frac{\sigma_{N}}{N(N-2)}\left(\sum_{j=1}^{N}a_{i,j}\frac{\partial}{\partial {z_{j}}}G(x,z)\bigg|_{z=x_{0}}\right),
\end{align*}
for any $x\in\bar{\Omega}\backslash\{x_{0}\}$. Moreover, by standard elliptic estimates, we obtain the desired result.
\end{proof}

In the following, we prove \eqref{eq 1.20} in Theorem 1.2.
\begin{proof}[\textbf{Proof of \eqref{eq 1.20}.}]
    Firstly, multiplying \eqref{eqA-4} by $v_{i,\varepsilon}$ and \eqref{eq 1.12} by $\frac{\partial u_{\varepsilon}}{\partial x_j}$, we derive
        \begin{equation}\label{eq 5.3}
            \begin{split}
        &\displaystyle{\int_{\partial\Omega}}\frac{\partial v_{i,\varepsilon}(x)}{\partial\nu_{x}}\frac{\partial u_{\varepsilon}(x)}{\partial x_{j}}dS_{x}\\
        =&(1-\lambda_{i,\varepsilon})\bigg\{\varepsilon\displaystyle{\int_{\Omega}}v_{i,\varepsilon}(x)\frac{\partial u_{\varepsilon}(x)}{\partial x_{j}}dx\\
        &\quad\,\quad\,\quad\,\quad+(2^{*}_\mu-1)\displaystyle{\int_{\Omega}}v_{i,\varepsilon}(x)u_{\varepsilon}^{2^{*}_\mu-2}(x)\frac{\partial u_{\varepsilon}(x)}{\partial x_{j}}\left(\displaystyle{\int_{\Omega}}\frac{u_{\varepsilon}^{2^{*}_\mu}(y)}{|x-y|^{\mu}}dy\right)dx\\
&\quad\,\quad\,\quad\,\quad+2^{*}_\mu\displaystyle{\int_{\Omega}}u_{\varepsilon}^{2^{*}_\mu-1}(x)\frac{\partial u_{\varepsilon}(x)}{\partial x_{j}}\left(\displaystyle{\int_{\Omega}}\frac{u_{\varepsilon}^{2^{*}_\mu-1}(y)v_{i,\varepsilon}(y)}{|x-y|^{\mu}}dy\right)dx\bigg\},\quad\text{for $j=1,\cdots,N$.}
            \end{split}
        \end{equation}
Then, from \eqref{eq 2.3} and \eqref{eq 1.22}, there holds
   \begin{equation}\label{LHS}
       \begin{split}
         \text{LHS of \eqref{eq 5.3}}=&\frac{1}{\|u_\varepsilon\|_{\infty}^{3+\frac{2}{N-2}}}\displaystyle{\int_{\partial\Omega}}\left(\frac{\partial }{\partial\nu_{x}}\left(\|u_\varepsilon\|_{\infty}^{2+\frac{2}{N-2}}v_{i,\varepsilon}(x)\right)\right)\left(\frac{\partial }{\partial x_{j}}\left(\|u_\varepsilon\|_{\infty}u_{\varepsilon}(x)\right)\right)dS_{x}\\
=&\frac{\sigma_{N}^{2}\sum_{k=1}^{N}a_{i,k}}{(N-2)N^{2}\|u_\varepsilon\|_{\infty}^{3+\frac{2}{N-2}}}\left(\displaystyle{\int_{\partial\Omega}}\,\frac{\partial}{\partial\nu_{x}}\left(\frac{\partial G(x,x_0)}{\partial y_{k}}\right)\frac{\partial G(x,x_0)}{\partial x_j}dS_{x}+o(1)\right)\\  =&\frac{\sigma_{N}^{2}\left(\sum_{k=1}^{N}a_{i,k}\right)}{(N-2)N^{2}\|u_\varepsilon\|_{\infty}^{3+\frac{2}{N-2}}}\left(\frac{1}{2}\frac{\partial^{2}R(y)}{\partial y_{k}\partial y_{j}}\bigg|_{y=x_0}+o(1)\right).
       \end{split}
   \end{equation}
On the other hand, by scaling, we have
\begin{equation*}
    \begin{split}
     \text{RHS of \eqref{eq 5.3}}=&\underbrace{\frac{\varepsilon(1-\lambda_{i,\varepsilon})}{\|u_\varepsilon\|_{\infty}^{\frac{N}{N-2}}}\displaystyle{\int_{\Omega_{\varepsilon}}}\tilde{v}_{i,\varepsilon}(x)\frac{\partial \tilde{u}_{\varepsilon}(x)}{\partial x_{j}}dx}_{:=E_{1}}\\
    &+\underbrace{\frac{(1-\lambda_{i,\varepsilon})(2^{*}_\mu-1)}{\|u_\varepsilon\|_{\infty}^{\frac{N-4}{N-2}}}\displaystyle{\int_{\Omega_{\varepsilon}}}\tilde{v}_{i,\varepsilon}\tilde{u}_\varepsilon^{2^{*}_\mu-2}\left(\frac{\partial \tilde{u}_{\varepsilon}}{\partial x_i}\right)\left(\displaystyle{\int_{\Omega_{\varepsilon}}}\frac{\tilde{u}_\varepsilon^{2^{*}_{\mu}}(y)}{|x-y|^{\mu}}dy\right)dx}_{:=E_{2}}\\
    &+\underbrace{\frac{(1-\lambda_{i,\varepsilon})2^{*}_\mu}{\|u_\varepsilon\|_{\infty}^{\frac{N-4}{N-2}}}\displaystyle{\int_{\Omega_{\varepsilon}}}\tilde{u}_\varepsilon^{2^{*}_\mu-1}(x)\frac{\partial \tilde{u}_\varepsilon(x) }{\partial x_{j}}\left(\displaystyle{\int_{\Omega_{\varepsilon}}}\frac{\tilde{u}_\varepsilon^{2^{*}_\mu-1}(y)\tilde{v}_{i,\varepsilon}(y)}{|x-y|^{\mu}}dy\right)dx}_{:=E_{3}}.
    \end{split}
\end{equation*}
Note that
$$\frac{\partial U_{0,1}(x)}{\partial x_{k}}=-\frac{\partial U_{\xi,1}(x)}{\partial \xi_{k}}\Big|_{\xi=0}=\frac{(2-N)\,x_k}{\left(1+|x|^{2}\right)^{\frac{N}{2}}},$$
then from \eqref{eq 1.7}, \eqref{eq 1.9} and \eqref{eq 1.21}, we derive that
\begin{align*}
   E_{1} =&\frac{\varepsilon(1-\lambda_{i,\varepsilon})}{\|u_\varepsilon\|_{\infty}^{\frac{N}{N-2}}}\left(\displaystyle{\int_{\R^N}}\left(\sum_{k=1}^{N}a_{i,k}\frac{x_k}{\left(1+|x|^{2}\right)^{\frac{N}{2}}}\right)\frac{\partial U_{0,1}(x)}{\partial x_{j}}dx+o(1)\right)\\
=&\frac{\varepsilon(\lambda_{i,\varepsilon}-1)}{(N-2)\|u_\varepsilon\|_{\infty}^{\frac{N}{N-2}}}\left(\displaystyle{\int_{\R^N}}\left(\sum_{k=1}^{N}a_{i,k}\frac{\partial U_{0,1}(x)}{\partial x_{k}}\right)\frac{\partial U_{0,1}(x)}{\partial x_{j}}dx+o(1)\right)\\
=&\frac{\varepsilon  (\lambda_{i,\varepsilon}-1)}{(N-2)\|u_\varepsilon\|_{\infty}^{\frac{N}{N-2}}}\left(\frac{a_{i,j}}{N}\displaystyle{\int_{\R^N}}|\nabla U_{0,1}|^{2}dx+o(1)\right).
\end{align*}
\begin{align*}
    E_{2}
    =&\frac{(2^{*}_\mu-1)(\lambda_{i,\varepsilon}-1)}{(N-2)\|u_\varepsilon\|_{\infty}^{\frac{N-4}{N-2}}}\left(\frac{a_{i,j}}{N}\displaystyle{\int_{\R^N}}\left|\nabla U_{0,1}\right|^{2}U_{0,1}^{2^{*}-2}(x)dx+o(1)\right),
\end{align*}
and
\begin{align*}
    E_{3}=\frac{(2^{*}-2^{*}_\mu)(\lambda_{i,\varepsilon}-1)}{(N-2)\|u_\varepsilon\|_{\infty}^{\frac{N-4}{N-2}}}\left(\frac{a_{i,j}}{N}\displaystyle{\int_{\R^N}}\left|\nabla U_{0,1}\right|^{2}U_{0,1}^{2^{*}-2}(x)dx+o(1)\right).
\end{align*}
Here we want to mention that we utilize the similar arguments as the proof of \eqref{a-1} to estimate $E_{3}$. From above estimates of $E_{1}-E_{3}$, we find that
\begin{equation}\label{RHS}
\begin{split}
 \text{RHS of \eqref{eq 5.3}}=&\frac{\varepsilon  (\lambda_{i,\varepsilon}-1)}{(N-2)\|u_\varepsilon\|_{\infty}^{\frac{N}{N-2}}}\left(\frac{a_{i,j}}{N}\displaystyle{\int_{\R^N}}|\nabla U_{0,1}|^{2}dx+o(1)\right)\\
 &+\frac{(2^{*}-1)(\lambda_{i,\varepsilon}-1)}{(N-2)\|u_\varepsilon\|_{\infty}^{\frac{N-4}{N-2}}}\left(\frac{a_{i,j}}{N}\displaystyle{\int_{\R^N}}\left|\nabla U_{0,1}\right|^{2}U_{0,1}^{2^{*}-2}(x)dx+o(1)\right)
\end{split}
\end{equation}
Therefore, from \eqref{LHS}, \eqref{RHS} and \eqref{eq 2.4}, it holds
\begin{equation}\label{4-33}
    \begin{split}
       &\frac{\sigma_{N}^{2}}{2N^{2}(N-2)}\left({\sum_{k=1}^{N}a_{i,k}}\frac{\partial^{2}R(x_0)}{\partial y_{k}\partial y_{j}}+o(1)\right)\\
  =& (\lambda_{i,\varepsilon}-1)\|u_\varepsilon\|_{\infty}^{\frac{2N}{N-2}}\left(\frac{a_{i,j}(N+2)}{N(N-2)^{2}}\displaystyle{\int_{\R^N}}\left|\nabla U_{0,1}\right|^{2}U_{0,1}^{2^{*}-2}(x)dx+o(1)\right).
    \end{split}
\end{equation}
Moreover, since $a_{i,j}\neq 0$ for some $j$, passing to the limit as $\varepsilon\rightarrow0$, we deduce that
\begin{align}\label{eq 5.5}
(\lambda_{i,\varepsilon}-1)\|u_\varepsilon\|_{\infty}^{\frac{2N}{N-2}}\longrightarrow M\eta_{i},\quad i=2,\cdots,N+1,
\end{align}
where
$$M:=\frac{(N-2)\sigma_{N}^{2}}{2N(N+2)\left(\displaystyle{\int_{\R^N}}\left|\nabla U_{0,1}\right|^{2}U_{0,1}^{2^{*}-2}(x)dx\right)}>0,$$
and
$$\eta_{i}:=\frac{\sum_{k=1}^{N}a_{i,k}\frac{\partial^{2}R(x_0)}{\partial y_{k}\partial y_{j}}}{a_{i,j}}\quad \text{for any $j$ such that $a_{i,j}\neq0$}.$$
By the definition of $\eta_i$, if $a_{i,j}\neq0$ for some $j=1,\cdots,N$, we have
\begin{equation*}
\sum_{k=1}^{N}a_{i,k}\frac{\partial^{2}R(x_0)}{\partial y_{k}\partial y_{j}}=\eta_{i}a_{i,j},
\end{equation*}
which still holds provided $a_{i,j}=0$ in virtue of \eqref{4-33}. Hence, $D^{2}R(x_0)\cdot\vec{a}_{i}=\eta_{i}\vec{a}_{i}$, that is, $\eta_i$ is an eigenvalue of the Hessian matrix $D^{2}R(x_0)$ with $\vec{a}_{i}=\left(a_{i,1},\cdots,a_{i,N}\right)$ as the corresponding eigenvector. Furthermore, for $i\neq j$, since the eigenfunctions $v_{i,\varepsilon}$ and $v_{j,\varepsilon}$ are orthogonal in the sense of \eqref{eqq1-11}, utilizing similar arguments as in \cite{Ta2}, we derive that $\vec{a}_{i}$ and $\vec{a}_{j}$ are perpendicular to each other in $\R^N$. Denote  $\gamma_{1}\leq\gamma_{2}\cdots\leq\gamma_{N}$ as the eigenvalues of the matrix $D^{2}R(x_0)$. Thus, we have $\eta_{i}=\gamma_{l}$, for some $l\in[\,1,N\,]$. Recalling that $\lambda_{2,\varepsilon}\leq \cdots\leq\lambda_{N+1,\varepsilon}$,  from \eqref{eq 5.5}, we deduce that $\eta_{i}=\gamma_{i-1}$, for $i=2,3,\cdots,N+1$. This ends the proof of Theorem 1.2.
\end{proof}
\noindent
\section{\label{Esti}Estimates of the $(N+2)$-th eigenpaires $\big(\lambda_{N+2,\varepsilon},v_{N+2,\varepsilon}\big)$}
\setcounter{equation}{0}
\vskip 0.2cm
In this section, we prove Theorem 1.3. Firstly, we clarify the following Lemma.
\smallskip
\begin{lemma} \label{lema 5.1} As $\varepsilon\rightarrow0$, there holds
\begin{align}\label{eq 6.1}
    \lambda_{N+2,\varepsilon}\longrightarrow 1.
\end{align}
\begin{proof}
    From Lemma \ref{lema 4.2} and the fact that $\lambda_{2,\varepsilon}\leq \cdots\leq\lambda_{N+1,\varepsilon}\leq\lambda_{N+2,\varepsilon}$, we deduce that
  \begin{align*}
      \underset{\varepsilon\rightarrow0}{\liminf}\,\lambda_{N+2,\varepsilon}\geq \underset{\varepsilon\rightarrow0}{\limsup}\,\lambda_{N+1,\varepsilon}=1.
\end{align*}
To obtain \eqref{eq 6.1}, it is enough to prove that
$$\underset{\varepsilon\rightarrow0}{\limsup}\,\lambda_{N+2,\varepsilon}\leq1.$$
By the variational characterization of $\lambda_{N+2,\varepsilon}$, we have
$$\lambda_{N+2,\varepsilon}\leq \underset{w\in W}{\max}\frac{\displaystyle{\int_{\Omega}}|\nabla w|^{2}dx}{\displaystyle{\int_{\Omega}}\varepsilon w^{2}dx+\displaystyle{\int_{\Omega}}\displaystyle{\int_{\Omega}}\frac{(2^{*}_\mu-1)u_{\varepsilon}^{2^{*}_\mu-2}(x)w^{2}(x)u_{\varepsilon}^{2^{*}_\mu}(y)+2^{*}_\mu u_{\varepsilon}^{2^{*}_\mu-1}(x)w(x)u_{\varepsilon}^{2^{*}_\mu-1}(y)w(y)}{|x-y|^{\mu}}dxdy},$$
where $W$ is the subspace generated by the functions $u_{\varepsilon},\,\psi_{1,\varepsilon},\cdots,\,\psi_{N+1,\varepsilon}$ defined in \eqref{eq 4.2} and \eqref{eq 4.3}. From Lemma \ref{lema 4.1}, we know that $dimW=N+2$. Then $\lambda_{N+2,\varepsilon}$ can be estimated via the similar way in  Lemma \ref{lema 4.2}. Indeed, for any function $w\in W$ which is of the form
$$w(x)=a_{0}u_{\varepsilon}(x)+\sum_{j=1}^{N}a_{j}\psi_{j,\varepsilon}(x)+d\psi_{N+1,\varepsilon}(x):=a_{0}u_{\varepsilon}(x)+\phi(x)\widehat{z}_{\varepsilon}(x),$$
for $a_0,a_{1},\cdots,a_{N},d\in \R$, where
$$\widehat{z}_{\varepsilon}(x)={z}_{\varepsilon}(x)+d\omega_{\varepsilon}(x):=\sum_{j=1}^{N}a_{j}\frac{\partial u_{\varepsilon}(x)}{\partial x_{j}}+d\left((x-x_{\varepsilon})\cdot\nabla u_{\varepsilon}(x)+\frac{N-2}{2}u_{\varepsilon}(x)\right),$$
we will evaluate separately the numerator and the denominator for the function $w\in W$. Obviously, from \eqref{eqA-4} and \eqref{eqA-5}, we have
\begin{align*}
    -\Delta \widehat{z}_{\varepsilon}(x)=&\varepsilon \widehat{z}_{\varepsilon}(x)+(2^{*}_\mu-1)u_\varepsilon^{2^{*}_\mu-2}(x)\widehat{z}_{\varepsilon}(x)\left(\displaystyle{\int_{\Omega}}\frac{u_\varepsilon^{2^{*}_\mu}(y)}{|x-y|^{\mu}}dy\right)\\
 &+2^{*}_\mu u_\varepsilon^{2^{*}_\mu-1}(x)\left(\displaystyle{\int_{\Omega}}\frac{u_\varepsilon^{2^{*}_\mu-1}(y)\widehat{z}_{\varepsilon}(y)}{|x-y|^{\mu}}dy\right)+2d\varepsilon u_\varepsilon(x).
\end{align*}
Then, similar to \eqref{eq4.18}, a direct calculation yields that
\begin{align}\label{eq5-2}
    \lambda_{N+2,\varepsilon}\leq \underset{\left(a_{0},\,a_{1},\cdots,a_{N},\,d\right)\,\in\R^{N+2}}{\max}\left\{1+\frac{\widehat{N}_{\varepsilon}}{\widehat{D}_{\varepsilon}}\right\}.
\end{align}
Here, we write
\begin{equation}\label{hatN-ep}
\widehat{N}_{\varepsilon}:=\widehat{N}_{1,\varepsilon}+\widehat{N}_{2,\varepsilon}+\widehat{N}_{3,\varepsilon}+\widehat{N}_{4,\varepsilon},
\end{equation}
with
\begin{equation}\label{hatN-ep,1}
\widehat{N}_{1,\varepsilon}:=a_{0}^{2}(2-2\cdot2^{*}_\mu)\displaystyle{\int_{\Omega}}\displaystyle{\int_{\Omega}}\frac{u_{\varepsilon}^{2^{*}_\mu}(x)u_{\varepsilon}^{2^{*}_\mu}(y)}{|x-y|^{\mu}}dxdy,
\end{equation}
\smallskip
\begin{equation}\label{hatN-ep,2}
    \begin{split}
    \widehat{N}_{2,\varepsilon}:=&2a_{0}(2-2\cdot2^{*}_\mu)\displaystyle{\int_{\Omega}}\displaystyle{\int_{\Omega}}\frac{u_{\varepsilon}^{2^{*}_\mu-1}(x)\phi(x)\left(\sum_{j=1}^{N}a_j\frac{\partial u_\varepsilon}{\partial x_j}(x)\right)u_{\varepsilon}^{2^{*}_\mu}(y)}{|x-y|^{\mu}}dxdy\\
    &+2a_{0}d(2-2\cdot2^{*}_\mu)\underbrace{\displaystyle{\int_{\Omega}}u_{\varepsilon}^{2^{*}_\mu-1}(x)\phi(x)\omega_{\varepsilon}(x)\left(\displaystyle{\int_{\Omega}}\frac{u_\varepsilon^{2^{*}_\mu}(y)}{|x-y|^{\mu}}dy\right)dx}_{:=\widehat{N}_{2,\varepsilon}^{(1)}},
    \end{split}
\end{equation}
\begin{equation}\label{hatN-ep,3}
    \begin{split}
\widehat{N}_{3,\varepsilon}:=&\displaystyle{\int_{\Omega}}|\nabla\phi|^{2}\left(\sum_{j=1}^{N}a_j\frac{\partial u_\varepsilon}{\partial x_j}(x)\right)\left(\sum_{l=1}^{N}a_l\frac{\partial u_\varepsilon}{\partial x_l}(x)\right)dx\\
&+2^{*}_\mu\displaystyle{\int_{\Omega}}\displaystyle{\int_{\Omega}}\frac{\phi(x)\left(\phi(x)-\phi(y)\right)u_{\varepsilon}^{2^{*}_\mu-1}(x)\left(\sum_{j=1}^{N}a_j\frac{\partial u_\varepsilon}{\partial x_j}(x)\right)u_{\varepsilon}^{2^{*}_\mu-1}(y)\left(\sum_{l=1}^{N}a_l\frac{\partial u_\varepsilon}{\partial y_l}(y)\right)}{|x-y|^{\mu}}dxdy\\
&+\underbrace{d^{2}\displaystyle{\int_{\Omega}}|\nabla\phi|^{2}\omega_{\varepsilon}^{2}(x)dx}_{:=\widehat{N}_{3,\varepsilon}^{(1)}}+\underbrace{2d\displaystyle{\int_{\Omega}}|\nabla\phi|^{2}\left(\sum_{j=1}^{N}a_j\frac{\partial u_\varepsilon}{\partial x_j}(x)\right)\omega_{\varepsilon}(x)dx}_{:={\widehat{N}_{3,\varepsilon}^{(2)}}}\\
&+\underbrace{2^{*}_{\mu}d^{2}\displaystyle{\int_{\Omega}}\displaystyle{\int_{\Omega}}\frac{\phi(x)\left(\phi(x)-\phi(y)\right)u_{\varepsilon}^{2^{*}_\mu-1}(x)\omega_{\varepsilon}(x)u_{\varepsilon}^{2^{*}_\mu-1}(y)\omega_{\varepsilon}(y)}{|x-y|^{\mu}}dxdy}_{{:=\widehat{N}_{3,\varepsilon}^{(3)}}}\\
&+\underbrace{2^{*}_{\mu}d\displaystyle{\int_{\Omega}}\displaystyle{\int_{\Omega}}\frac{\phi(x)\left(\phi(x)-\phi(y)\right)u_{\varepsilon}^{2^{*}_\mu-1}(x)u_{\varepsilon}^{2^{*}_\mu-1}(y)\left(\sum_{j=1}^{N}a_j\frac{\partial u_\varepsilon}{\partial x_j}(x)\right)\omega_{\varepsilon}(y)}{|x-y|^{\mu}}dxdy}_{:=\widehat{N}_{3,\varepsilon}^{(4)}}\\
&+\underbrace{2^{*}_{\mu}d\displaystyle{\int_{\Omega}}\displaystyle{\int_{\Omega}}\frac{\phi(x)\left(\phi(x)-\phi(y)\right)u_{\varepsilon}^{2^{*}_\mu-1}(x)u_{\varepsilon}^{2^{*}_\mu-1}(y)\left(\sum_{l=1}^{N}a_l\frac{\partial u_\varepsilon}{\partial y_l}(y)\right)\omega_{\varepsilon}(x)}{|x-y|^{\mu}}dxdy}_{{:=\widehat{N}_{3,\varepsilon}^{(5)}}},
    \end{split}
\end{equation}
\begin{equation}\label{hatN-ep,4}
\widehat{N}_{4,\varepsilon}:=2\varepsilon d\displaystyle{\int_{\Omega}}\phi^{2}(x) u_{\varepsilon}(x)\left(\sum_{j=1}^{N}a_j\frac{\partial u_\varepsilon}{\partial x_j}(x)\right)dx+2\varepsilon d^{2}\displaystyle{\int_{\Omega}}\phi^{2}(x) u_{\varepsilon}(x)\omega_{\varepsilon}(x)dx.
\end{equation}
Moreover, we denote
\begin{equation}\label{hatD-ep}
\widehat{D}_{\varepsilon}:=\widehat{D}_{1,\varepsilon}+\widehat{D}_{2,\varepsilon}+\widehat{D}_{3,\varepsilon},
\end{equation}
where
\begin{equation}\label{hatD-ep,1}
\widehat{D}_{1,\varepsilon}:=a_{0}^{2} \left(\varepsilon \displaystyle{\int_{\Omega}}u_{\varepsilon}^{2}(x)dx+(2\cdot2^{*}_\mu-1)\displaystyle{\int_{\Omega}}\displaystyle{\int_{\Omega}}\frac{u_{\varepsilon}^{2^{*}_\mu}(x)u_{\varepsilon}^{2^{*}_\mu}(y)}{|x-y|^{\mu}}dxdy\right),
\end{equation}
\smallskip
\begin{equation}\label{hatD-ep,2}
    \begin{split}
\widehat{D}_{2,\varepsilon}=&2a_0\varepsilon \displaystyle{\int_{\Omega}}
 u_{\varepsilon}(x)\phi(x)\left(\sum_{j=1}^{N}a_j\frac{\partial u_\varepsilon}{\partial x_j}(x)\right)dx+\underbrace{2a_0\varepsilon d\displaystyle{\int_{\Omega}}\phi(x) u_{\varepsilon}(x)\omega_{\varepsilon}(x)dx}_{:=\widehat{D}_{2,\varepsilon}^{(1)}}\\
&+2a_0(2\cdot2^{*}_\mu-1)\displaystyle{\int_{\Omega}}\displaystyle{\int_{\Omega}}\frac{u_{\varepsilon}^{2^{*}_\mu-1}(x)\phi(x)\left(\sum_{j=1}^{N}a_j\frac{\partial u_\varepsilon}{\partial x_j}(x)\right)u_{\varepsilon}^{2^{*}_\mu}(y)}{|x-y|^{\mu}}dxdy\\
&+2a_{0}d(2\cdot2^{*}_\mu-1)\underbrace{\displaystyle{\int_{\Omega}}\displaystyle{\int_{\Omega}}\frac{u_{\varepsilon}^{2^{*}_\mu-1}(x)\phi(x)\omega_{\varepsilon}(x)u_{\varepsilon}^{2^{*}_\mu}(y)}{|x-y|^{\mu}}dxdy}_{:=\widehat{D}_{2,\varepsilon}^{(2)}},
    \end{split}
\end{equation}
\begin{align}
\widehat{D}_{3,\varepsilon}=&\varepsilon \displaystyle{\int_{\Omega}}\phi^{2}(x)\left(\sum_{j=1}^{N}a_j\frac{\partial u_\varepsilon}{\partial x_j}(x)\right)\left(\sum_{l=1}^{N}a_l\frac{\partial u_\varepsilon}{\partial x_l}(x)\right)dx\notag\\
&+(2^{*}_\mu-1)\displaystyle{\int_{\Omega}}\displaystyle{\int_{\Omega}}\frac{u_{\varepsilon}^{2^{*}_\mu-2}(x)\phi^{2}(x)\left(\sum_{j=1}^{N}a_j\frac{\partial u_\varepsilon}{\partial x_j}(x)\right)\left(\sum_{l=1}^{N}a_l\frac{\partial u_\varepsilon}{\partial x_l}(x)\right)u_{\varepsilon}^{2^{*}_\mu}(y)}{|x-y|^{\mu}}dxdy\notag\\
&+2^{*}_\mu\displaystyle{\int_{\Omega}}u_{\varepsilon}^{2^{*}_\mu-1}(x)\phi(x)\left(\sum_{j=1}^{N}a_j\frac{\partial u_\varepsilon}{\partial x_j}(x)\right)\left(\displaystyle{\int_{\Omega}}\frac{u_{\varepsilon}^{2^{*}_\mu-1}(y)\phi(y)\left(\sum_{l=1}^{N}a_l\frac{\partial u_\varepsilon}{\partial y_l}(y)\right)}{|x-y|^{\mu}}dy\right)dx\notag\\
&+\underbrace{\varepsilon d^{2} \displaystyle{\int_{\Omega}}\phi^{2}(x)\omega_{\varepsilon}^{2}(x)dx}_{:=\widehat{D}_{3,\varepsilon}^{(1)}}+\underbrace{2\varepsilon d\displaystyle{\int_{\Omega}}\phi^{2}(x)\omega_{\varepsilon}(x)\left(\sum_{j=1}^{N}a_j\frac{\partial u_\varepsilon}{\partial x_j}(x)\right)dx}_{:=\widehat{D}_{3,\varepsilon}^{(2)}}\notag\\
&+\underbrace{(2^{*}_\mu-1)d^{2}\displaystyle{\int_{\Omega}}\displaystyle{\int_{\Omega}}\frac{u_{\varepsilon}^{2^{*}_\mu-2}(x)\phi^{2}(x)\omega_{\varepsilon}^{2}(x)u_{\varepsilon}^{2^{*}_\mu}(y)}{|x-y|^{\mu}}dxdy}_{:=\widehat{D}_{3,\varepsilon}^{(3)}}\notag\\
&+\underbrace{2^{*}_{\mu}d^{2}\displaystyle{\int_{\Omega}}u_{\varepsilon}^{2^{*}_\mu-1}(x)\phi(x)\omega_{\varepsilon}(x)\left(\displaystyle{\int_{\Omega}}\frac{u_{\varepsilon}^{2^{*}_\mu-1}(y)\phi(y)\omega_{\varepsilon}(y)}{|x-y|^{\mu}}dy\right)dx}_{:=\widehat{D}_{3,\varepsilon}^{(4)}}\notag\\
&+2\cdot(2^{*}_\mu-1)d\underbrace{\displaystyle{\int_{\Omega}}\displaystyle{\int_{\Omega}}\frac{u_{\varepsilon}^{2^{*}_\mu}(y)u_{\varepsilon}^{2^{*}_\mu-2}(x)\phi^{2}(x)\left(\sum_{j=1}^{N}a_j\frac{\partial u_\varepsilon}{\partial x_j}(x)\omega_{\varepsilon}(x)\right)}{|x-y|^{\mu}}dxdy}_{:=\widehat{D}_{3,\varepsilon}^{(5)}}\notag\\
&+2\cdot2^{*}_{\mu}d\underbrace{\displaystyle{\int_{\Omega}}u_{\varepsilon}^{2^{*}_\mu-1}(x)\phi(x)\left(\sum_{j=1}^{N}a_j\frac{\partial u_\varepsilon}{\partial x_j}(x)\right)\left(\displaystyle{\int_{\Omega}}\frac{u_{\varepsilon}^{2^{*}_\mu-1}(y)\phi(y)\omega_{\varepsilon}(y)}{|x-y|^{\mu}}dy\right)dx}_{:=\widehat{D}_{3,\varepsilon}^{(6)}}.\label{hatD-ep,3}
\end{align}

In the following, we evaluate separately each term in $\widehat{N}_{\varepsilon}$ and $\widehat{D}_{\varepsilon}$, which are needed to compute the quotient in \eqref{eq5-2}. Let $(a_0,a_{1},\cdots,a_{N},d)$ denote a maximizer of $\underset{a_{0},a_{1},\cdots,a_{N},d}{\max}\left(1+\frac{\widehat{N}_{\varepsilon}}{\widehat{D}_{\varepsilon}}\right)$, which is normalized as $a_{0}^{2}+\sum_{j=1}^{N}a_{j}^{2}+d^2=1$. Since $\lambda_{1,\varepsilon}<1/\left(2\cdot2^{*}_\mu-1\right)$ provided $a_{0}=1$, we only consider the case $\sum_{j=1}^{N}a_{j}^{2}+d^2\neq 0$.  As for $\widehat{D}_{2,\varepsilon}^{(1)}$ given in \eqref{hatD-ep,2}, a direct computation yields that
\begin{equation}\label{5-12}
\begin{split}
\widehat{D}_{2,\varepsilon}^{(1)}=&a_{0}d\,\varepsilon\displaystyle{\int_{\Omega}}\phi(x)\left(\sum_{j=1}^{N}\left(x_j-x_{\varepsilon,j}\right)\frac{\partial}{\partial x_{j}u_{\varepsilon}^{2}(x)}\right)dx+(N-2)a_{0}d\,\varepsilon\displaystyle{\int_{\Omega}}\phi(x)u_{\varepsilon}^{2}(x)\\
=&a_{0}d\,\varepsilon\displaystyle{\int_{\Omega}}\left(-\sum_{j=1}^{N}\left(x_j-x_{\varepsilon,j}\right)\frac{\partial\phi(x)}{\partial x_{j}}\right)u_{\varepsilon}^{2}(x)dx-\frac{2a_{0}d\,\varepsilon}{\|u_{\varepsilon}\|_{\infty}^{2^{*}-2}}\displaystyle{\int_{\Omega_{\varepsilon}}}\phi(\tau_{\varepsilon}^{-1}x+x_{\varepsilon})\tilde{u}_{\varepsilon}^{2}(x)dx\\
=&O\left(\frac{\varepsilon}{\|u_{\varepsilon}\|_{\infty}^{2}}\right)+O\left(\frac{\varepsilon}{\|u_{\varepsilon}\|_{\infty}^{2^{*}-2}}\left(\displaystyle{\int_{\R^N}}U_{0,1}^{2}(x)dx+o(1)\right)\right)=O\left(\frac{1}{\|u_{\varepsilon}\|_{\infty}^{2}}\right),
\end{split}
\end{equation}
where the last equality is derived from \eqref{eq 2.4}. For the term $\widehat{D}_{2,\varepsilon}^{(2)}$, we get that
\begin{equation}\label{hat_D,2}
    \begin{split}
\widehat{D}_{2,\varepsilon}^{(2)}=&\underbrace{\displaystyle{\int_{B_{\rho}(x_{\varepsilon})}}\displaystyle{\int_{B_{\rho}(x_{\varepsilon})}}\frac{u_{\varepsilon}^{2^{*}_\mu-1}(x)\phi(x)\omega_{\varepsilon}(x)u_{\varepsilon}^{2^{*}_\mu}(y)}{|x-y|^{\mu}}dxdy}_{:=M_{1}}\\
&+\underbrace{\displaystyle{\int_{\Omega\backslash B_{\rho}(x_{\varepsilon})}}\displaystyle{\int_{B_{\rho}(x_{\varepsilon})}}\frac{u_{\varepsilon}^{2^{*}_\mu-1}(x)\phi(x)\omega_{\varepsilon}(x)u_{\varepsilon}^{2^{*}_\mu}(y)}{|x-y|^{\mu}}dxdy}_{:=M_{2}}\\
&+\underbrace{\displaystyle{\int_{B_{\rho}(x_{\varepsilon})}}\displaystyle{\int_{\Omega\backslash B_{\rho}(x_{\varepsilon})}}\frac{u_{\varepsilon}^{2^{*}_\mu-1}(x)\phi(x)\omega_{\varepsilon}(x)u_{\varepsilon}^{2^{*}_\mu}(y)}{|x-y|^{\mu}}dxdy}_{:=M_{3}}\\
&+\underbrace{\displaystyle{\int_{\Omega\backslash B_{\rho}(x_{\varepsilon})}}\displaystyle{\int_{\Omega\backslash B_{\rho}(x_{\varepsilon})}}\frac{u_{\varepsilon}^{2^{*}_\mu-1}(x)\phi(x)\omega_{\varepsilon}(x)u_{\varepsilon}^{2^{*}_\mu}(y)}{|x-y|^{\mu}}dxdy}_{:=M_{4}}.
    \end{split}
\end{equation}
Note that for any $x\in B_{\rho}(x_{\varepsilon})$, $\phi(x)\equiv1$. Then, using the integration by parts, we have
\begin{equation}\label{m1}
    \begin{split}
     M_{1}=&\displaystyle{\int_{B_{\rho}(x_{\varepsilon})}}\displaystyle{\int_{B_{\rho}(x_{\varepsilon})}}\frac{u_{\varepsilon}^{2^{*}_\mu-1}(x)\left(\left(x-x_\varepsilon\right)\cdot\nabla {u}_{\varepsilon}(x)+\frac{N-2}{2}{u}_{\varepsilon}(x)\right)u_{\varepsilon}^{2^{*}_\mu}(y)}{|x-y|^{\mu}}dxdy\\
    =&\frac{-N}{2^{*}_\mu}\displaystyle{\int_{B_{\rho}(x_{\varepsilon})}}\displaystyle{\int_{B_{\rho}(x_{\varepsilon})}}\frac{u_{\varepsilon}^{2^{*}_\mu}(x)u_{\varepsilon}^{2^{*}_\mu}(y)}{|x-y|^{\mu}}dxdy\\
    &+\frac{\mu}{2^{*}_\mu}\underbrace{\displaystyle{\int_{B_{\rho}(x_{\varepsilon})}}\displaystyle{\int_{B_{\rho}(x_{\varepsilon})}}\frac{(x-y)\cdot(x-x_{\varepsilon})}{|x-y|^{\mu+2}}u_{\varepsilon}^{2^{*}_\mu}(x)u_{\varepsilon}^{2^{*}_\mu}(y)dxdy}_{:=M_{1,1}}\\
    &+\underbrace{\frac{1}{2^{*}_\mu}\int_{\partial B_{\rho}(x_\varepsilon)}\int_{B_{\rho}(x_\varepsilon)}\frac{u_{\varepsilon}^{2^{*}_\mu}(x)u_{\varepsilon}^{2^{*}_\mu}(y)(x-x_\varepsilon)\cdot\nu}{|x-y|^{\mu}}dydS_{x}}_{:=M_{1,2}}\\
    &+\frac{N-2}{2}\displaystyle{\int_{B_{\rho}(x_{\varepsilon})}}\displaystyle{\int_{B_{\rho}(x_{\varepsilon})}}\frac{u_{\varepsilon}^{2^{*}_\mu}(x)u_{\varepsilon}^{2^{*}_\mu}(y)}{|x-y|^{\mu}}dxdy.
    \end{split}
\end{equation}
Due to the symmetry property, we have
\begin{align*}
     &\displaystyle{\int_{B_{\rho}(x_{\varepsilon})}}\displaystyle{\int_{B_{\rho}(x_{\varepsilon})}}\frac{(y-x)\cdot(y-x_{\varepsilon})}{|x-y|^{\mu+2}}\,u_{\varepsilon}^{2^{*}_\mu}(x)u_{\varepsilon}^{2^{*}_\mu}(y)dxdy\\
     =&-\displaystyle{\int_{B_{\rho}(x_{\varepsilon})}}\displaystyle{\int_{B_{\rho}(x_{\varepsilon})}}\frac{(y-x)\cdot(x_{\varepsilon}-y)}{|x-y|^{\mu+2}}\,u_{\varepsilon}^{2^{*}_\mu}(x)u_{\varepsilon}^{2^{*}_\mu}(y)dxdy,
\end{align*}
then a direct calculation yields that
\begin{equation}\label{eq 6.2}
    \begin{split}
      M_{1,1}=&\frac{1}{2}\displaystyle{\int_{B_{\rho}(x_{\varepsilon})}}
      \displaystyle{\int_{B_{\rho}(x_{\varepsilon})}}\frac{(x-y)\cdot(x-x_{\varepsilon})-(y-x)\cdot(x_{\varepsilon}-y)}{|x-y|^{\mu+2}}
      u_{\varepsilon}^{2^{*}_\mu}(x)u_{\varepsilon}^{2^{*}_\mu}(y)dxdy\\
    =&\frac{1}{2}\displaystyle{\int_{B_{\rho}(x_{\varepsilon})}}\displaystyle{\int_{B_{\rho}(x_{\varepsilon})}}\frac{u_{\varepsilon}^{2^{*}_\mu}(x)u_{\varepsilon}^{2^{*}_\mu}(y)}{|x-y|^{\mu}}dxdy.
    \end{split}
\end{equation}
Moreover, 
from \eqref{eq 1.7} and \eqref{eq 2.2}, we deduce that
\begin{equation}\label{5-14}
    \begin{split}
        M_{1,2}\leq&C\displaystyle{\int_{\partial B_{\rho}(x_\varepsilon)}}U_{x_{\varepsilon},\tau_{\varepsilon}}^{2^{*}_\mu}(x)\left(\displaystyle{\int_{B_{\rho}(x_\varepsilon)}}\frac{U_{x_{\varepsilon},\tau_{\varepsilon}}^{2^{*}_\mu}(y)}{|x-y|^{\mu}}dy\right)dS_{x}\leq C\displaystyle{\int_{\partial B_{\rho}(x_\varepsilon)}}U_{x_{\varepsilon},\tau_{\varepsilon}}^{2^{*}}(x)dS_{x}\\
=&O\left(\frac{1}{\|u_\varepsilon\|_{\infty}^{2^{*}}}\right)=o\left(\frac{1}{\|u_\varepsilon\|_{\infty}^{2^{*}_\mu}}\right).
    \end{split}
\end{equation}
Thus, from \eqref{m1}, \eqref{eq 6.2} and \eqref{5-14}, we get that
\begin{equation*}
\begin{split}
  M_{1}=&\left(\frac{-N}{2^{*}_\mu}+\frac{\mu}{2\cdot2^{*}_\mu}+\frac{N-2}{2}\right)\displaystyle{\int_{B_{\rho}(x_{\varepsilon})}}\displaystyle{\int_{B_{\rho}(x_{\varepsilon})}}\frac{u_{\varepsilon}^{2^{*}_\mu}(x)u_{\varepsilon}^{2^{*}_\mu}(y)}{|x-y|^{\mu}}dxdy+o\left(\frac{1}{\|u_\varepsilon\|_{\infty}^{2^{*}_\mu}}\right)\\
  =&0+o\left(\frac{1}{\|u_\varepsilon\|_{\infty}^{2^{*}_\mu}}\right)=o\left(\frac{1}{\|u_\varepsilon\|_{\infty}^{2^{*}_\mu}}\right).
\end{split}
\end{equation*}
As for $M_{2}$, using HLS, H$\ddot{o}$lder inequality and the Sobolev inequality, we obtain that
\begin{equation*}
    \begin{split}
      M_{2}\leq &C\left(\displaystyle{\int_{ B_{\rho}(x_{\varepsilon})}}\left|u_{\varepsilon}^{2^{*}_\mu-1}(x)\phi(x)\omega_{\varepsilon}(x)\right|^{\frac{2N}{2N-\mu}}dx\right)^{\frac{2N-\mu}{2N}}\left(\displaystyle{\int_{\Omega\backslash B_{\rho}(x_{\varepsilon})}}\left|u_{\varepsilon}(y)\right|^{\frac{2N}{N-2}}dy\right)^{\frac{2N-\mu}{2N}}\\
\leq&C\left(\displaystyle{\int_{ B_{\rho}(x_{\varepsilon})}}\left|u_{\varepsilon}(x)\right|^{2^{*}}\right)^{\frac{N-\mu+2}{2N}}\!\left(\displaystyle{\int_{ B_{\rho}(x_{\varepsilon})}}\left|\omega_{\varepsilon}(x)\right|^{2^{*}}\right)^{\frac{N-2}{2N}}\!\left(\displaystyle{\int_{\Omega\backslash B_{\rho}(x_{\varepsilon})}}U_{x_\varepsilon,\tau_{\varepsilon}}^{2^{*}}(y)\right)^{\frac{2N-\mu}{2N}}\\
\leq &C \,\|u_\varepsilon\|_{H^{1}_{0}(\Omega)}^{\frac{N-\mu+2}{N-2}}\,\,\|\omega_\varepsilon\|_{H^{1}_{0}(\Omega)}^{\frac{N-\mu+2}{N-2}}\left(\displaystyle{\int_{\Omega\backslash B_{\rho}(x_{\varepsilon})}}U_{x_\varepsilon,\tau_{\varepsilon}}^{2^{*}}(y)\right)^{\frac{2N-\mu}{2N}}=O\left(\frac{1}{\|u_{\varepsilon}\|_{\infty}^{2^{*}_\mu}}\right).
\end{split}
\end{equation*}
For $M_{3}$, using HLS, we have
\begin{equation*}
    \begin{split}
      M_{3}\leq& C\left(\displaystyle{\int_{ \Omega\backslash B_{\rho}(x_{\varepsilon})}}\left|u_{\varepsilon}^{2^{*}_\mu-1}(x)\omega_{\varepsilon}(x)\right|^{\frac{2N}{2N-\mu}}dx\right)^{\frac{2N-\mu}{2N}}\left(\displaystyle{\int_{ B_{\rho}(x_{\varepsilon})}}\left|u_{\varepsilon}(y)\right|^{\frac{2N}{N-2}}dy\right)^{\frac{2N-\mu}{2N}} \\
      \leq &C\left(\displaystyle{\int_{\Omega_{\varepsilon}\backslash B_{\tau_{\varepsilon}\rho}(0)}}\left|\tilde{u}_{\varepsilon}^{2^{*}_\mu-1}(x)\left(x\cdot\nabla\tilde{u}_{\varepsilon}(x)+\frac{N-2}{2}\tilde{u}_{\varepsilon}(x)\right)\right|^{\frac{2N}{2N-\mu}}dx\right)^{\frac{2N-\mu}{2N}}\\
\leq&C\left(\displaystyle{\int_{\Omega_{\varepsilon}\backslash B_{\tau_{\varepsilon}\rho}(0)}}\left|U_{0,1}^{2^{*}_\mu-1}(x)\left(\frac{\partial U_{0,\tau}(x)}{\partial\tau}\Big|_{\tau=1}\right)\right|^{\frac{2N}{2N-\mu}}dx\right)^{\frac{2N-\mu}{2N}}\\
\leq& C\left(\displaystyle{\int_{\Omega_{\varepsilon}\backslash B_{\tau_{\varepsilon}\rho}(0)}}U_{0,1}^{2^{*}}(x)dx\right)^{\frac{2N-\mu}{2N}}\leq C\left(\displaystyle{\int_{\R^N\backslash B_{\tau_{\varepsilon}\rho}(0)}}U_{0,1}^{2^{*}}(x)dx\right)^{\frac{2N-\mu}{2N}}=O\left(\frac{1}{\|u_{\varepsilon}\|_{\infty}^{2^{*}_\mu}}\right).
    \end{split}
\end{equation*}
Similarly, we have $M_{4}=O\left({1}/{\|u_{\varepsilon}\|_{\infty}^{2\cdot2^{*}_\mu}}\right)$. Thus, from \eqref{hat_D,2} and the estimates of $M_{1}-M_{4}$, we get that
\begin{equation}\label{5-21}
   \widehat{D}_{2,\varepsilon}^{(2)}=O\left(\frac{1}{\|u_{\varepsilon}\|_{\infty}^{2^{*}_\mu}}\right).
\end{equation}
Therefore, from \eqref{5-12}, \eqref{5-21} and the previous estimates of $D_{2,\varepsilon}$ in \eqref{4-21}, we derive that
\begin{align}
\widehat{D}_{2,\varepsilon}=&O\left(\frac{\varepsilon }{\|u_{\varepsilon}\|_{\infty}^{2}}\right)+O\left(\frac{1}{\|u_{\varepsilon}\|_{\infty}^{2^{*}_{\mu}}}\right)+O\left(\frac{1}{\|u_{\varepsilon}\|_{\infty}^{2}}\right)\notag\\
    =&O\left(\frac{1}{\|u_{\varepsilon}\|_{\infty}^{2}}\right),\label{hatd,2}
\end{align}
 provided $0<\mu<4$. As for $\widehat{D}_{3,\varepsilon}$ given in \eqref{hatD-ep,3}, from \eqref{eq 2.4}, a direct calculation yields that
\begin{align}
\widehat{D}_{3,\varepsilon}^{(1)}=&\frac{\varepsilon d^{2}}{\|u_\varepsilon\|_{\infty}^{2^{*}-2}}\displaystyle{\int_{\Omega_{\varepsilon}}}\phi^{2}(\tau_{\varepsilon}^{-1}x+x_{\varepsilon})\left(x\cdot\nabla\tilde{u}_{\varepsilon}(x)+\frac{N-2}{2}\tilde{u}_{\varepsilon}(x)\right)^{2}dx\notag\\
=&\frac{\varepsilon d^{2}}{\|u_\varepsilon\|_{\infty}^{2^{*}-2}}\left(\displaystyle{\int_{\R^N}}\frac{\left(1-|x|^{2}\right)^{2}}{(1+|x|^{2})^{N}}dx+o(1)\right)\notag\\
=&O\left(\frac{\varepsilon }{\|u_{\varepsilon}\|_{\infty}^{2^{*}-2}}\right)=O\left(\frac{1}{\|u_{\varepsilon}\|_{\infty}^{2}}\right).\label{5.19}
\end{align}
Moreover,
\begin{align}
\widehat{D}_{3,\varepsilon}^{(2)}=&\frac{2\varepsilon d\sum_{j=1}^{N}a_{j}}{\|u_\varepsilon\|_{\infty}^{\frac{2}{N-2}}}\displaystyle{\int_{\Omega_{\varepsilon}}}\phi^{2}(\tau_{\varepsilon}^{-1}x+x_{\varepsilon})\,\frac{\partial \tilde{u}_\varepsilon (x)}{\partial x_{j}}\left(x\cdot\nabla\tilde{u}_{\varepsilon}(x)+\frac{N-2}{2}\tilde{u}_{\varepsilon}(x)\right)dx\notag\\
=&\frac{2\varepsilon d\sum_{j=1}^{N}a_{j}}{\|u_\varepsilon\|_{\infty}^{\frac{2}{N-2}}}\left(\displaystyle{\int_{\R^N}}\,\frac{\partial U_{0,1} (x)}{\partial x_{j}}\,\frac{1-|x|^{2}}{(1+|x|^{2})^{N}}\,dx+o(1)\right)\notag\\
=&0+o\left(\frac{\varepsilon}{\|u_{\varepsilon}\|_{\infty}^{\frac{2}{N-2}}}\right)=o\left(\frac{\varepsilon}{\|u_{\varepsilon}\|_{\infty}^{\frac{2}{N-2}}}\right).\label{D-2}
\end{align}
For the other terms in $\widehat{D}_{3,\varepsilon}$, we derive that
\begin{align}
&\widehat{D}_{3,\varepsilon}^{(3)}\notag\\=&(2^{*}_\mu-1)d^{2}\displaystyle{\int_{\Omega_{\varepsilon}}}\displaystyle{\int_{\Omega_{\varepsilon}}}\frac{\tilde{u}_{\varepsilon}^{2^{*}_\mu-2}(x){\phi^{2}(\tau_{\varepsilon}^{-1}x+x_\varepsilon)}\left(x\cdot\nabla \tilde{u}_{\varepsilon}(x)+\frac{N-2}{2}\tilde{u}_{\varepsilon}(x)\right)^{2}\tilde{u}_{\varepsilon}^{2^{*}_\mu}(y)}{|x-y|^{\mu}}dxdy\notag\\
=&(2^{*}_\mu-1)d^{2}\left(\frac{N-2}{2}\right)^{2}\left(\displaystyle{\int_{\R^N}}U_{0,1}^{2^{*}_\mu-2}(x)\frac{\left(1-|x|^{2}\right)^{2}}{(1+|x|^{2})^{N}}\left(\displaystyle{\int_{\R^N}}\frac{U_{0,1}^{2^{*}_\mu}(y)}{|x-y|^{\mu}}dy\right)dx+o(1)\right)\notag\\
=&(2^{*}_\mu-1)d^{2}\left(\frac{N-2}{2}\right)^{2}\left(\displaystyle{\int_{\R^N}}U_{0,1}^{2^{*}-2}(x)\frac{\left(1-|x|^{2}\right)^{2}}{(1+|x|^{2})^{N}}dx+o(1)\right),\label{eq 6.3}
\end{align}
and
\begin{align}
&\widehat{D}_{3,\varepsilon}^{(4)}\notag\\
=&2^{*}_{\mu}d^{2}\Bigg[\displaystyle{\int_{\Omega_{\varepsilon}}}\tilde{u}_{\varepsilon}^{2^{*}_\mu-1}(x)\phi(\tau_{\varepsilon}^{-1}x+x_\varepsilon)\left(x\cdot\nabla \tilde{u}_{\varepsilon}(x)+\frac{N-2}{2}\tilde{u}_{\varepsilon}(x)\right)\notag\\
&\quad\,\,\quad\,\,\quad\left(\displaystyle{\int_{\Omega_{\varepsilon}}}\frac{\tilde{u}_{\varepsilon}^{2^{*}_\mu-1}(y)\phi(\tau_{\varepsilon}^{-1}y+x_\varepsilon)\left(y\cdot\nabla \tilde{u}_{\varepsilon}(y)+\frac{N-2}{2}\tilde{u}_{\varepsilon}(y)\right)}{|x-y|^{\mu}}dy\right)dx\Bigg]\notag\\
=&2^{*}_{\mu}d^{2}\Bigg[\displaystyle{\int_{\R^N}}U_{0,1}^{2^{*}_\mu-1}(x)\left(\frac{N-2}{2}\frac{1-|x|^{2}}{(1+|x|^{2})^{\frac{N}{2}}}\right)
\notag\\
&\quad\,\,\quad\,\,\quad\left(\displaystyle{\int_{\R^N}}\frac{U_{0,1}^{2^{*}_\mu-1}(y)}{|x-y|^{\mu}}
\left(\frac{N-2}{2}\frac{1-|y|^{2}}{(1+|y|^{2})^{\frac{N}{2}}}\right)dy\right)dx+o(1)\Bigg]\notag\\
\underset{(*)}{=}&\left(2^{*}-2^{*}_\mu\right)d^{2}\left(\frac{N-2}{2}\right)^{2}\left(\displaystyle{\int_{\R^N}}U_{0,1}^{2^{*}-2}(x)\frac{\left(1-|x|^{2}\right)^{2}}{(1+|x|^{2})^{N}}dx+o(1)\right) ,\label{eq 6.4}
\end{align}
where $(*)$ can be obtained via the similar calculation as the proof of \eqref{a-2}. In addition, we find
\begin{align}
       &\widehat{D}_{3,\varepsilon}^{(5)}\notag\\=&\|u_{\varepsilon}\|_{\infty}^{\frac{2}{N-2}}\Bigg[\displaystyle{\int_{\Omega_{\varepsilon}}}\tilde{u}_{\varepsilon}^{2^{*}_\mu-2}(x)\phi^{2}(\tau_{\varepsilon}^{-1}x+x_\varepsilon)\left(\sum_{j=1}^{N}a_j\frac{\partial \tilde{u}_\varepsilon}{\partial x_j}(x)\right)\left(x\cdot\nabla \tilde{u}_{\varepsilon}(x)+\frac{N-2}{2}\tilde{u}_{\varepsilon}(x)\right)\notag\\
&\quad\,\,\quad\,\,\quad\,\quad\left(\displaystyle{\int_{\Omega_{\varepsilon}}}\frac{\tilde{u}_{\varepsilon}^{2^{*}_\mu}(y)}{|x-y|^{\mu}}dy\right)dx\Bigg]\notag\\
   =&\|u_{\varepsilon}\|_{\infty}^{\frac{2}{N-2}}\displaystyle{\int_{\R^N}}U_{0,1}^{2^{*}-2}(x)\left(\sum_{j=1}^{N}a_j\frac{(2-N)x_{j}}{(1+|x|^{2})^{\frac{N}{2}}}\right)\left(\frac{N-2}{2}\frac{1-|x|^{2}}{(1+|x|^{2})^{\frac{N}{2}}}\right)dx+o\left(\|u_{\varepsilon}\|_{\infty}^{\frac{2}{N-2}}\right)\notag\\
=&0+o\left(\|u_{\varepsilon}\|_{\infty}^{\frac{2}{N-2}}\right)=o\left(\|u_{\varepsilon}\|_{\infty}^{\frac{2}{N-2}}\right).\label{5-22}
    \end{align}
Moreover, 
we have
\begin{align}
&\widehat{D}_{3,\varepsilon}^{(6)}\notag\\=&\|u_{\varepsilon}\|_{\infty}^{\frac{2}{N-2}}\Bigg[\displaystyle{\int_{\Omega_{\varepsilon}}}\tilde{u}_{\varepsilon}^{2^{*}_\mu-1}(x)\phi(\tau_{\varepsilon}^{-1}x+x_\varepsilon)\left(\sum_{j=1}^{N}a_j\frac{\partial \tilde{u}_\varepsilon}{\partial x_j}(x)\right)\notag\\
&\quad\,\,\quad\,\,\quad\,\quad\left(\displaystyle{\int_{\Omega_{\varepsilon}}}\frac{\tilde{u}_{\varepsilon}^{2^{*}_\mu-1}(y)\phi(\tau_{\varepsilon}^{-1}y+x_\varepsilon)\left(y\cdot\nabla \tilde{u}_{\varepsilon}(y)+\frac{N-2}{2}\tilde{u}_{\varepsilon}(y)\right)}{|x-y|^{\mu}}dy\right)dx\Bigg]\notag\\
=&\|u_{\varepsilon}\|_{\infty}^{\frac{2}{N-2}}\Bigg[\displaystyle{\int_{\R^N}}U_{0,1}^{2^{*}_\mu-1}(x)\left(\sum_{j=1}^{N}a_j\frac{\partial U_{0,1}}{\partial x_j}(x)\right)\notag\\
&\quad\,\,\quad\,\,\quad\,\quad\left(\displaystyle{\int_{\R^N}}\frac{U_{0,1}^{2^{*}_\mu-1}(y)}{|x-y|^{\mu}}\left(\frac{N-2}{2}\frac{1-|y|^{2}}{(1+|y|^{2})^{\frac{N}{2}}}\right)dy\right)dx+o(1)\Bigg]\notag\\
=&\|u_{\varepsilon}\|_{\infty}^{\frac{2}{N-2}}\left(\displaystyle{\int_{\R^N}}U_{0,1}^{2^{*}-2}(x)\left(\sum_{j=1}^{N}a_j\frac{(2-N)x_{j}}{(1+|x|^{2})^{\frac{N}{2}}}\right)\left(\frac{(N-2)\mu}{2\left(2N-\mu\right)}\frac{1-|x|^{2}}{(1+|x|^{2})^{\frac{N}{2}}}\right)dx+o(1)\right)\notag\\
=&0+o\left(\|u_{\varepsilon}\|_{\infty}^{\frac{2}{N-2}}\right)=o\left(\|u_{\varepsilon}\|_{\infty}^{\frac{2}{N-2}}\right)\label{5-23}.
\end{align}
Hence, from \eqref{5.19}-\eqref{5-23} and some estimates of ${D}_{3,\varepsilon}$ in \eqref{4-23}, we have
\begin{align}
\widehat{D}_{3,\varepsilon}=&\varepsilon\left(\sum_{j=1}^{N}a_j^{2}\right)\left(\frac{1}{N}\displaystyle{\int_{\R^N}}\left|\nabla U_{0,1}\right|^{2}dx+o(1)\right)\notag\\
&+\left(2^{*}-1\right)\|u_{\varepsilon}\|_{\infty}^{2^{*}-2}\left(\sum_{j=1}^{N}a_j^{2}\right)\left(\frac{1}{N}\displaystyle{\int_{\R^N}}U_{0,1}^{2^{*}-2}(x)\left|\nabla U_{0,1}\right|^{2}dx+o(1)\right)\notag\\
&+O\left(\frac{1}{\|u_{\varepsilon}\|_{\infty}^{2}}\right)+o\left(\frac{\varepsilon}{\|u_{\varepsilon}\|_{\infty}^{\frac{2}{N-2}}}\right)+o\left(\|u_{\varepsilon}\|_{\infty}^{\frac{2}{N-2}}\right)\notag\\
    &+\left(2^{*}-1\right) d^{2}\left(\frac{N-2}{2}\right)^{2}\left(\displaystyle{\int_{\R^N}}U_{0,1}^{2^{*}-2}(x)\frac{\left(1-|x|^{2}\right)^{2}}{(1+|x|^{2})^{N}}dx+o(1)\right).\label{5-24}
    \end{align}

Therefore, from \eqref{hatD-ep}, \eqref{hatD-ep,1}, \eqref{hatd,2} and \eqref{5-24}, we conclude that
\begin{equation}\label{6-6}
    \begin{split}
     \widehat{D}_{\varepsilon}\geq &\widehat{D}_{2,\varepsilon}+\widehat{D}_{3,\varepsilon}\\
\geq&C_{1}|a|^{2}\|u_\varepsilon\|_{\infty}^{2^{*}-2}+o\left(\|u_{\varepsilon}\|_{\infty}^{\frac{2}{N-2}}\right)+C_{2}^{2}\,d^{2}\\
\geq &\frac{C_{1}}{2}|a|^{2}\|u_\varepsilon\|_{\infty}^{2^{*}-2}+\frac{C_{2}^{2}d^{2}}{2}\geq \delta>0,
    \end{split}
\end{equation}
for some $C_{1},\,C_{2},\delta>0$, and the last inequality holds due to the assumption that $\sum_{j=1}^{N}a_{j}^{2}+d^{2}=1$, which implies that $\sum_{j=1}^{N}a_{j}^{2}$ and $d^{2}$ can not vanish at the same time.

In the following, we estimate each term in the numerator $\widehat{N}_{\varepsilon}$. Combining the estimates of $\widehat{D}_{2,\varepsilon}^{(2)}$ in \eqref{5-21} and the estimates of ${N}_{2,\varepsilon}$ in \eqref{4-25}, we derive that
\begin{equation}\label{5-27}
\widehat{N}_{2,\varepsilon}=O\left(\frac{1}{\|u_{\varepsilon}\|_{\infty}^{2^{*}_{\mu}}}\right).
\end{equation}
Moreover, from \eqref{eq 2.4}, we obtain that the term $\widehat{N}_{4,\varepsilon}$ given in \eqref{hatN-ep,4} can be estimated as follows,
\begin{equation}\label{5-28}
\begin{split}
\widehat{N}_{4,\varepsilon}=&\!-\varepsilon d
\sum_{j=1}^{N}a_{j}\displaystyle{\int_{\Omega}}\frac{\partial\left(\phi^{2}(x)\right)}{\partial x_{j}} u_{\varepsilon}^{2}dx+2\varepsilon d^{2}\!\left(\!\displaystyle{\int_{\Omega}}\left(\nabla\phi \cdot\left(x-x_{\varepsilon}\right)\right)\phi\,u_{\varepsilon}^{2}dx-\!\displaystyle{\int_{\Omega}}\phi^{2}u_{\varepsilon}^{2}dx\right)\\
\leq&O\left(\frac{\varepsilon}{\|u_{\varepsilon}\|_{\infty}^{2}}\right)+\frac{2\varepsilon d^{2}}{\|u_{\varepsilon}\|_{\infty}^{2^{*}-2}}\displaystyle{\int_{\Omega_{\varepsilon}}}\phi^{2}(\tau_{\varepsilon}^{-1}x+x_{\varepsilon})\tilde{u}_{\varepsilon}^{2}(x)dx\\
=&O\left(\frac{\varepsilon}{\|u_{\varepsilon}\|_{\infty}^{2}}\right)+\frac{2\varepsilon d^{2}}{\|u_{\varepsilon}\|_{\infty}^{2^{*}-2}}\left(\displaystyle{\int_{\R^N}}U_{0,1}^{2}(x)dx+o(1)\right)\\
=&O\left(\frac{\varepsilon}{\|u_{\varepsilon}\|_{\infty}^{2}}\right)+O\left(\frac{\varepsilon}{\|u_{\varepsilon}\|_{\infty}^{2^{*}-2}}\right)=O\left(\frac{1}{\|u_{\varepsilon}\|_{\infty}^{2}}\right).
\end{split}
\end{equation}
Similarly, from the definition of the cut-off function $\phi$ in \eqref{eq 4.1} and \eqref{eq 2.3}, we have
\begin{equation}\label{5-6}
\widehat{N}_{3,\varepsilon}^{(1)}=O\left(\frac{1}{\|u_{\varepsilon}\|_{\infty}^{2}}\right),\quad
\widehat{N}_{3,\varepsilon}^{(2)}=O\left(\frac{1}{\|u_{\varepsilon}\|_{\infty}^{2}}\right).
\end{equation}
Moreover, due to the symmetry property of the double integrals and the fact that
\begin{equation}\label{5-7}
    (y-x_{\varepsilon})\cdot\nabla\left(u_\varepsilon^{2^{*}_\mu}(y)\right)
    =\sum_{l=1}^{N}\frac{\partial}{\partial y_{l}}\left((y_l-x_{\varepsilon,l})u_\varepsilon^{2^{*}_\mu}(y)\right)-N u_\varepsilon^{2^{*}_\mu}(y),
\end{equation}
we deduce that
\begin{equation}\label{eq 6.7}
    \begin{split}
&\widehat{N}_{3,\varepsilon}^{(4)}+\widehat{N}_{3,\varepsilon}^{(5)}\\
=&\frac{d}{2^{*}_{\mu}}\!\displaystyle{\int_{\Omega}}\!\displaystyle{\int_{\Omega}}\frac{\left(\phi(x)-\phi(y)\right)^{2}\!\left(\sum_{j=1}^{N}a_j\frac{\partial }{\partial x_j} u_\varepsilon^{2^{*}_\mu}(x)\right)\!\left((y-x_{\varepsilon})\cdot\nabla(u_\varepsilon^{2^{*}_\mu}(y))+\frac{N-2}{2}u_\varepsilon^{2^{*}_\mu}(y)\right)}{|x-y|^{\mu}}dxdy\\
=&\underbrace{\frac{d}{2^{*}_{\mu}}\displaystyle{\int_{\Omega}}\displaystyle{\int_{\Omega}}\frac{\left(\phi(x)-\phi(y)\right)^{2}\left(\sum_{j=1}^{N}a_j\frac{\partial }{\partial x_j} u_\varepsilon^{2^{*}_\mu}(x)\right)\left(\sum_{l=1}^{N}\frac{\partial}{\partial y_l}\left((y_l-x_{\varepsilon,l})u_\varepsilon^{2^{*}_\mu}(y)\right)\right)}{|x-y|^{\mu}}dxdy}_{:=P_{1}}\\
        &-\underbrace{\frac{(N+2)d}{2\cdot2^{*}_{\mu}}\displaystyle{\int_{\Omega}}\displaystyle{\int_{\Omega}}\frac{\left(\phi(x)-\phi(y)\right)^{2}\left(\sum_{j=1}^{N}a_j\frac{\partial }{\partial x_j} u_\varepsilon^{2^{*}_\mu}(x)\right)u_\varepsilon^{2^{*}_\mu}(y)}{|x-y|^{\mu}}dxdy}_{:=P_2}.
    \end{split}
\end{equation}
Then, a direct computation yields that
\begin{align*}
   & {P_1}\\
    =&\frac{d}{2^{*}_{\mu}}\sum_{j,l=1}^{N}a_j\displaystyle{\int_{\Omega}}\displaystyle{\int_{\Omega}}\left(\frac{\partial^{2}}{\partial y_l\partial x_j} \left(\frac{\left(\phi(x)-\phi(y)\right)^{2}}{|x-y|^{\mu}}\right)(y_l-x_{\varepsilon,l})\right)u_\varepsilon^{2^{*}_\mu}(x)u_\varepsilon^{2^{*}_\mu}(y)dxdy\\
    =&\underbrace{\frac{d}{2^{*}_{\mu}}\sum_{j,l=1}^{N}a_j\displaystyle{\int_{\Omega}}\displaystyle{\int_{\Omega}}\left(\left(\frac{\partial^{2}}{\partial y_l\partial x_j} \left(\phi(x)-\phi(y)\right)^{2}\right)(y_l-x_{\varepsilon,l})\right)\frac{u_\varepsilon^{2^{*}_\mu}(y)u_\varepsilon^{2^{*}_\mu}(x)}{|x-y|^{\mu}}dxdy}_{:=P_{1,1}}\\
    &+\underbrace{\frac{d}{2^{*}_{\mu}}\sum_{j,l=1}^{N}a_j\displaystyle{\int_{\Omega}}\displaystyle{\int_{\Omega}}\left(\left(\frac{\partial^{2}}{\partial y_l\partial x_j}\left(\frac{1}{|x-y|^{\mu}}\right)\right)(y_l-x_{\varepsilon,l}) \right)\left(\phi(x)-\phi(y)\right)^{2}{u_\varepsilon^{2^{*}_\mu}(y)u_\varepsilon^{2^{*}_\mu}(x)}dxdy}_{:=P_{1,2}}\\
    &+\underbrace{\frac{d}{2^{*}_{\mu}}\sum_{j,l=1}^{N}a_j\displaystyle{\int_{\Omega}}\displaystyle{\int_{\Omega}}\left(\frac{\partial}{\partial y_l}\left(\frac{1}{|x-y|^{\mu}}\right)\frac{\partial}{\partial x_j}\Big(\phi(x)-\phi(y)\Big)^{2}(y_l-x_{\varepsilon,l}) \right){u_\varepsilon^{2^{*}_\mu}(y)u_\varepsilon^{2^{*}_\mu}(x)}dxdy}_{:=P_{1,3}}\\
    &+\underbrace{\frac{d}{2^{*}_{\mu}}\sum_{j,l=1}^{N}a_j\displaystyle{\int_{\Omega}}\displaystyle{\int_{\Omega}}\left(\frac{\partial}{\partial x_j}\left(\frac{1}{|x-y|^{\mu}}\right)\frac{\partial}{\partial y_l}\Big(\phi(x)-\phi(y)\Big)^{2}(y_l-x_{\varepsilon,l}) \right){u_\varepsilon^{2^{*}_\mu}(y)u_\varepsilon^{2^{*}_\mu}(x)}dxdy}_{:=P_{1,4}}.
\end{align*}
Since for any $y\in\Omega$, $|y_l-x_{\varepsilon,l}|\leq |y-x_\varepsilon|\leq diam\left(\Omega\right)\leq C$, for $l=1,\cdots,N$. Similar arguments as  the estimates of $\widehat{K_{1}}$ and $\widehat{K_{2}}$ in $N_{3,\varepsilon}^{(2)}$ in \eqref{eqq4.35} can be used to yield that
$$P_{1,1}=O\left(\frac{1}{\|u_\varepsilon\|^{2\cdot2^{*}_\mu}_{\infty}}\right),\quad P_{1,2}=O\left(\frac{1}{\|u_\varepsilon\|^{2^{*}_\mu}_{\infty}}\right).$$
As for $P_{1,3}$, using the similar arguments as the estimates of $\widehat{K_{3}}$ and $\widehat{K_{4}}$ in $N_{3,\varepsilon}^{(2)}$ in \eqref{eqq4.35}, we have
\begin{align*}
    &P_{1,3}\\
    =&\frac{2d\mu}{2^{*}_\mu}\sum_{j,l=1}^{N}a_{j}\displaystyle{\int_{B_{\rho}(x_\varepsilon)}}\displaystyle{\int_{B_{\rho}(x_\varepsilon)}}\left(\frac{(x_j-y_j)}{|x-y|^{\mu+2}}\frac{\partial \phi(y)}{\partial y_{l}}\,\left(y_l-x_{\varepsilon,l}\right)\right)\left(\phi(x)-\phi(y)\right)u_{\varepsilon}^{2^{*}_\mu}(x)u_{\varepsilon}^{2^{*}_\mu}(y)dxdy\\
+&\frac{d}{2^{*}_\mu}\sum_{j,l=1}^{i-1}a_{j}\!\displaystyle{\int}\displaystyle{\int_{\left(\Omega\times\Omega\right)\backslash\left(B_{\rho}(x_\varepsilon)\times B_{\rho}(x_\varepsilon)\right)}}\!\left(\frac{\partial}{\partial x_{j}}\left(\!\frac{1}{|x-y|^{\mu}}\!\right)\!\right)\!\left(\!\frac{\partial}{\partial y_{l}}\Big(\phi(x)-\phi(y)\Big)^{2}\right)\!\left(y_l-x_{\varepsilon,l}\!\right)u_{\varepsilon}^{2^{*}_\mu}(x)u_{\varepsilon}^{2^{*}_\mu}(y)\\
    =&0+O\left(\frac{1}{\|u_{\varepsilon}\|_{\infty}^{2^{*}_\mu}}\right)=O\left(\frac{1}{\|u_{\varepsilon}\|_{\infty}^{2^{*}_\mu}}\right).
\end{align*}
Similarly, we can deduce that $P_{1,4}=O\left({1}/{\|u_{\varepsilon}\|_{\infty}^{2^{*}_\mu}}\right)$.
Hence, we derive that
\begin{equation}\label{5-32}
    P_{1}=O\left(\frac{1}{\|u_{\varepsilon}\|_{\infty}^{2^{*}_\mu}}\right).
\end{equation}
Also, a direct calculation yields that
\begin{align*}
    P_{2}
    =&\frac{(N+2)d}{2\cdot2^{*}_{\mu}}\sum_{j=1}^{N}a_j\displaystyle{\int_{\Omega}}\displaystyle{\int_{\Omega}}\,\frac{\partial }{\partial x_j} \left(\frac{\left(\phi(x)-\phi(y)\right)^{2}}{|x-y|^{\mu}}\right)u_\varepsilon^{2^{*}_\mu}(x)u_\varepsilon^{2^{*}_\mu}(y)dxdy\\
    =&\underbrace{\frac{(N+2)d}{2^{*}_{\mu}}\sum_{j=1}^{N}a_j\displaystyle{\int_{\Omega}}\displaystyle{\int_{\Omega}}\, \frac{\frac{\partial \phi(x)}{\partial x_j}\left(\phi(x)-\phi(y)\right)u_\varepsilon^{2^{*}_\mu}(x)u_\varepsilon^{2^{*}_\mu}(y)}{|x-y|^{\mu}}dxdy}_{:=P_{2,1}}\\
    &+\underbrace{\frac{(N+2)d}{2\cdot2^{*}_{\mu}}\sum_{j=1}^{N}a_j\displaystyle{\int_{\Omega}}\displaystyle{\int_{\Omega}}\,\frac{\partial }{\partial x_j} \left(\frac{1}{|x-y|^{\mu}}\right)\left(\phi(x)-\phi(y)\right)^{2}u_\varepsilon^{2^{*}_\mu}(x)u_\varepsilon^{2^{*}_\mu}(y)dxdy}_{:=P_{2,2}}.
\end{align*}
Using HLS, we have
\begin{align*}
P_{2,1}\leq&C\left(\displaystyle{\displaystyle{\int_{\Omega}}}\left|\left(\frac{\partial \phi(x)}{\partial x_j} \right)u_\varepsilon^{2^{*}_\mu}(x)\right|^{\frac{2N}{2N-\mu}}dx\right)^{\frac{2N-\mu}{2N}}\left(\displaystyle{\int_{\Omega}}\left|(\phi(x)-\phi(y))u_\varepsilon^{2^{*}_\mu}(y)\right|^{\frac{2N}{2N-\mu}}dy\right)^{\frac{2N-\mu}{2N}}\\
    =&O\left(\frac{1}{\|u_\varepsilon\|^{2^{*}_\mu}_{\infty}}\right).
\end{align*}
As for $P_{2,2}$, due to the definition of the cut-off function $\phi$ in \eqref{eq 4.1}, we find that $\phi(x)-\phi(y)=0$ provided $x,y\in B_{\rho}(x_{\varepsilon})$, which implies that
\begin{align*}
    P_{2,2}=&\frac{(N+2)d}{2\cdot2^{*}_{\mu}}\displaystyle{\int_{\Omega\backslash B_{\rho}(x_\varepsilon) }\int_{B_{\rho}(x_\varepsilon)}}\left(\sum_{j=1}^{N}a_j\frac{\partial }{\partial x_j} \left(\frac{1}{|x-y|^{\mu}}\right)\right)\left(\phi(x)-\phi(y)\right)^{2}u_\varepsilon^{2^{*}_\mu}(x)u_\varepsilon^{2^{*}_\mu}(y)dxdy\\
    &+\frac{(N+2)d}{2\cdot2^{*}_{\mu}}\displaystyle{\int_{\Omega\backslash B_{\rho}(x_\varepsilon) }\int_{B_{\rho}(x_\varepsilon)}}\left(\sum_{j=1}^{N}a_j\frac{\partial }{\partial x_j} \left(\frac{1}{|x-y|^{\mu}}\right)\right)\left(\phi(x)-\phi(y)\right)^{2}u_\varepsilon^{2^{*}_\mu}(x)u_\varepsilon^{2^{*}_\mu}(y)dydx\\
    &+\frac{(N+2)d}{2\cdot2^{*}_{\mu}}\displaystyle{\int_{\Omega\backslash B_{\rho}(x_\varepsilon) }\int_{\Omega\backslash B_{\rho}(x_\varepsilon)}}\left(\sum_{j=1}^{N}a_j\frac{\partial }{\partial x_j} \left(\frac{1}{|x-y|^{\mu}}\right)\right)\left(\phi(x)-\phi(y)\right)^{2}u_\varepsilon^{2^{*}_\mu}(x)u_\varepsilon^{2^{*}_\mu}(y)dxdy\\
    =&0+O\left(\frac{1}{\|u_\varepsilon\|^{2\cdot2^{*}_\mu}_{\infty}}\right)=O\left(\frac{1}{\|u_\varepsilon\|^{2\cdot2^{*}_\mu}_{\infty}}\right),
\end{align*}
where we utilized the fact that
\begin{equation*}
    \begin{split}
   &\displaystyle{\int_{\Omega\backslash B_{\rho}(x_\varepsilon) }\int_{B_{\rho}(x_\varepsilon)}}\left(\sum_{j=1}^{N}a_j\frac{\partial }{\partial x_j} \left(\frac{1}{|x-y|^{\mu}}\right)\right)\left(\phi(x)-\phi(y)\right)^{2}u_\varepsilon^{2^{*}_\mu}(x)u_\varepsilon^{2^{*}_\mu}(y)dydx\\
   =&\displaystyle{\int_{\Omega\backslash B_{\rho}(x_\varepsilon) }\int_{B_{\rho}(x_\varepsilon)}}\left(\sum_{j=1}^{N}a_j\frac{\partial }{\partial y_j} \left(\frac{1}{|x-y|^{\mu}}\right)\right)\left(\phi(y)-\phi(x)\right)^{2}u_\varepsilon^{2^{*}_\mu}(y)u_\varepsilon^{2^{*}_\mu}(x)dxdy\\
=&\displaystyle{\int_{\Omega\backslash B_{\rho}(x_\varepsilon) }\int_{B_{\rho}(x_\varepsilon)}}\left(-\sum_{j=1}^{N}a_j\frac{\partial }{\partial x_j} \left(\frac{1}{|x-y|^{\mu}}\right)\right)\left(\phi(y)-\phi(x)\right)^{2}u_\varepsilon^{2^{*}_\mu}(y)u_\varepsilon^{2^{*}_\mu}(x)dxdy
    \end{split}
\end{equation*}
and similar arguments as  the estimates of $\widehat{H_{2,4}}$ in $\widehat{H_{2}}$ given in \eqref{hatH2}. Hence, we have
\begin{equation}\label{5-33}
    P_{2}=O\left(\frac{1}{\|u_\varepsilon\|^{2^{*}_\mu}_{\infty}}\right).
\end{equation}
Therefore, from \eqref{eq 6.7}, \eqref{5-32} and \eqref{5-33}, we conclude that
\begin{equation}\label{5-10}
\widehat{N}_{3,\varepsilon}^{(4)}+\widehat{N}_{3,\varepsilon}^{(5)}=O\left(\frac{1}{\|u_\varepsilon\|^{2^{*}_\mu}_{\infty}}\right).
\end{equation}

Now, we estimate the remaining term $\widehat{N}_{3,\varepsilon}^{(3)}$ in $\widehat{N}_{3,\varepsilon}$. Due to the symmetry property and \eqref{5-7}, we obtain that
\begin{equation*}\label{eq 6.9}
    \begin{split}
&\widehat{N}_{3,\varepsilon}^{(3)}\\
=&\underbrace{\frac{d^{2}}{2\cdot2^{*}_\mu }\displaystyle{\int_{\Omega}}\displaystyle{\int_{\Omega}}\frac{\left(\phi(x)-\phi(y)\right)^{2}\left(\sum_{j=1}^{N}\frac{\partial}{\partial x_{j}}\left((x_j-x_{\varepsilon,j})u_\varepsilon^{2^{*}_\mu}(x)\right)\right)\left(\sum_{l=1}^{N}\frac{\partial}{\partial y_{l}}\left((y_l-x_{\varepsilon,l})u_\varepsilon^{2^{*}_\mu}(y)\right)\right)}{|x-y|^{\mu}}dydx}_{:=A_{1}}\\
&-\underbrace{\frac{d^{2}(N+2)}{2\cdot2^{*}_\mu }\displaystyle{\int_{\Omega}}\displaystyle{\int_{\Omega}}\frac{\left(\phi(x)-\phi(y)\right)^{2}\left(\sum_{j=1}^{N}\frac{\partial}{\partial x_{j}}\left((x_j-x_{\varepsilon,j})u_\varepsilon^{2^{*}_\mu}(x)\right)\right)u_\varepsilon^{2^{*}_\mu}(y)}{|x-y|^{\mu}}dxdy}_{:=A_{2}}\\
&+\underbrace{\frac{d^{2}}{2\cdot2^{*}_\mu }\left(\frac{N+2}{2}\right)^{2}\displaystyle{\int_{\Omega}}\displaystyle{\int_{\Omega}}\frac{\left(\phi(x)-\phi(y)\right)^{2}u_\varepsilon^{2^{*}_\mu}(x)u_\varepsilon^{2^{*}_\mu}(y)}{|x-y|^{\mu}}dxdy}_{:=A_{3}}.
    \end{split}
\end{equation*}
Then, using similar discussions in the proof of $P_{1}$ and $P_{2}$, we obtain that
    \begin{align*}
        A_{1}=O\left(\frac{1}{\|u_\varepsilon\|^{2^{*}_\mu}_{\infty}}\right),\quad A_{2}=O\left(\frac{1}{\|u_\varepsilon\|^{2^{*}_\mu}_{\infty}}\right).
    \end{align*}
As for $A_{3}$, since $\phi(x)=\phi(y)=1$ for $x,y\in B_{\rho}(x_\varepsilon)$,  by HLS, it holds
\begin{align*}
    A_{3}
    \leq&C\displaystyle{\int_{B_{\rho}(x_\varepsilon)}}\displaystyle{\int_{\Omega\backslash B_{\rho}(x_\varepsilon)}}\frac{U_{x_\varepsilon,\tau_{\varepsilon}}^{2^{*}_\mu}(x)U_{x_\varepsilon,\tau_{\varepsilon}}^{2^{*}_\mu}(y)}{|x-y|^{\mu}}dydx\\
    &+C\displaystyle{\int_{\Omega\backslash B_{\rho}(x_\varepsilon)}}\displaystyle{\int_{\Omega\backslash B_{\rho}(x_\varepsilon)}}\frac{U_{x_\varepsilon,\tau_{\varepsilon}}^{2^{*}_\mu}(x)U_{x_\varepsilon,\tau_{\varepsilon}}^{2^{*}_\mu}(y)}{|x-y|^{\mu}}dydx\\
    =&O\left(\frac{1}{\|u_\varepsilon\|^{2^{*}_\mu}_{\infty}}\right).
\end{align*}
Thus, from above estimates of $A_{1}-A_{3}$, we obtain that
\begin{equation}\label{5-35}
    \widehat{N}_{3,\varepsilon}^{(3)}=O\left(\frac{1}{\|u_\varepsilon\|^{2^{*}_\mu}_{\infty}}\right).
\end{equation}
Then, from \eqref{hatN-ep,3}, \eqref{5-6}, \eqref{5-10}, \eqref{5-35} and the previous estimates of $N_{3,\varepsilon}^{(1)}$ and $N_{3,\varepsilon}^{(2)}$ in \eqref{4-34} and \eqref{N_{3,2}}, we obtain that
\begin{equation}\label{5-36}
  \widehat{N}_{3,\varepsilon}=O\left(\frac{1}{\|u_\varepsilon\|^{2}_{\infty}}\right).
\end{equation}

Therefore, from \eqref{hatN-ep}, \eqref{hatN-ep,1}, \eqref{5-27}, \eqref{5-28} and \eqref{5-36}, we conclude that
\begin{equation}\label{5-13}
    \begin{split}
        \widehat{N}_{\varepsilon}=& \widehat{N}_{1,\varepsilon}+O\left(\frac{1}{\|u_{\varepsilon}\|_{\infty}^{2}}\right)\\
        =&a_{0}^{2}(2-2\cdot2^{*}_\mu)\displaystyle{\int_{\Omega}}\displaystyle{\int_{\Omega}}\frac{u_{\varepsilon}^{2^{*}_\mu}(x)u_{\varepsilon}^{2^{*}_\mu}(y)}{|x-y|^{\mu}}dxdy+O\left(\frac{1}{\|u_{\varepsilon}\|_{\infty}^{2}}\right)\\
        \leq&\, O\left(\frac{1}{\|u_{\varepsilon}\|_{\infty}^{2}}\right).
    \end{split}
\end{equation}
Moreover, from \eqref{eq5-2}, \eqref{6-6} and \eqref{5-13}, we have
\begin{align*}
\underset{\varepsilon\rightarrow0}{
\limsup}\,\lambda_{N+2,\varepsilon}\leq\underset{\varepsilon\rightarrow0}{
\limsup}\left(1+\frac{\widehat{N}_{\varepsilon}}{\widehat{D}_{\varepsilon}}\right)\leq 1+\underset{\varepsilon\rightarrow0}{
    \lim}\,\frac{O\left(\frac{1}{\|u_{\varepsilon}\|_{\infty}^{2}}\right)}{\delta}=1.
\end{align*}
Thus, we obtain the desired result \eqref{eq 6.1}.
\end{proof}
\end{lemma}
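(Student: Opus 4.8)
\textbf{Strategy of proof of Lemma \ref{lema 5.1}.}
The plan is to sandwich $\lambda_{N+2,\varepsilon}$ between a lower bound coming from the monotonicity of the eigenvalue sequence and an upper bound coming from the min-max principle. For the lower bound I would invoke Lemma \ref{lema 4.2}: since $\lambda_{i,\varepsilon}\to1$ for $i=2,\dots,N+1$ and the eigenvalues are ordered, $\lambda_{N+1,\varepsilon}\le\lambda_{N+2,\varepsilon}$ gives $\liminf_{\varepsilon\to0}\lambda_{N+2,\varepsilon}\ge1$. Everything then reduces to proving $\limsup_{\varepsilon\to0}\lambda_{N+2,\varepsilon}\le1$.

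For the upper bound, the idea is to apply the variational characterization
\[
\lambda_{N+2,\varepsilon}\le\max_{w\in W}\frac{\displaystyle\int_\Omega|\nabla w|^2\,dx}{\displaystyle\int_\Omega\varepsilon w^2+(2^{*}_\mu-1)u_\varepsilon^{2^{*}_\mu-2}w^2\Big(\int_\Omega\tfrac{u_\varepsilon^{2^{*}_\mu}(y)}{|x-y|^\mu}dy\Big)+2^{*}_\mu u_\varepsilon^{2^{*}_\mu-1}w\Big(\int_\Omega\tfrac{u_\varepsilon^{2^{*}_\mu-1}(y)w(y)}{|x-y|^\mu}dy\Big)\,dx}
\]
with the test space $W=\mathrm{span}\{u_\varepsilon,\psi_{1,\varepsilon},\dots,\psi_{N+1,\varepsilon}\}$, where $\psi_{j,\varepsilon}$ ($j=1,\dots,N$) are the localized translation fields of \eqref{eq 4.2} and $\psi_{N+1,\varepsilon}$ is the localized dilation field of \eqref{eq 4.3}. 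By Lemma \ref{lema 4.1} one has $\dim W=N+2$. For a general $w=a_0u_\varepsilon+\phi\,\widehat z_\varepsilon$ with $\widehat z_\varepsilon=\sum_{j=1}^N a_j\,\partial_{x_j}u_\varepsilon+d\,\omega_\varepsilon$, I would use the equations \eqref{eqA-4}--\eqref{eqA-5} satisfied by $\partial_{x_j}u_\varepsilon$ and $\omega_\varepsilon$ (so that $-\Delta\widehat z_\varepsilon=\varepsilon\widehat z_\varepsilon+(2^{*}_\mu-1)u_\varepsilon^{2^{*}_\mu-2}\widehat z_\varepsilon(\int_\Omega\cdots)+2^{*}_\mu u_\varepsilon^{2^{*}_\mu-1}(\int_\Omega\cdots)+2d\varepsilon u_\varepsilon$), and test the PDE for $\widehat z_\varepsilon$ against $\phi^2\widehat z_\varepsilon$ together with the PDE for $u_\varepsilon$ against $\phi\widehat z_\varepsilon$. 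The leading local quadratic forms then cancel against the corresponding pieces in the denominator, reducing the Rayleigh quotient to the form $1+\widehat N_\varepsilon/\widehat D_\varepsilon$.

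The decisive step is the quantitative bounds $\widehat D_\varepsilon\ge\delta>0$ and $\widehat N_\varepsilon=O(\|u_\varepsilon\|_\infty^{-2})$ at a maximizer normalized by $a_0^2+\sum_j a_j^2+d^2=1$. The degenerate case $a_0=1$ may be discarded because then $w=u_\varepsilon$ and the Rayleigh quotient tends to $\tfrac1{2\cdot2^{*}_\mu-1}<1$ by the computation behind \eqref{eq 1.16}; hence $\sum_j a_j^2+d^2\neq0$. Rescaling to $\Omega_\varepsilon$ and using $\tilde u_\varepsilon\to U_{0,1}$ in $C^1_{loc}(\R^N)$, the leading part of $\widehat D_\varepsilon$ is a positive multiple of $\|u_\varepsilon\|_\infty^{2^{*}-2}\big(\sum_j a_j^2\big)\int_{\R^N}U_{0,1}^{2^{*}-2}|\nabla U_{0,1}|^2\,dx$ plus a positive multiple of $d^2\int_{\R^N}U_{0,1}^{2^{*}-2}\tfrac{(1-|x|^2)^2}{(1+|x|^2)^N}\,dx$ (together with nonnegative $\varepsilon$-terms), which is bounded below away from $0$ since $\sum_j a_j^2$ and $d^2$ cannot vanish simultaneously. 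For $\widehat N_\varepsilon$, once the leading local forms are cancelled, the remaining terms fall into three types: (i) cutoff pieces such as $\int|\nabla\phi|^2(\cdots)$, supported where $|x-x_\varepsilon|\ge\rho$ and controlled through $\|u_\varepsilon\|_\infty u_\varepsilon\to\tfrac{\sigma_N}{N}G(x,x_0)$ of \eqref{eq 2.3}, giving $O(\|u_\varepsilon\|_\infty^{-2})$; (ii) terms carrying a factor $\varepsilon$, estimated via the blow-up rate $\varepsilon\,\|u_\varepsilon\|_\infty^{2(N-4)/(N-2)}\to\text{const}$ of \eqref{eq 2.4}; and (iii) the non-local double integrals of the type $\iint\tfrac{(\phi(x)-\phi(y))^2(\cdots)}{|x-y|^\mu}\,dxdy$, which vanish on $B_\rho(x_\varepsilon)\times B_\rho(x_\varepsilon)$ because $\phi\equiv1$ there and, on the off-diagonal regions, are handled by HLS with a balanced splitting of exponents together with the pointwise bound \eqref{eq 2.2}; for the differentiated kernels one integrates by parts to move the derivative onto $|x-y|^{-\mu}$ and kills the diagonal contribution by the antisymmetry $(x_j-y_j)\mapsto-(x_j-y_j)$, exactly as in the estimates of $\widehat H_2$ and $\widehat K_i$ in Section 4, using in addition the Pohozaev-type identity $(y-x_\varepsilon)\cdot\nabla(u_\varepsilon^{2^{*}_\mu})=\sum_l\partial_{y_l}\big((y_l-x_{\varepsilon,l})u_\varepsilon^{2^{*}_\mu}\big)-N u_\varepsilon^{2^{*}_\mu}$ for the terms built from $\omega_\varepsilon$.

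Putting these together, $\widehat N_\varepsilon/\widehat D_\varepsilon=O(\|u_\varepsilon\|_\infty^{-2})\to0$, so $\limsup_{\varepsilon\to0}\lambda_{N+2,\varepsilon}\le1$, which with the lower bound yields $\lambda_{N+2,\varepsilon}\to1$. The hard part is item (iii): there are many new non-local cross terms built from the dilation field $\omega_\varepsilon$ (the mixed pieces pairing $\omega_\varepsilon$ with the $\partial_{x_j}u_\varepsilon$, and the products $\omega_\varepsilon(x)\,\omega_\varepsilon(y)$), and one must show each of them is genuinely of lower order than the positive leading term $\|u_\varepsilon\|_\infty^{2^{*}-2}$ in $\widehat D_\varepsilon$; this hinges on the interplay of symmetrization of the double integrals, HLS with carefully chosen conjugate exponents, the sharp bound $u_\varepsilon\le CU_{x_\varepsilon,\tau_\varepsilon}$, and the precise rate \eqref{eq 2.4}.
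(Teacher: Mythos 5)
Your proposal follows essentially the same route as the paper: the lower bound from the ordering of eigenvalues and Lemma \ref{lema 4.2}, the upper bound via the min–max principle on $W=\mathrm{span}\{u_\varepsilon,\psi_{1,\varepsilon},\dots,\psi_{N+1,\varepsilon}\}$, the reduction of the Rayleigh quotient to $1+\widehat N_\varepsilon/\widehat D_\varepsilon$ using \eqref{eqA-4}--\eqref{eqA-5}, the exclusion of the pure $a_0$ direction via $\lambda_{1,\varepsilon}\to 1/(2\cdot 2^{*}_\mu-1)<1$, and the bounds $\widehat D_\varepsilon\ge\delta>0$ and $\widehat N_\varepsilon=O(\|u_\varepsilon\|_\infty^{-2})$ obtained from rescaling, HLS, the pointwise bound \eqref{eq 2.2}, the blow-up rate \eqref{eq 2.4}, symmetrization of the double integrals and the Pohozaev-type identity for the $\omega_\varepsilon$-terms. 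This matches the paper's proof of Lemma \ref{lema 5.1}, so the proposal is correct in structure and method.
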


In the following, we prove other claims in Theorem 1.3.
\begin{proof}[\textbf{Proof of \eqref{eq 1.24} and \eqref{eq 1.23}}] Since we have obtained \eqref{eq 6.1}, from Lemma \ref{lema 2.8}, we know that for $\varepsilon>0$ sufficiently small,
\begin{align}\label{eq 6.10}
    \tilde{v}_{N+2,\varepsilon}(x)\longrightarrow\sum_{k=1}^{N}
\frac{a_{N+2,k}\,x_k}{\big(1+|x|^{2}\big)^{\frac{N}{2}}}+b_{N+2}\frac{1-|x|^{2}}{\big(1+|x|^{2}\big)^{\frac{N}{2}}},\quad\text{in $C^{1}_{loc}(\R^N)$,}
\end{align}
where $\left(a_{N+2,1},\cdots,a_{N+2,N},b_{N+2}\right)\neq\vec{0}$. To obtain the desired result \eqref{eq 1.24}, we only need to prove $a_{N+2,k}=0$, for $k=1,\cdots,N$. In fact, for fixed $\varepsilon>0$, we know that $v_{N+2,\varepsilon}$ is orthogonal to $v_{i,\varepsilon}$ in the sense of \eqref{eqq1-11}, then
\begin{equation}\label{eq 6.11}
    \begin{split}
0
=&(2^{*}_\mu-1)\lambda_{N+2,\varepsilon}\displaystyle{\int_{\Omega_{\varepsilon}}}\tilde{u}^{2^{*}_\mu-2}_{\varepsilon}\tilde{v}_{i,\varepsilon}\tilde{v}_{N+2,\varepsilon}\left(\displaystyle{\int_{\Omega_{\varepsilon}}}\frac{\tilde{u}_{\varepsilon}^{2^{*}_\mu}(y)}{|x-y|^{\mu}}dy\right)dx+\frac{\varepsilon\lambda_{N+2,\varepsilon}}{\|u_\varepsilon\|_{\infty}^{2^{*}-2}} \displaystyle{\int_{\Omega_{\varepsilon}}}\tilde{v}_{i,\varepsilon}\tilde{v}_{N+2,\varepsilon}dx\\
&+2^{*}_\mu \lambda_{N+2,\varepsilon}\displaystyle{\int_{\Omega_{\varepsilon}}}\tilde{u}^{2^{*}_\mu-1}_{\varepsilon}\tilde{v}_{i,\varepsilon}\left(\displaystyle{\int_{\Omega_{\varepsilon}}}\frac{\tilde{u}^{2^{*}_\mu-1}_{\varepsilon}(y)\tilde{v}_{N+2,\varepsilon}(y)}{|x-y|^{\mu}}dy\right)dx,\quad\text{for $i=2,\cdots, N+1$}.
    \end{split}
\end{equation}
From \eqref{eq 1.9}, \eqref{eq 1.21} and \eqref{eq 6.10}, we derive that as $\varepsilon\rightarrow0$,
\begin{align*}
&\text{RHS of \eqref{eq 6.11}}
   \longrightarrow \\
   &\!\underbrace{{\!(2^{*}_\mu-1)}\displaystyle{\!\int_{\R^N}}\!U_{0,1}^{2^{*}_\mu-2}
   \!\left(\!\sum_{k=1}^{N}\!\frac{a_{i,k}\,x_{k}}{\left(1+|x|^{2}\!\right)^{\frac{N}{2}}}\right)\!\left(\!\sum_{j=1}^{N}
\!\frac{a_{N+2,j}\,x_j}{\big(1+|x|^{2}\big)^{\frac{N}{2}}}+\!b_{N+2}\frac{1-|x|^{2}}{\big(1+|x|^{2}\big)
^{\frac{N}{2}}}\!\right)\!\left(\!\displaystyle{\int_{\R^N}}\!\frac{U_{0,1}^{2^{*}_\mu}(y)}{|x-y|^{\mu}}dy\!\right)\!dx}_{:=G_{1}}\\
+&\!\underbrace{{2^{*}_\mu }\displaystyle{\int_{\R^N}}U_{0,1}^{2^{*}_\mu-1}\!\left(\sum_{k=1}^{N}\frac{a_{i,k}\,x_{k}}{\left(1+|x|^{2}\right)
^{\frac{N}{2}}}\right)\!\left(\displaystyle{\int_{\R^N}}\frac{U_{0,1}^{2^{*}_\mu-1}(y)}{|x-y|^{\mu}}\!\left(\sum_{j=1}^{N}
\frac{a_{N+2,j}\,y_j}{\big(1+|y|^{2}\big)^{\frac{N}{2}}}+\!b_{N+2}\frac{1-|y|^{2}}{\big(1+|y|^{2}\big)^{\frac{N}{2}}}\right)dy\!\right)\!dx}_{:=G_{2}},
\end{align*}
where we have used the fact \eqref{eq 2.4} to derive that
\begin{align*}
&\underset{\varepsilon\rightarrow0}{\lim}\frac{\varepsilon}{\|u_\varepsilon\|_{\infty}^{2^{*}-2}} \displaystyle{\int_{\Omega_{\varepsilon}}}\left(\sum_{k=1}^{N}\frac{a_{i,k}\,x_{k}}{\left(1+|x|^{2}\right)^{\frac{N}{2}}}\right)\left(\sum_{j=1}^{N}
\frac{a_{N+2,j}\,x_j}{\big(1+|x|^{2}\big)^{\frac{N}{2}}}+b_{N+2}\frac{1-|x|^{2}}{\big(1+|x|^{2}\big)^{\frac{N}{2}}}\right)dx\\
=&\underset{\varepsilon\rightarrow0}{\lim}\frac{\varepsilon}{\|u_\varepsilon\|_{\infty}^{2^{*}-2}} \displaystyle{\int_{\Omega_{\varepsilon}}}\left(\sum_{k=1}^{N}\frac{a_{i,k}\,a_{N+2,k}\,x_{k}^{2}}{\left(1+|x|^{2}\right)^{N}}\right)dx=0.
\end{align*}
From \eqref{eq 1.7}, we have
\begin{align*}
    G_{1}=&{(2^{*}_\mu-1)}\displaystyle{\int_{\R^N}}U_{0,1}^{2^{*}-2}\left(\sum_{k=1}^{N}\frac{a_{i,k}\,x_{k}}{\left(1+|x|^{2}\right)^{\frac{N}{2}}}\right)\left(\sum_{j=1}^{N}
\frac{a_{N+2,j}\,x_j}{\big(1+|x|^{2}\big)^{\frac{N}{2}}}+b_{N+2}\frac{1-|x|^{2}}{\big(1+|x|^{2}\big)^{\frac{N}{2}}}\right)dx\\
=&{(2^{*}_\mu-1)}\displaystyle{\int_{\R^N}}U_{0,1}^{2^{*}-2}\left(\sum_{k=1}^{N}\frac{a_{i,k}\,a_{N+2,k}\,x_{k}^{2}}{\left(1+|x|^{2}\right)^{N}}\right)dx.
\end{align*}
Moreover, using similar arguments as  the proof of \eqref{a-1} and \eqref{a-2}, we derive
\begin{align*}
      G_{2}=&{2^{*}_\mu }\displaystyle{\int_{\R^N}}U_{0,1}^{2^{*}_\mu-1}\left(\sum_{k=1}^{N}\frac{a_{i,k}x_{k}}{\left(1+|x|^{2}\right)
      ^{\frac{N}{2}}}\right)\left(\displaystyle{\int_{\R^N}}\frac{U_{0,1}^{2^{*}_\mu-1}(y)}{|x-y|^{\mu}}\left(\sum_{j=1}^{N}
\frac{a_{N+2,j}\,y_j}{\big(1+|y|^{2}\big)^{\frac{N}{2}}}\right)dy\right)dx\\
&+{2^{*}_\mu }\displaystyle{\int_{\R^N}}U_{0,1}^{2^{*}_\mu-1}\left(\sum_{k=1}^{N}\frac{a_{i,k}\,x_{k}}{\left(1+|x|^{2}\right)^{\frac{N}{2}}}\right)
\left(\displaystyle{\int_{\R^N}}\frac{U_{0,1}^{2^{*}_\mu-1}(y)}{|x-y|^{\mu}}\left(b_{N+2}\frac{1-|y|^{2}}{\big(1+|y|^{2}\big)^{\frac{N}{2}}}\right)dy\right)dx\\
=&{\left(2^{*}-2^{*}_\mu\right)}\displaystyle{\int_{\R^N}}U_{0,1}^{2^{*}-2}\left(\sum_{k=1}^{N}\frac{a_{i,k}\,x_{k}}{\left(1+|x|^{2}\right)^{\frac{N}{2}}}\right)\left(\sum_{j=1}^{N}a_{N+2,j}\frac{x_{j}}{\left(1+|x|^{2}\right)^{\frac{N}{2}}}\right)dx\\
&+{\left(2^{*}-2^{*}_\mu\right)b_{N+2} }\displaystyle{\int_{\R^N}}U_{0,1}^{2^{*}-2}\left(\sum_{k=1}^{N}\frac{a_{i,k}\,x_{k}}{\left(1+|x|^{2}\right)^{\frac{N}{2}}}\right)\left(\frac{1-|x|^{2}}{\big(1+|x|^{2}\big)^{\frac{N}{2}}}\right)dx\\
=&{\left(2^{*}-2^{*}_\mu\right)}\displaystyle{\int_{\R^N}}U_{0,1}^{2^{*}-2}\left(\sum_{k=1}^{N}\frac{a_{i,k}\,a_{N+2,k}\,x_{k}^{2}}{\left(1+|x|^{2}\right)^{N}}\right)dx.
\end{align*}

Therefore, we deduce that
\begin{equation*}
    \begin{split}
      0=&{\left(2^{*}-1\right)}\displaystyle{\int_{\R^N}}U_{0,1}^{2^{*}-2}\left(\sum_{k=1}^{N}\frac{a_{i,k}\,a_{N+2,k}\,x_{k}^{2}}{\left(1+|x|^{2}\right)^{N}}\right)dx\\
      =&\langle \vec{a}_{i},\vec{a}_{N+2}\rangle\left(\frac{\left(2^{*}-1\right)}{N}\displaystyle{\int_{\R^N}}U_{0,1}^{2^{*}-2}\,\frac{|x|^{2}}{\left(1+|x|^{2}\right)^{N}}\,dx\right),
    \end{split}
\end{equation*}
which implies that $\langle \vec{a}_{i},\vec{a}_{N+2}\rangle=0$, for $i=2,\cdots,N+1$. Moreover, since the vectors $\{\vec{a}_{i}\}_{i=2}^{N+1}$ are mutually orthogonal in $\R^N$, we have $\vec{a}_{N+2}=\vec{0}$. Hence, we conclude that \eqref{eq 1.24} holds. Furthermore, since $b_{N+2}\neq 0$, utilizing Lemma \ref{lema 2.11} with $i=N+2$, we can obtain \eqref{eq 1.23}. In addition, we claim that $\lambda_{N+2,\varepsilon}$ is simple via a similar discussion as in Section 3. This ends the proof of Theorem 1.3.
\end{proof}
\vskip 0.2cm
\begin{proof}[\textbf{Proof of Theorem \ref{cor1}.}] Suppose that $x_0\in\Omega$ is a non-degenerate critical point of Robin function $R(x)$, then all the eigenvalues $\gamma_{i}$ of the Hessian matrix $D^{2}R(x_0)$ are nonzero, for $i=1,\cdots,N$. On the other hand, from \eqref{eq 1.20}, we find that for $\varepsilon>0$ small, any $\gamma_{i}<0$ implies that $\lambda_{i+1,\varepsilon}<1$. Then, the Morse index of $u_\varepsilon$ is bigger or equal to $m(x_0)+1$ since $\lambda_{1,\varepsilon}<1$, where $m(x_0)$ denotes as the Morse index of $D^{2}R(x)$ at $x_0$. However, from \eqref{eq 1.23}, we know that for $\varepsilon>0$ small, $\lambda_{N+2,\varepsilon}$ is bigger than one. Thus, the Morse index of $u_\varepsilon$ is precisely equal to $m(x_0)+1$.
\end{proof}

\noindent
\section*{Appendix}
\appendix
\renewcommand{\theequation}{A.\arabic{equation}}

\setcounter{equation}{0}

\section{ Some preliminaries}
\renewcommand{\theequation}{A.\arabic{equation}}

\setcounter{equation}{0}

\vskip 0.2cm

\renewcommand{\theequation}{A.\arabic{equation}}

\setcounter{equation}{0}

In this Appendix, we outline some well-known results. First of all, to understand the critical growth of problem \eqref{eq 1.1}, we recall the Hardy-Littlewood-Sobolev inequality (HLS for short).

\smallskip
\begin{lemma} \cite{L2} \label{lema 2.1} Suppose $\mu\in(0,N)$ and $\theta,\,r>1$ with $\frac{1}{\theta}+\frac{1}{r}+\frac{\mu}{N}=2$. Let $f\in L^{\theta}(\R^N)$ and $g\in L^{r}(\R^N)$, there exists a sharp constant $C(\theta,r,\mu,N)$, independent of $f$ and $g$, such that
\begin{align}\label{eq1.11}
\displaystyle{\int_{\R^N}}\displaystyle{\int_{\R^N}}\frac{f(x)g(y)}{|x-y|^{\mu}}dxdy\leq C(\theta,r,\mu,N)\|f\|_{L^{\theta}(\R^N)}\|g\|_{L^{r}(\R^N)}.
\end{align}
If $\theta=r=\frac{2N}{2N-\mu}$, then
$$C(\theta,r,\mu,N)=C_{N,\mu}:=\pi^{\frac{\mu}{2}}\frac{\Gamma\left(\frac{N-\mu}{2}\right)}{\Gamma\left(N-\frac{\mu}{2}\right)}\left(\frac{\Gamma(N)}{\Gamma\left(\frac{N}{2}\right)}\right)^{\frac{N-\mu}{N}}.$$
In this case, the equality in \eqref{eq1.11} holds if and only if $f\equiv (const.)\, g$, where
$$g(x)=A\left(\frac{1}{\tilde{\tau}^{2}+|x-\tilde{a}|^{2}}\right)^{\frac{2N-\mu}{2}},\quad \text{for some $A\in \mathbb{C}$, $0\neq\tilde{\tau}\in\R$ and $\tilde{a}\in\R^N$.}$$
\end{lemma}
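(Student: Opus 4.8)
The plan is to establish Lemma~\ref{lema 2.1} in two stages: first the existence of a finite constant $C(\theta,r,\mu,N)$ in \eqref{eq1.11}, which is soft, and then the sharp value $C_{N,\mu}$ together with the characterization of equality in the diagonal case $\theta=r=\frac{2N}{2N-\mu}$, which is the substantial part and where I would follow Lieb's argument.

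For the first stage I would argue by duality. Setting $\frac1s=1-\frac1r$, the bound \eqref{eq1.11} is equivalent, via $\iint|x-y|^{-\mu}f(x)g(y)\,dx\,dy=\int g\,(I_\mu f)$ and H\"older, to the boundedness of the Riesz potential $I_\mu f(x):=\int_{\R^N}|x-y|^{-\mu}f(y)\,dy$ from $L^{\theta}(\R^N)$ to $L^{s}(\R^N)$, where $\frac1s=\frac1\theta+\frac\mu N-1$ by the constraint $\frac1\theta+\frac1r+\frac\mu N=2$. The point is that the kernel $|x|^{-\mu}$ belongs to the weak space $L^{N/\mu,\infty}(\R^N)$ with an explicitly computable quasinorm (indeed $|\{|x|^{-\mu}>\lambda\}|=c_N\lambda^{-N/\mu}$), so the estimate follows from the weak Young inequality $\|f*k\|_{L^{s}}\le C\|f\|_{L^{\theta}}\|k\|_{L^{N/\mu,\infty}}$, which is itself a consequence of the Marcinkiewicz interpolation theorem applied to the two endpoint strong Young inequalities. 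A fully self-contained alternative is the truncation method: split $|x-y|^{-\mu}=|x-y|^{-\mu}\mathbf{1}_{\{|x-y|<R\}}+|x-y|^{-\mu}\mathbf{1}_{\{|x-y|\ge R\}}$, estimate the first convolution by Young's inequality and the second by H\"older's inequality, and optimize over $R>0$; this already yields \eqref{eq1.11} with a (non-sharp) constant.

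For the sharp constant I would first reduce to radial extremals via the Riesz rearrangement inequality: the bilinear form $\iint|x-y|^{-\mu}f(x)g(y)\,dx\,dy$ does not decrease when $f,g$ are replaced by their symmetric decreasing rearrangements, while the norms are unchanged; passing to moduli handles complex $f,g$, and in the diagonal symmetric case one reduces further to $f=g$ nonnegative radial nonincreasing. The decisive structural input is conformal invariance: under stereographic projection $\R^N\cong S^N\setminus\{\mathrm{pt}\}$, the constrained functional transforms into a functional on $S^N$ invariant under the full conformal group, which restores the compactness lost under translations and dilations. Existence of a maximizer would then be obtained by the competing-symmetries method of Carlen and Loss: iterate the operator $T$ = symmetric decreasing rearrangement followed by a fixed conformal map that does not fix the origin; $T$ never decreases the functional, and one shows the iterates $T^n f$ converge strongly to the radial profile $g_*(x)=(1+|x|^2)^{-\frac{2N-\mu}{2}}$, which must therefore be an extremal. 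Equivalently, one obtains existence by concentration--compactness and then identifies the extremal by the moving-sphere analysis of Chen--Li--Ou applied to the integral Euler--Lagrange equation, which forces any maximizer to be a translate/dilate of $g_*$; this simultaneously yields the equality statement, since the symmetry group of the problem consists exactly of translations, dilations, and scalar multiples, generating precisely the family $A(\tilde\tau^2+|x-\tilde a|^2)^{-\frac{2N-\mu}{2}}$.

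Finally, $C_{N,\mu}$ is computed by evaluating the quotient at $f=g=g_*$, using that the Riesz potential of a bubble is again a bubble — this is the relation recorded in Theorem~A$(iii)$, $\int_{\R^N}|x-y|^{-\mu}U_{\xi,\tau}^{2^{*}_{\mu}}(y)\,dy=c\,U_{\xi,\tau}^{2^{*}-2^{*}_{\mu}}(x)$ — so that $\iint|x-y|^{-\mu}g_*(x)g_*(y)\,dx\,dy$ collapses to a single integral of a product of powers of $(1+|x|^2)$, i.e.\ a Beta integral, and then to the stated product of Gamma functions. The routine parts are the interpolation estimate and the special-function bookkeeping; the genuine obstacle is the sharp stage, namely proving existence of a maximizer and pinning it down to the explicit bubble family, for which the conformal invariance on $S^N$ together with either the competing-symmetries convergence or the moving-sphere classification is the essential ingredient.
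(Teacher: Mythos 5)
The paper does not prove this lemma at all: it is quoted verbatim from Lieb's paper \cite{L2} as a known black box, so there is no internal argument to compare against. Your proposal is a correct reconstruction of the standard proof and its logical skeleton is sound: duality plus the weak Young inequality (or the truncation-and-optimize argument) for the finiteness of $C(\theta,r,\mu,N)$; Riesz rearrangement to reduce to symmetric decreasing competitors; conformal invariance on $S^N$ restoring compactness, with existence and identification of the extremal via Carlen--Loss competing symmetries or concentration--compactness plus the Chen--Li--Ou moving-sphere classification; and a Beta-integral evaluation at $g_*(x)=(1+|x|^2)^{-\frac{2N-\mu}{2}}$ for the value of $C_{N,\mu}$, which does match the stated Gamma-function expression. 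Two small points deserve attention. First, the reduction from general $(f,g)$ to $f=g$ in the diagonal case, and the conclusion ``$f\equiv(\mathrm{const.})\,g$'' in the equality statement, both rest on the strict positive definiteness of the kernel $|x-y|^{-\mu}$ (Cauchy--Schwarz for the bilinear form, with its equality case); you should state this step explicitly, and likewise the strict Riesz rearrangement inequality is needed to pin down equality up to translation. Second, invoking Theorem A$(iii)$ of the present paper to compute $\int|x-y|^{-\mu}g_*(y)\,dy$ is mildly circular, since Theorem A is itself downstream of the sharp HLS inequality; the identity $\int_{\R^N}|x-y|^{-\mu}(1+|y|^2)^{-\frac{2N-\mu}{2}}dy=c\,(1+|x|^2)^{-\frac{\mu}{2}}$ should instead be derived directly (stereographic projection, the Riesz composition formula, or a Bessel-function computation). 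With those caveats, your outline is the proof of Lieb and of Carlen--Loss, appropriately cited where the heavy machinery (competing symmetries convergence, moving spheres) enters.
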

\begin{Rem}\label{Rem1.1}
  From \eqref{eq1.11}, the integral
$$\displaystyle{\int_{\R^N}}\displaystyle{\int_{\R^N}}\frac{|u(x)|^{q}|u(y)|^{q}}{|x-y|^{\mu}}dxdy$$
is well-defined in $H^{1}(\R^N)\times H^{1}(\R^N)$ if $\frac{2N-\mu}{N}\leq q\leq\frac{2N-\mu}{N-2}$. Moreover, for any $u\in D^{1,2}(\R^{N})$, we know that
$$\left(\displaystyle{\int_{\R^N}}\displaystyle{\int_{\R^N}}\frac{|u(x)|^{\frac{2N-\mu}{N-2}}|u(y)|^{\frac{2N-\mu}{N-2}}}{|x-y|^{\mu}}dxdy\right)^{\frac{N-2}{2N-\mu}}\leq \left(C_{N,\mu}\right)^{\frac{N-2}{2N-\mu}}\left(\displaystyle{\int_{\R^N}}|u(x)|^{2^{*}}dx\right)^{\frac{2}{2^{*}}}.$$
\end{Rem}
\vskip 0.2cm
In the following, we present several results, which play a key role in our proofs in Section 2.
\begin{lemma}\cite[Lemma 2.5]{G1}\label{lema 2.1}
    Let $u\in H^{1}_{0}(\Omega)$ be a smooth solution of
    $$-\Delta u=a(x)u,$$
with $a(x)\in L^{\frac{N}{2}}(\Omega)$. If there exists $\varepsilon_{0}>0$ such that
$$\displaystyle{\int_{\Omega}}|a(x)|^{\frac{N}{2}}dx<\varepsilon_0,$$
then for any $Q\in \R^N$,
$$\underset{x\in\Omega\cap B_{R}(Q)}{\sup}|u(x)|\leq C\left(\frac{1}{R^N}\displaystyle{\int_{\Omega\cap B_{R}(Q)}}|u|^{p}dx\right)^{\frac{1}{p}},\quad \text{$\forall\,p>1$}.$$
\end{lemma}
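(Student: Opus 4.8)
The statement is the classical local boundedness (Brezis--Kato / Moser-type) estimate: if $-\Delta u = a(x)u$ with $a\in L^{N/2}(\Omega)$ and the $L^{N/2}$-norm of $a$ on $\Omega$ is smaller than a universal threshold $\varepsilon_0$, then $u$ is locally bounded with the $L^\infty$--$L^p$ bound claimed. The plan is to run a Moser iteration on the rescaled ball, using the smallness of $\|a\|_{L^{N/2}}$ to close the first step and then bootstrapping. First I would reduce to the model case $Q=0$, $R=1$ by the scaling $u_R(y)=u(Q+Ry)$, which solves $-\Delta u_R = R^2 a(Q+Ry)\,u_R$ on $(\Omega-Q)/R$, and note that $\|R^2 a(Q+R\cdot)\|_{L^{N/2}(B_1)}=\|a\|_{L^{N/2}(B_R(Q))}<\varepsilon_0$ is scale-invariant; the final inequality for general $R$ then follows by undoing the scaling, since $\frac1{R^N}\int_{\Omega\cap B_R(Q)}|u|^p = \int_{(\Omega-Q)/R\,\cap\,B_1}|u_R|^p$.

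Next, for $\beta\ge 1$ and $L>0$ I would test the equation with $\varphi = \eta^2\, u\, |u_L|^{2\beta-2}$, where $u_L=\min\{|u|,L\}$ is the usual truncation (ensuring $\varphi\in H^1_0$) and $\eta$ is a cutoff between concentric balls $B_{r'}\subset B_r\subset B_1$. Integration by parts gives, after discarding a favorable term and using Young's inequality on the $\nabla\eta$ cross-term,
\begin{equation*}
\int \eta^2 |\nabla(u\,|u_L|^{\beta-1})|^2\,dx \;\le\; C\beta^2\!\int |\nabla\eta|^2\, |u|^2|u_L|^{2\beta-2}\,dx \;+\; C\beta\!\int \eta^2 |a|\, |u|^2|u_L|^{2\beta-2}\,dx .
\end{equation*}
For the $a$-term I apply H\"older with exponents $N/2$ and $N/(N-2)$ together with the Sobolev inequality $\|\eta\, u|u_L|^{\beta-1}\|_{L^{2^*}}^2 \le S^{-1}\|\nabla(\eta\, u|u_L|^{\beta-1})\|_{L^2}^2$, and estimate $\|a\|_{L^{N/2}(B_1)}<\varepsilon_0$. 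Choosing $\varepsilon_0$ small (depending only on $N$ and $S$) so that the resulting constant times $\varepsilon_0$ can be absorbed into the left-hand side when $\beta=1$ (and, for $\beta>1$, splitting $a=a_1+a_2$ with $a_1$ bounded and $\|a_2\|_{L^{N/2}}$ as small as we like — a standard device to handle all $\beta$ uniformly) yields a reverse-H\"older-type inequality
\begin{equation*}
\Big(\int_{B_{r'}} |u|^{2^*\beta}\Big)^{1/2^*} \;\le\; \frac{C\beta^2}{(r-r')^2}\int_{B_r} |u|^{2\beta}\,dx ,
\end{equation*}
after letting $L\to\infty$ via Fatou/monotone convergence (the right side being finite at each stage by induction, starting from $u\in L^{2^*}_{loc}$). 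Iterating with $\beta_k=(N/(N-2))^k$ and a geometric sequence of radii $r_k\downarrow 1/2$ gives the bound $\sup_{B_{1/2}}|u| \le C\big(\int_{B_1}|u|^{2^*}\big)^{1/2^*}$; a further elementary interpolation (Young's inequality applied to $\|u\|_{L^{2^*}}\le \|u\|_{L^\infty}^{1-2/2^*}\|u\|_{L^1}^{2/2^*}$, or simply reordering the iteration) upgrades the right-hand exponent from $2^*$ down to any $p>1$, producing exactly $\sup_{\Omega\cap B_R(Q)}|u|\le C\big(R^{-N}\int_{\Omega\cap B_R(Q)}|u|^p\big)^{1/p}$.

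The main obstacle is the bookkeeping needed to make a single iteration scheme work for \emph{all} exponents $\beta\ge1$ despite $a$ being merely in $L^{N/2}$: for $\beta=1$ the smallness $\|a\|_{L^{N/2}}<\varepsilon_0$ is exactly what lets us absorb the bad term, but for larger $\beta$ one cannot shrink $\|a\|_{L^{N/2}}$ further, so the standard fix is the decomposition $a=a_1+a_2$ with $a_1\in L^\infty$ and $\|a_2\|_{L^{N/2}(B_1)}$ arbitrarily small (possible by density / truncation), treating $a_1$ as a harmless lower-order term and $a_2$ via the absorption argument at every step; one must also check that the constants accumulated over the iteration converge, which is routine since $\sum_k \beta_k^{-1}\log(C\beta_k^2)<\infty$. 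Everything else — the truncation to stay in $H^1_0$, passing $L\to\infty$, and the final scaling back — is standard, and I would present it compactly, citing the Brezis--Kato lemma for the $\beta=1$ starting point if a reference is preferred over a self-contained argument.
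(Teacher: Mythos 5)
The paper contains no proof of this lemma: it is imported verbatim from \cite[Lemma 2.5]{G1} and simply restated in the appendix among ``well-known results'', so there is no in-paper argument to compare your proposal with; I can only assess the proposal on its own terms. Your scaling reduction and the first Moser step ($\beta=1$: test with $\eta^2 u$, H\"older with exponents $N/2$ and $N/(N-2)$, Sobolev, absorb using $\|a\|_{L^{N/2}}<\varepsilon_0$) are correct and are indeed the standard mechanism behind estimates of this type.

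The genuine gap is the step where you claim the iteration closes ``for all $\beta\ge 1$ uniformly''. At level $\beta$ the good term carries the coefficient $\frac{2\beta-1}{\beta^2}\sim\frac1\beta$ in front of $\int\eta^2|\nabla(u|u_L|^{\beta-1})|^2$, while the bad term is bounded by $S^{-1}\|a\|_{L^{N/2}}\|\nabla(\eta\,u|u_L|^{\beta-1})\|_{L^2}^2$; absorption therefore requires roughly $\beta\,\|a\|_{L^{N/2}}\lesssim 1$, so a fixed smallness $\varepsilon_0$ fails from some $\beta$ on. Your remedy, the Brezis--Kato splitting $a=a_1+a_2$ with $a_1\in L^\infty$, repairs each level only at the price of constants depending on $a$ itself (through the truncation height needed at level $\beta$), and what it actually delivers is the classical conclusion $u\in L^q_{loc}$ for every finite $q$ with non-uniform constants, not the asserted reverse-H\"older inequality with $C=C(N)$ and hence not a sup bound with $C=C(N,p,\varepsilon_0)$. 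Uniformity is not cosmetic here: the lemma is applied in the proof of Lemma \ref{lema 2.4} to the $\varepsilon$-dependent coefficients $a_\varepsilon$, where the only available information is the smallness of the $L^{N/2}$-norm, cf.\ \eqref{eq 2.7}--\eqref{eq 2.8}, so the constant must not depend on $a$ beyond that. Moreover the obstruction is not an artifact of the method: for $u(x)=\bigl(\log\frac1{|x|}\bigr)^{\gamma}$ one has $-\Delta u=a\,u$ with $a(x)\approx\frac{(N-2)\gamma}{|x|^{2}\log(1/|x|)}\in L^{N/2}$ and $\|a\|_{L^{N/2}(B_\rho)}\to0$ as $\rho\to0$, while $u$ is unbounded; smoothing at scale $1/k$ produces smooth solutions with uniformly small $\|a_k\|_{L^{N/2}}$, uniformly bounded $L^{p}$-norms, and $\sup u_k\to\infty$. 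So $L^{N/2}$ is genuinely borderline, and proofs of lemmas of this kind (and the way the present paper uses it, where $a_\varepsilon$ is essentially $w_\varepsilon^{2^{*}_{\mu}-2}$ times a bounded Hartree factor plus a harmless linear term, i.e.\ controlled by a power of the solution) must exploit that extra structure: after the first step the integrability of the coefficient improves along with that of $u$, so the later iterations are estimated directly instead of by absorption. Your write-up should either restrict to such structured coefficients (which is all the application needs) or assume $a\in L^{q}$ for some $q>N/2$; as it stands, the ``handle all $\beta$ uniformly'' step is where the argument breaks. Two minor points, which are transcription issues of the quoted statement rather than faults of yours: your iteration naturally bounds the sup over a half ball by the integral over the full ball (which is exactly what is used to get \eqref{eq 2.6}), whereas the statement as printed has the same ball on both sides; and, read literally, smallness of $\int_\Omega|a|^{N/2}$ over all of $\Omega$ together with $u\in H^1_0(\Omega)$ would force $u\equiv0$ by coercivity, so only the localized version is meaningful.
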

\begin{lemma} \cite[Proposition 6.2]{GY}\label{lema 2.2} Assume that $N\geq 3$ and $u_\varepsilon$ is a solution of \eqref{eq 1.1}, then the following identity holds true
\begin{align}\label{eq 2.1}
\varepsilon\displaystyle{\int_{\Omega}}u_\varepsilon^{2}(x)\,dx=\frac{1}{2}\displaystyle{\int_{\partial\Omega}}\left(\frac{\partial u_{\varepsilon}}{\partial \nu }\right)^{2}(x-y)\cdot {\nu}\,dS_{x},\quad\text{$\forall\,y\in\R^N$,}
\end{align}
 where $\nu$ denotes the unit  outward normal to $\partial\Omega$.
\end{lemma}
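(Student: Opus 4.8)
The plan is to obtain \eqref{eq 2.1} by pairing equation \eqref{eq 1.1} with the Pohozaev multiplier $(x-y)\cdot\nabla u_\varepsilon$ and combining the resulting identity with the energy identity produced by pairing \eqref{eq 1.1} with $u_\varepsilon$ itself. Since the $-\Delta u_\varepsilon$ term and the linear term $\varepsilon u_\varepsilon$ are the classical ones, the only genuinely new contribution is the Hartree nonlinearity, whose treatment hinges on the symmetry of the double integral and on the precise value of $2^{*}_{\mu}$.

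I would first fix an arbitrary $y\in\R^{N}$ and carry the full vector field $x\mapsto x-y$ through every step, noting that $\nabla_x\cdot(x-y)=N$ regardless of $y$, so the choice of $y$ affects only the boundary term. Write $K_\varepsilon(x):=\int_\Omega |x-z|^{-\mu}u_\varepsilon^{2^{*}_{\mu}}(z)\,dz$, which is bounded by \cite[Lemma 6.1]{GY}, and recall $u_\varepsilon\in C^{2}(\Omega)\cap C^{1}(\overline{\Omega})$ by elliptic regularity and the smoothness of $\Omega$. Testing $-\Delta u_\varepsilon=K_\varepsilon u_\varepsilon^{2^{*}_{\mu}-1}+\varepsilon u_\varepsilon$ against $(x-y)\cdot\nabla u_\varepsilon$ and integrating over $\Omega$: two integrations by parts on the left, together with $u_\varepsilon=0$ on $\partial\Omega$ (hence $\nabla u_\varepsilon=(\partial_\nu u_\varepsilon)\nu$ there), give $-\tfrac{N-2}{2}\int_\Omega|\nabla u_\varepsilon|^{2}\,dx-\tfrac12\int_{\partial\Omega}(\partial_\nu u_\varepsilon)^{2}(x-y)\cdot\nu\,dS_x$; the linear term gives $\tfrac\varepsilon2\int_\Omega (x-y)\cdot\nabla(u_\varepsilon^{2})\,dx=-\tfrac N2\varepsilon\int_\Omega u_\varepsilon^{2}\,dx$, the boundary contribution again vanishing because $u_\varepsilon=0$ on $\partial\Omega$.

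The main step is the nonlocal term $\mathcal N:=\int_\Omega K_\varepsilon(x)\,u_\varepsilon^{2^{*}_{\mu}-1}(x)\,(x-y)\cdot\nabla u_\varepsilon(x)\,dx$. Using $u_\varepsilon^{2^{*}_{\mu}-1}\nabla u_\varepsilon=\tfrac1{2^{*}_{\mu}}\nabla(u_\varepsilon^{2^{*}_{\mu}})$ and integrating by parts in $x$ (the boundary term vanishes because $u_\varepsilon^{2^{*}_{\mu}}=0$ on $\partial\Omega$), one gets $\mathcal N=\tfrac1{2^{*}_{\mu}}\big(-\int_\Omega (x-y)\cdot\nabla K_\varepsilon(x)\,u_\varepsilon^{2^{*}_{\mu}}(x)\,dx-N\int_\Omega K_\varepsilon u_\varepsilon^{2^{*}_{\mu}}\,dx\big)$. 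Since $\nabla_x K_\varepsilon(x)=-\mu\int_\Omega \tfrac{x-z}{|x-z|^{\mu+2}}u_\varepsilon^{2^{*}_{\mu}}(z)\,dz$, the first integral equals $\mu\iint_{\Omega\times\Omega}\tfrac{(x-y)\cdot(x-z)}{|x-z|^{\mu+2}}u_\varepsilon^{2^{*}_{\mu}}(x)u_\varepsilon^{2^{*}_{\mu}}(z)\,dz\,dx$; symmetrizing in $x\leftrightarrow z$ and using $(x-y)\cdot(x-z)+(z-y)\cdot(z-x)=|x-z|^{2}$ collapses it to $\tfrac\mu2\,\mathcal D$ with $\mathcal D:=\iint_{\Omega\times\Omega}|x-z|^{-\mu}u_\varepsilon^{2^{*}_{\mu}}(x)u_\varepsilon^{2^{*}_{\mu}}(z)\,dz\,dx$ (both $\mathcal D$ and its symmetrized form are finite by the Hardy-Littlewood-Sobolev inequality, so Fubini is legitimate). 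Hence $\mathcal N=\tfrac1{2^{*}_{\mu}}\big(-\tfrac\mu2-N\big)\mathcal D=-\tfrac{2N-\mu}{2\cdot2^{*}_{\mu}}\mathcal D=-\tfrac{N-2}{2}\mathcal D$, the last equality being exactly the critical relation $\tfrac{2N-\mu}{2^{*}_{\mu}}=N-2$.

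Finally I would combine the two identities. Setting $E:=\int_\Omega|\nabla u_\varepsilon|^{2}$, $M:=\int_\Omega u_\varepsilon^{2}$ and $B:=\int_{\partial\Omega}(\partial_\nu u_\varepsilon)^{2}(x-y)\cdot\nu\,dS_x$, the Pohozaev computation reads $-\tfrac{N-2}{2}E-\tfrac12 B=-\tfrac{N-2}{2}\mathcal D-\tfrac N2\varepsilon M$, while pairing \eqref{eq 1.1} with $u_\varepsilon$ gives $E=\mathcal D+\varepsilon M$. Substituting the latter into the former cancels all $\mathcal D$-terms and leaves $\tfrac12 B=\varepsilon M$, which is precisely \eqref{eq 2.1}. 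The algebra is routine; the only point requiring care, and the mild obstacle I anticipate, is justifying the boundary integrations by parts and the exchange of order of integration in $\mathcal N$, i.e. verifying that $u_\varepsilon$, $K_\varepsilon$ and the singular kernels are regular and integrable enough near $\partial\Omega$ and on the diagonal, which is handled by interior $C^{2}$-regularity, boundedness of $K_\varepsilon$, smoothness of $\Omega$, and the Hardy-Littlewood-Sobolev inequality.
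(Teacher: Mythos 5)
Your argument is correct and is the standard Pohozaev-identity derivation of this lemma, which the paper itself does not prove but simply imports by citing \cite[Proposition 6.2]{GY}: the essential points — symmetrizing the double integral so that $(x-y)\cdot(x-z)+(z-y)\cdot(z-x)=|x-z|^{2}$, and then using the critical relation $\tfrac{2N-\mu}{2^{*}_{\mu}}=N-2$ so that the nonlocal term exactly cancels against $-\tfrac{N-2}{2}\int_\Omega|\nabla u_\varepsilon|^{2}$ after substituting the energy identity — are exactly right. One transcription slip: the intermediate coefficient should read $\tfrac{1}{2^{*}_{\mu}}\big(\tfrac{\mu}{2}-N\big)$ rather than $\tfrac{1}{2^{*}_{\mu}}\big(-\tfrac{\mu}{2}-N\big)$; your subsequent equalities $-\tfrac{2N-\mu}{2\cdot 2^{*}_{\mu}}\mathcal D=-\tfrac{N-2}{2}\mathcal D$ and the final identity are consistent with the corrected sign, so nothing downstream is affected.
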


\begin{lemma}\cite[Lemma 2]{H}\label{lema 2.3}
    Let $u$ be the solution of
    \begin{equation*}
        \begin{cases}
            -\Delta u=f,\quad&\text{in $\Omega$},\\
            u=0,\quad  &\text{on $\partial\Omega$}.
        \end{cases}
    \end{equation*}
Assume that $\omega$ is a neighborhood of $\partial\Omega$, then
\begin{align*}
    \|u\|_{W^{1,q}(\Omega)}+\|\nabla u\|_{C^{0,\alpha}(\omega^{\prime})}\leq C\left(\|f\|_{L^{1}(\Omega)}+\|f\|_{L^{\infty}(\omega)}\right)
\end{align*}
for $q<\frac{N}{N-1}$, $\alpha\in(0,1)$ and $\omega^{\prime}\subset\subset\omega$ is a strict subdomain of $\omega$.
\end{lemma}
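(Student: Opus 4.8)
The plan is to combine the Green's-function representation with standard Calderón--Zygmund and Schauder theory, splitting $f$ according to the two norms that appear on the right-hand side. Write $u(x)=\int_{\Omega}G(x,y)f(y)\,dy$, where $G$ is the Dirichlet Green's function of Definition B. Throughout I will use the classical pointwise bounds $|G(x,y)|\le C|x-y|^{2-N}$ and $|\nabla_{x}G(x,y)|\le C|x-y|^{1-N}$, valid because $\Omega$ is smooth and bounded, together with the fact that $G$ is smooth on $\{(x,y)\in\overline{\Omega}\times\overline{\Omega}:|x-y|\ge d\}$ with bounds on all derivatives depending only on $d$ and $\Omega$ (this follows from writing $G=\Phi-H$ with $\Phi$ the fundamental solution and $H(\cdot,y)$ harmonic of class $C^{\infty}(\overline{\Omega}\times\overline{\Omega})$).

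For the global $W^{1,q}$ bound, the representation gives $|u(x)|\le C(k_{0}*|f|)(x)$ and $|\nabla u(x)|\le C(k_{1}*|f|)(x)$ with $k_{j}(z)=|z|^{j-N}\chi_{\{|z|\le\operatorname{diam}\Omega\}}$. Since $k_{0}\in L^{q}(\mathbb{R}^{N})$ for $q<\frac{N}{N-2}$ and $k_{1}\in L^{q}(\mathbb{R}^{N})$ for $q<\frac{N}{N-1}$, Young's convolution inequality yields $\|u\|_{W^{1,q}(\Omega)}\le C\|f\|_{L^{1}(\Omega)}\le C\bigl(\|f\|_{L^{1}(\Omega)}+\|f\|_{L^{\infty}(\omega)}\bigr)$ for every $q<\frac{N}{N-1}$. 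This already disposes of the first term on the left.

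For the boundary Hölder estimate of $\nabla u$, set $d:=\operatorname{dist}(\overline{\omega'},\overline{\Omega}\setminus\omega)>0$ (positive precisely because $\omega'$ is a strict subdomain of the neighborhood $\omega$ of $\partial\Omega$), and split $f=f_{1}+f_{2}$ with $f_{1}=f\chi_{\omega}$, $f_{2}=f\chi_{\Omega\setminus\omega}$, and correspondingly $u=u_{1}+u_{2}$ with $-\Delta u_{i}=f_{i}$ in $\Omega$, $u_{i}=0$ on $\partial\Omega$. For $u_{1}$: since $|f_{1}|\le\|f\|_{L^{\infty}(\omega)}$ a.e., one has $\|f_{1}\|_{L^{p}(\Omega)}\le C\|f\|_{L^{\infty}(\omega)}$ for all $p<\infty$; the $L^{p}$ theory for the Dirichlet problem on the smooth domain $\Omega$ gives $\|u_{1}\|_{W^{2,p}(\Omega)}\le C\|f\|_{L^{\infty}(\omega)}$, and choosing $p=\frac{N}{1-\alpha}>N$ and applying the Sobolev embedding $W^{2,p}(\Omega)\hookrightarrow C^{1,\alpha}(\overline{\Omega})$ yields $\|\nabla u_{1}\|_{C^{0,\alpha}(\overline{\Omega})}\le C\|f\|_{L^{\infty}(\omega)}$. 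For $u_{2}$: because $\operatorname{supp}f_{2}\subset\overline{\Omega}\setminus\omega$ lies at distance $\ge d$ from $\overline{\omega'}$, the representation $\nabla u_{2}(x)=\int_{\Omega\setminus\omega}\nabla_{x}G(x,y)f(y)\,dy$ may be differentiated once more under the integral for $x\in\omega'$, so $|\nabla^{2}u_{2}(x)|\le\bigl(\sup_{|x-y|\ge d}|\nabla_{x}^{2}G(x,y)|\bigr)\|f\|_{L^{1}(\Omega)}\le C(d,\Omega)\|f\|_{L^{1}(\Omega)}$ on $\omega'$; hence $\nabla u_{2}$ is Lipschitz on $\omega'$ and in particular $\|\nabla u_{2}\|_{C^{0,\alpha}(\omega')}\le C\|f\|_{L^{1}(\Omega)}$. (Alternatively $u_{2}$ is harmonic in $\omega\cap\Omega$ and vanishes on the smooth piece $\partial\Omega$, so boundary Schauder estimates bound $\|\nabla u_{2}\|_{C^{0,\alpha}(\omega')}$ by $\|u_{2}\|_{W^{1,q}(\Omega)}\le C\|f\|_{L^{1}(\Omega)}$ via the previous paragraph.) Adding $u_{1}$ and $u_{2}$ and combining with the $W^{1,q}$ bound gives the claim.

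The main obstacle — in fact the only nontrivial ingredient — is the control of $G$ and its derivatives away from the diagonal up to $\partial\Omega$, namely the bound $|\nabla_{x}G(x,y)|\le C|x-y|^{1-N}$ and the uniform smoothness of $G$ on $\{|x-y|\ge d\}$, which rely on the smoothness of $\partial\Omega$; once these are granted, the rest is a routine assembly of Young's inequality, interior and boundary $L^{p}$ estimates, and Sobolev embedding on the smooth bounded domain $\Omega$. One should also note at the outset that ``the solution'' of the Dirichlet problem with $f\in L^{1}(\Omega)$ is understood in the very weak (duality) sense, for which the Green representation used above is valid.
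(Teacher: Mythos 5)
The paper does not actually prove this lemma --- it is quoted verbatim from Han \cite{H}, Lemma 2 --- so there is no in-paper argument to compare with; your job was to supply the standard proof, and what you wrote is essentially that classical argument and is correct. The $W^{1,q}$ bound via the Green representation and Young's inequality (using $|\nabla_x G(x,y)|\le C|x-y|^{1-N}$, so the truncated kernel lies in $L^{q}$ precisely for $q<\frac{N}{N-1}$) is fine, as is the splitting $f=f\chi_{\omega}+f\chi_{\Omega\setminus\omega}$, with Calder\'on--Zygmund $W^{2,p}$ estimates plus the embedding $W^{2,p}\hookrightarrow C^{1,1-N/p}$ for the near-boundary piece and off-diagonal regularity of $G$ for the far piece. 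Two small points should be tidied up. First, your parenthetical justification of the off-diagonal smoothness is misstated: $H=\Phi-G$ is \emph{not} of class $C^{\infty}(\overline{\Omega}\times\overline{\Omega})$, since it blows up along the boundary diagonal (indeed $H(y,y)=R(y)\to\infty$ as $y\to\partial\Omega$). What you actually need, and what is true, is only that for $|x-y|\ge d$ all $x$-derivatives of $G(\cdot,y)$ are bounded by $C(d,k,\Omega)$ uniformly in $y\in\Omega$; this follows from $0\le G(\cdot,y)\le Cd^{2-N}$ on $\{|x-y|\ge d/2\}$ together with interior and boundary derivative estimates for harmonic functions vanishing on the smooth boundary, not from global smoothness of $H$. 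Second, passing from the pointwise bound on $\nabla^{2}u_{2}$ in $\omega'$ to the $C^{0,\alpha}(\omega')$ seminorm implicitly uses that nearby points of $\omega'\cap\overline{\Omega}$ can be joined by paths of comparable length; for pairs of distant points one should simply invoke the sup bound $|\nabla u_{2}|\le Cd^{1-N}\|f\|_{L^{1}(\Omega)}$, which also comes from the representation. Either remark closes this harmless gap, and your alternative route via harmonicity of $u_{2}$ in $\omega\cap\Omega$ and boundary Schauder estimates does the same job.
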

\begin{lemma}\label{lema 2.5}\cite{WY} For any constant $0<\sigma\leq N-2$, there exists a constant $C>0$ such that
    \begin{equation*}
     \displaystyle{\displaystyle{\int_{\R^N}}}\frac{1}{|y-x|^{N-2}}\frac{1}{(1+|x|)^{2+\sigma}}dx\leq
     \begin{cases}
       \,\frac{C}{(1+|y|)^{\sigma}},\quad&\text{if $\sigma< N-2$,}\\
      \,\frac{C}{(1+|y|)^{\sigma}}\ln|y|,\quad&\text{if $\sigma= N-2$.}
    \end{cases}
\end{equation*}
\end{lemma}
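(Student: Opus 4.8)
Write $I(y):=\displaystyle{\int_{\R^N}}|y-x|^{-(N-2)}(1+|x|)^{-(2+\sigma)}\,dx$. The plan is a purely elementary dyadic decomposition of $\R^N$ according to the relative sizes of $|x|$, $|x-y|$ and $|y|$; no deeper tool (not even Hardy--Littlewood--Sobolev) is needed. First I would treat $|y|\le 4$ separately: there $I(y)$ is bounded by an absolute constant, by splitting the domain into $\{|x|\le 8\}$, where $|y-x|^{-(N-2)}\in L^1_{loc}$ and the weight is $\le 1$, and $\{|x|>8\}$, where $|y-x|\ge|x|/2$ so the integrand is $\lesssim|x|^{-(N+\sigma)}$, which is integrable at infinity precisely because $\sigma>0$. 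Since $(1+|y|)^{-\sigma}$ is bounded below on $\{|y|\le 4\}$, the claimed bound is trivial there.

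For $|y|\ge 4$ I would decompose $\R^N$ into (R1) $\{|x|\le|y|/2\}$, (R2) $\{|x-y|\le|y|/2\}$, (R3) $\{|x|>|y|/2,\ |x-y|>|y|/2\}$, and split (R3) further into (R3a) $\{|y|/2<|x|\le 2|y|\}$ and (R3b) $\{|x|>2|y|\}$. On R1 one has $|y-x|\ge|y|/2$, so the contribution is $\lesssim|y|^{-(N-2)}\int_{|x|\le|y|/2}(1+|x|)^{-(2+\sigma)}\,dx$, which in polar coordinates is $\lesssim|y|^{-(N-2)}\cdot|y|^{N-2-\sigma}=|y|^{-\sigma}$ when $\sigma<N-2$ and $\lesssim|y|^{-(N-2)}\ln|y|$ when $\sigma=N-2$. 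On R2 one has $|x|\ge|y|/2$, hence the weight is $\lesssim|y|^{-(2+\sigma)}$, and $\int_{|z|\le|y|/2}|z|^{-(N-2)}\,dz\lesssim|y|^{2}$, giving $\lesssim|y|^{-\sigma}$. On R3a again $|x|\sim|y|$, so the weight is $\lesssim|y|^{-(2+\sigma)}$ while $\int_{|x-y|\le 3|y|}|y-x|^{-(N-2)}\,dx\lesssim|y|^{2}$, whence $\lesssim|y|^{-\sigma}$. On R3b one has $|x-y|\ge|x|-|y|\ge|x|/2$, so the integrand is $\lesssim|x|^{-(N-2)}(1+|x|)^{-(2+\sigma)}\le|x|^{-(N+\sigma)}$ and $\int_{|x|>2|y|}|x|^{-(N+\sigma)}\,dx\lesssim|y|^{-\sigma}$ because $\sigma>0$. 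Summing the five pieces and using $1+|y|\sim|y|$ for $|y|\ge 4$ yields exactly the asserted dichotomy.

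The only step requiring any care -- and what I would flag as the ``main obstacle'', though it is really just bookkeeping -- is the borderline case $\sigma=N-2$, where $2+\sigma=N$ exactly matches the Newtonian kernel and the radial integral over R1, namely $\int_1^{|y|/2}r^{N-1}(1+r)^{-N}\,dr\le\int_1^{|y|/2}r^{-1}\,dr=\ln(|y|/2)$, becomes logarithmically large. One must verify that this logarithm enters only through R1 and that each of R2, R3a, R3b still produces the clean rate $|y|^{-\sigma}$, so that the stated estimate is sharp in the sense claimed (pure power for $\sigma<N-2$, power times $\ln|y|$ for $\sigma=N-2$). All remaining computations are routine polar-coordinate estimates and the constant $C$ depends only on $N$ and $\sigma$ (it degenerates as $\sigma\to 0^{+}$, consistent with the restriction $\sigma>0$).
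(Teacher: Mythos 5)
Your proof is correct. Note, however, that the paper offers no proof of this lemma at all: it is quoted verbatim from the reference [WY] (Wei--Yan), so there is no in-paper argument to compare against. Your region decomposition (split off $|y|$ small; then $|x|\le|y|/2$, $|x-y|\le|y|/2$, the intermediate annulus $|y|/2<|x|\le 2|y|$, and $|x|>2|y|$) together with the elementary polar-coordinate estimates is exactly the standard way such Green-function-type convolution bounds are proved in the blow-up literature, including in the appendix of [WY] itself, so you are not deviating from the expected route; each regional estimate you give ($|y|^{-\sigma}$ from R2, R3a, R3b, and $|y|^{-\sigma}$ resp.\ $|y|^{-(N-2)}\ln|y|$ from R1 according to whether $\sigma<N-2$ or $\sigma=N-2$) checks out, and you correctly identify R1 as the sole source of the logarithm in the borderline case. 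One small caveat: for $\sigma=N-2$ the inequality as literally stated, with the factor $\ln|y|$, cannot hold when $|y|\le 1$ (the right-hand side is then nonpositive while the integral is positive), so your remark that the claim is ``trivial'' for $|y|\le 4$ is only accurate for $\sigma<N-2$; in the borderline case the statement must be read for $|y|$ large, or with $\ln|y|$ replaced by $\ln(2+|y|)$, which is the standard formulation and is exactly what your computation for $|y|\ge 4$ delivers. This is a defect of the quoted statement rather than of your argument, but it is worth saying explicitly rather than sweeping into the word ``trivial.''
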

\begin{lemma}\label{lema 2.6} \cite[Lemma 3.6]{SYZ} For any constant $\sigma\geq N-2-\frac{\mu}{2}$ and $\mu\in(0,4]$, there exists a constant $C>0$ such that
\begin{equation*}
\displaystyle{\int_{\R^N}}\frac{1}{|y-x|^{\frac{2N(N-2)}{2N-\mu}}}\frac{1}{(1+|x|)^{\frac{2N(2+\sigma)}{2N-\mu}}}dx\leq
     \begin{cases}
      \, \frac{C}{(1+|y|)^{\frac{N(2\sigma+\mu)}{2N-\mu}}},\,\,&\text{if $\sigma> N-2-\frac{\mu}{2}$ and $\mu\in(0,4)$,}\\
       \,\frac{C}{(1+|y|)^{\frac{N(2\sigma+\mu)}{2N-\mu}}}\ln|y|,\,\,&\text{if $\sigma= N-2-\frac{\mu}{2}$ and $\mu=4$.}
    \end{cases}
\end{equation*}
\end{lemma}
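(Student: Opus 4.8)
The inequality is a Riesz–potential decay estimate, and I would establish it exactly along the lines of the $\mu$–free prototype recorded in Lemma~\ref{lema 2.5}, simply carrying along the $\mu$–dependent exponents. Abbreviate $a:=\frac{2N(N-2)}{2N-\mu}$, $\beta:=\frac{2N(2+\sigma)}{2N-\mu}$ and $\gamma:=\frac{N(2\sigma+\mu)}{2N-\mu}$, and record the elementary identity $\gamma=a+\beta-N$. Under the standing hypotheses $\mu\in(0,4]$ and $\sigma\ge N-2-\frac{\mu}{2}$ one checks $a\le N$ (with $a<N$ precisely when $\mu<4$) and $a+\beta>N$, so that $I(y):=\int_{\R^N}|y-x|^{-a}(1+|x|)^{-\beta}\,dx$ is finite. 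It suffices to treat $|y|\ge 2$, since for $|y|\le 2$ the left–hand side is a fixed finite quantity while the right–hand side is bounded below, making the inequality trivial.

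For $|y|\ge 2$ set $\delta:=|y|/2$ and split $\R^N=A_1\cup A_2\cup A_3$ with $A_1=\{|x|\le\delta\}$, $A_2=\{|x-y|\le\delta\}$ and $A_3=\R^N\setminus(A_1\cup A_2)$, writing $I(y)=I_1+I_2+I_3$ correspondingly. On $A_1$ one has $|y-x|\ge\delta\sim|y|$, hence $I_1\le C|y|^{-a}\int_{|x|\le\delta}(1+|x|)^{-\beta}\,dx$, and the radial integral produces the expected trichotomy according to whether $\beta<N$, $\beta=N$ or $\beta>N$; since $\gamma=a+\beta-N$, this is where the logarithmic factor in the borderline situation ($\sigma=N-2-\frac{\mu}{2}$, $\mu=4$) originates. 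On $A_2$ one has $|x|\ge\delta\sim|y|$, so $(1+|x|)^{-\beta}\le C|y|^{-\beta}$ and $I_2\le C|y|^{-\beta}\int_{|x-y|\le\delta}|y-x|^{-a}\,dx\le C|y|^{-\beta}|y|^{N-a}=C|y|^{-\gamma}$, the estimate of the last integral requiring $a<N$ (here the restriction $\mu<4$ is used). On $A_3$ both $|y-x|\ge\delta$ and $|x|\ge\delta$ hold; I would split it once more into the shell $\delta\le|x|\le 2|y|$, where $|x|\sim|y|$ so one bounds $(1+|x|)^{-\beta}\le C|y|^{-\beta}$ and integrates $|y-x|^{-a}$ over a ball of radius $\sim|y|$, and the far region $|x|>2|y|$, where $|y-x|\ge|x|/2$ so the integrand is $\le C|x|^{-(a+\beta)}$ and $\int_{|x|>2|y|}|x|^{-(a+\beta)}\,dx\le C|y|^{N-a-\beta}$ by $a+\beta>N$; both contributions are $O(|y|^{-\gamma})$.

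Adding $I_1+I_2+I_3$ then yields $I(y)\le C(1+|y|)^{-\gamma}$ away from the critical exponent and $I(y)\le C(1+|y|)^{-\gamma}\log|y|$ in the critical case, which is the assertion. The only delicate point is the bookkeeping of the three exponent regimes inside the $A_1$–integral and the precise matching of each regime with the hypothesis imposed on $\sigma$ and $\mu$; the remaining estimates are the standard ball/annulus splitting used throughout for such convolution kernels (exactly as in the proof of Lemma~\ref{lema 2.5}), so beyond this careful accounting I anticipate no genuine obstacle. Since the statement is quoted from \cite{SYZ}, one may alternatively simply invoke that reference; the sketch above indicates how the argument runs.
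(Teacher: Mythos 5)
Your reduction to the three standard regions is the right general scheme, but the place you dismiss as ``bookkeeping'' is exactly where the argument fails, and it fails in the only regime the lemma actually covers. Note first that the hypothesis $\sigma\ge N-2-\frac{\mu}{2}$ is equivalent to $\beta:=\frac{2N(2+\sigma)}{2N-\mu}\ge N$, so the case $\beta<N$ of your ``trichotomy'' never occurs here. When $\beta>N$ (i.e.\ $\sigma>N-2-\frac\mu2$, the first displayed case), your $A_1$ estimate gives only $I_1\le C|y|^{-a}\int_{\R^N}(1+|x|)^{-\beta}dx=O(|y|^{-a})$, and since $\gamma=a+\beta-N>a$ in this regime, $I_1$ is \emph{not} $O(|y|^{-\gamma})$; hence your concluding sentence ``adding $I_1+I_2+I_3$ yields $I(y)\le C(1+|y|)^{-\gamma}$ away from the critical exponent'' does not follow from your own estimates. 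This gap cannot be repaired: restricting the integral to $|x|\le 1$ gives, for $|y|\ge 2$, the lower bound $I(y)\ge c(1+|y|)^{-a}$ with $c>0$, which is incompatible with a bound $C(1+|y|)^{-\gamma}$ once $\gamma>a$ (e.g.\ $N=6$, $\mu=2$, $\sigma=4$: $a=4.8$, $\gamma=6$). The correct decay when $\beta\ge N$ is $(1+|y|)^{-a}$ (with a logarithm at $\beta=N$); the rate $(1+|y|)^{-\gamma}$ is the true one only when $\beta\le N$, i.e.\ $\sigma\le N-2-\frac\mu2$, which is precisely the analogue of the restriction $\sigma\le N-2$ in Lemma \ref{lema 2.5}. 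So your scheme proves the estimate only with the inequality on $\sigma$ running the other way; as transcribed, the first case of the statement is not provable, and your write-up masks this by not carrying out the $A_1$ computation in the relevant regime.

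There is a second, independent problem in the borderline case. You assert that $a\le N$ and $a+\beta>N$ make $I(y)$ finite, but at $\mu=4$ one has $a=N$, so the kernel $|y-x|^{-N}$ is not locally integrable and your $I_2$ term (which, as you yourself note, needs $a<N$) is $+\infty$; thus the second displayed case ($\sigma=N-2-\frac\mu2$, $\mu=4$) also cannot be established by your argument as written, the integral being divergent for every $y$ under a literal reading. Finally, since the paper offers no proof of this lemma but only cites \cite{SYZ}, ``simply invoking the reference'' is not a safe fallback here: before doing so you should verify that the hypotheses and exponents there match the version quoted above, because your own (correctly set up) region analysis is exactly the computation that shows the quoted hypotheses select the regime in which the claimed rate $\frac{N(2\sigma+\mu)}{2N-\mu}$ is not the true decay.
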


\vskip0.2cm
\section{ Some basic estimates}
\renewcommand{\theequation}{B.\arabic{equation}}

\setcounter{equation}{0}

\vskip 0.2cm
In this Appendix, we present some proofs of the basic estimates. Firstly, we show two computational techniques, which are useful in the estimates of the double integrals involving the Talenti bubbles $U_{\xi,\tau}(x)$, $\forall \,x,\,\xi\in\R^N$ and $\tau\in\R^{+}$.
\begin{lemma}\label{integral}
As for the Talenti bubbles
$$U_{0,1}(x)=\frac{1}{\left(1+x^{2}\right)^{\frac{N-2}{2}}},$$
we derive the following two equations
\begin{equation}\label{a-1}
    \begin{split}
        &2^{*}_\mu\displaystyle{\int_{\R^N}}U_{0,1}^{2^{*}_\mu-1}(x)\left(\sum_{j=1}^{i-1}a_j\frac{\partial U_{0,1}(x)}{\partial x_j}\right)\left(\displaystyle{\int_{\R^N}}\frac{U^{2^{*}_\mu-1}_{0,1}(y)\left(\sum_{l=1}^{i-1}a_l\frac{\partial U_{0,1}(y)}{\partial y_l}\right)}{|x-y|^{\mu}}dy\right)dx  \\
=&\left(2^{*}-2^{*}_\mu\right)\displaystyle{\int_{\R^N}}U_{0,1}^{2^{*}-2}(x)\left(\sum_{j=1}^{i-1}a_j\frac{\partial {U_{0,1}(x)}}{\partial x_j}\right)\left(\sum_{l=1}^{i-1}a_l\frac{\partial U_{0,1}(x)}{\partial x_l}\right)dx,
    \end{split}
\end{equation}
and
\begin{equation}\label{a-2}
    \begin{split}
       &\frac{2^{*}_\mu(N-2)}{2} \int_{\R^N}U_{0,1}^{2^{*}_\mu-1}(x)\,\frac{1-|x|^{2}}{(1+x^{2})^{\frac{N}{2}}}\left(\int_{\R^N}\frac{U_{0,1}^{2^{*}_\mu-1}(y)\frac{1-|y|^{2}}{(1+y^{2})^{\frac{N}{2}}}}{|x-y|^{\mu}}dy\right)dx\\
       =&\frac{\mu}{2}\int_{\R^N}U_{0,1}^{2^{*}-2}(x)\,\frac{\left(1-|x|^{2}\right)^{2}}{\left(1+x^{2}\right)^N}\,dx.
    \end{split}
\end{equation}
\end{lemma}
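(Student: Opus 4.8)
The plan is to derive both identities by differentiating, with respect to the parameters $\xi$ and $\tau$, the basic convolution relation of Theorem A$(iii)$, namely
$$\int_{\R^N}\frac{U_{\xi,\tau}^{2^{*}_{\mu}}(y)}{|x-y|^{\mu}}\,dy = U_{\xi,\tau}^{2^{*}-2^{*}_{\mu}}(x),$$
which is \eqref{eq 1.7} with the dimensional constant suppressed exactly as agreed in the Introduction, and then pairing the resulting pointwise relation with a suitable test function and integrating over $\R^N$. Convergence of all the (double) integrals that occur is a routine consequence of the decay of $U_{0,1}$ together with the Hardy--Littlewood--Sobolev inequality.

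For \eqref{a-1}, I would fix $\vec{a}=(a_1,\dots,a_{i-1},0,\dots,0)\in\R^N$, set $\xi(t)=t\vec{a}$, and differentiate $\int_{\R^N}|x-y|^{-\mu}U_{\xi(t),1}^{2^{*}_{\mu}}(y)\,dy = U_{\xi(t),1}^{2^{*}-2^{*}_{\mu}}(x)$ at $t=0$. Using the identity $\frac{\partial U_{\xi,1}(x)}{\partial\xi_j}\big|_{\xi=0}=-\frac{\partial U_{0,1}(x)}{\partial x_j}$ and writing $Z(x):=\sum_{j=1}^{i-1}a_j\,\frac{\partial U_{0,1}(x)}{\partial x_j}$, this gives the pointwise relation
$$2^{*}_{\mu}\int_{\R^N}\frac{U_{0,1}^{2^{*}_{\mu}-1}(y)\,Z(y)}{|x-y|^{\mu}}\,dy = (2^{*}-2^{*}_{\mu})\,U_{0,1}^{2^{*}-2^{*}_{\mu}-1}(x)\,Z(x).$$
Multiplying both sides by $U_{0,1}^{2^{*}_{\mu}-1}(x)Z(x)$, integrating over $x\in\R^N$, and using that $(2^{*}_{\mu}-1)+(2^{*}-2^{*}_{\mu}-1)=2^{*}-2$, one recovers exactly \eqref{a-1} (the double sum over $j,l$ appears by bilinearity of the paired integrals in $Z$).

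For \eqref{a-2}, I would do the analogous computation in the dilation direction. A direct computation gives $\frac{\partial U_{0,\tau}(x)}{\partial\tau}\big|_{\tau=1}=\frac{N-2}{2}\,\frac{1-|x|^{2}}{(1+|x|^{2})^{N/2}}=:W(x)$, so that $\frac{1-|x|^{2}}{(1+|x|^{2})^{N/2}}=\frac{2}{N-2}W$. Differentiating the convolution relation (with $\xi=0$) at $\tau=1$ yields $2^{*}_{\mu}\int_{\R^N}|x-y|^{-\mu}U_{0,1}^{2^{*}_{\mu}-1}(y)W(y)\,dy=(2^{*}-2^{*}_{\mu})U_{0,1}^{2^{*}-2^{*}_{\mu}-1}(x)W(x)$. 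Pairing with $U_{0,1}^{2^{*}_{\mu}-1}W$, integrating, expressing everything back in terms of $\frac{1-|x|^{2}}{(1+|x|^{2})^{N/2}}$, and finally using $(N-2)(2^{*}-2^{*}_{\mu})=\mu$ transforms this into \eqref{a-2}.

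The routine part is the bookkeeping of the suppressed constants (they cancel consistently on the two sides, being precisely the ones discarded in the Introduction). The only genuine point requiring care is the justification of differentiation under the integral sign in the convolution term $\int_{\R^N}|x-y|^{-\mu}U_{\xi,\tau}^{2^{*}_{\mu}}(y)\,dy$; I would handle this by producing, via the pointwise decay of $U_{\xi,\tau}$ and the Hardy--Littlewood--Sobolev inequality, a fixed integrable majorant valid for $(\xi,\tau)$ in a neighbourhood of $(0,1)$, which also guarantees that the limiting pointwise relations are genuine identities in $x$ rather than merely almost-everywhere ones.
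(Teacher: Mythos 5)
Your proposal is correct and follows essentially the same route as the paper: both differentiate the convolution identity $\int_{\R^N}|x-y|^{-\mu}U_{\xi,\tau}^{2^{*}_{\mu}}(y)\,dy=U_{\xi,\tau}^{2^{*}-2^{*}_{\mu}}(x)$ in the translation direction for \eqref{a-1} and in the dilation direction for \eqref{a-2}, then pair the resulting pointwise relation with the appropriate test function and integrate, using $(2^{*}_{\mu}-1)+(2^{*}-2^{*}_{\mu}-1)=2^{*}-2$ and $(N-2)(2^{*}-2^{*}_{\mu})=\mu$. Your added remark on justifying differentiation under the integral sign is a minor refinement the paper leaves implicit.
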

\begin{proof}
As for \eqref{a-1}, since
$$\frac{\partial U_{0,1}(y)}{\partial y_{l}}=-\frac{\partial U_{\xi,1}(y)}{\partial \xi_{l}}\Bigg|_{\xi=0}, \quad \text{for $l=1,2,\cdots,N$,}$$
and \eqref{eq 1.7} holds with $\tau=1$, i.e.
$$\displaystyle{\int_{\R^N}}\frac{U_{\xi,1}^{2^{*}_{\mu}}(y)}{|x-y|^{\mu}}dy=U_{\xi,1}^{2^{*}-2^{*}_{\mu}}(x),\quad \forall\, x,\,\xi \in \R^{N},$$
a direct simple computation yields that
\begin{align*}
&2^{*}_\mu\displaystyle{\int_{\R^N}}\frac{U_{0,1}^{2^{*}_\mu-1}(y)\left(\sum_{l=1}^{i-1}a_l\frac{\partial U_{0,1}(y)}{\partial y_l}\right)}{|x-y|^{\mu}}dy=\sum_{l=1}^{i-1}a_l\displaystyle{\int_{\R^N}}\frac{\frac{\partial }{\partial y_l}\,\left(U^{2^{*}_\mu}_{0,1}(y)\right)}{|x-y|^{\mu}}\,dy\\
=&\sum_{l=1}^{i-1}a_l\,\displaystyle{\int_{\R^N}}\frac{\left(-\frac{\partial }{\partial \xi_l}\,\left(U^{2^{*}_\mu}_{\xi,1}(y)\right)\right)\Big|_{\xi=0}}{|x-y|^{\mu}}\,dy=-\sum_{l=1}^{i-1}a_l\left(\frac{\partial}{\partial \xi_l}\left(\displaystyle{\int_{\R^N}}\frac{U_{\xi,1}^{2^{*}_\mu}(y)}{|x-y|^{\mu}}dy\right)\Bigg|_{\xi=0}\right)\\
=&-\sum_{l=1}^{i-1}a_l\frac{\partial}{\partial \xi_l}\left(U_{\xi,1}^{2^{*}-2^{*}_\mu}(x)\right)\Big|_{\xi=0}=\left(2^{*}-2^{*}_\mu\right)U_{0,1}^{2^{*}-2^{*}_\mu-1}(x)\left(\sum_{l=1}^{i-1}a_l\frac{\partial U_{0,1}(x)}{\partial x_l}\right).
\end{align*}
This gives \eqref{a-1}.
As for $\eqref{a-2}$, remark that
$$\frac{\partial U_{0,\tau}(y)}{\partial\tau}\Big|_{\tau=1}
=\frac{N-2}{2}\,\frac{1-|y|^{2}}{(1+y^{2})^{\frac{N}{2}}}=y\cdot\nabla U_{0,1}(y)+\frac{N-2}{2}U_{0,1}(y),$$
and \eqref{eq 1.7} holds with $\xi=0$, i.e.
$$\displaystyle{\int_{\R^N}}\frac{U_{0,\tau}^{2^{*}_{\mu}}(y)}{|x-y|^{\mu}}dy=U_{0,\tau}^{2^{*}-2^{*}_{\mu}}(x),\quad\forall\,\tau \in \R^{+},$$
then we have
\begin{align*}
    & \frac{2^{*}_\mu(N-2)}{2}\int_{\R^N}\frac{U_{0,1}^{2^{*}_\mu-1}(y)\,\frac{1-|y|^{2}}{(1+y^{2})^{\frac{N}{2}}}}{|x-y|^{\mu}}dy=\int_{\R^N}\frac{\frac{\partial }{\partial\tau}\left(U_{0,\tau}^{2^{*}_\mu}(y)\right)\Big|_{\tau=1}}{|x-y|^{\mu}}\,dy\\
     =&\frac{\partial}{\partial\tau}\left(\int_{\R^N}\frac{U_{0,\tau}^{2^{*}_\mu}(y)}{|x-y|^{\mu}}dy\right)\Bigg|_{\tau=1}=\frac{\partial}{\partial\tau}\left(U_{0,\tau}^{2^{*}-2^{*}_\mu}(x)\right)\Big|_{\tau=1}\\
     =&(2^{*}-2^{*}_\mu)\,U_{0,1}^{2^{*}-2^{*}_\mu-1}(x)\,\left(\frac{\partial U_{0,\tau}(x)}{\partial\tau}\Big|_{\tau=1}\right)=\frac{\mu}{2}\,U_{0,1}^{2^{*}-2^{*}_\mu-1}(x)\,\frac{1-|x|^{2}}{(1+x^{2})^{\frac{N}{2}}}.
\end{align*}
Thus, we get \eqref{a-2}.
\end{proof}

\begin{lemma}\label{lma A-3} For any $x\in\Omega$, we denote the test-functions as follows:
$$\frac{\partial u_\varepsilon}{\partial x_{i}}\,(i=1,2,\cdots N)\quad\text{and}\quad(x-x_{\varepsilon})\cdot\nabla u_{\varepsilon}(x)+\frac{N-2}{2}u_{\varepsilon}(x),$$
then a direct computation yields that
\begin{equation}\label{eqA-4}
    \begin{split}
        -\Delta\left(\frac{\partial u_{\varepsilon}}{\partial x_i}\right)
        =&\varepsilon \left(\frac{\partial u_{\varepsilon}}{\partial x_i}\right)+(2^{*}_\mu-1)u_\varepsilon^{2^{*}_\mu-2}\left(\frac{\partial u_{\varepsilon}}{\partial x_i}\right)\left(\displaystyle{\int_{\Omega}}\frac{u_\varepsilon^{2^{*}_{\mu}}(y)}{|x-y|^{\mu}}dy\right)\\
        &+2^{*}_\mu u_\varepsilon^{2^{*}_\mu-1}(x)\left(\displaystyle{\int_{\Omega}}\frac{u_\varepsilon^{2^{*}_\mu-1}(y)\frac{\partial u_\varepsilon(y) }{\partial y_{i}}}{|x-y|^{\mu}}dy\right),\quad\text{for $i=1,\cdots,N$,}
    \end{split}
\end{equation}
and
 \begin{equation}\label{eqA-5}
     \begin{split}
        &-\Delta\Big((x-x_{\varepsilon})\cdot\nabla u_{\varepsilon}(x)+\frac{N-2}{2}u_{\varepsilon}(x)\Big)\\
     =&\varepsilon\left((x-x_\varepsilon)\cdot\nabla u_{\varepsilon}(x)+\frac{N-2}{2}u_{\varepsilon}(x)\right)\\
    &+(2^{*}_\mu-1)u_\varepsilon^{2^{*}_\mu-2}(x)\left((x-x_\varepsilon)\cdot\nabla_{x} u_{\varepsilon}(x)+\frac{N-2}{2}u_{\varepsilon}(x)\right)\left(\displaystyle{\int_{\Omega}}\frac{u_\varepsilon^{2^{*}_{\mu}}(y)}{|x-y|^{\mu}}dy\right)\\
     &+2^{*}_\mu u_\varepsilon^{2^{*}_\mu-1}(x)\left(\displaystyle{\int_{\Omega}}\frac{u_\varepsilon^{2^{*}_{\mu}-1}(y)\big((y-x_{\varepsilon})\cdot\nabla_{y} u_\varepsilon(y)+\frac{N-2}{2} u_\varepsilon(y)\big)}{|x-y|^{\mu}}dy\right)+2\varepsilon u_\varepsilon(x).
     \end{split}
 \end{equation}
\end{lemma}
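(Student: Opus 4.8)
The plan is to derive both identities by differentiating directly the equation \eqref{eq 1.1} for $u_\varepsilon$, which I would first rewrite in the compact form $-\Delta u_\varepsilon = g_\varepsilon$ with
$$g_\varepsilon(x) := K_\varepsilon(x)\,u_\varepsilon^{2^{*}_\mu-1}(x) + \varepsilon\,u_\varepsilon(x), \qquad K_\varepsilon(x) := \int_\Omega \frac{u_\varepsilon^{2^{*}_\mu}(y)}{|x-y|^\mu}\,dy.$$
Before any computation I would record, using elliptic regularity together with the boundedness of $K_\varepsilon$ from \cite[Lemma 6.1]{GY} and the decay bound \eqref{eq 2.2}, that $u_\varepsilon$ is regular enough for all the differentiations below, that differentiation under the integral sign in $K_\varepsilon$ is legitimate (since $|\nabla_x|x-y|^{-\mu}|\le\mu|x-y|^{-\mu-1}$ is still integrable against the relevant powers of $U_{x_\varepsilon,\tau_\varepsilon}$), and that every integration by parts in the $y$-variable produces no boundary term, because $u_\varepsilon$ — hence $u_\varepsilon^{2^{*}_\mu}$ and $u_\varepsilon^{2^{*}_\mu-1}$ — vanishes on $\partial\Omega$.

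For \eqref{eqA-4}, I would apply $\partial/\partial x_i$ to $-\Delta u_\varepsilon = g_\varepsilon$; since $\partial_{x_i}$ commutes with $\Delta$, the product rule gives $-\Delta(\partial_{x_i}u_\varepsilon) = (\partial_{x_i}K_\varepsilon)u_\varepsilon^{2^{*}_\mu-1} + (2^{*}_\mu-1)K_\varepsilon u_\varepsilon^{2^{*}_\mu-2}\partial_{x_i}u_\varepsilon + \varepsilon\partial_{x_i}u_\varepsilon$. The only nontrivial ingredient is $\partial_{x_i}K_\varepsilon$: writing $\partial_{x_i}|x-y|^{-\mu} = -\partial_{y_i}|x-y|^{-\mu}$ and integrating by parts in $y$ turns it into $2^{*}_\mu\int_\Omega |x-y|^{-\mu}u_\varepsilon^{2^{*}_\mu-1}(y)\,\partial_{y_i}u_\varepsilon(y)\,dy$, and substituting this back produces exactly \eqref{eqA-4}.

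For \eqref{eqA-5}, I would first translate coordinates so that $x_\varepsilon = 0$ (harmless for $\Delta$) and set $\omega_\varepsilon = x\cdot\nabla u_\varepsilon + \tfrac{N-2}{2}u_\varepsilon$. The computation rests on the elementary commutator identity $\Delta(x\cdot\nabla w) = x\cdot\nabla(\Delta w) + 2\Delta w$, which yields $-\Delta\omega_\varepsilon = x\cdot\nabla g_\varepsilon + \tfrac{N+2}{2}g_\varepsilon$. I would then expand $x\cdot\nabla g_\varepsilon$ by the product rule, replace $x\cdot\nabla u_\varepsilon$ by $\omega_\varepsilon - \tfrac{N-2}{2}u_\varepsilon$, and reduce everything to the single quantity $x\cdot\nabla K_\varepsilon$. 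For that I would use the joint homogeneity $x\cdot\nabla_x|x-y|^{-\mu} = -\mu|x-y|^{-\mu} - y\cdot\nabla_y|x-y|^{-\mu}$, integrate the last term by parts in $y$ against $u_\varepsilon^{2^{*}_\mu}(y)$ via $\operatorname{div}_y(y\,u_\varepsilon^{2^{*}_\mu}) = N u_\varepsilon^{2^{*}_\mu} + 2^{*}_\mu u_\varepsilon^{2^{*}_\mu-1}\,y\cdot\nabla u_\varepsilon$, and again substitute $y\cdot\nabla u_\varepsilon = \omega_\varepsilon - \tfrac{N-2}{2}u_\varepsilon$, arriving at
$$x\cdot\nabla K_\varepsilon(x) = -\frac{\mu}{2}K_\varepsilon(x) + 2^{*}_\mu\int_\Omega \frac{u_\varepsilon^{2^{*}_\mu-1}(y)\,\omega_\varepsilon(y)}{|x-y|^\mu}\,dy,$$
the coefficient $-\mu/2$ coming from $N - \mu - \tfrac{(N-2)2^{*}_\mu}{2} = -\tfrac{\mu}{2}$, i.e. from $(N-2)2^{*}_\mu = 2N-\mu$. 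Reassembling, the $\varepsilon u_\varepsilon$-terms add up to $2\varepsilon u_\varepsilon$, while the coefficient multiplying $K_\varepsilon u_\varepsilon^{2^{*}_\mu-1}$ equals $-\tfrac{\mu}{2} - \tfrac{N-\mu+2}{2} + \tfrac{N+2}{2} = 0$ (using $(N-2)(2^{*}_\mu-1) = N-\mu+2$), leaving precisely \eqref{eqA-5}.

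The analytic content is light; the step I expect to be the delicate one is the coefficient bookkeeping in the expansion of $x\cdot\nabla g_\varepsilon + \tfrac{N+2}{2}g_\varepsilon$, in particular tracking the several $K_\varepsilon u_\varepsilon^{2^{*}_\mu-1}$ contributions and recognising that they cancel. This cancellation is the nonlocal counterpart of the familiar fact that the dilation generator lies in the kernel of the critical linearised operator on $\R^N$, and it relies squarely on the exact value $2^{*}_\mu = \tfrac{2N-\mu}{N-2}$; since a single wrong coefficient would destroy the identity, I would double-check it by specialising to $\varepsilon = 0$ and $\Omega = \R^N$ with $u_\varepsilon$ a Talenti bubble, where the right-hand side of \eqref{eqA-5} must reduce to the linearised equation satisfied by the scaling mode.
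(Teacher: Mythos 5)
Your proposal is correct and follows essentially the same route as the paper: differentiate the equation (respectively apply the dilation commutator $\Delta((x-x_\varepsilon)\cdot\nabla w)=(x-x_\varepsilon)\cdot\nabla\Delta w+2\Delta w$), transfer the $x$-derivatives of $|x-y|^{-\mu}$ onto the $y$-variable, integrate by parts with no boundary terms thanks to $u_\varepsilon=0$ on $\partial\Omega$, and close the bookkeeping via $(N-2)2^{*}_\mu=2N-\mu$. Your use of the joint homogeneity identity for $|x-y|^{-\mu}$ is only a cosmetic variant of the paper's splitting $(x-x_\varepsilon)=(y-x_\varepsilon)+(x-y)$ in the term $Q$, and your coefficient computations (the $-\mu/2$ and the cancellation of the $K_\varepsilon u_\varepsilon^{2^{*}_\mu-1}$ terms) are accurate.
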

\begin{proof}
    Since $u_\varepsilon$ is a solution of \eqref{eq 1.1}, differentiating it with respect to $x_i$, for $i=1,2,\cdots,N$, we derive that
\begin{align*}
-\Delta\left(\frac{\partial u_{\varepsilon}}{\partial x_i}\right)
         =&\varepsilon \left(\frac{\partial u_{\varepsilon}}{\partial x_i}\right)+(2^{*}_\mu-1)u_\varepsilon^{2^{*}_\mu-2}\left(\frac{\partial u_{\varepsilon}}{\partial x_i}\right)\left(\displaystyle{\int_{\Omega}}\frac{u_\varepsilon^{2^{*}_{\mu}}(y)}{|x-y|^{\mu}}dy\right)\\
&+u_\varepsilon^{2^{*}_\mu-1}(x)\left(\displaystyle{\int_{\Omega}}\frac{\partial}{\partial x_{i}}\left(\frac{1}{|x-y|^{\mu}}\right)u_\varepsilon^{2^{*}_\mu}(y)dy\right)\\
        =&\varepsilon \left(\frac{\partial u_{\varepsilon}}{\partial x_i}\right)+(2^{*}_\mu-1)u_\varepsilon^{2^{*}_\mu-2}\left(\frac{\partial u_{\varepsilon}}{\partial x_i}\right)\left(\displaystyle{\int_{\Omega}}\frac{u_\varepsilon^{2^{*}_{\mu}}(y)}{|x-y|^{\mu}}dy\right)\\
        &+2^{*}_\mu u_\varepsilon^{2^{*}_\mu-1}(x)\left(\displaystyle{\int_{\Omega}}\frac{u_\varepsilon^{2^{*}_\mu-1}(y)\frac{\partial u_\varepsilon(y) }{\partial y_{i}}}{|x-y|^{\mu}}dy\right),
    \end{align*}
where we have used the fact that
$$\frac{\partial}{\partial x_{i}}\left(\frac{1}{|x-y|^{\mu}}\right)=-\frac{\partial}{\partial y_{i}}\left(\frac{1}{|x-y|^{\mu}}\right).$$
Hence, we derive \eqref{eqA-4}. On the other hand, define $\omega_{\varepsilon}(x):=(x-x_{\varepsilon})\cdot\nabla u_{\varepsilon}(x)+\frac{N-2}{2}u_{\varepsilon}(x)$. Then, a direct computation yields that
\begin{align*}
   -\Delta \omega_{\varepsilon}=&-2\Delta u_\varepsilon+(x-x_\varepsilon)\cdot\nabla(-\Delta u_\varepsilon)+\frac{N-2}{2}(-\Delta u_\varepsilon)\\
=&~\varepsilon\omega_{\varepsilon}+(2^{*}_\mu-1)u_\varepsilon^{2^{*}_\mu-2}(x)\omega_{\varepsilon}(x)\left(\displaystyle{\int_{\Omega}}\frac{u_\varepsilon^{2^{*}_{\mu}}(y)}{|x-y|^{\mu}}dy\right)+2\varepsilon u_\varepsilon+\frac{\mu}{2}u_\varepsilon^{2^{*}_\mu-1}(x)\left(\displaystyle{\int_{\Omega}}\frac{u_\varepsilon^{2^{*}_{\mu}}(y)}{|x-y|^{\mu}}dy\right)\\
    \\
&+\underbrace{u_\varepsilon^{2^{*}_\mu-1}(x)\left((x-x_\varepsilon)\cdot\nabla_{x} \left(\displaystyle{\int_{\Omega}}\frac{u_\varepsilon^{2^{*}_{\mu}}(y)}{|x-y|^{\mu}}dy\right)\right)}_{:=Q}\\
    =&~\varepsilon\omega_{\varepsilon}+(2^{*}_\mu-1)u_\varepsilon^{2^{*}_\mu-2}(x)\omega_{\varepsilon}(x)\left(\displaystyle{\int_{\Omega}}\frac{u_\varepsilon^{2^{*}_{\mu}}(y)}{|x-y|^{\mu}}dy\right)+2^{*}_\mu u_\varepsilon^{2^{*}_\mu-1}(x)\left(\displaystyle{\int_{\Omega}}\frac{u_\varepsilon^{2^{*}_{\mu}-1}(y)\omega_{\varepsilon}(y)}{|x-y|^{\mu}}dy\right)\\
    &+2\varepsilon u_\varepsilon,
\end{align*}
since
    \begin{align*}
Q:=&~u_\varepsilon^{2^{*}_\mu-1}(x)\left(\displaystyle{\int_{\Omega}}u_\varepsilon^{2^{*}_{\mu}}(y)\left(\sum_{i=1}^{N}(x_i-x_{\varepsilon,i})\,\frac{\partial}{\partial x_i}\left(\frac{1}{|x-y|^{\mu}}\right)\right)dy\right)\\
  =& u_\varepsilon^{2^{*}_\mu-1}(x)\left(\displaystyle{\int_{\Omega}}\left(\sum_{i=1}^{N}(y_{i}-x_{\varepsilon,i})\frac{\partial \left( u_\varepsilon^{2^{*}_{\mu}}(y)\right)}{\partial y_i}\right)\frac{1}{|x-y|^{\mu}}dy\right)\\
  &+ u_\varepsilon^{2^{*}_\mu-1}(x)\left(\displaystyle{\int_{\Omega}}\left(\sum_{i=1}^{N}(x_i-y_{i})\frac{\partial\left( u_\varepsilon^{2^{*}_{\mu}}(y)\right)}{\partial y_i}\right)\frac{1}{|x-y|^{\mu}}dy\right)\\
  =&2^{*}_\mu u_\varepsilon^{2^{*}_\mu-1}(x)\left(\displaystyle{\int_{\Omega}}\frac{u_\varepsilon^{2^{*}_{\mu}-1}(y)\omega_{\varepsilon}(y)}{|x-y|^{\mu}}dy\right)-\frac{2N-\mu}{2}u_\varepsilon^{2^{*}_\mu-1}(x)\left(\displaystyle{\int_{\Omega}}\frac{u_\varepsilon^{2^{*}_{\mu}}(y)}{|x-y|^{\mu}}dy\right)\\
  &- u_\varepsilon^{2^{*}_\mu-1}(x)\left(\displaystyle{\int_{\Omega}}\left(\sum_{i=1}^{N}\frac{\partial }{\partial y_i}\left(\frac{x_i-y_i}{|x-y|^{\mu}}\right) \right)u_\varepsilon^{2^{*}_{\mu}}(y)dy\right)\\
  =&2^{*}_\mu u_\varepsilon^{2^{*}_\mu-1}(x)\left(\displaystyle{\int_{\Omega}}\frac{u_\varepsilon^{2^{*}_{\mu}-1}(y)\omega_{\varepsilon}(y)}{|x-y|^{\mu}}dy\right)-\frac{\mu}{2}u_\varepsilon^{2^{*}_\mu-1}(x)\left(\displaystyle{\int_{\Omega}}\frac{u_\varepsilon^{2^{*}_{\mu}}(y)}{|x-y|^{\mu}}dy\right).
\end{align*}
Therefore, we conclude that \eqref{eqA-5} holds true.
\end{proof}

In the following, we give the proof of \eqref{eq4.18} in Lemma \ref{lema 4.2}.
\begin{proof}[\textbf{Proof of \eqref{eq4.18}}] 
It follows from \eqref{eq 4.16}, the denominator of $\lambda_{i,\varepsilon}$ can be computed as follows
\begin{equation}\label{eqA-1}
 \begin{split}
&\displaystyle{\int_{\Omega}}\varepsilon v^{2}dx+(2^{*}_\mu-1)\displaystyle{\int_{\Omega}}\displaystyle{\int_{\Omega}}\frac{u_{\varepsilon}^{2^{*}_\mu-2}(x)v^{2}(x)u_{\varepsilon}^{2^{*}_\mu}(y)}{|x-y|^{\mu}}dxdy\\
&+2^{*}_\mu \displaystyle{\int_{\Omega}}\displaystyle{\int_{\Omega}}\frac{u_{\varepsilon}^{2^{*}_\mu-1}(x)v(x)u_{\varepsilon}^{2^{*}_\mu-1}(y)v(y)}{|x-y|^{\mu}}dxdy\\
  =&a_{0}^{2}\bigg(\varepsilon \displaystyle{\int_{\Omega}}u_{\varepsilon}^{2}(x)dx+(2\cdot2^{*}_\mu-1)\displaystyle{\int_{\Omega}}\displaystyle{\int_{\Omega}}\frac{u_{\varepsilon}^{2^{*}_\mu}(x)u_{\varepsilon}^{2^{*}_\mu}(y)}{|x-y|^{\mu}}dxdy\bigg)\\
  &+2a_{0}\bigg(\varepsilon \displaystyle{\int_{\Omega}}
 u_{\varepsilon}(x)\phi(x) z_{\varepsilon}(x)dx+(2^{*}_\mu-1)\displaystyle{\int_{\Omega}}\displaystyle{\int_{\Omega}}\frac{u_{\varepsilon}^{2^{*}_\mu-1}(x)\phi(x)z_{\varepsilon}(x)u_{\varepsilon}^{2^{*}_\mu}(y)}{|x-y|^{\mu}}dxdy\bigg)\\
&+2^{*}_\mu\displaystyle{\int_{\Omega}}\displaystyle{\int_{\Omega}}\frac{u_{\varepsilon}^{2^{*}_\mu-1}(x)u_{\varepsilon}^{2^{*}_\mu-1}(y)\big(a_{0}u_{\varepsilon}(x)\phi(y)z_{\varepsilon}(y)+a_{0}u_{\varepsilon}(y)\phi(x)z_{\varepsilon}(x)\big)}{|x-y|^{\mu}}dxdy\\
&+\varepsilon \displaystyle{\int_{\Omega}}\phi^{2}(x)z_{\varepsilon}^{2}(x)dx+(2^{*}_\mu-1)\displaystyle{\int_{\Omega}}\displaystyle{\int_{\Omega}}\frac{u_{\varepsilon}^{2^{*}_\mu-2}(x)\phi^{2}(x)z_{\varepsilon}^{2}(x)u_{\varepsilon}^{2^{*}_\mu}(y)}{|x-y|^{\mu}}dxdy\\
&+2^{*}_\mu\displaystyle{\int_{\Omega}}
\displaystyle{\int_{\Omega}}\frac{u_{\varepsilon}^{2^{*}_\mu-1}(x)\phi(x)z_{\varepsilon}(x)u_{\varepsilon}^{2^{*}_\mu-1}(y)\phi(y)z_{\varepsilon}(y)}{|x-y|^{\mu}}dxdy.
\end{split}
\end{equation}
Due to the symmetry, we get that for any $x,y\in \Omega$,
\begin{align*}
\displaystyle{\int_{\Omega}}\displaystyle{\int_{\Omega}}\frac{u_{\varepsilon}^{2^{*}_\mu}(x)u_{\varepsilon}^{2^{*}_\mu-1}(y)\phi(y)z_{\varepsilon}(y)}{|x-y|^{\mu}}dxdy=\displaystyle{\int_{\Omega}}\displaystyle{\int_{\Omega}}\frac{u_{\varepsilon}^{2^{*}_\mu}(y)u_{\varepsilon}^{2^{*}_\mu-1}(x)\phi(x)z_{\varepsilon}(x)}{|x-y|^{\mu}}dxdy,
\end{align*}
then \eqref{eqA-1} reduces to
\begin{equation}\label{eq 4.17}
    \begin{split}
&\underbrace{a_{0}^{2}\bigg(\varepsilon \displaystyle{\int_{\Omega}}u_{\varepsilon}^{2}(x)dx+(2\cdot2^{*}_\mu-1)\displaystyle{\int_{\Omega}}\displaystyle{\int_{\Omega}}\frac{u_{\varepsilon}^{2^{*}_\mu}(x)u_{\varepsilon}^{2^{*}_\mu}(y)}{|x-y|^{\mu}}dxdy\bigg)}_{:=D_{1,\varepsilon}}\\
  +&\underbrace{2a_{0}\bigg(\varepsilon \displaystyle{\int_{\Omega}}
 u_{\varepsilon}(x)\phi(x) z_{\varepsilon}(x)dx+(2\cdot2^{*}_\mu-1)\displaystyle{\int_{\Omega}}\displaystyle{\int_{\Omega}}\frac{u_{\varepsilon}^{2^{*}_\mu-1}(x)\phi(x)z_{\varepsilon}(x)u_{\varepsilon}^{2^{*}_\mu}(y)}{|x-y|^{\mu}}dxdy\bigg)}_{:=D_{2,\varepsilon}}\\
 +&\underbrace{\varepsilon \displaystyle{\int_{\Omega}}\phi^{2}(x)z_{\varepsilon}^{2}(x)dx+(2^{*}_\mu-1)\displaystyle{\int_{\Omega}}\displaystyle{\int_{\Omega}}\frac{u_{\varepsilon}^{2^{*}_\mu-2}(x)\phi^{2}(x)z_{\varepsilon}^{2}(x)u_{\varepsilon}^{2^{*}_\mu}(y)}{|x-y|^{\mu}}dxdy}_{:=\widehat{D_{1}}}\\
+&\underbrace{2^{*}_\mu\displaystyle{\int_{\Omega}}\displaystyle{\int_{\Omega}}\frac{u_{\varepsilon}^{2^{*}_\mu-1}(x)\phi(x)z_{\varepsilon}(x)u_{\varepsilon}^{2^{*}_\mu-1}(y)\phi(y)z_{\varepsilon}(y)}{|x-y|^{\mu}}dxdy}_{:=\widehat{D_{2}}}\\
:=&D_{1,\varepsilon}+D_{2,\varepsilon}+D_{3,\varepsilon}=D_{\varepsilon},
    \end{split}
\end{equation}
where $D_{3,\varepsilon}:=\widehat{D_{1}}+\widehat{D_{2}}$. Next, we evaluate the numerator of $\lambda_{i,\varepsilon}$. From \eqref{eq4.15} and \eqref{eq4.16}, a direct calculation yields that
\begin{equation}\label{eq 4.21}
    \begin{split}
\displaystyle{\int_{\Omega}}|\nabla v|^{2}dx=&\displaystyle{\int_{\Omega}}|\nabla (a_0 u_\varepsilon+\phi z_\varepsilon)|^{2}dx\\
=&a_0^{2}\displaystyle{\int_{\Omega}}|\nabla u_\varepsilon|^{2}dx+2a_0\displaystyle{\int_{\Omega}}\nabla u_\varepsilon\nabla(\phi z_\varepsilon)dx+\displaystyle{\int_{\Omega}}|\nabla\phi|^{2}|z_\varepsilon|^{2}dx+\varepsilon\displaystyle{\int_{\Omega}} \phi^{2}(x)z_\varepsilon^{2}(x)dx\\
&+(2^{*}_\mu-1)\displaystyle{\int_{\Omega}} \displaystyle{\int_{\Omega}}\frac{u_{\varepsilon}^{2^{*}_\mu-2}(x)\phi^{2}(x)z_\varepsilon^{2}(x) u_{\varepsilon}^{2^{*}_\mu}(y)}{|x-y|^{\mu}}dxdy\\
    &+2^{*}_\mu \displaystyle{\int_{\Omega}}\displaystyle{\int_{\Omega}}\frac{u_{\varepsilon}^{2^{*}_\mu-1}(x)\phi^{2}(x)z_\varepsilon (x)u_{\varepsilon}^{2^{*}_\mu-1}(y)z_\varepsilon(y)}{|x-y|^{\mu}}dxdy.
    \end{split}
\end{equation}
Since $u_{\varepsilon}$ is the solution of \eqref{eq 1.1}, we derive that
\begin{align*}
  & a_{0}^{2} \displaystyle{\int_{\Omega}}|\nabla u_\varepsilon(x)|^{2}dx
   =a_{0}^{2} \left(\displaystyle{\int_{\Omega}}\varepsilon u_\varepsilon^{2}(x)dx+\displaystyle{\int_{\Omega}}\displaystyle{\int_{\Omega}}\frac{u_{\varepsilon}^{2^{*}_\mu}(x)
   u_{\varepsilon}^{2^{*}_\mu}(y)}{|x-y|^{\mu}}dxdy\right)\\
   =&\underbrace{a_{0}^{2} \left(\displaystyle{\int_{\Omega}}\varepsilon u_\varepsilon^{2}(x)dx+(2\cdot2^{*}_\mu-1)\displaystyle{\int_{\Omega}}
   \displaystyle{\int_{\Omega}}\frac{u_{\varepsilon}^{2^{*}_\mu}(x)u_{\varepsilon}^{2^{*}_\mu}(y)}{|x-y|^{\mu}}dxdy\right)}
   _{D_{1,\varepsilon}}+\underbrace{a_{0}^{2}(2-2\cdot2^{*}_\mu) \displaystyle{\int_{\Omega}}\displaystyle{\int_{\Omega}}\frac{u_{\varepsilon}^{2^{*}_\mu}(x)u_{\varepsilon}^{2^{*}_\mu}(y)}{|x-y|^{\mu}}dxdy}_{:=N_{1,\varepsilon}}.
\end{align*}
Moreover, from \eqref{eq4.17}, we find that
\begin{align*}
&2a_{0}\displaystyle{\int_{\Omega}}\nabla u_{\varepsilon}\nabla(\phi z_\varepsilon)dx
    =\underbrace{2a_{0}(2-2\cdot2^{*}_\mu)\displaystyle{\int_{\Omega}}\displaystyle{\int_{\Omega}}
    \frac{u_{\varepsilon}^{2^{*}_\mu-1}(x)\phi(x)z_{\varepsilon}(x)u_{\varepsilon}^{2^{*}_\mu}(y)}{|x-y|^{\mu}}dxdy}_{:=N_{2,\varepsilon}}\\
    &+\underbrace{2a_{0}\bigg(\varepsilon \displaystyle{\int_{\Omega}}
 u_{\varepsilon}(x)\phi(x) z_{\varepsilon}(x)dx+(2\cdot2^{*}_\mu-1)\displaystyle{\int_{\Omega}}\displaystyle{\int_{\Omega}}
 \frac{u_{\varepsilon}^{2^{*}_\mu-1}(x)\phi(x)z_{\varepsilon}(x)u_{\varepsilon}^{2^{*}_\mu}(y)}{|x-y|^{\mu}}dxdy\bigg)}_{D_{2,\varepsilon}}.
\end{align*}
As for the remaining terms in \eqref{eq 4.21}, we have
\begin{align*}
&\quad\displaystyle{\int_{\Omega}}|\nabla\phi|^{2}|z_\varepsilon|^{2}+\varepsilon\displaystyle{\int_{\Omega}} \phi^{2}(x)z_\varepsilon^{2}(x)dx+(2^{*}_\mu-1)\displaystyle{\int_{\Omega}} \displaystyle{\int_{\Omega}}\frac{u_{\varepsilon}^{2^{*}_\mu-2}(x)\phi^{2}(x)z_\varepsilon^{2}(x) u_{\varepsilon}^{2^{*}_\mu}(y)}{|x-y|^{\mu}}dxdy\\
   &+2^{*}_\mu \displaystyle{\int_{\Omega}}\displaystyle{\int_{\Omega}}\frac{u_{\varepsilon}^{2^{*}_\mu-1}(x)\phi^{2}(x)z_\varepsilon (x)u_{\varepsilon}^{2^{*}_\mu-1}(y)z_\varepsilon(y)}{|x-y|^{\mu}}dxdy   \\
=&\underbrace{\varepsilon \displaystyle{\int_{\Omega}}\phi^{2}(x)z_{\varepsilon}^{2}(x)dx+(2^{*}_\mu-1)
\displaystyle{\int_{\Omega}}\displaystyle{\int_{\Omega}}
\frac{u_{\varepsilon}^{2^{*}_\mu-2}(x)\phi^{2}(x)z_{\varepsilon}^{2}(x)u_{\varepsilon}^{2^{*}_\mu}(y)}{|x-y|^{\mu}}dxdy}_{\widehat{D_{1}}}\\
&+\underbrace{2^{*}_\mu\displaystyle{\int_{\Omega}}\displaystyle{\int_{\Omega}}
\frac{u_{\varepsilon}^{2^{*}_\mu-1}(x)\phi(x)z_{\varepsilon}(x)u_{\varepsilon}^{2^{*}_\mu-1}(y)\phi(y)z_{\varepsilon}(y)}{|x-y|^{\mu}}dxdy}_{\widehat{D_{2}}}\\
&+\underbrace{\displaystyle{\int_{\Omega}}|\nabla\phi|^{2}|z_\varepsilon|^{2}dx+2^{*}_\mu \displaystyle{\int_{\Omega}}\displaystyle{\int_{\Omega}}\frac{u_{\varepsilon}^{2^{*}_\mu-1}(x)\big(\phi(x)-\phi(y)\big)\phi(x)z_\varepsilon (x)u_{\varepsilon}^{2^{*}_\mu-1}(y)z_\varepsilon(y)}{|x-y|^{\mu}}dxdy}_{:=N_{3,\varepsilon}}\\
:=&D_{3,\varepsilon}+N_{3,\varepsilon}.
\end{align*}
Then, from above estimates, we conclude that the numerator \eqref{eq 4.21} is equal to
\begin{align}\label{B.8}
\displaystyle{\int_{\Omega}}|\nabla v|^{2}dx=D_{1,\varepsilon}+D_{2,\varepsilon}+D_{3,\varepsilon}+N_{1,\varepsilon}+N_{2,\varepsilon}+N_{3,\varepsilon}=D_{\varepsilon}+N_{\varepsilon},
\end{align}
Thus, from \eqref{eq 4.17} and \eqref{B.8}, a simple computation yields that
\begin{align*}
\lambda_{i,\varepsilon}\leq\underset{a_0,\,a_1,\cdots,\,a_{i-1}}{\max}{\left\{1+\frac{N_\varepsilon}{D_\varepsilon}\right\}}.
\end{align*}
\end{proof}
\noindent


\begin{thebibliography}{99}

{\footnotesize
\bibitem{A}
N. Ackermann, \emph{On a periodic Schr\"odinger equation with nonlocal superlinear part.} Math. Z. 248, 423--443 (2004).

\bibitem{ANY}
O. Alves, B. N\'Obrega, M. Yang, \emph{Multi-bump solutions for Choquard equation with deepening
potential well.} Calc. Var. Partial Differential Equations 55, (2016).

\bibitem{BLR}
A. Bahri, Y. LI, O. Rey, \emph{On a variational problem with lack of compactness: the topological effect of the critical points at infinity.} Calc. Var. Partial Differential Equations 3, 67--93 (1995).

\bibitem{BLP}
I. Birindelli, F. Leoni, F. Pacella, \emph{Symmetry and spectral properties for viscosity solutions
of fully nonlinear equations.} J. Math. Pures Appl. 107, 409--428 (2017).

\bibitem {BN}
H. Br\'ezis, L. Nirenberg, \emph{Positive solutions of nonlinear elliptic equations involving critical Sobolev exponents.}
 Comm. Pure Appl. Math. 36, 437--477 (1983).

\bibitem{CLP}
D. Cao, P. Luo, S. Peng, \emph{The number of positive solutions to the Br\'ezis-Nirenberg problem.}  Trans. Amer. Math. Soc. 374, 1947--1985 (2021).

\bibitem{CFP}
A. Capozzi, D. Fortunato, G. Palmieri, \emph{An existence result for nonlinear elliptic problems involving critical Sobolev exponent.}
Ann. Inst. H. Poincar\'e Anal. Non Lin\'eaire 2, 463--470 (1985).

\bibitem {CSS1}
G. Cerami, S. Solimini, M. Struwe, \emph{Some existence results for superlinear elliptic boundary value problems involving critical exponents.} J. Funct. Anal. 69, 289--306 (1986).

\bibitem{C}
K. Cerqueti, \emph{A uniqueness result for a semilinear elliptic equation involving the critical Sobolev exponent in symmetric domains.} Asymptot. Anal. 21, 99--115 (1999).

\bibitem{CKL}
W. Choi, S. Kim, Ki-Ahm Lee, \emph{Qualitative properties of multi-bubble solutions for nonlinear elliptic equations involving critical exponents.}  Adv. Math. 298, 484--533 (2016).

\bibitem{CW}
M. Clapp, T. Weth, \emph{Multiple solutions for the Br\'ezis-Nirenberg problem.} Adv. Diffrential Equations 10, 463--480 (2005).

\bibitem{DP}
L. Damascelli, F.Pacella, \emph{Morse index of solutions of nonlinear elliptic equations.} De Gruyter Series in
Nonlinear Analysis and Applications, 30. De Gruyter, Berlin, (2019).

\bibitem{DGIP}
F. De Marchis, M. Grossi, I. Ianni, F. Pacella, \emph{Morse index and uniqueness of positive
solutions of the Lane-Emden problem in planar domains.} J. Math. Pures Appl. 128, 339--378 (2019).

\bibitem{DY}
L. Du, M. Yang, \emph{Uniqueness and nondegeneracy of solutions for a critical nonlocal equation.} Discrete Contin. Dyn. Syst. 39, 5847--5866 (2019).

\bibitem{GLMY}
F. Gao, H. Liu, V. Moroz, M. Yang, \emph{High energy positive solutions for a coupled Hartree system
with Hardy-Littlewood-Sobolev critical exponents.} J. Differential Equations 287, 329--375 (2021).

\bibitem{GMYZ}
F. Gao, V. Moroz, M. Yang, S. Zhao, \emph{Construction of infinitely many solutions for a critical Choquard equation via local Pohozaev identities.} Calc. Var. Partial Differential Equations 61, (2022).

\bibitem{GY}
F. Gao, M. Yang, \emph{The Br\'ezis-Nirenberg type critical problem for the nonlinear Choquard equation.} Sci.
China Math. 61, 1219--1242 (2018).

\bibitem{GWY}
J. Giacomoni, Y. Wei, M. Yang, \emph{Nondegeneracy of solutions for a critical Hartree equation.} Nonlinear Anal. 199, (2020).

\bibitem{GG}
F. Gladiali, M. Grossi, \emph{On the spectrum of a nonlinear planar problem.} Ann. Inst. H. Poincar\'e Anal. Non Lin\'eaire 26, 191--222 (2009).

\bibitem{GGOS}
 F. Gladiali, M. Grossi, H. Ohtsuka, T. Suzuki, \emph{Morse indices of multiple blow-up solutions to the two dimensional
Gel'fand problem.} Comm. Partial Differential Equations 39, 2028--2063 (2014).

\bibitem{GGPS}
F. Gladiali, M. Grossi, F. Pacella, P. N. Srikanth, \emph{Bifurcation and symmetry breaking for
a class of semilinear elliptic equations in an annulus.} Calc. Var. Partial Differential Equations 40, 295--317 (2011).

\bibitem{G}
L. Glangetas, \emph{Uniqueness of positive solutions of a nonlinear elliptic equation involving the critical
exponent.} Nonlinear Anal. 20, 571--603 (1993).


\bibitem{G1}
M. Grossi, \emph{A nondegeneracy result for a nonlinear elliptic equation.} NoDEA Nonlinear Differtial Equations Appl. 12, 227--241 (2005).

\bibitem{GP}
M. Grossi, F. Pacella, \emph{On an eigenvalue problem related to the critical exponent.} Math. Z. 250, 225--256 (2005).

\bibitem{GS}
M. Grossi, R. Servadei, \emph{Morse index for solutions of the nonlinear Schr\"odinger equation in
a degenerate setting.} Ann. Mat. Pura Appl. 186, 433--453 (2007).

\bibitem{GHPS}
L. Guo, T. Hu, S. Peng, W. Shuai, \emph{Existence and uniqueness of solutions for Choquard equation involving Hardy-Littlewood-Sobolev critical exponent.} Calc. Var. Partial Differential Equations 58, (2019).

\bibitem{H}
Z. Han, \emph{Asymptotic approach to singular solutions for nonlinear elliptic equations involving critical
Sobolev exponent.} Ann. Inst. H. Poincar\'e Anal. Non lin\'eaire 8, 159--174 (1991).

\bibitem{IP}
A. Iacopetti, F. Pacella, \emph{A nonexistece result for sign-changing solutions of the Br\'ezis-Nirenberg problem in low dimensions.}
J. Differential Equations 258, 159--174 (2015).

\bibitem{LLTX}
X. Li, C. Liu, X. Tang, G. Xu, \emph{Nondegeneracy of positive bubble solutions for generalized energy-critical
 Hartree equations.} Preprint. arXiv:2304.04139 [math.AP].

\bibitem{L1}
E.H. Lieb, \emph{Existence and uniqueness of the minimizing solution of Choquard’s nonlinear equation.} Studies in Appl. Math. 57, 93--105 (1977).

\bibitem{L2}
E.H. Lieb, \emph{Sharp constants in the Hardy-Littlewood-Sobolev and related inequalities.} Ann. of Math. (2) 118, 349--374 (1983).


\bibitem{LPP}
P. Luo, K. Pan, S. Peng, \emph{Morse index of concentrated solutions for the nonlinear
Schr\"odinger equation with a very degenerate potential.} Preprint.

\bibitem{LTX}
P. Luo, Z. Tang, H. Xie, \emph{Qualitative analysis to an eigenvalue problem of the h\'enon equation.} J. Funct. Anal. 286, (2024).

\bibitem{MV1}
V. Moroz, J. Van Schaftingen, \emph{Groundstates of nonlinear Choquard equations: existence, qualitative
properties and decay asymptotics.} J. Funct. Anal. 265, 153--184 (2013).

\bibitem{MV2}
V. Moroz, J. Van Schaftingen, \emph{Groundstates of nonlinear Choquard equations: Hardy-Littlewood-Sobolev critical exponent.} Commun. Contemp. Math. 17, (2015).

\bibitem{MP1}
M. Musso, A. Pistoia, \emph{Multispike solutions for a nonlinear elliptic problem involving the critical Sobolev exponent.} Indiana Univ. Math. J. 51, 541--579 (2002).

\bibitem{PW}
F. Pacella, T. Weth,  \emph{Symmetry of solutions to semilinear elliptic equations via Morse index.} Proc. Amer. Math. Soc. 135, 1753--1762 (2007).

\bibitem{PS}
S. Pekar,\emph{Untersuchung über die Elektronentheorie der Kristalle.} Akademie Verlag, Berlin, (1954).

\bibitem{Rey1}
O. Rey, \emph{Proof of two conjectures of H. Br\'ezis and L. A. Peletier.} Manuscripta Math. 65, 19--37 (1989).

\bibitem{Rey2}
O. Rey, \emph{The role of the Green’s function in a nonlinear elliptic equation involving the critical Sobolev
exponent.} J. Funct. Anal. 89, 1--52 (1990).

\bibitem{SZ}
M. Schechter, W. Zou, \emph{On the Br\'ezis-Nirenberg problem.} Arch. Ration. Mech. Anal. 197, 337--356 (2010).

\bibitem{SYZ}
M. Squassina, M. Yang, S. Zhao, \emph{Local uniqueness of blow-up solutions for critical Hartree equations in bounded domain.} Calc. Var. Partial Differential Equations 62, (2023).

\bibitem{Ta1}
F. Takahashi, \emph{Asymptotic nondegeneracy of the least energy solutions to an elliptic problem with the critical Sobolev exponent
 coefficients.} Adv. Nonlinear Stud. 8, 783--798 (2008).


\bibitem{Ta2}
F. Takahashi, \emph{Spectral estimates of least energy solutions to the Br\'ezis-Nirenberg problem with a variable coefficient.} J. Differential Equations 248, 2497--2527 (2010).

\bibitem{Ta3}
F. Takahashi, \emph{An eigenvalue problem related to blowing-up solutions for a semilinear elliptic equation
 with the critical Sobolev exponent.} Discrete Contin. Dyn. Syst. Ser. S 4, 907--922 (2011).


\bibitem{WW}
J. Wei, M. Winter, \emph{Strongly interacting bumps for the Schr\"odinger-Newton equations.} J. Math. Phys. 50, (2009).

\bibitem{WY}
J. Wei, S. Yan, \emph{Infinitely many solutions for the prescribed scalar curvature problem on $\mathcal{S}^{N}$.} J. Funct.
Anal. 258, 3048--3081 (2010).

\bibitem{YYZ}
M. Yang, W. Ye, S. Zhao, \emph{Existence of concentrating solutions of the Hartree type Br\'ezis-Nirenberg
problem.}  J. Differential Equations 344, 260--324 (2023).


\bibitem{YZ}
M. Yang, S. Zhao, \emph{Blow-up behavior of solutions to critical Hartree equations on bounded domain.} J.
Geom. Anal. 33, (2023).


}




\end{thebibliography}
\end{document}